\DeclareSymbolFont{tipa}{T3}{cmr}{m}{n}
\DeclareMathAccent{\invbreve}{\mathalpha}{tipa}{16}
\DeclareMathAlphabet\mathbfcal{OMS}{cmsy}{b}{n}
\definecolor{darkblue}{rgb}{0.0, 0.0, 0.55}
\definecolor{forestgreen}{rgb}{0.13, 0.55, 0.13}
\definecolor{darkteal}{rgb}{0.0, 0.24, 0.18}
\definecolor{w}{RGB}{231, 160, 76}
\definecolor{l}{RGB}{37, 84, 138}
\newcommand{\wbox}{\textcolor{w}{\blacksquare}}
\newcommand{\lbox}{\textcolor{l}{\blacksquare}}
\newcommand{\wb}{\square}
\newcommand{\bb}{\blacksquare}
\theoremstyle{definition}
\newtheorem{definition}{Definition}[section]
\newtheorem{theorem}{Theorem}[section]
\newtheorem{lemma}{Lemma}[section]
\newtheorem{conjecture}{Conjecture}[section]
\newtheorem*{remark}{Remark}
\newcounter{gametable}
\newenvironment{gametable}[1][htb]
  {\refstepcounter{gametable}
   \begin{table}[#1]%
  }
  {\end{table}}
    \@ifdefinable{\PI}{\def\PI/{\mbox{Player 1}}}
    \@ifdefinable{\PII}{\def\PII/{\mbox{Player 2}}}
\newcommand{\G}[1]{$\textswab{Game}(#1)$}
\newcommand{\BG}[1]{$\textswab{BGame}(#1)$}
\newcommand{\SG}[1]{$\textswab{SGame}(#1)$}
\newcommand{\CG}[1]{$\textswab{CGame}(#1)$}
\newcommand{\InfG}[1]{$\overline{\textswab{Game}}(#1)$}
\newcommand{\InfBG}[1]{$\overline{\textswab{BGame}}(#1)$}
\DeclareRobustCommand{\Hbar}{%
\text{
  \hmode@bgroup
  \vphantom{$H$}%
  \sbox\z@{$H$}%
  \ooalign{%
    $H$\cr
    \hidewidth
    \kern 0.1em 
    \vrule
      height \dimexpr 0.7\ht\z@+0.1ex\relax
      depth  -0.7\ht\z@
      width  0.8\wd\z@
    \hidewidth\cr
  }%
  \egroup
}
}
\newcommand{\E}{\mathbb{E}}
\DeclareMathOperator\supp{supp}
\DeclareRobustCommand{\loopedsquare}{\text{\raisebox{-.035em}{\includegraphics[height=.6em]{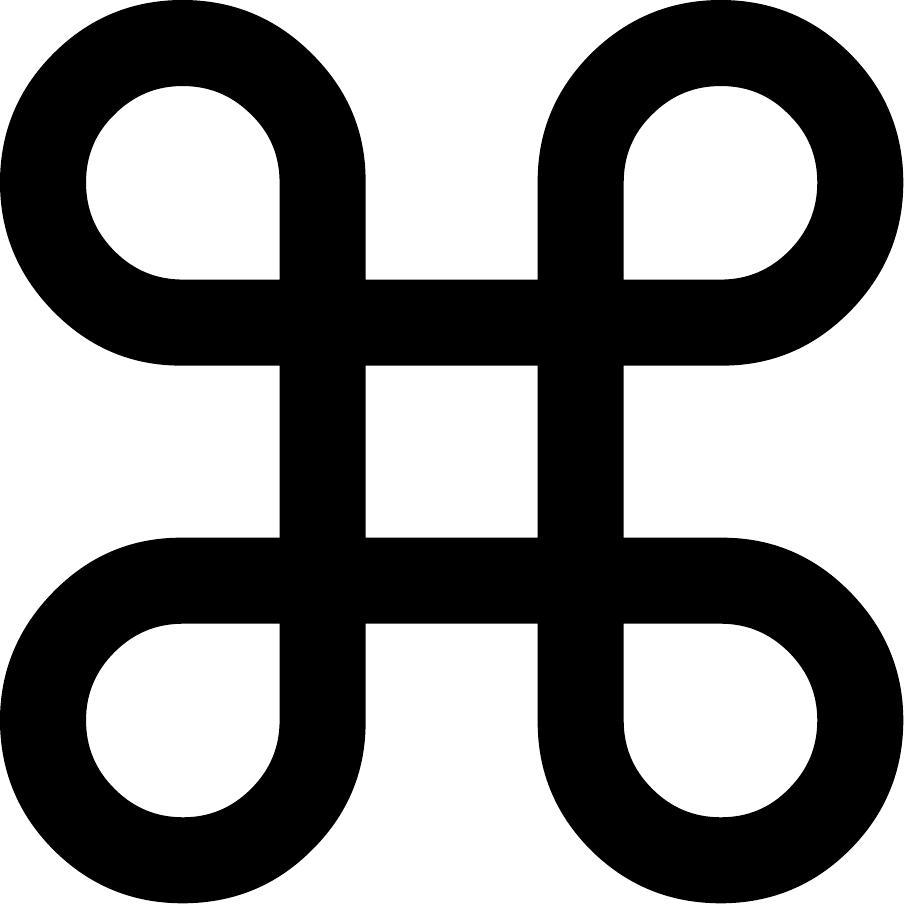}}}}
\newcommand{\sampi}{\ensuremath{\includegraphics[width=0.6em]{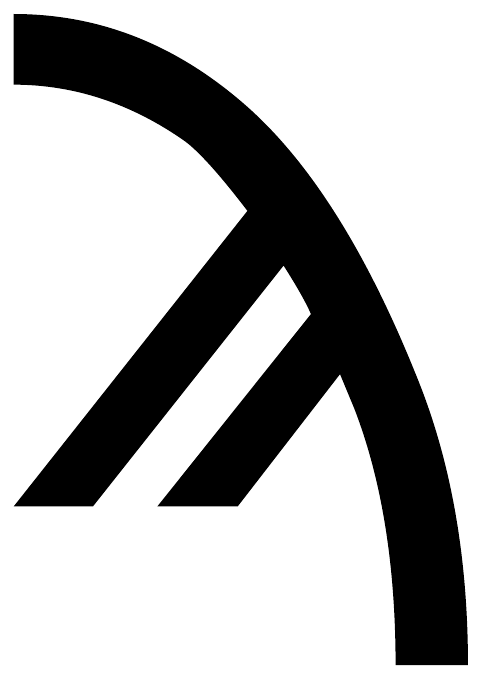}}}
\newcommand\sh[1]{\ensuremath{\mathop{\text{\normalfont\fontencoding{T2A}\selectfont ш}}#1}}
\newcommand{\hexacube}{
    \begin{tikzpicture}[scale=0.15, baseline=-0.5ex]
        \pgfmathsetmacro{\hdist}{sqrt(3)/2}

        \draw (0,1) -- (\hdist,0.5) -- (\hdist,-0.5) -- (0,-1) -- (-\hdist,-0.5) -- (-\hdist,0.5) -- cycle;

        \draw (0,0) -- (0,-1);
        \draw (0,0) -- (\hdist,0.5);
        \draw (0,0) -- (-\hdist,0.5);
    \end{tikzpicture}
}
\title{Statistical Games \\ \large Playful approach to statistics}
\author{József Konczer  \\
        \href{mailto:konczer.j@gmail.com}{konczer.j@gmail.com},
        \href{https://konczer.github.io/}{konczer.github.io}
        }
\date{February 2024}
\begin{document}

\maketitle

\section{Brief introduction}

\epigraph{\textit{It might appeal to some.}}
{Abraham Wald \cite{book:Savage}}

The first and main part of this work has a mathematical character.
It explores and analyses a few simple two-player non-cooperative games (for the explicit definition, see Def.~\ref{def:FisherGame}, or Def.~\ref{def:BayesianGame}), in which many concepts of probability theory and statistics naturally emerge.
In these games -- termed Fisher and Bayesian games -- an adversarial player can choose from a set of possible scenarios, while the other player can collect data, based on which, she has to make a guess or bet on the scenarios.
Besides the mathematical exploration of games, in which concepts from Frequentist and Bayesian statistics can be identified in equilibrium, in Section~\ref{sec:StatisticalGames}, a generalized betting game, termed ``Statistical game'' is introduced (for the definition, see Def.~\ref{def:StatisticalGame}).
Unification of Bayesian and Fisher games is possible by interpreting these as general Statistical games, differing only in the agent's relative risk aversion.
From a mathematical point of view, the emergent structures and nontrivial limit behaviour of such games, along with their equilibrium solutions, seem worthy of detailed examination.
This work can be viewed as the beginning of the mathematical investigation and exploration of statistical games.

Later, in Section~\ref{sec:PhilosophiPart}, these games are proposed to be models or analogues for statistical inference itself.
This suggestion is more philosophical -- and in some sense more radical. The proposition aims to ground statistical and probabilistic concepts with non-cooperative games, instead of devices of chance or subjective degree of belief.
This can be viewed as a different framework for the \emph{interpretation} of probability and related statistical procedures. 

A secondary contribution of this work
comes from a non-exhaustive but broad review of the diverse scientific literature, spread both in time and context.
The main philosophical and some technical concepts promoted in this work have been present in the literature in a fragmented form, often referred to as minimax regret criterion \cite{paper:Milnor,book:LuceRaiffa}. (The ideas of Wald \cite{book:Wald}, Savage \cite{book:Savage}, Good \cite{paper:GoodWeather}, Kelly \cite{Kelly}, Kashyap \cite{Kashyap1974,Kashyap1971} already contained the fundamental concepts from which the framework could have been constructed.)
The scope of the topic -- decision making, statistics, and probabilistic inference -- is enormous and highly interdisciplinary.
An incomplete list of related fields includes: Economics \cite{book:JuliaUncertainty}, Philosophy \cite{sep:InterpretationOfProbability,book:ScientificExplanationBraithwaite}, Statistics \cite{book:CoxStatistics}, Computer science \cite{book:InformationTheory,Kelly}, Mathematics \cite{book:Klenke}, Physics \cite{book:PathriaStatisticalMechanics,book:ErrorAnalysis}, Biology \cite{book:EvolutionCognition}, Machine learning \cite{book:ProbabilisticMachineLearning,book:Bishop,book:RL, book:NorvigRussell} et cetera\footnote{Finance \cite{book:ValueAtRisk,book:FinancialRiskManagerHandbook}, Control theory \cite{book:Control}, Operations Research \cite{book:OperationsResearch,book:IntroductionOperationsResearch}\dots}.
The collection of related ideas -- which do not necessarily refer to each other -- and presenting these concepts in a unified, coherent framework will hopefully inspire further research and stimulate interdisciplinary collaboration.

Hopefully, the presented simple but clear toy models can serve as a solid foundation for future research and development, and -- together with the listed future directions in Section~\ref{sec:FutureWork} -- build a compelling case for a more general and coherent framework for decision making in the face of uncertainty.

\newpage

\tableofcontents

\newpage

\section{Binary decisions}

Also known as forced binary choice \cite{book:StatisticalInference}, binary classification problem \cite{book:DecisionTheory} or two-category classification \cite{book:PatternClassification}. It can be viewed as a hypothesis testing problem \cite{book:StatisticalInference,book:CoxStatistics}, in which one alternative must be selected.

\subsection{Description of the game}

\begin{definition}[Fisher game]
\label{def:FisherGame}
There are two players, \PI/ and \PII/.
\PII/ needs to choose between scenario A or B first and then produce a binary sequence of length $M$ containing precisely $K_A$ or $K_B$ number of $1$-s. (Without losing generality, we will assume $K_A \le K_B$.)
Following this, \PI/ (not knowing the actions of \PII/) can sample $N$ number of bits, and after observing their value, she guesses scenario A or B.

If \PI/ guessed the scenario correctly, she wins the game (\textcolor{w}{$\blacksquare$}) and loses otherwise (\textcolor{l}{$\blacksquare$}). 
The above-defined Fisher game will be denoted as 
\G{N, K_A, K_B, M}.

\end{definition}

A tabular description of a general Fisher game is shown by Game Table~\ref{game:GeneralFisherGame}.

\begin{gametable}[H]
\captionsetup{justification=centering}
\caption{\label{game:GeneralFisherGame} General description of \\ \G{N, K_A, K_B, M}}
\centering
\begin{tabularx}{0.73\textwidth}{ X | X }
\hline
 &  \\
\multicolumn{1}{c|}{\PI/} & \multicolumn{1}{c}{\PII/} \\
 &
\begin{itemize}
        \item Chooses scenario A or B,
        \begin{itemize}
            \item then chooses a binary sequence available for the chosen scenario.
        \end{itemize}
\end{itemize}
\\
\begin{itemize}
        \item Chooses $N$ indices for sampling,
        \begin{itemize}
            \item based on the bits in the chosen sample, guesses scenario A or B.
        \end{itemize}
    \end{itemize}
& \\
& \\
\multicolumn{2}{c}{
    \begin{minipage}{0.5\linewidth}
        \centering
        If \PI/ guessed the scenario correctly, she wins the game (\textcolor{w}{$\blacksquare$}) and loses otherwise (\textcolor{l}{$\blacksquare$}).
    \end{minipage}
} \\
\end{tabularx}
\end{gametable}

\subsubsection{Trivial cases}
\label{sec:FisherTrivialCases}

\paragraph{Blind guessing:}
A maximally trivial version of this guessing game occurs when $M=0$, i.e. there are no sequences to investigate. In this case, \PI/ needs to guess blindly.
Formally, we will call this game \G{N=0,K_A=0,K_B=0,M=0}.
The utility matrix of this game looks the following \footnote{a utility matrix for \PI/ is a matrix, in which rows represent the actions of \PI/ (in this case guessing A, guessing B), and columns represent the actions of \PII/ (in this case choosing A, choosing B). Therefore \PI/ can pick a row, while \PII/ selects a column. The matrix element in the intersection represents the utility for \PI/ of the consequence of the selected actions.}:

\begin{equation*}
u_1=
\begin{bmatrix}
\wbox & \lbox \\
\lbox & \wbox
\end{bmatrix}
\end{equation*}

This is equivalent to the well-known game called Matching Pennies \cite{book:EssentialGameTheory} \footnote{alternatively it can be viewed as a ``Guess The Hand'', ``Which hand is the coin in?'' or ``Hand Game'' \cite{book:CulinIndianGames,book:PrehistoricGames} guessing game}. In the context of Game Theory \cite{book:EssentialGameTheory,book:GameTheory,review:NeumannMorgensternGameThoery,book:GameTheoryOriginal} the game has one unique Nash equilibrium: the players are guessing/choosing A or B with an equal $50\%$ chance, and both players have a $50\%$ chance to win or lose.

Any further games, where $M > 0$, while $N=0$ are degenerate \cite{book:AlgorithmicGameTheory} cases of the blind guessing game. \footnote{Another family of games leading to blind guessing are games in which $K_A=K_B$. In this case, \PI/ gains no information about A or B by sampling from the provided sequence.}

\paragraph{Sure winning:}
A further trivial case is \G{N=1,K_A=0,K_B=1,M=1}. For this situation, if \PII/ chooses A, then she needs to provide the sequence $(0)$ (or $(\wb)$), while if she chooses B, she must provide sequence $(1)$ (or $(\bb)$).

\PI/ can sample one bit, which can be chosen only one way and which completely reveals the choice of \PII/. \PI/ has an absolute winning strategy denoted by the set of rules: $\{(\wb) \to A, (\bb) \to B \}$ (if she sees $\wb$, chooses A, if she sees $\bb$, chooses B). This strategy guarantees a $100\%$ success rate for \PI/ regardless of \PII/'s strategy.

In all games in which $N>0$, $K_A = 0$, $K_B = M$, $M>0$,  \PI/ has essentially the same trivial absolute winning strategy.

Another trivial sure winning strategy is available when $N=M$, i.e. \PI/ can sample all the bits. \footnote{In general, we will call a game ``Sure winning'' if there is no possible observation after sampling, which could be realized both by scenario A and B. Using the notation in Section~\ref{sec:ActionSpaceStructure} this formally means that $\mathbb{K}_{AB} = \mathbb{K}_A \cap \mathbb{K}_B = \emptyset$.}

\subsubsection{Smallest nontrivial case}

The smallest and simplest nontrivial game is \G{N=1,K_A=0,K_B=1,M=2}, which can be seen from Table~\ref{tab:M<=2}. \footnote{The other smallest nontrivial game is \G{N=1,K_A=1,K_B=2,M=2}. The two games are related by the transformation $\wb \leftrightarrow \bb$, and $A \leftrightarrow B$; therefore, results from one generalizes for both.}

\PII/ can either choose A, in which case only one sequence is possible:

\[
\{(\wb, \wb)\}
\]
Alternatively, she can choose scenario B, by which two sequences can be generated:

\[
\{(\wb, \bb), (\bb, \wb)\}
\]
These are $1+2=3$ distinct sequences in total.

\PI/ can sample from $2$ indices. The observed bit can take $2$ values ($\wb$ or $\bb$). Based on this information, she can guess one of the $2$ scenarios, A or B. This results in a total of $2 \times 2 ^ 2 = 8$ possible choices.

\subsubsection{Exact solution of the smallest nontrivial case}

\paragraph{Action sets:}
In this small finite case, it is feasible to enumerate all possible actions of the players, construct the utility matrix, and find all equilibrium strategies of the players.

The action set of \PII/ is denoted by $\mathcal{A}_2$.
For clarity, the notation will be kept redundant. As an example, $(B,(\wb,\bb))$ stands for the action of \PII/, in which she has chosen scenario B and picked $(\wb,\bb)$ from the allowed sequences in scenario B. Enumeration of all possible choices of \PII/ results:

\begin{equation}
    \label{eq:SimplestFisher_A2}
    \mathcal{A}_2 = \{ (A, (\wb,\wb)), (B, (\wb,\bb)) ,(B,(\bb,\wb)) \}
\end{equation}

The whole action set of \PI/ is more complicated.
To clarify the notation, let us take an example: \PI/ selects the second bit to investigate, and if it is $\wb$, then guesses scenario A and if the bit is $\bb$, then guesses B. This will be denoted as $(2,\{(\wb) \to A, (\bb) \to B\})$. Using this notation, the full action set looks the following:

\begin{equation}
\begin{split}
\mathcal{A}_1 = \{ 
& (1,\{(\wb) \to A, (\bb) \to A\}), (1,\{(\wb) \to A, (\bb) \to B\}), \\
& (1,\{(\wb) \to B, (\bb) \to A\}), (1,\{(\wb) \to B, (\bb) \to B\}), \\
& (2,\{(\wb) \to A, (\bb) \to A\}), (2,\{(\wb) \to A, (\bb) \to B\}), \\
& (2,\{(\wb) \to B, (\bb) \to A\}), (2,\{(\wb) \to B, (\bb) \to B\})
\}
\end{split}
\end{equation}

\vfill

\paragraph{Utility matrix:}
It is tedious but straightforward to construct the utility matrix of this zero-sum game. 
The utility matrix for \PI/ looks the following:

\setcounter{MaxMatrixCols}{12}
\begin{equation}
u_1=
\begin{bmatrix}
\wbox & \lbox & \lbox \\
\wbox & \lbox & \wbox \\
\lbox & \wbox & \lbox \\
\lbox & \wbox & \wbox \\
\wbox & \lbox & \lbox \\
\wbox & \wbox & \lbox \\
\lbox & \lbox & \wbox \\
\lbox & \wbox & \wbox
\end{bmatrix}
\end{equation}

\paragraph{Steps toward solution:}
Either by eliminating dominated strategies \cite{book:EssentialGameTheory, book:GameTheory}, or using common sense, one can exclude strategies in which \PI/ would guess A after seeing a $\bb$. This can reduce the action set of \PI/:

\begin{equation}
\begin{split}
\mathcal{A}_1' = \{ 
& (1,\{(\wb) \to A, (\bb) \to B\}), (1,\{(\wb) \to B, (\bb) \to B\}), \\
& (2,\{(\wb) \to A, (\bb) \to B\}), (2,\{(\wb) \to B, (\bb) \to B\})
\}
\end{split}
\end{equation}
resulting in a reduced utility matrix:

\setcounter{MaxMatrixCols}{12}
\begin{equation}
u_1'=
\begin{bmatrix}
\wbox & \lbox & \wbox \\
\lbox & \wbox & \wbox \\
\wbox & \wbox & \lbox \\
\lbox & \wbox & \wbox
\end{bmatrix}
\end{equation}

This finite zero-sum game can be solved completely \footnote{even using a standard computational software, for example \href{https://cgi.csc.liv.ac.uk/~rahul/bimatrix_solver/}{online bimatrix solver}, \href{https://nashpy.readthedocs.io/en/stable/}{Nashpy}, \href{https://doc.sagemath.org/html/en/reference/game_theory/index.html}{Sage} or \href{https://gambitproject.readthedocs.io/en/latest/intro.html}{Gambit} etc\href{https://www.wolfram.com/broadcast/video.php?v=3526}{\dots}}:

First, we might observe that the $2.$ and $4.$ rows in the reduced utility matrix are identical. Let us assume that we can reach an equilibrium by keeping only one from rows $2$ and $4$. (This assumption needs to be checked in the end.)
If we delete the $4.$ row and swap the $1.$ row with the $2.$ row, then we get the following utility matrix:

\begin{equation}
u_1''=
\begin{bmatrix}
\lbox & \wbox & \wbox \\
\wbox & \lbox & \wbox \\
\wbox & \wbox & \lbox
\end{bmatrix}
\end{equation}

We get the same utility matrix if we delete the $2.$ row and then permute the new rows $\{1 \to 2, 2 \to 3, 3 \to 1 \}$.

\paragraph{Matching Game:}

A game with this utility matrix can be reinterpreted as a (reverse) ``Matching Game'':

Imagine that both \PI/ and \PII/ can choose from $3$ different items (I, II, III). \PI/ loses ($\lbox$) if they choose the same item (and wins ($\wbox$) otherwise). \footnote{The game is also similar to a \href{https://www.britannica.com/art/cups-and-balls-trick}{Cups and Balls trick} (also known as Shell Game) from the tricksters perspective, if her moves were disregarded by the chooser.}

This is a symmetric, zero-sum game, which is easier to understand intuitively.
The game has one single Nash equilibrium, in which all players are randomizing uniformly between all possible items. This behaviour is described by the following mixed strategy profile, which is essentially a pair of probability distributions for all possible actions.

\begin{equation}
    \underline{\sigma}''^*_1 =(1/3,1/3,1/3) \quad  \underline{\sigma}''^*_2 =(1/3,1/3,1/3)
\end{equation}

To prove that this is indeed a Nash equilibrium, one needs to check if \PI/ or \PII/ can expect more utility by unilaterally deviating from her strategy. If \PII/ chooses items I, II, III uniformly, all choices of \PI/ lead to the same $2/3$ expected chance of winning. This shows that there is no item, the choice of which could enhance \PI/'s winning chance. Similarly, if \PI/ chooses items I, II, and III uniformly, then \PII/ has a $1/3$ chance of winning with all items. This proves that this strategy profile is a Nash equilibrium.

There is another, more dynamic way to approach this game:
If \PI/ would use any mixed strategy profile $\underline{\sigma}''_1$, then \PII/ could learn her distribution and always choose the item which is chosen by \PI/ most frequently.
Formally, the minimal chance of winning for \PI/ when she uses a $\underline{\sigma}''_1$ profile is:

\begin{equation}
    v_1^{\mathrm{worst}}(\underline{\sigma}''_1) = 1- \max(\underline{\sigma}''_1)
\end{equation}

\PI/ can maximize her worst-case winning rate if she minimizes the maximum of her mixed strategy's distribution. For $3$ items, the minimum of the probabilities maximum is $1/3$, when all items are chosen uniformly.
A similar line of reasoning shows that \PII/ can maximize her minimal winning rate by uniform randomization.

This is why equilibrium strategies in zero-sum games are often called ``minimax'' strategies.

\paragraph{Switching to the original game:}

We can translate this equilibrium strategy profile back to the original game and actions. To check if this is a Nash equilibrium in the complete action set, we need to show that \PI/ can not get better results by playing the deleted row: 

To see this, one needs to recall that the deleted row was always identical to a row that remained in the reduced game (namely the row: choosing I), so the expected winning rate is the same $2/3$ for it. This shows that \PI/ can not gain more by including the excluded action in the mixture.

\paragraph{Complete solution of the smallest nontrivial game:}

It might have been surprising from the original description that this game has not only one unique Nash equilibrium but a continuous set of possible equilibrium strategies. \footnote{This is equivalent to the statement that this game is degenerate \cite{book:AlgorithmicGameTheory}.} The spanning Nash equilibria for the reduced action set are:

\begin{equation}
    \begin{split}
        {\underline{\sigma}'^*_1}^{(1)}=(1/3,1/3,1/3,0), & \quad {\underline{\sigma}'^*_2}^{(1)}=(1/3,1/3,1/3), \\
        {\underline{\sigma}'^*_1}^{(2)}=(1/3,0,1/3,1/3), & \quad {\underline{\sigma}'^*_2}^{(2)}=(1/3,1/3,1/3)
    \end{split}
\end{equation}

All equilibria can be characterized as a one-parameter family, with a continuous parameter $\lambda \in [0,1]$:

\begin{equation}
    \label{eq:SimplestFisherComplete}
    \underline{\sigma}'^*_1(\lambda)=(1/3,\lambda/3,1/3,(1-\lambda)/3), \quad \underline{\sigma}'^*_2(\lambda)=(1/3,1/3,1/3), 
\end{equation}

\paragraph{Symmetric solution:}

There is only one symmetrical solution when \PI/ picks the first and second bit equally likely. This solution corresponds to $\lambda=1/2$:

\begin{equation}
    \underline{\sigma}'^{*S}_1=(1/3,1/6,1/3,1/6), \quad 
    \underline{\sigma}'^{*S}_2=(1/3,1/3,1/3)
\end{equation}

One can reinterpret the symmetrical solution in a more familiar procedural language, described in Game Table~\ref{game:1_2_1_2}.

\begin{gametable}[H]
\captionsetup{justification=centering}
\caption{\label{game:1_2_1_2} Symmetric equilibrium strategy for \\ \G{N=1, K_A=0, K_B=1, M=2}}
\centering
\begin{tabularx}{0.73\textwidth}{ X | X }

\hline
 &  \\
\multicolumn{1}{c|}{\PI/} & \multicolumn{1}{c}{\PII/} \\
 &
\begin{itemize}
        \item Choose scenario A with probability $1/3$, and scenario B with probability $2/3$
        \begin{itemize}
            \item Choose uniformly from all different allowed sequences.
        \end{itemize}
\end{itemize}
\\
\begin{itemize}
        \item Sample randomly from all possible indices uniformly
        \begin{itemize}
            \item in case the sampled bit is $\wb$:
            \begin{itemize}
                \item guess A with probability $2/3$
                \item and B with probability $1/3$ 
            \end{itemize}
            \item in case the sampled bit is $\bb$:
            \begin{itemize}
                \item guess B
            \end{itemize}
        \end{itemize}
    \end{itemize}
& \\
\end{tabularx}
\end{gametable}

The optimal policy is visualized in figure \ref{fig:PolicyPlot1012}.

\begin{figure}[H]
    \centering
    \includegraphics[scale=0.7]{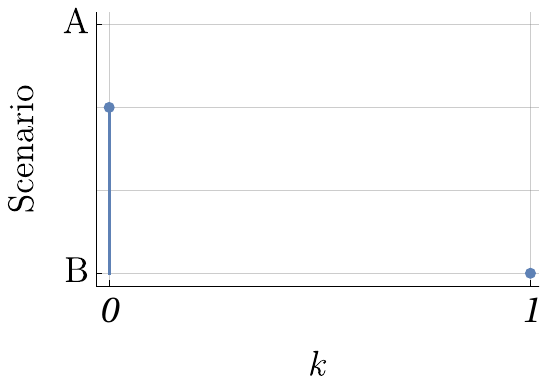}
    \caption{Visualization of the optimal guessing policy for \PI/ in the game: \G{N=1,K_A=0,K_B=1,M=2}. Where $k$ is the number of $\bb$-s in the sample, and the height corresponds to the chance of choosing A from the possible scenarios.}
    \label{fig:PolicyPlot1012}
\end{figure}

\subsection{Table of smallest games}

Even at this stage, we can enumerate all possible games where the number of bits is less or equal to 2. The results are collected in Table~\ref{tab:M<=2}.

\begin{table}[H]
    \centering
    \begin{tabular}{|c|c|c|c|c|c|c|}
        \hline
        $N$ & $K_A$ & $K_B$ & $M$ & Trivial? & Type & Winning rate \\
        \hline
        1 & 0 & 0 & 1 & $\checkmark$ & Blind guessing & $50\%$ \\
        1 & 1 & 0 & 1 & $\checkmark$ & Sure winning & $100\%$ \\
        1 & 1 & 1 & 1 & $\checkmark$ & Blind guessing & $50\%$ \\
        \hline
        1 & 0 & 0 & 2 & $\checkmark$ & Blind guessing & $50\%$ \\
        1 & 0 & 1 & 2 & \ding{55} & - & $66.\overline{6}\%$ \\
        1 & 0 & 2 & 2 & $\checkmark$ & Sure winning & $100\%$ \\
        1 & 1 & 1 & 2 & $\checkmark$ & Blind guessing & $50\%$ \\
        1 & 1 & 2 & 2 & \ding{55} & - & $66.\overline{6}\%$ \\
        1 & 2 & 2 & 2 & $\checkmark$ & Blind guessing & $50\%$ \\
        \hline
        2 & 0 & 0 & 2 & $\checkmark$ & Blind guessing & $50\%$ \\
        2 & 0 & 1 & 2 & $\checkmark$ & Sure winning & $100\%$ \\
        2 & 0 & 2 & 2 & $\checkmark$ & Sure winning & $100\%$ \\
        2 & 1 & 1 & 2 & $\checkmark$ & Blind guessing & $50\%$ \\
        2 & 1 & 2 & 2 & $\checkmark$ & Sure winning & $100\%$ \\
        2 & 2 & 2 & 2 & $\checkmark$ & Blind guessing & $50\%$ \\
        \hline
    \end{tabular}
    \caption{\centering Enumeration of all games, \G{N,K_A,K_B,M}, satisfying: $0<M \le 2$, $0\le K_A \le K_B \le M$, $0 < N \le M$.}
    \label{tab:M<=2}
\end{table}

\subsection{General structure of the action sets}
\label{sec:ActionSpaceStructure}

\paragraph{Most general construction:}
The action set of \PII/ contains all the possible permutations of the sequences allowed in scenario A or B, denoted by $\mathcal{K}_A$ and $\mathcal{K}_B$:

\begin{equation}
    \mathcal{K}_A = \{\underline{b} \in \{\wb,\bb\}^M \ | \ \#_\bb \ \underline{b} = K_A \}
\end{equation}

\begin{equation}
    \mathcal{K}_B = \{\underline{b} \in \{\wb,\bb\}^M \ | \ \#_\bb \ \underline{b} = K_B \}
\end{equation}

Where the notation $\#_y X$ stands for the number of members in $X$ equal to $y$.

The possible actions of \PII/ is the union of all allowed sequences:

\begin{equation}
    \mathcal{A}_2 = \mathcal{K}_A \cup \mathcal{K}_B
\end{equation}
(To reproduce the redundant notation used in the previous section, one can additionally label the sequences with A or B: $\mathcal{A}_2 = \left ( \{A\} \times \mathcal{K}_A \right ) \cup \left ( \{B\} \times \mathcal{K}_B \right )$.)

The action set of \PI/ consists of two parts: firstly, choosing a concrete sampling and, subsequently, selecting a policy.

\begin{equation}
    \mathcal{A}_1 = \mathcal{S} \times \mathcal{P}
\end{equation}

\begin{equation}
    \label{eq:SamplingSet}
    \mathcal{S} = \{ S \subset \{1,\dots,M \} \ | \  |S| = N \}
\end{equation}

\begin{equation}
    \mathcal{P} = \{ \phi : \{\wb,\bb\}^N \mapsto \{A,B\} \}
\end{equation}

\paragraph{Action set sizes:}

To navigate better in the action spaces, calculating the size of these sets might be illuminating. This can be done by simple combinatorics.
In the general case \G{N,K_A,K_B,M} \PII/ has the following number of possible actions:

\begin{equation}
    |\mathcal{A}_2| = |\mathcal{K}_A| + |\mathcal{K}_B|=  \binom{M}{K_A} + \binom{M}{K_B}
\end{equation}

If we take into account all possible guesses for all possible sample sequences, we get the following double exponential expression for the action set size of \PI/:

\begin{equation}
    |\mathcal{A}_1| = |\mathcal{S}| \times |\mathcal{P}| = \binom{M}{N} \ 2^{2^N} 
\end{equation}

\subsubsection{Policy restrictions}

For a general \G{N,K_A,K_B,M} the number of $\bb$-s, $k$ might take values only from a restricted set $\mathbb{K}$.
If \PII/ chooses scenario A, then in the sample, maximally $K_A$ number of $\bb$-s, and maximally $M-K_A$ number of $\wb$-s can appear. If $N$ is greater then the maximal number of $\bb$-s or $\wb$-s, then $k$ can take values only from a restricted set:

\begin{equation}
    \mathbb{K}_A = \{ \max\{0,N-(M-K_A)\}, \dots, \min\{K_A,N\} \}
\end{equation}

If \PII/ chooses scenario B, then the potentially restricted set looks the following:

\begin{equation}
    \mathbb{K}_B = \{ \max \{0,N-(M-K_B)\}, \dots, \min \{K_B,N\} \}
\end{equation}

\begin{figure}[H]
    \centering
    \includegraphics{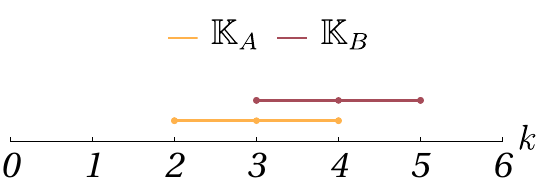}
    \caption{Illustration of $\mathbb{K}_A$ and $\mathbb{K}_B$ for \G{N=6,K_A=4,K_B=5,M=8}}
    \label{fig:KAB}
\end{figure}

\PI/ does not know in advance if scenario A or B has been chosen by \PII/. Therefore, she needs to prepare strategies for any possible restricted outcome:

\begin{equation}
    \mathbb{K} = \mathbb{K}_A \cup \mathbb{K}_B
\end{equation}

This can slightly restrict the policy set of \PI/ \footnote{because \PI/ does not need to consider policies for sampled sequences, which can not come out under any scenario}:

\begin{equation}
        |\mathcal{P}'| = \left (  2^{\sum_{k \in \mathbb{K}} \binom{N}{k}} \right ) \le |\mathcal{P}| = 2^{2^N}
\end{equation}

\paragraph{Permutation invariant policies:}

A meaningful restriction of the guessing game could be that \PI/ needs to make her guess only by knowing the total number of $\bb$-s in the sampled subsequence, regardless of the exact order of bits in the sample. In this case, the number of actions for \PI/ is greatly reduced:

\begin{equation}
    |\mathcal{P}^R| = 2^{N+1}
\end{equation}

\begin{equation}
    |\mathcal{P'}^R| = 2^{|\mathbb{K}|}
\end{equation}

However, there might be strategies played by \PII/ for which the best response of \PI/ is not part of the $\mathcal{P}^R$ set.
In the following parts, we will focus on equilibrium strategies in which this restriction will be justified.

\paragraph{Obviously dominated policies:}

We can restrict further the set of policies by removing obviously wrong choices.
A sane player will not choose scenario B, seeing a sample which could only come out under scenario A (and vice versa).
Formally: any policy, which gives B for any sample, in which the number of $\bb$-s $k \in \mathbb{K}_A \setminus \mathbb{K}_B$ is dominated by policies, which always gives A in such case.
To reduce the possible policies, one can keep only those which can not be dominated in this obvious way:

\begin{equation}
    \begin{split}
    \mathcal{P}'' =  
    \{ 
    \phi : \{\wb,\bb\}^N \mapsto \{A,B\} \ | 
    &
    \ \forall (\underline{d} \in \{\wb,\bb\}^N, \#_\bb \underline{d} \in \mathbb{K_A} \setminus \mathbb{K}_B) \  \phi(\underline{d})=A \\
    & 
    \ \forall (\underline{d} \in \{\wb,\bb\}^N, \#_\bb \underline{d} \in \mathbb{K_B} \setminus \mathbb{K}_A) \  \phi(\underline{d})=B
    \}
    \end{split}
\end{equation}

This condition keeps the possible policies unrestricted only on the set:

\begin{equation}
    \mathbb{K}_{AB} = \mathbb{K}_A \cap \mathbb{K}_B
\end{equation}

\begin{equation}
        |\mathcal{P}''| = 2^{\sum_{k \in \mathbb{K}_{AB}} \binom{N}{k}} 
\end{equation}

This, combined with permutation invariance, gives:

\begin{equation}
        |\mathcal{P}''^R| = 2^{|\mathbb{K}_{AB}|} 
\end{equation}

\subsection{Symmetric solution of the general case}

Without losing generality, we will assume that $K_A < K_B$ in this section.

\paragraph{The Ansatz:}

An Ansatz\footnote{an educated guess, or an assumption about the form of the solution} can be formulated for the general case, described in Game Table~\ref{game:Ansatz1}.

\begin{gametable}[H]

\captionsetup{justification=centering}
\caption{\label{game:Ansatz1} {\bf Ansatz} for the general \G{N, K_A, K_B, M} case, \\ having 3 free variables: $P$, $k^\bullet$ and $\nu$:}

\centering
\begin{tabularx}{0.73\textwidth}{ X | X }

\hline
 &  \\
\multicolumn{1}{c|}{\PI/} & \multicolumn{1}{c}{\PII/} \\
 &
\begin{itemize}
        \item choose scenario A with probability $\pi(\mathrm{A})=P$, or B with probability $\pi(\mathrm{B})=1-P$.
        \begin{itemize}
            \item Choose uniformly from all different allowed sequences.
        \end{itemize}
    \end{itemize}
\\
 \begin{itemize}
        \item sample $N$ bits randomly and uniformly from all available $M$ bits,
        \begin{itemize}
            \item in case the number of $\bb$-s $k < k^\bullet$ guess A,
            \item in case $k=k^\bullet$:
            \begin{itemize}
                \item guess A with probability $\mu(\hat{\phi}) = \nu$,
                \item guess B with probability $\mu(\check{\phi}) = 1-\nu$,
            \end{itemize}
            \item in case the number of $\bb$-s $k > k^\bullet$ guess B.
        \end{itemize}
    \end{itemize}
& \\
\end{tabularx}
\end{gametable}

\paragraph{Fixing the parameters:}

For the sake of derivation, we denote by $p_k(A)$ the probability that in scenario A, $N$ randomly selected bits contain precisely $k$ number of $\bb$-s. (While $p_k(B)$ denotes the same concept for scenario B).

To be specific, these probabilities follow a hypergeometric distribution \cite{book:IntroToProbability, book:StatisticalInference, book:Renyi1970}:

\begin{equation}
    p_k(A) = \frac{\binom{K_A}{k} \binom{M-K_A}{N-k}}{\binom{M}{N}}, \quad
    p_k(B) = \frac{\binom{K_B}{k} \binom{M-K_B}{N-k}}{\binom{M}{N}}
\end{equation}

\begin{figure}[H]
    \centering
    \includegraphics[scale=0.8]{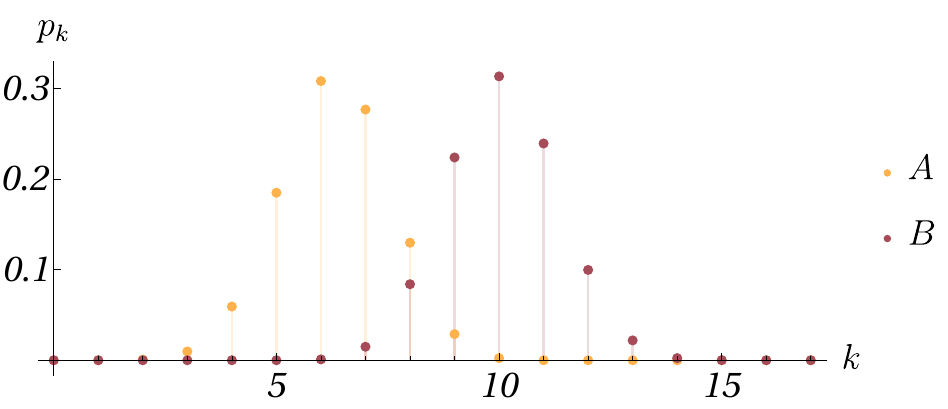}
    \caption{Illustration of $p_k(A)$ and $p_k(B)$ for \G{N=17,K_A=10,K_B=16,M=27}.}
    \label{fig:pkAB}
\end{figure}

In formulating the parameters, we will not use the specific form of these probabilities, only a generic observation, vaguely stating: ``as $k$ becomes larger, scenario B becomes more probable than scenario A''. The property we will use can be formulated in the following way:

\begin{equation}
    \forall \ \ell, k \in \mathbb{K}_{AB}, \ \ell > k \implies \frac{p_\ell(B)}{p_\ell(A)} > \frac{p_k(B)}{p_k(A)}
\end{equation}

To maximize her expected utility, \PI/ should always choose:

\begin{equation}
    \begin{split}
        & A,\quad \mathrm{If} \quad P \ p_k(A) > (1-P) \ p_k(B) \\
        & B,\quad \mathrm{If} \quad P \ p_k(A) < (1-P) \ p_k(B)
    \end{split}
\end{equation}

This leaves only one possibility for the case in which $k=k^\bullet$ \footnote{in the general case, when $\nu \ne 0$}, which is equality:

\begin{equation}
    \label{eq:Pk*}
    P \  p_{k^\bullet}(A) = (1-P) \ p_{k^\bullet}(B)
\end{equation}

Equation \eqref{eq:Pk*} can determine $P$ as a function of $k^\bullet$:

\begin{equation}
    P(k^\bullet) = \frac{p_{k^\bullet}(B)}{p_{k^\bullet}(A)+p_{k^\bullet}(B)}
\end{equation}

\PII/ is mixing between A and B; therefore, she needs to be indifferent about choosing between them:

\begin{equation}
    \sum_{k<k^\bullet} p_k(A) + \nu \ p_{k^\bullet}(A) = \sum_{k>k^\bullet} p_k(B) + (1-\nu) \ p_{k^\bullet}(B)
\end{equation}

By rearranging the terms, we get an expression for $\nu$:

\begin{equation}
    \label{eq:nuk*}
    \begin{split}
        \mu(\hat{\phi}) &= \nu = \frac{\sum_{k \ge k^\bullet} p_k(B) - \sum_{k < k^\bullet} p_k(A)}{p_{k^\bullet}(A)+p_{k^\bullet}(B)} \\
        \mu(\check{\phi}) &= 1 - \nu = \frac{\sum_{k \leq k^\bullet} p_k(A) - \sum_{k > k^\bullet} p_k(B)}{p_{k^\bullet}(A)+p_{k^\bullet}(B)}
    \end{split}
\end{equation}

(It is important to note that $\nu$ represents a probability. We will use the convention that $\nu \in [0,1)$)

Collecting all the variables, we can express the probability of guessing correctly for \PI/ \footnote{setting the utility of winning to $1$ and losing to $0$, this is equivalent to the expected utility or ``value'' of the game} as a function of $k^\bullet$:

\begin{equation}
    v(k^\bullet) =  \frac{p_{k^\bullet}(A) \ \left (\sum_{k \ge k^\bullet} p_k(B) \right ) + p_{k^\bullet}(B) \ \left (\sum_{k < k^\bullet} p_k(A) \right )}{p_{k^\bullet}(A)+p_{k^\bullet}(B)}
\end{equation}

\PI/ can maximize this quantity by appropriately choosing $k^\bullet$. 
An alternative approach to find $k^\bullet$ is to satisfy the inequalities: $0 \le \nu < 1$, $0<1-\nu\le1$ together with equations \eqref{eq:nuk*}:

\begin{equation}
    \sum_{k \geq k^\bullet} p_k(B) \ge \sum_{k < k^\bullet} p_k(A) 
\end{equation}

and

\begin{equation}
    \sum_{k > k^\bullet} p_k(B) < \sum_{k \leq k^\bullet} p_k(A)
\end{equation}

\subsubsection{Main theorem on Fisher games}

\begin{theorem}[Symmetrical equilibrium]
\label{thm:Symmetric}
\G{N,K_A,K_B,M} has a symmetrical Nash equilibrium, in which:

\begin{itemize}
    \item \PII/ chooses scenario A or B with probability $P^*$ and $1-P^*$;
    \begin{itemize}
        \item then picks an allowed sequence with equal probability (from $\mathcal{K}_A$ or $\mathcal{K}_B$).
    \end{itemize}

    \item \PI/ first samples $N$ bits uniformly from the provided sequence. Based on $k$ -- the number of $\bb$-s -- she performs the following action:
    \begin{itemize}
        \item if $k<k^*$ she guesses A
        \item if $k=k^*$ then 
        \begin{itemize}
            \item she guesses A with probability $\nu^*$ or B with probability $1-\nu^*$
        \end{itemize} 
        \item if $k>k^*$ she guesses B
    \end{itemize}
\end{itemize}

The parameters $(k^*, \nu^*, P^*)$ can be determined from the parameters of the game $(N, K_A, K_B, M)$:

\begin{equation}
    \label{thm:SymEqHypergeom}
    p_k(A) = \frac{\binom{K_A}{k} \binom{M-K_A}{N-k}}{\binom{M}{N}}, \quad
    p_k(B) = \frac{\binom{K_B}{k} \binom{M-K_B}{N-k}}{\binom{M}{N}}
\end{equation}

\begin{equation}
    \label{thm:Fisher_Ps}
    P^* = \frac{p_{k^*}(B)}{p_{k^*}(A)+p_{k^*}(B)}
\end{equation}

\begin{equation}
    \label{thm:SymEqNu}
    \nu^* = \frac{\sum_{k \ge k^*} p_k(B) - \sum_{k < k^*} p_k(A)}{p_{k^*}(A)+p_{k^*}(B)}
\end{equation}

Finally, $k^*$ is the smallest integer, for which the sum of probabilities becomes greater than $1$:

\begin{equation}
    \label{thm:Fisher_ks}
    \sum_{k \le k^*} p_k(A)+p_k(B) > 1, \quad \mathrm{while} \quad \sum_{k<k^*} p_k(A)+p_k(B) \le 1
\end{equation}

\end{theorem}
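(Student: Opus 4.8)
The plan is to verify directly that the strategy profile described in the statement -- which is the Ansatz of Game Table~\ref{game:Ansatz1} with its three parameters fixed -- is a Nash equilibrium of the zero-sum game \G{N,K_A,K_B,M}. For a two-player zero-sum game it suffices to rule out any profitable unilateral deviation, and since \PII/ mixes only between the two pure ``scenario'' choices while \PI/ plays a threshold rule, this reduces to two conditions: (i) \PI/'s threshold policy is a best response to \PII/ playing A, B with probabilities $P^*,1-P^*$ and choosing sequences uniformly; and (ii) \PII/ is indifferent between A and B against \PI/'s policy. I would establish (i) and (ii) separately and then show that the parameters $(k^*,\nu^*,P^*)$ solving them simultaneously exist and lie in their admissible ranges.

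For (i) the key reduction is that, against a \PII/ who randomizes uniformly over the allowed sequences, the number $k$ of $\bb$-s in \PI/'s sample is a sufficient statistic and the choice of sampling indices is irrelevant. By exchangeability of the uniform law on $\mathcal{K}_A$ (resp.\ $\mathcal{K}_B$), for any fixed index set $S\in\mathcal{S}$ the probability of observing a particular string $\underline{d}$ with count $k$ equals $\binom{M-N}{K_A-k}/\binom{M}{K_A}$, a function of $k$ alone; summing over the $\binom{N}{k}$ such strings recovers the hypergeometric $p_k(A),p_k(B)$ of \eqref{thm:SymEqHypergeom}, independently of $S$. Hence any policy can be replaced, without loss of expected utility, by the permutation-invariant policy that for each $k$ guesses whichever scenario carries the larger posterior weight. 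Comparing $P^*\,p_k(A)$ with $(1-P^*)\,p_k(B)$ and invoking the monotone-likelihood-ratio property $p_\ell(B)/p_\ell(A)>p_k(B)/p_k(A)$ for $\ell>k$ then forces the optimal rule to be a single threshold. The definition $P^*=p_{k^*}(B)/\bigl(p_{k^*}(A)+p_{k^*}(B)\bigr)$ is precisely the value making \eqref{eq:Pk*} hold, so the two posteriors coincide at $k=k^*$ and \PI/ is indifferent there, which legitimises any mixing weight $\nu^*$.

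For (ii) I compute \PI/'s probability of a correct guess conditional on each scenario under the threshold-plus-mixing policy: it is $\sum_{k<k^*}p_k(A)+\nu^*p_{k^*}(A)$ given A, and $\sum_{k>k^*}p_k(B)+(1-\nu^*)p_{k^*}(B)$ given B. Equating these is exactly the requirement that \PII/ cannot lower \PI/'s win rate by favouring one scenario; solving the resulting linear equation for $\nu^*$ yields formula \eqref{thm:SymEqNu} and pins down $\nu^*$ once $k^*$ is chosen.

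The remaining and most delicate step is to show that a consistent $k^*$ exists with $\nu^*\in[0,1)$ and $P^*\in(0,1)$, so that the two indifference conditions are genuinely compatible. Writing $F_A(k)=\sum_{j\le k}p_j(A)$ and $F_B$ analogously, a short rearrangement gives $\nu^*\ge 0\iff F_A(k^*-1)+F_B(k^*-1)\le 1$ and $\nu^*<1\iff F_A(k^*)+F_B(k^*)>1$; thus the admissible $k^*$ is the smallest integer at which the combined cumulative sum crosses $1$, which is exactly \eqref{thm:Fisher_ks}. Since $F_A+F_B$ is nondecreasing in $k$ and attains $2$ at the top of $\mathbb{K}$, such a $k^*$ exists and is unique. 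I expect the main obstacle to be precisely this reconciliation: one must confirm that the threshold location dictated by \PI/'s best response in (i), the value $P^*$ it determines, and the value $\nu^*$ forced by \PII/'s indifference in (ii) can all be met at one and the same integer $k^*$ -- together with the auxiliary verification of the monotone-likelihood-ratio inequality for the hypergeometric $p_k(A),p_k(B)$ when $K_A<K_B$, a routine but necessary ratio estimate on binomial coefficients. Once these fit together, $P^*\in(0,1)$ follows from positivity of $p_{k^*}(A),p_{k^*}(B)$, and no unilateral deviation of either player is profitable, confirming the profile as a Nash equilibrium.
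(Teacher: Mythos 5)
Your proposal is correct and follows essentially the same route as the paper's own proof: verify the Ansatz profile directly, using the symmetry of uniform sampling and uniform sequence choice to reduce everything to the count statistic $k$ with hypergeometric weights, the monotone likelihood ratio property to force the threshold structure of \PI/'s best response, $P^*$ chosen to make \PI/ indifferent at $k=k^*$, \PII/'s conditional-win-rate indifference to fix $\nu^*$, and the cumulative crossing condition \eqref{thm:Fisher_ks} to place $k^*$ so that $\nu^*\in[0,1)$. The only cosmetic differences are that the paper carries out the binomial-coefficient ratio estimate behind the likelihood-ratio monotonicity explicitly and treats the pure-policy case $\nu^*=0$ (where $P^*$ is pinned down only by inequalities) as a separate branch, whereas your indifference-at-$k^*$ argument covers both cases at once.
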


For the proof, see Appendix \ref{proof:Symmetric}.

\begin{remark}
    The equilibrium quantity $k^*$ can be interpreted as the median \cite{book:DeGrootProbabilityAndStatistics,book:IntroToProbability} of a mixture of random variables $\kappa_A, \kappa_B$:

    \begin{equation}
        k^* = \mathbbm{m}
        \left [ \frac{1}{2} \kappa_A+
        \frac{1}{2}\kappa_B
        \right ]
    \end{equation}

    where $\kappa_A$ and $\kappa_B$ are characterized by $p_k(A)$ and $p_k(B)$. And $\mathbbm{m}[.]$ represents the (strong) median \cite{paper:BinomialMedianMode}:

    \begin{equation}
        \mathbbm{m}[\kappa] = c \iff
        \Pr(\kappa \le c) \ge \frac{1}{2}
        \ \wedge \  
        \Pr(\kappa \ge c) \ge \frac{1}{2}
    \end{equation}

    (The only difference is that such definition would imply the $\nu \in (0,1]$ choice instead of $\nu \in [0,1)$.)
    
\end{remark}

\begin{lemma}[Intermediacy of the median]
\label{lemma:IntermediacyMedian}
    For any random variables with unique median $\mathbbm{m}[\xi] \le \mathbbm{m}[\eta]$ and any $\lambda \in [0,1]$ mixing parameter (if $\lambda \cdot \xi + (1-\lambda) \cdot \eta$ also has a unique median):

    \begin{equation}
        \mathbbm{m}[\xi] 
        \le
        \mathbbm{m}[\lambda \cdot \xi + (1-\lambda) \cdot \eta]
        \le
        \mathbbm{m}[\eta] 
    \end{equation}
    
\end{lemma}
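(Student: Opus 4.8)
The plan is to work entirely with the tail probabilities of the cumulative distribution functions, and to read $\lambda\cdot\xi+(1-\lambda)\cdot\eta$ as the \emph{mixture} law — sample from $\xi$ with probability $\lambda$ and from $\eta$ with probability $1-\lambda$ — exactly as the preceding remark reads $\tfrac12\kappa_A+\tfrac12\kappa_B$. Writing $\mathrm{mix}=\lambda\cdot\xi+(1-\lambda)\cdot\eta$, this interpretation makes the tails split linearly,
\[
\Pr(\mathrm{mix}\ge x) = \lambda\,\Pr(\xi\ge x)+(1-\lambda)\,\Pr(\eta\ge x),
\]
and symmetrically for $\Pr(\mathrm{mix}\le x)$. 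This linearity is the whole engine of the argument, so recognising that the statement concerns a mixture rather than a pointwise linear combination of the two variables is the one interpretive point that must be settled first.

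Next I would extract from the \emph{uniqueness} of the medians the strict tail bounds the argument needs. Put $m_\xi=\mathbbm{m}[\xi]$ and $m_\eta=\mathbbm{m}[\eta]$. I claim uniqueness forces $\Pr(\eta\ge x)<\tfrac12$ for every $x>m_\eta$, and $\Pr(\xi\le x)<\tfrac12$ for every $x<m_\xi$. Indeed, for $x>m_\eta$ monotonicity of the distribution function gives $\Pr(\eta\le x)\ge\Pr(\eta\le m_\eta)\ge\tfrac12$; if in addition $\Pr(\eta\ge x)\ge\tfrac12$ held, then $x$ would itself satisfy both strong-median conditions, contradicting the uniqueness of $m_\eta$. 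The same reasoning yields the companion facts $\Pr(\xi\ge x)<\tfrac12$ for $x>m_\xi$ and $\Pr(\eta\le x)<\tfrac12$ for $x<m_\eta$.

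Then I would prove the two inequalities separately by contradiction. Let $M=\mathbbm{m}[\mathrm{mix}]$, so that $\Pr(\mathrm{mix}\le M)\ge\tfrac12$ and $\Pr(\mathrm{mix}\ge M)\ge\tfrac12$. For the upper bound, suppose $M>m_\eta$. Since $M>m_\eta\ge m_\xi$, the tail bounds give both $\Pr(\eta\ge M)<\tfrac12$ and $\Pr(\xi\ge M)<\tfrac12$, whence the linearity identity forces $\Pr(\mathrm{mix}\ge M)<\lambda\tfrac12+(1-\lambda)\tfrac12=\tfrac12$, contradicting the upper strong-median condition; hence $M\le m_\eta$. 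The lower bound $M\ge m_\xi$ is the mirror image: assuming $M<m_\xi\le m_\eta$ and applying the left-tail bounds yields $\Pr(\mathrm{mix}\le M)<\tfrac12$, the required contradiction.

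Finally I would dispose of the degenerate endpoints $\lambda\in\{0,1\}$, where $\mathrm{mix}$ reduces to $\eta$ or $\xi$ and the conclusion is immediate, and note that for $\lambda\in(0,1)$ a convex combination of two numbers each strictly below $\tfrac12$ is again strictly below $\tfrac12$, which is precisely what lets the two contradictions bite. I expect the only step demanding genuine care to be the translation of ``unique median'' into the strict tail inequalities of the second paragraph; once those are secured, the linearity of the mixture tails makes both bounds essentially mechanical.
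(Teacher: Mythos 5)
Your proposal is correct and takes essentially the same approach as the paper's proof: both read $\lambda\cdot\xi+(1-\lambda)\cdot\eta$ as a mixture so that the tail probabilities combine linearly, and both exploit uniqueness of the medians to obtain the strict tail inequalities (the paper packages this as tail sets $\mathbb{T}^{-}_p$, $\mathbb{T}^{+}_p$ and an intersection-inclusion argument, whereas you run the equivalent argument as a direct contradiction). No gaps.
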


The proof can be found in Appendix \ref{Appendix:FisherGameEquilibrium}, Section~\ref{sec:IntermediacyMedian}.

\begin{theorem}
\label{thm:FiniteKMedianBounds}

    The intermediacy property of the median implies the following bounds for $k^*$:

    \begin{equation}
        \mathbbm{m}[\kappa_A]
        \le
        k^*
        \le
        \mathbbm{m}[\kappa_B]
    \end{equation}

    where $\kappa_A$ and $\kappa_B$ represent random variables with hypergeometric distribution:

    \begin{equation}
        \kappa_A \sim \mathrm{Hypergeom}(N,K_A,M), \quad
        \kappa_B \sim \mathrm{Hypergeom}(N,K_B,M)
    \end{equation}

    \begin{equation}
    \Pr(\kappa_A = k) =
    p_k(A) = \frac{\binom{K_A}{k} \binom{M-K_A}{N-k}}{\binom{M}{N}}, \quad
    \Pr(\kappa_B = k) =
    p_k(B) = \frac{\binom{K_B}{k} \binom{M-K_B}{N-k}}{\binom{M}{N}}
\end{equation}

\end{theorem}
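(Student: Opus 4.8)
The plan is to reduce the statement entirely to the Intermediacy Lemma (Lemma~\ref{lemma:IntermediacyMedian}) by identifying $k^*$ with the median of the equal mixture $\zeta := \tfrac{1}{2}\kappa_A + \tfrac{1}{2}\kappa_B$, exactly as anticipated in the Remark following Theorem~\ref{thm:Symmetric}. First I would verify this identification directly from the defining inequalities \eqref{thm:Fisher_ks}. Since $\sum_k p_k(A) = \sum_k p_k(B) = 1$, the mixture $\zeta$ has probability mass function $\tfrac{1}{2}(p_k(A)+p_k(B))$, so the condition $\sum_{k \le k^*}(p_k(A)+p_k(B)) > 1$ reads $\Pr(\zeta \le k^*) > \tfrac{1}{2}$, while $\sum_{k<k^*}(p_k(A)+p_k(B)) \le 1$ gives $\Pr(\zeta < k^*) \le \tfrac{1}{2}$, i.e. $\Pr(\zeta \ge k^*) \ge \tfrac{1}{2}$. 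These are precisely the two defining conditions of the strong median, so $k^* = \mathbbm{m}[\zeta]$.

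Next I would establish the ordering $\mathbbm{m}[\kappa_A] \le \mathbbm{m}[\kappa_B]$ of the two endpoint medians. Here I can invoke the monotone likelihood ratio property already recorded in the Ansatz discussion, namely that $p_\ell(B)/p_\ell(A)$ is strictly increasing in $\ell$. The MLR property implies first-order stochastic dominance $\kappa_A \preceq_{\mathrm{st}} \kappa_B$, that is $F_A(x) = \Pr(\kappa_A \le x) \ge \Pr(\kappa_B \le x) = F_B(x)$ for every $x$. Comparing the two cumulative distribution functions then forces the smallest integer at which $F_A$ reaches $\tfrac{1}{2}$ to be no larger than the corresponding integer for $F_B$, which yields $\mathbbm{m}[\kappa_A] \le \mathbbm{m}[\kappa_B]$.

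With both ingredients in hand the conclusion is immediate: applying Lemma~\ref{lemma:IntermediacyMedian} with $\xi = \kappa_A$, $\eta = \kappa_B$ and $\lambda = \tfrac{1}{2}$ gives $\mathbbm{m}[\kappa_A] \le \mathbbm{m}[\zeta] \le \mathbbm{m}[\kappa_B]$, and substituting the identification $k^* = \mathbbm{m}[\zeta]$ established in the first step finishes the argument.

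I expect the main obstacle to be the bookkeeping around uniqueness and the choice of median convention. Lemma~\ref{lemma:IntermediacyMedian} is stated for random variables with unique medians, whereas a hypergeometric distribution (or the mixture $\zeta$) may have a CDF equal to exactly $\tfrac{1}{2}$ at an integer, producing a whole interval of medians; the $\nu \in [0,1)$ versus $(0,1]$ discrepancy flagged in the Remark is exactly this boundary ambiguity. The delicate part of the proof is therefore to confirm that the strong-median identification of $k^*$ stays consistent with the chosen endpoint conventions and, in the degenerate non-unique case, to weaken the strict inequalities in the Lemma appropriately so that the chain of inequalities still closes. Everything else is routine.
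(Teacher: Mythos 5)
Your proposal is correct and follows essentially the paper's own (largely implicit) argument: the Remark after Theorem~\ref{thm:Symmetric} identifies $k^*$ as the strong median of the equal mixture $\tfrac{1}{2}\kappa_A+\tfrac{1}{2}\kappa_B$, and the bounds are then read off from Lemma~\ref{lemma:IntermediacyMedian}, exactly as you do. Your write-up is in fact somewhat more careful than the paper's, since you verify the median identification directly from \eqref{thm:Fisher_ks}, supply the needed ordering $\mathbbm{m}[\kappa_A]\le\mathbbm{m}[\kappa_B]$ via the monotone likelihood ratio property already established in the paper, and explicitly flag the non-uniqueness/convention issue ($\nu\in[0,1)$ versus $(0,1]$) that the paper only mentions in passing.
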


\begin{theorem}[Symmetric equilibrium is Optimal]

    There is no equilibrium which yields a greater winning rate for \PI/ than the Symmetric equilibrium.

\end{theorem}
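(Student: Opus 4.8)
The plan is to exploit the fact that \G{N,K_A,K_B,M} is, by construction, a \emph{finite two-player zero-sum game}: the action sets $\mathcal{A}_1$ and $\mathcal{A}_2$ are finite, and the payoff to \PII/ is the negation (up to the constant shift coming from the ``win $=1$, lose $=0$'' convention) of the winning indicator of \PI/. For such games the classical minimax theorem of von Neumann applies, and the whole argument reduces to a single structural fact about zero-sum games rather than to any special feature of the symmetric profile.

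First I would recall the minimax theorem together with its standard corollary on the structure of equilibria: a finite two-player zero-sum game has a unique value $v$, equal both to the maximin of \PI/ and to the minimax of \PII/, and a mixed profile $(\underline{\sigma}_1,\underline{\sigma}_2)$ is a Nash equilibrium if and only if $\underline{\sigma}_1$ is an optimal (maximin) strategy for \PI/ and $\underline{\sigma}_2$ is an optimal (minimax) strategy for \PII/. A direct consequence is the \emph{interchangeability} of equilibria and, crucially here, the fact that \emph{every} Nash equilibrium yields \PI/ exactly the payoff $v$. Indeed, at any equilibrium \PI/ cannot obtain more than $v$, since \PII/ could hold her to $v$ with a minimax strategy and \PII/ is playing a best response; symmetrically she cannot obtain less than $v$.

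Second I would invoke Theorem~\ref{thm:Symmetric}, which guarantees that the symmetric profile is a genuine Nash equilibrium of the game. By the previous paragraph its winning rate for \PI/ is exactly the value $v$ of \G{N,K_A,K_B,M}. Since every other equilibrium also yields precisely $v$, no equilibrium can yield a strictly larger winning rate for \PI/ --- in fact all equilibria share the same winning rate --- which is exactly the assertion of the theorem.

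The only points requiring care are the hypotheses of the minimax theorem, and these are immediate: finiteness of $\mathcal{A}_1,\mathcal{A}_2$ is explicit in Section~\ref{sec:ActionSpaceStructure}, and the game is constant-sum (hence strategically equivalent to zero-sum) because exactly one of the two players ``wins'' in every play. There is therefore no substantial obstacle; the content of the statement is simply the observation that ``optimal'' is not a property distinguishing one equilibrium from another in a zero-sum game. If one preferred an entirely self-contained argument avoiding the citation, the main --- and only mildly technical --- step would be to reprove the equality of the maximin and minimax values directly, for instance via linear-programming duality applied to the finite utility matrix $u_1$.
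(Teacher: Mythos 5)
Your proposal is correct and follows essentially the same route as the paper, which likewise disposes of the statement by citing von Neumann's minimax theorem (every Nash equilibrium of a finite two-player zero-sum game gives each player exactly the value of the game). The extra details you supply --- the equilibrium/optimal-strategy correspondence, the finiteness and constant-sum checks --- merely flesh out what the paper leaves implicit in that citation.
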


\begin{proof}
    This is a special case of a more general theorem:

    \begin{quote}
    {\bf Minimax theorem (von Neumann, 1928)}
    In any finite, two-player, zero-sum game, in any Nash equilibrium each player receives a payoff that is equal to both his maxmin value and his minmax value.

    \hfill --- Theorem 3.1.4 in \cite{book:EssentialGameTheory}.
    \end{quote}

\end{proof}

\begin{remark}
In the general case, \G{N,K_A,K_B,M} is highly degenerate. Meaning that its solution is very far from unique and, in fact, has a continuous set of Nash equilibria. (Similar to the complete solution of the simplest case in \eqref{eq:SimplestFisherComplete}.)

These solutions are spanned by strategies which are not symmetric in their sampling or the choice of binary sequences.
Any continuous mixture of these spanning solutions is itself a Nash equilibrium, meaning that the set of solutions forms a high dimensional simplex.

\end{remark}

\begin{definition}
A two-player game is called \emph{nondegenerate} if no mixed strategy
of support size $k$ \footnote{a mixed strategy can be viewed as a discrete probability distribution on the possible actions, and so its support size is the support of this distribution, i.e. the set of actions which are played with nonzero probability} has more than $k$ pure best responses \footnote{an action belongs to the set of ``pure best responses'' -- responding to the other player's strategy --, if no other action could yield higher expected utility (or expected winning chance in the context of Fisher games)}. \cite{book:AlgorithmicGameTheory}

\end{definition}

\paragraph{Actions appearing in equilibrium:}

In the symmetric solution, the support of \PII/'s strategy includes all possible sequences:

\begin{equation}
    \supp(\sigma_2) = \mathcal{A}_2 = \mathcal{K}_A \cup \mathcal{K}_B
\end{equation}

\begin{equation}
    |\supp(\sigma_2)| = |\mathcal{K}_A| + |\mathcal{K}_B| = \binom{M}{K_A} + \binom{M}{K_B}
\end{equation}

For \PI/, the symmetric equilibrium strategy contains all sampling choices twice if $\nu > 0$. First, with a policy, in which she guesses A if $k=k^*$ (denoted by $\hat{\phi}$), and second in which she guesses B if $k=k^*$ (denoted by $\check{\phi}$). If $\nu = 0$, then only the second policy remains part of the support:

\begin{equation}
        \supp(\sigma_1)=
        \begin{cases}
        \mathcal{S} \times \{\hat{\phi}, \check{\phi} \} & \text{if } \nu > 0 \\
        \mathcal{S} \times \{\check{\phi} \} & \text{if } \nu = 0
    \end{cases}
\end{equation}

\begin{equation}
    |\supp(\sigma_1)| = \binom{M}{N} \times (1 + \lceil \nu \rceil)
\end{equation}

All actions with nonzero probability in the equilibrium strategy profile are, by definition, best response regarding the other player's strategy. In the general case

\begin{equation}
    \label{eq:DegCondition}
    |\supp(\sigma_1)| = (1 + \lceil \nu \rceil) \ \binom{M}{N} \ne \binom{M}{K_A} + \binom{M}{K_B} = |\supp(\sigma_2)| 
\end{equation}

This shows that in general, \G{N,K_A,K_B,M} is degenerate, i.e. the symmetric Nash equilibrium is not a unique equilibrium strategy profile. Uniqueness could be possible only if \eqref{eq:DegCondition} becomes an equality.

\subsection{Examples and Visualizations}

\paragraph{Equilibrium strategy plot:}

To show the structure and equilibrium parameters of general \G{N,K_A,K_B,M}-s, we introduce the ``strategy plot''. Figure \ref{fig:StrategyPlot_1012} shows the equilibrium solution of the smallest nontrivial example, \G{N=1,K_A=0,K_B=1,M=2}.

\begin{figure}[H]
    \centering
    \includegraphics[width=6 cm]{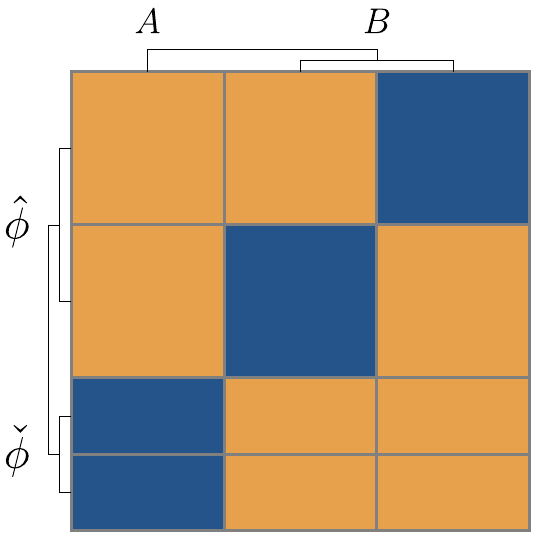}
    \caption{Strategy plot for the symmetric equilibrium of \G{N=1,K_A=0,K_B=1,M=2}.}
    \label{fig:StrategyPlot_1012}
\end{figure}

\begin{definition}[Equilibrium strategy plot]
Equilibrium strategy plot is a square-shaped collection of tiles which are divided into two parts, both horizontally and vertically:

\begin{itemize}
    \item Horizontally, it is first divided into two parts, in the ratio $P^*$ : $1-P^*$, representing the equilibrium probability of \PII/ choosing A or B. These two bigger domains are subdivided into equal parts, representing all possible sequences (in lexicographic ordering \cite{book:OrderedSets}), which can be realized in scenarios A or B.
    \item Vertically it is first divided into two parts, in the ratio $\nu^*$ : $1-\nu^*$, representing the equilibrium probability of \PI/ choosing the policy $\hat{\phi}$ or $\check{\phi}$. These bigger two domains are subdivided into equal parts, representing all possible $N$ long sampling choices from $M$ long sequences (in lexicographic ordering \cite{book:OrderedSets}).
\end{itemize}

By this construction, all pairs of actions appearing in the equilibrium solution are represented proportionally to the probability they appear in the equilibrium strategy profile.

Every tile, representing a specific pair of actions, is coloured according to the winning ($\wbox$) or losing ($\lbox$) of \PI/ in that case.

\end{definition}

For further, slightly larger examples, see figures \ref{fig:StrategyPlot_G_2_2_4_7}, \ref{fig:StrategyPlot_G_3_2_4_8}, which starts to reveal a more general structure and intriguing patterns.

\begin{figure}[H]
    \centering
    \begin{subfigure}[b]{0.45\textwidth}
        \includegraphics[width=\textwidth]{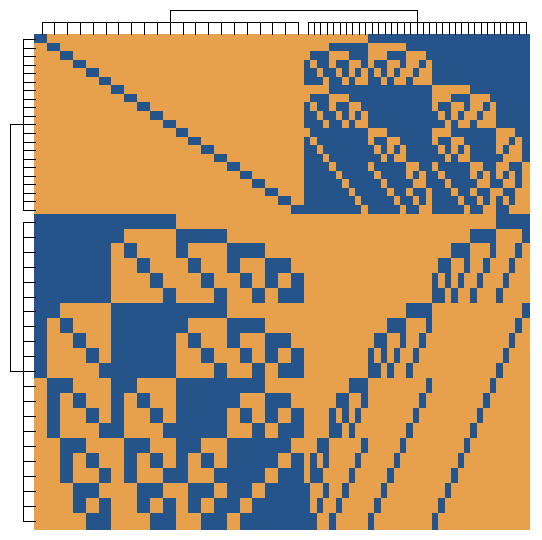}
        \caption{\G{N=2,K_A=2,K_B=4,M=7}}
        \label{fig:StrategyPlot_G_2_2_4_7}
    \end{subfigure}
    \hspace{0.05\textwidth} 
    \begin{subfigure}[b]{0.45\textwidth}
        \includegraphics[width=\textwidth]{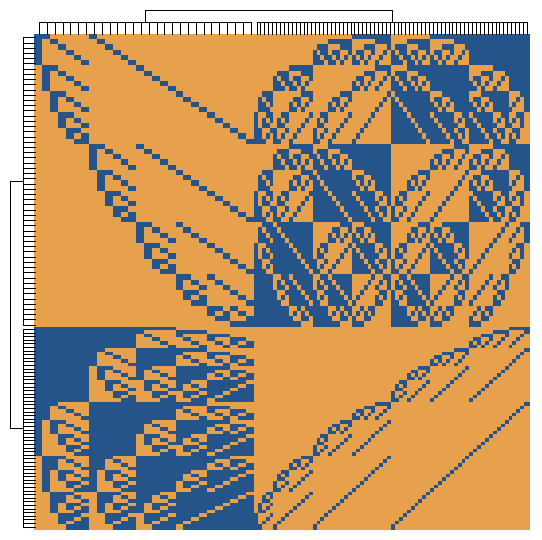}
        \caption{\G{N=3,K_A=2,K_B=4,M=8}}
        \label{fig:StrategyPlot_G_3_2_4_8}
    \end{subfigure}
    \caption{Strategy plots for symmetric equilibria}
    \label{fig:StrategyPlots}
\end{figure}

Mainly for aesthetic reasons, a relatively large, highly symmetric equilibrium strategy has been selected in figure \ref{fig:StrategyPlot_G_4_4_6_10}, showing an intricate fractal-like pattern.

\begin{figure}[H]
    \centering
    \includegraphics[width=12 cm]{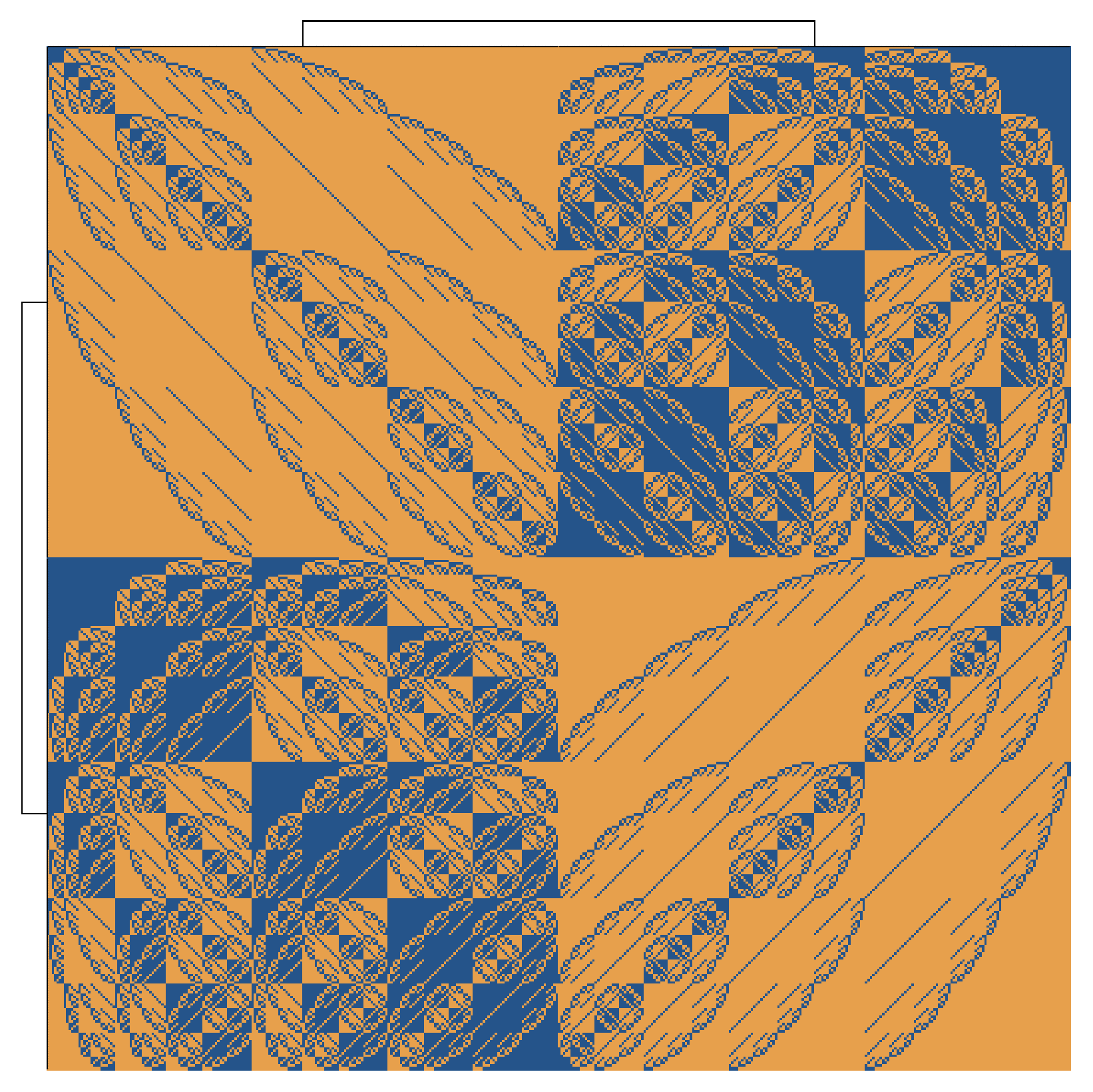}
    \caption{Strategy plot for the symmetric equilibrium of \G{N=4,K_A=4,K_B=6,M=10}.}
    \label{fig:StrategyPlot_G_4_4_6_10}
\end{figure}

\paragraph{Generalization of variables:}

For aesthetic reasons, we will allow games with $K_A \ge K_B$, with the following convention \footnote{$P^*_o$, $k^*_o$, $\nu^*_o$ and $v^*_o$ stands for the original quantities described in the previous sections (where we assumed that $K_A<K_B$)}:

\begin{equation}
\label{eq:PKAKBextended}
    P^*(K_A,K_B) =
        \begin{cases}
        \pi(A) = P^*_o(K_A,K_B) & \text{if } K_A < K_B \\
        1/2                     & \text{if } K_A = K_B \\
        \pi(B) = P^*_o(K_B,K_A) & \text{if } K_A > K_B
    \end{cases}
\end{equation}

\begin{equation}
    k^*(K_A,K_B) =
        \begin{cases}
        k^*_o(K_A,K_B) & \text{if } K_A < K_B \\
        -              & \text{if } K_A = K_B \\
        k^*_o(K_B,K_A) & \text{if } K_A > K_B
    \end{cases}
\end{equation}

\begin{equation}
    \nu^*(K_A,K_B) =
        \begin{cases}
        \nu^*_o(K_A,K_B)  & \text{if } K_A < K_B \\
        -                 & \text{if } K_A = K_B \\
        \nu^*_o(K_B,K_A)  & \text{if } K_A > K_B
    \end{cases}
\end{equation}

For the winning rate (or value):

\begin{equation}
    v^*(K_A,K_B) =
        \begin{cases}
        v^*_o(K_A,K_B)  & \text{if } K_A < K_B \\
        1/2           & \text{if } K_A = K_B \\
        v^*_o(K_B,K_A)  & \text{if } K_A > K_B
    \end{cases}
\end{equation}

And an introduced new variable $s^*$ combining $k^*$ and $\nu^*$ can be generalized automaticly:

\begin{equation}
    s^* = \frac{k^* + \nu^*}{N + 1}
\end{equation}

\paragraph{Equilibrium parameter plots:}

The first two figures \ref{fig:Game4__10_Pknu}, \ref{fig:Game4__10_vs} contain all equilibrium parameters of games with $N=4$ and $M=10$ \footnote{if a quantity has no well-defined value for a given pair of parameters, then we indicate this with a dark red region on the plot.}:

\begin{figure}[H]
    \centering
    \begin{subfigure}[b]{0.3\textwidth}
        \includegraphics[width=\textwidth]{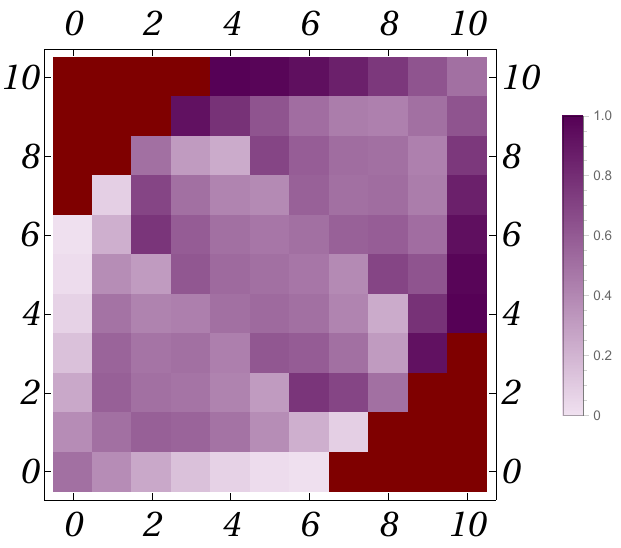}
        \caption{$P^*(K_A,K_B)$}
        \label{fig:sub1}
    \end{subfigure}
    \hfill 
    \begin{subfigure}[b]{0.3\textwidth}
        \includegraphics[width=\textwidth]{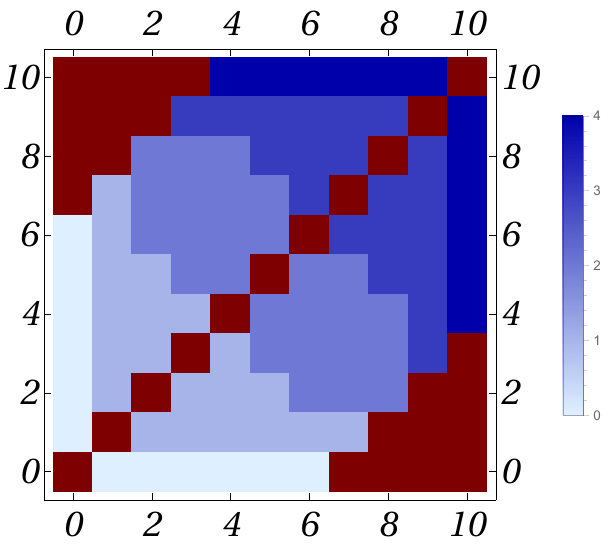}
        \caption{$k^*(K_A,K_B)$}
        \label{fig:sub2}
    \end{subfigure}
    \hfill 
    \begin{subfigure}[b]{0.3\textwidth}
        \includegraphics[width=\textwidth]{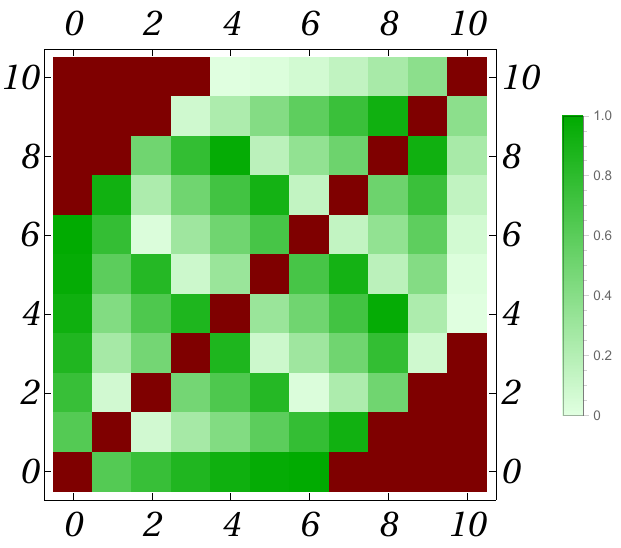}
        \caption{$\nu^*(K_A,K_B)$}
        \label{fig:sub3}
    \end{subfigure}
    
    \caption{\G{N=4,K_A,K_B,M=10}}
    \label{fig:Game4__10_Pknu}
\end{figure}

\begin{figure}[H]
    \centering
    \begin{subfigure}[b]{0.3\textwidth}
        \includegraphics[width=\textwidth]{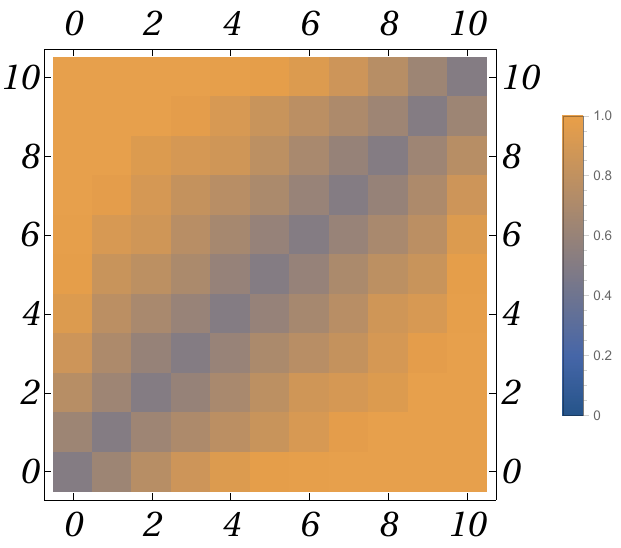}
        \caption{$v^*(K_A,K_B)$}
        \label{fig:sub11}
    \end{subfigure}
    \hspace{0.05\textwidth} 
    \begin{subfigure}[b]{0.3\textwidth}
        \includegraphics[width=\textwidth]{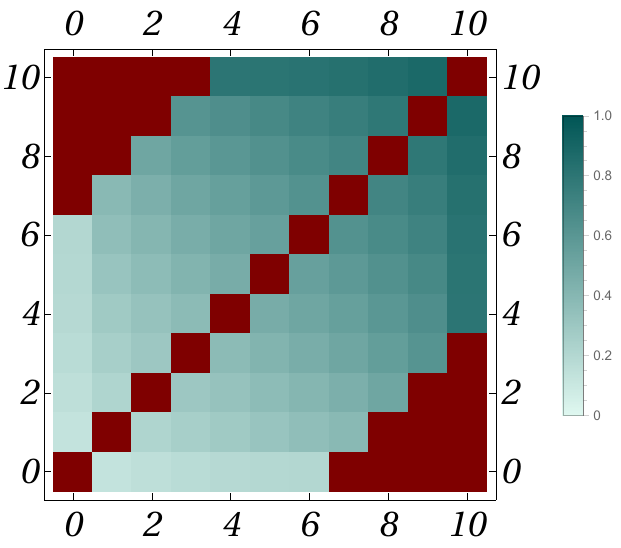}
        \caption{$s^*(K_A,K_B)$}
        
    \end{subfigure}
    
    \caption{\G{N=4,K_A,K_B,M=10}}
    \label{fig:Game4__10_vs}
\end{figure}

The next two figures \ref{fig:Game4__100_Pknu}, \ref{fig:Game4__100_vs} contains all equilibrium parameters of games with $N=4$ and $M=100$:

\begin{figure}[H]
    \centering
    \begin{subfigure}[b]{0.3\textwidth}
        \includegraphics[width=\textwidth]{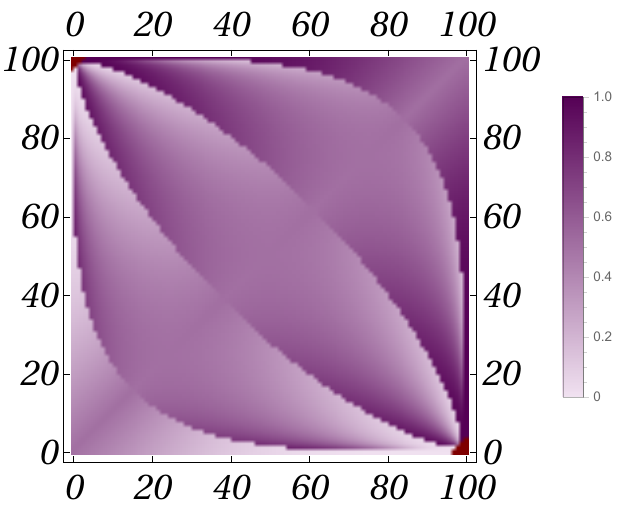}
        \caption{$P^*(K_A,K_B)$}
        \label{fig:Game4__100_P}
    \end{subfigure}
    \hfill 
    \begin{subfigure}[b]{0.3\textwidth}
        \includegraphics[width=\textwidth]{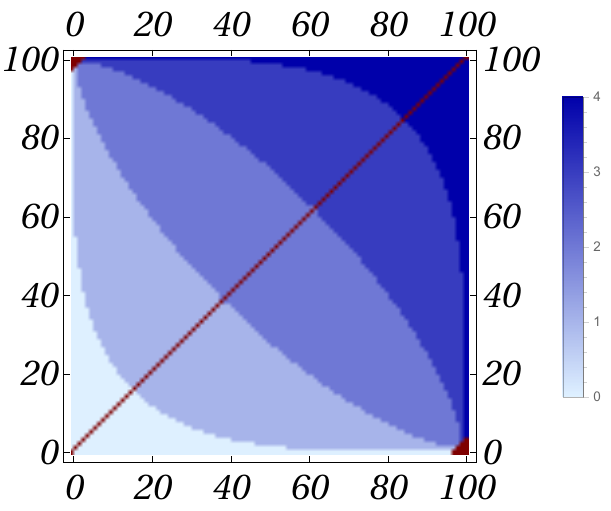}
        \caption{$k^*(K_A,K_B)$}
        \label{fig:Game4__100_k}
    \end{subfigure}
    \hfill 
    \begin{subfigure}[b]{0.3\textwidth}
        \includegraphics[width=\textwidth]{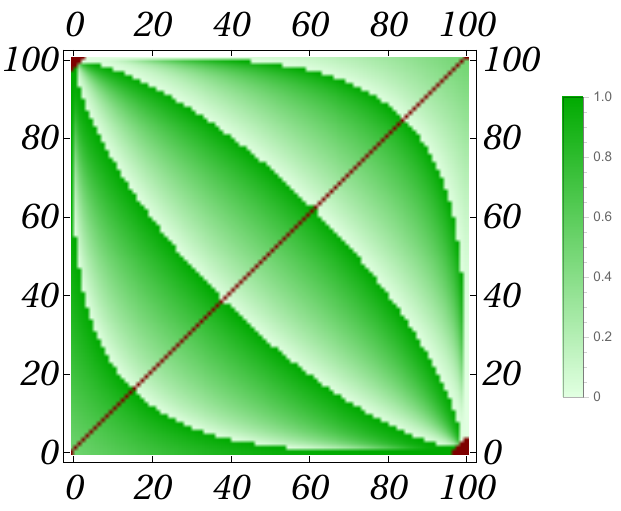}
        \caption{$\nu^*(K_A,K_B)$}
        \label{fig:Game4__100_nu}
    \end{subfigure}
    
    \caption{\G{N=4,K_A,K_B,M=100}}
    \label{fig:Game4__100_Pknu}
\end{figure}

\begin{figure}[H]
    \centering
    \begin{subfigure}[b]{0.3\textwidth}
        \includegraphics[width=\textwidth]{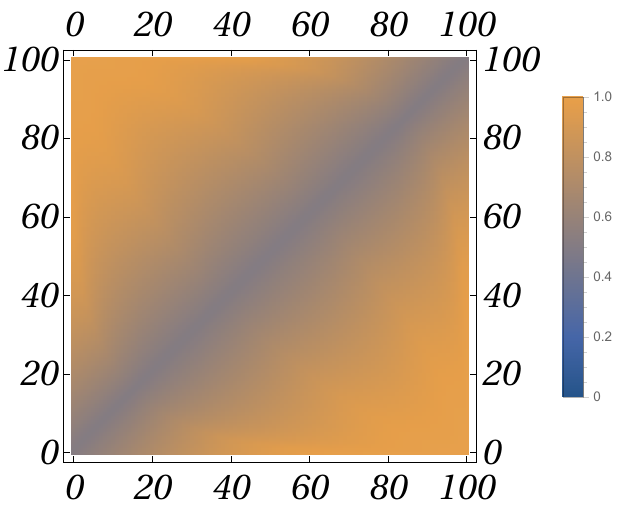}
        \caption{$v^*(K_A,K_B)$}
        \label{fig:Game4__100_v}
    \end{subfigure}
    \hspace{0.05\textwidth} 
    \begin{subfigure}[b]{0.3\textwidth}
        \includegraphics[width=\textwidth]{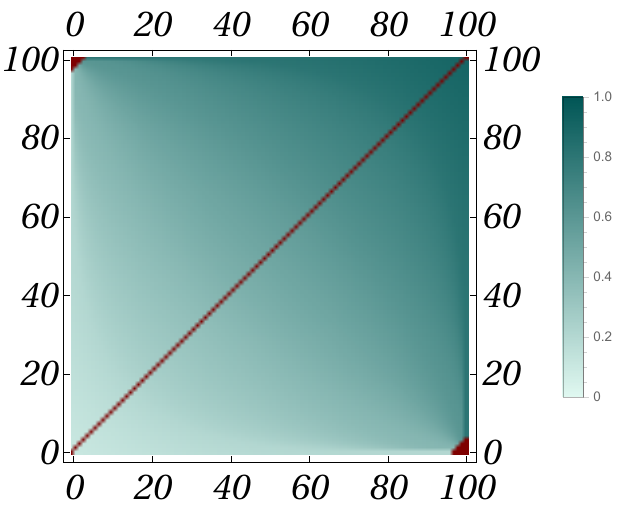}
        \caption{$s^*(K_A,K_B)$}
        \label{fig:Game4__100_s}
    \end{subfigure}
    \caption{\G{N=4,K_A,K_B,M=100}}
    \label{fig:Game4__100_vs}
\end{figure}

The last two figures \ref{fig:Game15__100_Pknu}, \ref{fig:Game15__100_vs} contain all equilibrium parameters of games with $N=15$ and $M=100$:

\begin{figure}[H]
    \centering
    \begin{subfigure}[b]{0.3\textwidth}
        \includegraphics[width=\textwidth]{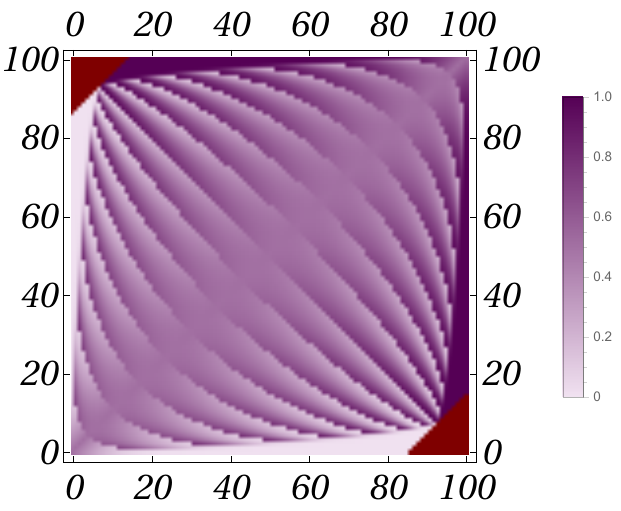}
        \caption{$P^*(K_A,K_B)$}
        \label{fig:sub15}
    \end{subfigure}
    \hfill 
    \begin{subfigure}[b]{0.3\textwidth}
        \includegraphics[width=\textwidth]{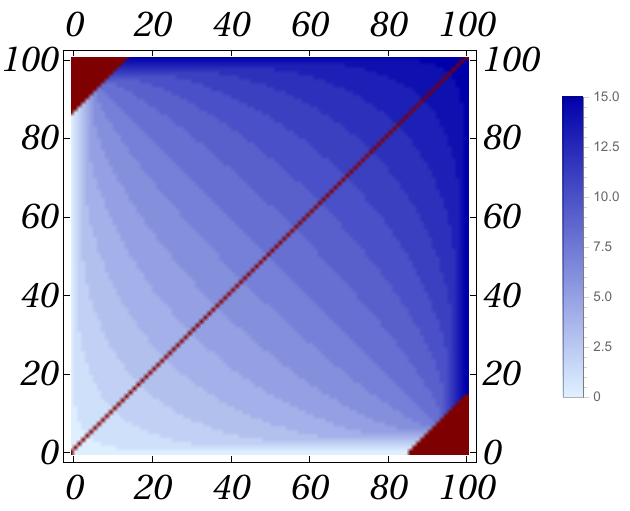}
        \caption{$k^*(K_A,K_B)$}
        \label{fig:sub25}
    \end{subfigure}
    \hfill 
    \begin{subfigure}[b]{0.3\textwidth}
        \includegraphics[width=\textwidth]{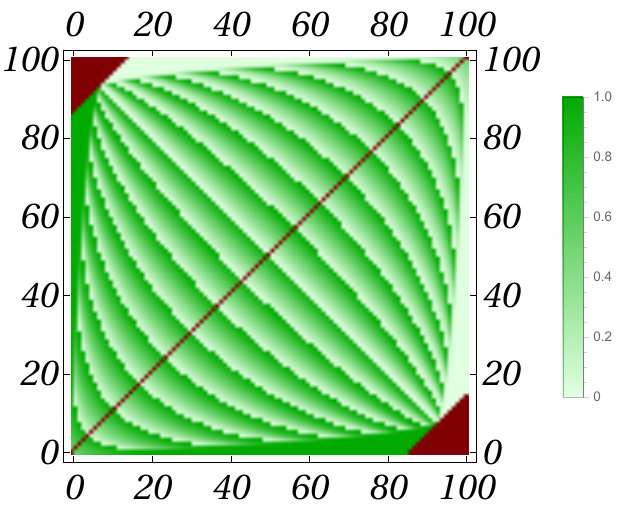}
        \caption{$\nu^*(K_A,K_B)$}
        \label{fig:sub35}
    \end{subfigure}
    
    \caption{\G{N=15,K_A,K_B,M=100}}
    \label{fig:Game15__100_Pknu}
\end{figure}

\begin{figure}[H]
    \centering
    \begin{subfigure}[b]{0.3\textwidth}
        \includegraphics[width=\textwidth]{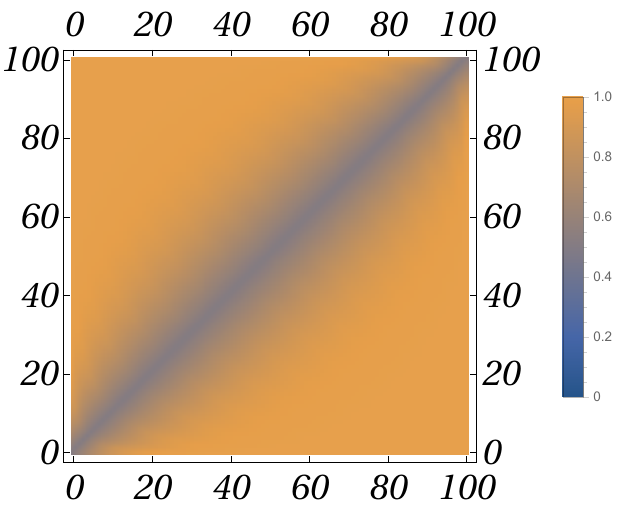}
        \caption{$v^*(K_A,K_B)$}
        \label{fig:sub16}
    \end{subfigure}
    \hspace{0.05\textwidth} 
    \begin{subfigure}[b]{0.3\textwidth}
        \includegraphics[width=\textwidth]{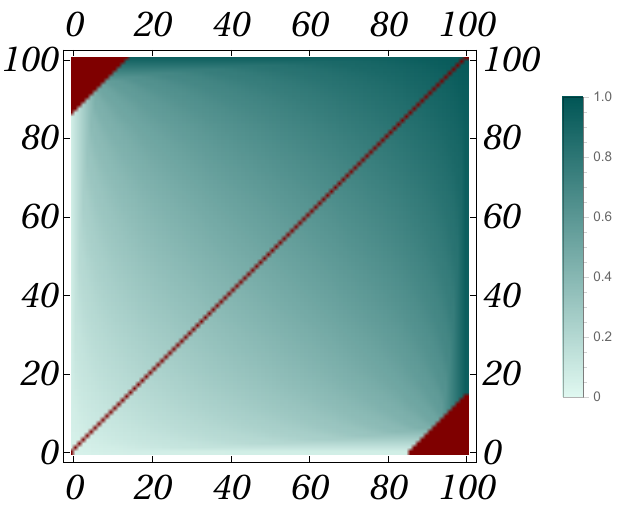}
        \caption{$s^*(K_A,K_B)$}
        \label{fig:sub26}
    \end{subfigure}
    
    \caption{\G{N=15,K_A,K_B,M=100}}
    \label{fig:Game15__100_vs}
\end{figure}

\subsection{Interpretation of the equilibrium strategy}

\paragraph{Sufficient statistics:}

It is worth noting that from the potentially huge amount of available policies, for the symmetric equilibrium, only the total number of $\bb$-s, $k$ matters for choosing a scenario.
\PI/ can ignore the actual order of bits in the sampled sequence $\underline{d}$ to form an optimal strategy. This quantity (the total number of $\bb$-s) can be interpreted as a \emph{sufficient statistics} \cite{book:StatisticalInference,book:Savage} for this decision problem.

\paragraph{Type I and Type II errors:}

The concepts of Type I (``false positive'', the rejection of the null hypothesis when it is true) and Type II (``false negative'', the failure to reject the null hypothesis that is false) errors were central to Neyman and Pearson \cite{paper:NeymanPearson1933b} and are still important concepts in Hypothesis testing in ``Frequentist'' statistics \cite{book:StatisticalInference,book:HandbookParametricNonparametric}.

The equilibrium policy in a Fisher game can be viewed as a statistical test for the hypothesis, stating that A is the correct scenario.
This test's special feature, however, is that for a particular value ($k=k^*$), it gives a probabilistic answer (guess/accept A with probability $\nu^*$ and reject A with probability $1-\nu^*$).
In this context, we can make the following identification:

\begin{equation}
    \text{Type I error:} \quad \alpha_\mathrm{Error} = 1-v_A
\end{equation}

\begin{equation}
    \text{Type II error:} \quad \beta_\mathrm{Error} = 1-v_B
\end{equation}

where $v_A$ and $v_B$, defined in equations \eqref{eq:vAnu} and \eqref{eq:vBnu}, are the winning ratios if \PII/ chose scenario A or B respectively.

In equilibrium, the winning ratios are equal; therefore, the winning rate, i.e. the probability of making an error, is independent of the ``a priory probability'' of A being the true scenario $\pi(A)=P$. By introducing the probabilities of making Type I and Type II errors:

\begin{equation}
    P_\mathrm{I} = \pi(A) \alpha_\mathrm{Error}
\end{equation}

\begin{equation}
    P_\mathrm{II} = \pi(B) \beta_\mathrm{Error}
\end{equation}

and the probability of an error of any kind:

\begin{equation}
    P_\mathrm{Error} = P_\mathrm{I} + P_\mathrm{II}
\end{equation}

We can conclude that the equilibrium strategy can be viewed as a ``statistical test'', which is ``independent of the probability law a priori'', and fulfils Definition A in \cite{paper:NeymanPearson1933b}: 

    \begin{quote}
    We may now discuss what meaning could be given to the words:
    ``a test independent of the probability law a priori.''

    {\it
    Definition A. The phrase might be defined as implying a choice
    of critical region $w$ in such a way that the probability $P_\mathrm{Error}$ of
    making an error in testing $H_0$ had a value independent of the
    probabilities $\pi(A)$, $\pi(B)$.
    }

    \hfill --- Definition A in \cite{paper:NeymanPearson1933b} \footnote{the notation has been slightly modified to harmonize with the notation used in this work}.
    \end{quote}

\paragraph{Randomized sampling:}

For \PI/, taking the sample randomly with a uniform distribution is a vital part of her equilibrium strategy.
This has also been an essential concept in ``Frequentist'' statistics, Fisher being its prime proponent \cite{book:FisherDesignOfExperiments,paper:FishersDevil}.
In the game theoretic framework, such randomized acts appear naturally as part of mixed equilibrium strategies. 

\paragraph{Emergence of probability distributions:}

The emergence of $p_k(A)$ and $p_k(B)$ distributions cannot be interpreted by purely combinatorial arguments (as the Classical interpretation of probability might suggest \cite{sep:InterpretationOfProbability,book:Laplace}).
These distributions result from the mixed equilibrium strategies of both \PI/ and \PII/.
In this framework, \PI/ is not merely a passive observer of random events but, by randomized sampling, partially contributes to the behaviour and specific distribution of the sample.

\paragraph{Randomized policies:}

In the game theoretic framework for general parameters \PI/ is not only sampling randomly, but there is a critical value of $\bb$-s, $k=k^*$ when her action is probabilistic: chooses A with probability $\nu^*$ and B with probability $1-\nu^*$.
Such probabilistic choice or rejection is absent from both ``Frequentist'' and Bayesian hypothesis testing frameworks.
However, randomized policies are not uncommon in the real world. Evidence for randomization is supported from many sides:
    \begin{itemize}
        \item From a human behavioural point of view, experts seem to realize mixed strategies by ``uncorrelated'' complex patterns \cite{MinimaxExperiment01,MinimaxExperiment02}.
        \item From an animal behavioural point of view, wasps seem to realize a mixed strategy \cite{WaspRandomArticle} (for broader context, see for example \cite{EvolutionGameBookJMSmith}), and primates can learn to play Matching Pennies \cite{PrimateRandomArticle}.
        \item Even in collective decision-making, ritualistic practices such as divination could play a role in randomizing the tribe's actions \cite{DivinationArticle}.
    \end{itemize}  

\paragraph{Interpretation of mixed states:}

In the definition of Fisher games \ref{def:FisherGame}, we did not include any stochastic or probabilistic parameter. 
Therefore, when we talk about ``probability'' in the game theoretical context, then we can use a purely game theoretical interpretation:

\begin{quote}
    {\it
    Mixed strategy can alternatively be viewed as the belief held by all other players concerning a player's actions. A mixed strategy equilibrium is then an $n$-tuple of common knowledge expectations, which has the property that all the actions to which a strictly positive probability is assigned are optimal, given the beliefs. A player's behavior may be perceived by all the other players as the outcome of a random device even though this is not the case.
    }
    
    \hfill --- Ariel Rubinstein in \cite{paper:Rubinstein}.
    \end{quote}

This means that inside a Fisher game, \PI/ does not need to adopt a probability concept based on frequencies \cite{book:VonMises}, degree of belief\footnote{or even as qua bases of action \cite{essay:Ramsey}} \cite{book:Jaynes} or the principle of insufficient reason \cite{book:Laplace}.
``Probabilities'' in this context appear only in the reasoning process of players, who act \emph{as if} certain distributions would be justified, but these concepts do not have to have any grounding outside the game.
This might be called a pre-Bayesian or game theoretical interpretation of probability.

``\dots we do not approach the problem as Bayesians, saying that agent $i$’s
beliefs can be described by a probability distribution; instead, we use a 'pre-Bayesian' model in which $i$ does not know such a distribution and indeed has no beliefs about it.'' \cite{book:EssentialGameTheory}.

\section{Bayesian betting}

The concept is intimately related to Bayesian inference \cite{book:Jaynes,book:Bernardo}, Bayesian hypothesis testing \cite{book:Jaynes} and Bayesian decision theory \cite{book:BayesianDataAnalysis}. Some authors use the analogy of a horse race \cite{book:InformationTheory} to introduce the concept.

\subsection{General concepts}

\paragraph{Double or nothing:}

Now, we enter a different casino, where instead of betting simply on scenario A or B, we can place some portion of our capital on each alternative. The portion we put on the winning scenario (chosen by \PII/) will be doubled, while the portion placed on the other scenario will be lost.

\subsubsection{The statistically trivial case}

\label{subsection:BettingGame}

Remarkably, if a gambler can repeatedly bet a portion of her capital to outcome A or B (which occur with probability $P$ and $1-P$ respectively), then in the long run, splitting the bet can be ``better'' compared to placing all the money to the more likely outcome \cite{Kelly}.

\paragraph{Growth factor:}

To demonstrate that a balanced splitting strategy might be ``better'', we can consider the following simple example: 

Let us investigate how the expectation of the ratio of capitals for two gamblers looks after $n$ rounds if they use different splitting strategies for their capital. They place $p'_1$ and $p'_2$ portion of their capital to A, and $1-p'_1$, $1-p'_2$ to B.

The ratio can be represented by a random variable $\rho$:

\begin{equation}
    \rho = \frac{(2 p'_1)^\kappa (2 (1-p'_1))^{n-\kappa}}{(2 p'_2)^\kappa (2 (1-p'_2))^{n-\kappa}}
\end{equation}

Where $\kappa$ is a random variable representing the number of A-s in the randomly generated $n$ long sequence of A-s and B-s. If the occurrence of scenarios A and B are independent in separate rounds, and their distribution is identical $\pi(A) = P$, $\pi(B) = 1-P$, then $\kappa$ can be characterized by the following Binomial distribution:

\begin{equation}
    \kappa \sim \mathrm{Binom}(n,P)
\end{equation}

\begin{equation}
    \Pr(\kappa = k) = \binom{n}{k} P^k (1-P)^{n-k} 
\end{equation}

At this point, we introduce the difference of the two gamblers' ''growth factors``:

\begin{equation}
    \Delta G =\frac{1}{n} \mathbb{E} [\log(\rho)]
\end{equation}

By recalling that for a Binomial random variable $\E[\kappa/n] = P$, this difference can be expressed by the parameters $p'_1$, $p'_2$ and $P$:

\begin{equation}
    \begin{split}
        \Delta G =&  \left (  P \log(2 \ p'_1) + (1-P) \log(2(1-p'_1))  \right ) \\
        &- \left (  P \log(2 \ p'_2) + (1-P) \log(2(1-p'_2))  \right )
    \end{split}
\end{equation}

$\Delta G(p'_1,p'_2)$ is visualized in figure \ref{fig:LogRatio}, with $P=2/3$. The graph clearly shows a saddle point at $p'^*_1=p'^*_2=P=2/3$.

\begin{theorem}
    In a repeated double or nothing gamble (where the right scenarios are identically and independently distributed with probability $P$), two gamblers' growth rate difference has a saddle point at:

    \begin{equation}
        p'^*_1 = p'^*_2 = P
    \end{equation}

    This means that for any gambler deviating from this splitting ratio, the expectation of her relative capital will be lower than her optimally playing counterpart's capital. 
    
\end{theorem}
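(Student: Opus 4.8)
The plan is to exploit the additively separable structure of $\Delta G$. I would introduce the single-variable function
\[
g(p) = P \log(2p) + (1-P)\log\bigl(2(1-p)\bigr), \qquad p \in (0,1),
\]
so that $\Delta G(p'_1, p'_2) = g(p'_1) - g(p'_2)$. Because the two gamblers' splitting ratios enter $\Delta G$ through separate summands, the entire problem collapses to understanding the one-dimensional function $g$, and the mixed second derivative vanishes automatically.

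First I would locate the critical points. Differentiating gives $g'(p) = \frac{P}{p} - \frac{1-P}{1-p}$, which vanishes exactly when $P(1-p) = (1-P)p$, i.e. at the unique interior point $p = P$. Hence the gradient $\nabla \Delta G = \bigl(g'(p'_1),\, -g'(p'_2)\bigr)$ vanishes only at $(P,P)$, the unique critical point. To classify it, I would show $g$ is strictly concave: a direct computation yields $g''(p) = -\frac{P}{p^2} - \frac{1-P}{(1-p)^2} < 0$ on all of $(0,1)$. Thus $g$ has a strict global maximum at $p = P$, the Hessian of $\Delta G$ at $(P,P)$ is diagonal with entries $g''(P) < 0$ and $-g''(P) > 0$, and the point is a genuine saddle — a maximum in the $p'_1$ direction and a minimum in the $p'_2$ direction.

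The step I would emphasize, because it delivers the global (not merely local) statement cleanly, is to recognize the gap as a relative entropy. Writing
\[
g(P) - g(p) = P\log\frac{P}{p} + (1-P)\log\frac{1-P}{1-p} \ge 0,
\]
the right-hand side is the Kullback--Leibler divergence $D_{\mathrm{KL}}\bigl(\mathrm{Bern}(P)\,\|\,\mathrm{Bern}(p)\bigr)$ between Bernoulli laws, nonnegative by Gibbs' inequality and zero iff $p = P$. Fixing each counterpart at the optimum then gives the saddle inequalities
\[
\Delta G(p'_1, P) = -\,D_{\mathrm{KL}}\bigl(\mathrm{Bern}(P)\,\|\,\mathrm{Bern}(p'_1)\bigr) \le 0 = \Delta G(P,P) \le D_{\mathrm{KL}}\bigl(\mathrm{Bern}(P)\,\|\,\mathrm{Bern}(p'_2)\bigr) = \Delta G(P, p'_2),
\]
with both inequalities strict off the optimum. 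Since $\Delta G = \frac{1}{n}\E[\log\rho]$ and $\rho$ is the ratio of the deviating gambler's capital to that of the gambler playing $p'=P$, the left inequality says precisely that any deviation yields, in expectation, a strictly lower relative log-capital — which is the asserted interpretation.

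There is no deep obstacle here; everything reduces to one-variable strict convexity. The only point demanding care is the domain: $g(p) \to -\infty$ as $p \to 0^+$ or $p \to 1^-$, so I would confirm that the maximum is attained in the interior rather than appeal naively to compactness. This follows at once because $p = P$ is the unique critical point of a strictly concave function on the open interval, so the boundary behaviour cannot host the maximizer.
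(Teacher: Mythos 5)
Your proposal is correct, and while its local core coincides with the paper's proof, it goes a meaningful step further. The paper's own argument is exactly your first two paragraphs in disguise: it writes down the two first-order conditions $\frac{P}{p'^*_1} - \frac{1-P}{1-p'^*_1} = 0$ and $-\frac{P}{p'^*_2} + \frac{1-P}{1-p'^*_2} = 0$, solves them to get $(P,P)$, and computes the Hessian $\frac{1}{P(1-P)}\,\mathrm{diag}(-1,1)$, whose signature certifies a saddle. Your observation that $\Delta G(p'_1,p'_2) = g(p'_1) - g(p'_2)$ is additively separable explains structurally why that Hessian is diagonal, but up to that point the two proofs are the same computation. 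What is genuinely different is your third step: identifying $g(P) - g(p) = D_{\mathrm{KL}}\bigl(\mathrm{Bern}(P)\,\|\,\mathrm{Bern}(p)\bigr) \ge 0$ upgrades the local saddle certificate to the global inequalities $\Delta G(p'_1,P) \le 0 \le \Delta G(P,p'_2)$, strict off the optimum. This matters because the theorem's second sentence (``for any gambler deviating from this splitting ratio, the expectation of her relative capital will be lower'') is a claim about arbitrary deviations, not merely small ones, and a Hessian-signature argument alone does not establish it; the paper leaves that global step implicit, whereas your Gibbs-inequality identification closes it. Your closing remark about the boundary behaviour ($g \to -\infty$ as $p \to 0^+$ or $p \to 1^-$, so the interior critical point is the global maximizer) is likewise a point of rigor the paper skips. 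In short, the paper's route is shorter and purely computational, while yours costs one extra identification and in exchange proves the stronger, global form of the statement that the theorem actually asserts.
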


\begin{proof}
    This can be proved by directly solving the set of equations:
    
    \begin{equation}
        \frac{\partial}{\partial p'_1} \Delta G(p'_1,p'_2) \Bigr|_{p'_1=p'^*_1,p'_2=p'^*_2} = 0, \quad
        \frac{\partial}{\partial p'_2} \Delta G(p'_1,p'_2) \Bigr|_{p'_1=p'^*_1,p'_2=p'^*_2} = 0
    \end{equation}

    \begin{equation}
        \frac{P}{p'^*_1} - \frac{1-P}{1-p'^*_1} = 0,
        \quad
        -\frac{P}{p'^*_2} + \frac{1-P}{1-p'^*_2} = 0
    \end{equation}

    and directly calculating the Hessian of $\Delta G(p'_1,p'_2)$:

    \begin{equation}
        \underline{\underline{H}}^* = 
        \frac{1}{P(1-P)}
        \begin{bmatrix}
        -1 & 0 \\
        0 & 1
        \end{bmatrix}
    \end{equation}

    The signature of the Hessian verifies that we indeed found a saddle point.
    
\end{proof}

\begin{remark}[Connection with Entropy]
Remarkably, the growth factor in equilibrium is closely related to the Shannon entropy ($H$) \cite{paper:ShannonOriginal, book:InformationTheory} of the random variable $\Pi$, which is characterized by a probability distribution $\pi$ on $\Theta = \{A,B\}$: $\pi(A)=P$, $\pi(B)=1-P$.

\begin{equation}
    G^* = P \log(2 \ P) + (1-P) \log(2(1-P))
\end{equation}

\begin{equation}
    G^* = \log(2) + P \log(P) + (1-P) \log(1-P)
\end{equation}

\begin{equation}
    G^*= \log(2) - H(\Pi)
\end{equation}

\end{remark}

\begin{figure}[H]
    \centering
    \includegraphics[width=12 cm]{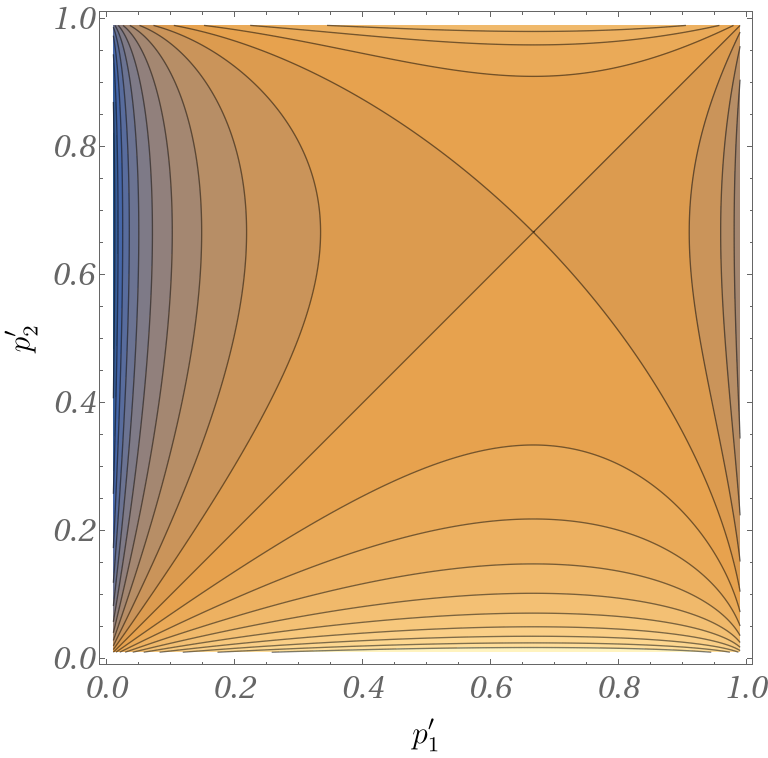}
    \caption{Contour plot of the expected log ratio of two gamblers' capitals (or growth factor difference) $\Delta G(p'_1,p'_2)$ for $P=2/3$. (Colouring closer to blue represents negative values, while colouring closer to gold represents positive values.)}
    \label{fig:LogRatio}
\end{figure}

\subsubsection{Adopting a utility function}

In appendix \ref{appendix:UtilityFunctions}, I argue that adopting a utility function, which associates the logarithm of the gained capital to the utility of the outcome:

\begin{equation}
    u(c) = \log(c)
\end{equation}

is a ``natural choice'' in many aspects. This is because this utility function is \emph{aligned} with a wide variety of multiplicative multiplayer contests.
Alignment in this context means that the directly calculated optimal policies in the multiplicative multiplayer games are the same \emph{as if} agents were trying to maximise their expected logarithmic utility (For details, see Section~\ref{sec:ContestView}). 

However, I want to point out that adopting a utility function is ultimately a normative choice of an agent, and different choices may result in different optimal policies.

For instance, a whole one-parameter family of utility functions fulfil a weaker consistency requirement \footnote{also known as isoelastic, isocurvature, power utility \cite{book:EconomicsDictionary} or constant relative risk aversion (CRRA) utility \cite{book:Arrow, paper:Pratt}}:

\begin{equation}
\label{eq:ugamma}
    u_\gamma(c) = \frac{c^{1-\gamma}-1}{1-\gamma}
\end{equation}

Where $\gamma > 0$ is the so-called Arrow-Pratt measure of relative risk aversion \cite{book:MathematicsForEconomists, book:EconomicsDictionary}. 

Assuming that the agent is maximising her expected utility \cite{plato:ExpectedUtility}, different utility functions result in different optimal splitting ratios:

\begin{equation}
    p'^*_\gamma(P) = \frac{P^{1/\gamma}}{P^{1/\gamma}+(1-P)^{1/\gamma}}
\end{equation}

In the following sections -- until stated otherwise -- we will assume that \PI/ is adopting the ``most natural'' logarithmic utility function.

\subsection{Description of the general Bayesian game}
\label{def:BayesianGame}

\begin{definition}[Bayesian game \footnote{not to be confused with the Bayesian Game introduced by Harsányi \cite{paper:Harsanyi_I,paper:Harsanyi_II,paper:Harsanyi_III},\cite{paper:ZamirBayesianGames}. An alternative naming might be pre-Bayesian game \cite{book:EssentialGameTheory}.}]

There are two players, \PI/ and \PII/.
\PII/ needs to choose between scenario A or B, then produce a binary sequence of length $M$ containing precisely $K_A$ or $K_B$ number of $1$-s. (Without losing generality, we will assume $K_A \le K_B$.)
Following this, \PI/ (not knowing the actions of \PII/) can sample $N$ number of bits. After observing their values, she determines what portion of her capital $p'$ she places on scenario A (while the other $1-p'$ portion is placed on scenario B).

The portion \PI/ places on the scenario, chosen by \PII/, will be doubled, while the other part of her capital will be lost.
For this specific game, we will assume that \PI/ has a logarithmic utility function and that \PI/ and \PII/ are playing a zero-sum game\footnote{zero-sum in utilities (not in capital)}.
The above-defined Bayesian game will be denoted as 
\BG{N, K_A, K_B, M}.

\end{definition}

A tabular description of the general Bayesian game is shown by Game Table~\ref{game:GeneralBayesianGame}.

\begin{gametable}[H]
\captionsetup{justification=centering}
\caption{\label{game:GeneralBayesianGame} General description of \\ \BG{N, K_A, K_B, M}}

\centering
\begin{tabularx}{0.73\textwidth}{ X | X }

\hline
 &  \\
\multicolumn{1}{c|}{\PI/} & \multicolumn{1}{c}{\PII/} \\
 &
\begin{itemize}
        \item Chooses scenario A or B,
        \begin{itemize}
            \item then chooses a binary sequence available for the chosen scenario.
        \end{itemize}
\end{itemize}
\\
\begin{itemize}
        \item Chooses $N$ indices for sampling,
        \begin{itemize}
            \item based on the bits in the chosen sample determines a continuous parameter $p' \in [0,1]$,
            \begin{itemize}
                \item and places her capital's $p'$ portion to scenario A and $1-p'$ portion to scenario B.
            \end{itemize}
        \end{itemize}
    \end{itemize}
& \\
& \\
\multicolumn{2}{c}{
    \begin{minipage}{0.75\linewidth}
        \centering
        The portion \PI/ places on the scenario chosen by \PII/ will be doubled, while the other part of her capital will be lost.
        
        \PI/ has a logarithmic utility function, \PI/ and \PII/ are playing a zero-sum game.
    \end{minipage}
} \\
\end{tabularx}
\end{gametable}

\begin{remark}
    \PI/ can choose from a continuous set of actions. This means that, strictly speaking, this situation cannot be solved using the framework of finite games \cite{book:GameTheory}.

    Unfortunately, because of the unbounded logarithmic utility function (\PI/ can potentially realise $-\infty$ losses), the problem is not even a so-called Continuous Game \cite{paper:ContinuousEquilibrium}.

    However, against all these difficulties, by direct calculation, a unique equilibrium solution can be found for the above-defined Bayesian games.
    
\end{remark}

\subsection{Simplest case}

For Bayesian games, analysing the statistically trivial case might be useful, which can be considered the analogy of Blind guessing described in Section~\ref{sec:FisherTrivialCases}.

\paragraph{Utility function:}

In the game \BG{N=0,K_A=0,K_B=0,M=0} \PI/ needs to determine how she splits her capital without using any meaningful information about the scenarios.

The utility function (which is, in this logarithmic case, equivalent to the growth rate) looks the following:

\begin{equation}
U_1(p')=
\begin{bmatrix}
\log(2 \ p') & \log(2 \ (1-p'))
\end{bmatrix}
\end{equation}

This continuous-discrete utility function is visualised in figure \ref{fig:U_0000}. Since the utility function is unbounded from below (\PI/ can lose all her capital, in which case her utility goes to $-\infty$), we used a negative cutoff. Utilities, lower than the cutoff are not present which is indicated by grey colouring.

\begin{figure}[H]
    \centering
    \includegraphics[width=8 cm]{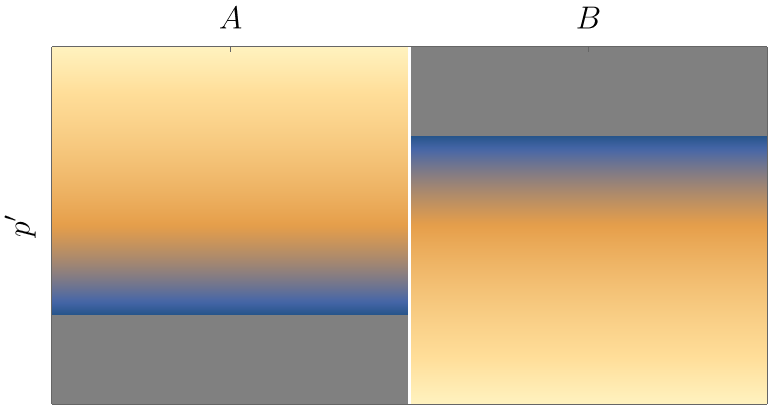}
    \caption{Utility function of \BG{N=0,K_A=0,K_B=0,M=0}.}
    \label{fig:U_0000}
\end{figure}

\paragraph{Equilibrium solution:}

To find equilibrium, the following expected utility (or growth rate) has to be maximised by \PI/ (controlling $p'$) and minimised by \PII/ (controlling P):

\begin{equation}
    G(P,p') = P \log(2 \ p') + (1-P) \log (2 (1-p'))
\end{equation}

Maximisation respect to $p'$:

\begin{equation}
    P \frac{1}{p'^*} - (1-P) \frac{1}{1-p'^*} = 0
\end{equation}

\begin{equation}
    p'^* = P
\end{equation}

Minimising respect to $P$:

\begin{equation}
    G(P) = \log(2) + P \log(P) + (1-P) \log (1-P)
\end{equation}

\begin{equation}
    G(P) = \log(2) - H(P)
\end{equation}

which is minimal when the entropy $H(P)$ is maximal, i.e. $P^*=1/2$.

Summarising the result:

\begin{equation}
    P^* = 1/2, \quad p'^* = 1/2, \quad G^* = 0
\end{equation}

\subsection{Simplest statistically nontrivial case}
\label{sec:SimplestNontrivialBayesian}

We could tabulate the smallest and simplest Bayesian games and compose a table very similar to \ref{tab:M<=2}.
It is straightforward to find the analogues of Blind guessing and Sure winning games for Bayesian games. 

After identifying the trivial cases, we can conclude that the simplest statistically nontrivial Bayesian game is:
\BG{N=1,K_A=0,K_B=1,M=2}.

\paragraph{Action sets:}

The possible actions of \PII/, are the same as in \eqref{eq:SimplestFisher_A2}:

\begin{equation}
    \mathcal{A}_2 = \{ (A, (\wb,\wb)), (B, (\wb,\bb)) ,(B,(\bb,\wb)) \}
\end{equation}

\PI/ can sample one bit from indices $i \in \{1,2\}$, (formally $\mathcal{S}=\{\{1\},\{2\}\}$)
and choose 4 different splitting ratios for scenario A \footnote{which are the splitting ratios for scenario B subtracted from 1, because $p'_{(S, \underline{d} \to A)} + p'_{(S, \underline{d} \to B)}=1$}:

\begin{equation}
    \begin{split}
    p'_{(i,\wb)} &= p'_{(\{i\},(\wb) \to A)} = 1- p'_{(\{i\},(\wb) \to B)} \\
    p'_{(i,\bb)} &= p'_{(\{i\},(\bb) \to A)} = 1- p'_{(\{i\},(\bb) \to B)}
    \end{split}
\end{equation}

\paragraph{Utility function:}

\PI/ has both discrete and continuous choices (sampling is a discrete choice, and choosing a splitting ratio is a continuous one), while \PII/ has only discrete choices.

The utility for all possible outcomes can be captured by this hybrid utility function matrix \footnote{where \PI/ is choosing a row and picks a set of $\{p'\}$-s, while \PII/ is choosing a column}:

\begin{equation}
    U_1(\{p'\}) = 
    \begin{bmatrix}
    \log(2 p'_{(1,\wb)}) & \log(2(1-p'_{(1,\wb)})) & \log(2(1-p'_{(1,\bb)})) \\
    \log(2 p'_{(2,\wb)}) & \log(2(1-p'_{(2,\bb)})) & \log(2(1-p'_{(2,\wb)})) 
    \end{bmatrix}
\end{equation}

\paragraph{Parametrization of strategies:}

We will use the following parametrization for \PII/'s mixed strategy in her discrete choices ($P \in [0,1], r \in [0,1]$) \footnote{formally the mixed strategy of choosing from allowed sequences after choosing a scenario is: $\underline{\varpi}(A) = (1)$, $\underline{\varpi}(B) = (r,1-r)$}:

\begin{equation}
    \underline{\sigma}_2 = (P, (1-P) r, (1-P)(1-r))
\end{equation}

While the mixed choice of sampling of \PI/ will be parametrized by $q \in [0,1]$:

\begin{equation}
    \underline{\eta}_1 = (q,1-q)
\end{equation}

\paragraph{Growth rate difference:}

Using this notation, we can formulate the following expression for the growth rate difference relative to the sure winning case \footnote{measuring the difference between the absolute growth rate and the growth rate of a player who can always win (i.e. double her capital). $\Delta G = G - \log(2)$}:

\begin{equation}
\label{eq:Simplest_DeltaG}
    \begin{split}
        \Delta G = P & \left ( 
        q \log(p'_{(1,\wb)}) + (1-q) \log(p'_{(2,\wb)})
        \right ) + \\
        (1-P) r & \left (
        q \log(1-p'_{(1,\wb)}) + (1-q) \log(1-p'_{(2,\bb)})
        \right) + \\
        (1-P) (1-r) & \left (
        q \log(1-p'_{(1,\bb)}) + (1-q) \log(1-p'_{(2,\wb)}
        \right )
    \end{split}
\end{equation}

\paragraph{Equilibrium solution:}

First, we can observe that \PI/ can set
\begin{equation}
    p'_{(1,\bb)} = p'_{(2,\bb)} = p'^*_1 = 0,
\end{equation}
to maximize her gain. This follows from common sense because \PI/ can safely push all her capital to scenario B if she ever sees a $\bb$ in her sample (because no allowed sequence from scenario A contains $\bb$).

For any $(P, r)$ parameters we can find the optimal $p'^*_{(1,\wb)}$ and $p'^*_{(2,\wb)}$ by which \PI/ can maximize her gain \footnote{this can be calculated by taking the derivative of $\Delta G$ with respect to $p'^*_{(1,\wb)}$ and $p'^*_{(2,\wb)}$; and then finding the values, when the derivatives are equal to zero}:

\begin{equation}
    p'^*_{(1,\wb)} = \frac{P}{P+(1-P) r}, \quad  p'^*_{(2,\wb)} = \frac{P}{(1-r)+ P r}
\end{equation}

If \PI/ is using a mixed strategy, then the expected growth rate from sampling the first or second index must be equal:

\begin{equation}
    P \log(p'^*_{(1,\wb)}) + (1-P) r \log(1-p'^*_{(1,\wb)}) =
    P \log(p'^*_{(2,\wb)}) + (1-P) (1-r) \log(1-p'^*_{(2,\wb)})
\end{equation}

This equation can be solved for $r$. For all $P$, this gives a simple result:

\begin{equation}
    r^*(P) = 1/2
\end{equation}

This means that in equilibrium, \PII/ chooses the two available sequences in scenario B uniformly.

Having a $r \in (0,1)$ value for $r$ means that the expression \eqref{eq:Simplest_DeltaG}'s derivative respect to $r$ has to be 0:

\begin{equation}
    q \log(1-p'^*_{(1,\wb)}) = (1-q) \log(1-p'^*_{(2,\wb)})
\end{equation}

$r^*=1/2$, in which case $p'^*_{(1,\wb)} = p'^*_{(2,\wb)}$, so the equilibrium value for $q$ has to be:

\begin{equation}
    q^* = 1/2
\end{equation}

Combining these results, we get the following expression for $\Delta G(P)$, which has to be minimized by \PII/ by choosing an optimal $P^*$:

\begin{equation}
    \Delta G(P) = P \log \left ( \frac{P}{1/2+P/2} \right ) +
    \frac{1}{2} (1-P) \log \left ( 1 - \frac{P}{1/2+P/2} \right )
\end{equation}

After taking the derivative respect to $P$, and simplification, we get the following equation for $P^*$:

\begin{equation}
    \log \left ( \frac{2 P^*}{1 + P^*} \right ) +
    \frac{1}{2} \log \left ( \frac{1 - P^*}{1 + P^*} \right ) = 0
\end{equation}

\begin{equation}
    \frac{1}{2} \log \left ( \frac{4 {P^*}^2}{1 - {P^*}^2} \right ) = 0
\end{equation}

\begin{equation}
     \frac{4 {P^*}^2}{1 - {P^*}^2}  = 1
\end{equation}

\begin{equation}
     P^*  = \frac{1}{\sqrt{5}}
\end{equation}

Substituting back to the expressions for $p'^*_{(1,\wb)}$ and $p'^*_{(2,\wb)}$ we get:

\begin{equation}
     p'^*_{(1,\wb)} = p'^*_{(2,\wb)} = p'^*_0 = \frac{1}{2} \left ( \sqrt{5} -1 \right )
\end{equation}

Which is remarkably the reciprocal of the celebrated golden ratio $\phi$ (or $\tau$) \cite{book:Fibinacci, book:MathConstants, book:ConwayBookOfNumbers}.

\paragraph{Summary:}

Figure \ref{fig:GainPlot1012} shows $\Delta G(P)/\log(2)$, which can be interpreted as a doubling factor difference relative to Sure winning. ($\Delta G(P)/\log(2)=0$ means that \PI/ can double her capital in every round, while $\Delta G(P)/\log(2)=-1$ means that her capital will stagnate and not have an expected growth.)

\begin{figure}[H]
    \centering
    \includegraphics[scale=0.7]{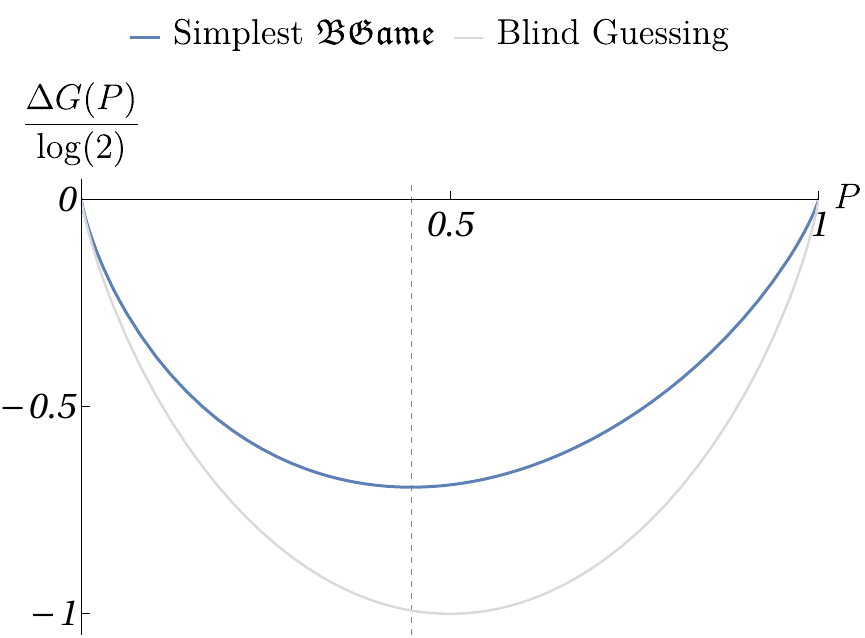}
    \caption{Visualization of the doubling factor difference (relative to a Sure winning game) $\Delta G(P)/\log(2)$ for \PI/ in the game: \BG{N=1,K_A=0,K_B=1,M=2} and a Blind guessing game, as a function of \PII/'s strategy of choosing A. The dashed grid line is placed to the equilibrium value of $P^*=1/\sqrt{5}$.}
    \label{fig:GainPlot1012}
\end{figure}

The equilibrium quantities are:

\begin{equation}
    P^* = \frac{1}{\sqrt{5}}, \quad p'^*_0 = \frac{1}{2} \left ( \sqrt{5} -1 \right ), \quad p'^*_1 = 0 
\end{equation}

Optimal gain and doubling factor:

\begin{equation}
    G^* \approx 0.2119, \quad \frac{G^*}{\log(2)} \approx 0.3058
\end{equation}

The game table of the optimal play can be found in Game Table~\ref{game:B_1_2_1_2}.

\begin{gametable}[H]
\captionsetup{justification=centering}
\caption{\label{game:B_1_2_1_2} Equilibrium strategy for \\ \BG{N=1, K_A=0, K_B=1, M=2}}

\centering
\begin{tabularx}{0.73\textwidth}{ X | X }

\hline
 &  \\
\multicolumn{1}{c|}{\PI/} & \multicolumn{1}{c}{\PII/} \\
 &
\begin{itemize}
        \item Choose scenario A with probability $P^* = 1/\sqrt{5} \approx 0.447$, and scenario B with probability $1-P^* = 1-1/\sqrt{5} \approx 0.553$
        \begin{itemize}
            \item Choose uniformly from all different allowed sequences.
        \end{itemize}
\end{itemize}
\\
\begin{itemize}
        \item Sample randomly from all possible indices uniformly
        \begin{itemize}
            \item in case the sampled bit is $\wb$:
            \begin{itemize}
                \item place 
                
                $p'^*_0 = \frac{1}{2} \left ( \sqrt{5} - 1 \right ) \approx 0.618$ portion of the capital to A 
                \item place 
                
                $1-p'^*_0 = \frac{1}{2} \left (3 - \sqrt{5} \right ) \approx 0.382$ portion of the capital to B
            \end{itemize}
            \item in case the sampled bit is $\bb$:
            \begin{itemize}
                \item place all capital to B
            \end{itemize}
        \end{itemize}
    \end{itemize}
& \\
\end{tabularx}
\end{gametable}

\begin{remark}
    We found an equilibrium solution assuming that \PI/ will use a pure policy, i.e. she will not mix different $\{p'\}$ splitting ratios.

    A mixed policy would be suboptimal relative to an appropriately chosen pure policy because of the concavity of both expressions:

    \begin{equation}
        \log(p'), \quad \mathrm{and} \quad \log(1-p')
    \end{equation}

\end{remark}

\subsection{General structure of the action sets}

\paragraph{Continuous Policy set:}

Most of the general statements about action sets remain valid from \ref{sec:ActionSpaceStructure}. The most important difference is that in Bayesian games, policies are mappings from sequences to continuous splitting ratios in $[0,1]$ interval.

\paragraph{Excluding all-in policies:}

If the sample could come out both from A and B, then we can exclude ``all-in'' strategies, where \PI/ would bet all her capital only to A or B. This is because, by such ``reckless'' behaviour, she could potentially lose all her capital, i.e. realize a $-\infty$ loss.

Considering these restrictions, we can define the following general continuous policy sets:

\begin{equation}
    \mathcal{P}''_c \cong \{ \phi : \{\wb,\bb\}^{|\mathbb{K}_{AB}|} \mapsto (0,1) \} \cong (0,1)^{2^{|\mathbb{K}_{AB}|}}
\end{equation}

\begin{equation}
    \mathcal{P}''^R_c \cong \{ \phi : \mathbb{K}_{AB} \mapsto (0,1) \} \cong (0,1)^{|\mathbb{K}_{AB}|}
\end{equation}

\begin{remark}
These policy sets are open sets; therefore, not compact.
This prevents us from automatically using theorems for Continuous Games \cite{paper:ContinuousEquilibrium} or Separable Games \cite{paper:SeparableGames}, for which the action sets have to be compact.

\end{remark}

\paragraph{Action sets:}

The action sets are the following for Bayesian games:

\begin{equation}
    \mathcal{A}_1 = \mathcal{S} \times \mathcal{P}_c
\end{equation}

Where the sampling set $\mathcal{S}$ is identical to \eqref{eq:SamplingSet}.

\begin{equation}
    \mathcal{A}_2 = \mathcal{K}_A \cup \mathcal{K}_B
\end{equation}

Together with the logarithmic utility function, these conditions result in a Non-Compact Continuous Game.

\subsection{Equilibrium solution of the general Bayesian game}
\label{sec:EquilibriumSolutionGeneralBayesianGame}

\paragraph{The Ansatz:}

An Ansatz\footnote{an educated guess, or an assumption about the form of the solution} can be formulated for the general case, described in Game Table~\ref{game:BayesianGameAnsatz}.

\begin{gametable}[H]
\captionsetup{justification=centering}
\caption{\label{game:BayesianGameAnsatz} {\bf Ansatz} for \\ \BG{N, K_A, K_B, M} \\ having a set of free variables: $P$, $\{p'_k\}_{k \in \mathbb{K} } $}

\centering
\begin{tabularx}{0.73\textwidth}{ X | X }

\hline
 &  \\
\multicolumn{1}{c|}{\PI/} & \multicolumn{1}{c}{\PII/} \\
 &
\begin{itemize}
        \item choose scenario A with probability $\pi(\mathrm{A})=P$, or B with probability $\pi(\mathrm{B})=1-P$.
        \begin{itemize}
            \item Choose uniformly from all different allowed sequences.
        \end{itemize}
\end{itemize}
\\
\begin{itemize}
        \item sample $N$ bits randomly and uniformly from all available $M$ bits,
        \begin{itemize}
            \item based on the number of $\bb$-s $k$, determine a continuous parameter $p'_k \in [0,1]$,
            \begin{itemize}
                \item and place the capitals $p'_k$ portion to scenario A and $1-p'_k$ portion to scenario B.
            \end{itemize}
        \end{itemize}
    \end{itemize}
& \\
\end{tabularx}
\end{gametable}

\paragraph{Fixing the splitting ratios:}

Assuming this Ansatz, we can introduce the probabilities of seeing $k$ $\bb$-s in the sample, given the chosen scenario by \PII/ has been A or B:

\begin{equation}
    p_k(A) = \frac{\binom{K_A}{k} \binom{M-K_A}{N-k}}{\binom{M}{N}}, \quad
    p_k(B) = \frac{\binom{K_B}{k} \binom{M-K_B}{N-k}}{\binom{M}{N}}
\end{equation}

Using these quantities, the expected logarithmic utility (or growth rate) is the following:

\begin{equation}
    G = \pi(A) \sum_k p_k(A) \log(2 \ p'_k) + \pi(B)  \sum_k p_k(B) \log(2 \ (1 - p'_k))
\end{equation}

We can take the derivative respect to $p'_\ell$, to find the maximum respect to that splitting ratio:

\begin{equation}
    \frac{\partial}{\partial p'_\ell} G \Bigr|_{p'_\ell=p'^*_\ell} = \pi(A) p_\ell(A) \frac{1}{p'^*_\ell} - \pi(B) p_\ell(B) \frac{1}{1-p'^*_\ell} = 0
\end{equation}

\begin{equation}
    p'^*_\ell = \frac{\pi(A) p_\ell(A)}{\pi(A) p_\ell(A)+\pi(B) p_\ell(B)}
\end{equation}

\begin{remark}
    The result for the optimal splitting ratio formally follows Bayes' rule \cite{sep:BayesTheorem}, however, the interpretation of each individual component differs from their conventional meanings in Bayes' rule.
    
\end{remark}

\paragraph{Finding $P^*$:}
\label{par:FindingPs}

After substituting the optimal splitting ratios $p'^*_k$, we have the following equation for the growth rate difference \footnote{relative to the growth rate of an agent who always wins (i.e. doubles in every round). $\Delta G = G - \log(2)$}:

\begin{equation}
    \begin{split}
        \Delta G(P)=&P \sum_k p_k(A) \log \left ( \frac{P \ p_k(A)}{P \ p_k(A)+(1-P) p_k(B)} \right ) + \\
                    &(1-P) \sum_k p_k(B) \log \left ( \frac{(1-P) p_k(B)}{P \ p_k(A)+(1-P) p_k(B)} \right )
    \end{split}
\end{equation}

Which can be reorganized:

\begin{equation}
    \label{eq:DelataGP}
    \begin{split}
    \Delta G(P) =  & P \log(P) + (1-P) \log(1-P) + \\
                & P \sum_k p_k(A) \log(p_k(A)) + (1-P) \sum_k p_k(B) \log(p_k(B)) - \\
                & \sum_k (P \ p_k(A) + (1-P) p_k(B)) \log(P \ p_k(A) + (1-P) p_k(B))
    \end{split}
\end{equation}

\begin{remark}[Connection with Conditional Entropy]

Introducing the random variables: $\Pi$, representing the randomly chosen scenarios, and $X$ representing the data, i.e. the number of $\bb$-s in the sample, the growth rate difference can be expressed as:

\begin{equation}
    \Delta G = - H(\Pi) - H(X|\Pi) + H(X)  
\end{equation}

\begin{equation}
    \Delta G = - H(\Pi|X)
\end{equation}

i.e. the conditional Shannon entropy \cite{book:InformationTheory} of the scenarios, given the observed data.
(where $H(X)=H(\underline{p})=-\sum_k p_k \log(p_k)$ stands for the Shannon entropy of a discrete random variable $X$ characterized by a probability distribution $\underline{p}$.)

\end{remark}

In general, the equilibrium value $P^*$, where $\Delta G(P)$ takes its minimum, can not be expressed in closed form using elementary functions \cite{paper:ElementaryFunctions}.

However to show that a unique $P^*\in(0,1)$ exists, one can observe that: $\Delta G(0) = \Delta G(1) = 0$, and directly calculate the second derivative of $\Delta G(P)$ respect to $P$:

\begin{equation}
\label{eq:DeltaGPrime(P)}
    \begin{split}
        \Delta G'(P) =& \log(P) - \log(1-P) - \Delta H + \\
        & - \sum_k (p_k(A)-p_k(B)) \log(P \ p_k(A) + (1-P) p_k(B))
    \end{split}
\end{equation}

where:

\begin{equation}
    \Delta H = H_A - H_B
\end{equation}

\begin{equation}
    H_A = - \sum_k p_k(A) \log(p_k(A)), \quad H_B = - \sum_k p_k(B) \log(p_k(B))
\end{equation}

and

\begin{equation}
    \Delta G''(P) = \frac{1}{P} + \frac{1}{1-P} - \sum_k \frac{(p_k(A)-p_k(B))^2}{P p_k(A) + (1-P) p_k(B)}
\end{equation}

\begin{equation}
    \Delta G''(P) = \sum_k  \left ( \frac{p_k(A)}{P} + \frac{p_k(B)}{1-P} - \frac{(p_k(A)-p_k(B))^2}{P p_k(A) + (1-P) p_k(B)} \right )
\end{equation}

\begin{equation}
    \Delta G''(P) = \sum_k  \frac{p_k(A) p_k(B)}{P(1-P)(P p_k(A) + (1-P) p_k(B))}
\end{equation}

The second derivative is positive for all $P \in (0,1)$, meaning that this is a convex optimization problem \cite{book:ConvexOpt}, which has one unique solution $P^* \in (0,1)$.

$P^*$ can be found numerically, for instance, using the Bisection method \cite{book:NumericalRecipes} on $\Delta G'(P)$, resulting in a simple and robust controlled approximation.

\subsubsection{Main theorem on Bayesian games}

\begin{theorem}[Bayesian equilibrium]
\label{thm:Bayesian}
\BG{N,K_A,K_B,M} has a unique Nash equilibrium, in which:

\begin{itemize}
    \item \PII/ chooses scenario A or B with probability $P^*$ and $1-P^*$;
    \begin{itemize}
        \item then picks an allowed sequence with equal probability (from $\mathcal{K}_A$ or $\mathcal{K}_B$).
    \end{itemize}

    \item \PI/ first samples uniformly $N$ bits from the provided sequence. Based on $k$ -- the number of $\bb$-s -- she determines $p'^*_k \in [0,1]$, and bets in the following way:
    \begin{itemize}
        \item places her capitals $p'^*_k$ portion to A
        \item places her capitals $1-p'^*_k$ portion to B
    \end{itemize}
\end{itemize}

The parameters $(P^*, \{p'^*_k\})$ can be determined from the parameters of the game $(N, K_A, K_B, M)$:

\begin{equation}
    \label{thm:BayesEqHypergeom}
    p_k(A) = \frac{\binom{K_A}{k} \binom{M-K_A}{N-k}}{\binom{M}{N}}, \quad
    p_k(B) = \frac{\binom{K_B}{k} \binom{M-K_B}{N-k}}{\binom{M}{N}}
\end{equation}

\begin{equation}
    p'^*_k = \frac{P^* \ p_k(A)}{P^* \ p_k(A) + (1-P^*) \ p_k(B)}
\end{equation}

while $P^*$ is the unique minimum of the growth rate difference:

\begin{equation}
    \begin{split}
        \Delta G(P)=&P \ \sum_{k \in \mathbb{K}_A} p_k(A) \log \left ( \frac{P \ p_k(A)}{P \ p_k(A)+(1-P) \ p_k(B)} \right ) + \\
                    &(1-P) \ \sum_{k \in \mathbb{K}_B} p_k(B) \log \left ( \frac{(1-P) \ p_k(B)}{P \  p_k(A)+(1-P) \ p_k(B)} \right )
    \end{split}
\end{equation}

\end{theorem}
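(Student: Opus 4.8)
The plan is to verify directly that the symmetric profile of Game Table~\ref{game:BayesianGameAnsatz} is a Nash equilibrium and then to prove that it is the \emph{only} one. A direct verification is unavoidable here: as noted after Definition~\ref{def:BayesianGame}, the open policy set together with the unbounded logarithmic payoff makes this a non-compact continuous game, so no off-the-shelf existence theorem applies. The two structural facts I would lean on throughout are the strict concavity of $p'\mapsto\log(2p')$ and $p'\mapsto\log(2(1-p'))$, and the strict convexity of $\Delta G$ already established via $\Delta G''(P)>0$; the former pins down \PI/'s betting, the latter pins down \PII/'s mixing.

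\textbf{Step 1 (\PI/ is a best response).} Fix \PII/ at ``$A$ with probability $P^*$, then uniform on $\mathcal{K}_A$'' and symmetrically for $B$. For \emph{any} sampling set $S$ the number of $\bb$-s in the sample is hypergeometric with law $p_k(A)$ under $A$ and $p_k(B)$ under $B$, independently of $S$; hence $k$ is a sufficient statistic and the choice of $S$ cannot improve \PI/'s payoff. Strict concavity of the logarithm makes any mixture of betting ratios strictly inferior to a single deterministic bet (the Remark in Section~\ref{sec:SimplestNontrivialBayesian}), so the best response is the pure, pointwise maximizer; the all-in exclusion keeps $p'_k$ in the open interval for $k\in\mathbb{K}_{AB}$, so the first-order condition $\partial G/\partial p'_\ell=0$ applies and returns exactly the Bayes-type formula $p'^*_k=P^*p_k(A)/(P^*p_k(A)+(1-P^*)p_k(B))$ of the statement.

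\textbf{Step 2 (\PII/ is a best response).} Fix \PI/ at ``uniform sampling, bet $p'^*_k$ by $k$''. Every sequence carrying $K_A$ ones induces the \emph{same} sample law $p_k(A)$, so \PI/'s expected log-payoff equals $V_A:=\sum_k p_k(A)\log(2p'^*_k)$ for every sequence in $\mathcal{K}_A$, and likewise a common value $V_B$ on $\mathcal{K}_B$; thus \PII/ is automatically indifferent within each scenario, which legitimises the uniform choice. For indifference \emph{between} scenarios I would invoke the envelope theorem: since the $p'^*_k$ solve \PI/'s first-order conditions, $\Delta G'(P)=V_A-V_B$, so the defining relation $\Delta G'(P^*)=0$ is precisely $V_A=V_B$. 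Hence every split of \PII/, and in particular $P^*$, is a best response, and the profile is a Nash equilibrium of value $\log 2+\Delta G(P^*)$.

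\textbf{Step 3 (uniqueness).} In a zero-sum game the equilibrium set is the product of \PI/'s convex set of maximin strategies with \PII/'s convex set of minimax strategies, so it suffices to prove each factor is a singleton. Any maximin \PI/ strategy is in equilibrium against the symmetric minimax \PII/ strategy, hence a best response to it; the strict-concavity argument of Step~1 then shows its induced bet on every $k\in\mathbb{K}_{AB}$ must equal $p'^*_k$, and dually strict convexity of $\Delta G$ forces the split $P^*$ to be unique. \textbf{The main obstacle} is the residual freedom in \PI/'s \emph{sampling} distribution: against a symmetric \PII/ all sampling sets give identical payoff, so uniformity is not selected by payoff alone. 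It is selected by \PII/'s incentives -- only uniform sampling makes the sample's $k$-law identical across all sequences of a scenario (the hypergeometric identity used in Step~2), and only then is \PII/ indifferent among her sequences rather than drawn to ``deceptive'' ones. Turning this into a rigorous exclusion of asymmetric equilibria in the non-compact continuous strategy space is the delicate point; I would argue it by showing the minimax condition pins \PII/ to the symmetric split and that mutual best response then forces \PI/'s sampling to be uniform, just as the explicit smallest case of Section~\ref{sec:SimplestNontrivialBayesian} yields the unique $q^*=1/2$.
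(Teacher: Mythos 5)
Your Steps 1 and 2 are essentially the paper's own proof (Appendix \ref{proof:Bayesian}): the same symmetrization/indifference observations, the same first-order conditions yielding the Bayes-form $p'^*_k$ (the paper additionally checks the boundary cases $p'^*_k=1$ on $\mathbb{K}_A\setminus\mathbb{K}_B$ and $p'^*_k=0$ on $\mathbb{K}_B\setminus\mathbb{K}_A$, and verifies a negative-definite Hessian), and the same envelope identity $\Delta G'(P)=V_A-V_B$ whose root defines $P^*$. The one methodological difference in this part is the device for existence and uniqueness of $P^*$: you lean on the strict convexity $\Delta G''(P)>0$, which the paper indeed establishes in the main text (Section \ref{par:FindingPs}), whereas the appendix proof passes to log-odds $\vartheta=\log\left(P/(1-P)\right)$, rewrites the root as a fixed point of a map $F$, proves $|F'|\le q<1$ via a total-variation bound, and invokes the Banach fixed point theorem \cite{book:BanachFixedPoint}. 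Both are valid; the contraction route buys more than uniqueness, since it directly underwrites the controlled iterative approximations of Section \ref{sec:NumericalMethods}.

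Your Step 3 contains a genuine gap, and you have located it correctly yourself. Interchangeability of optimal strategies in zero-sum games does reduce global uniqueness to showing that each player's optimal set is a singleton, but strict concavity/convexity alone cannot deliver that: against the symmetric \PII/ strategy, \emph{every} sampling distribution of \PI/ (paired with the Bayes bets) earns exactly the value, and against the symmetric \PI/ strategy \emph{every} within-scenario sequence distribution of \PII/ concedes exactly the value, so both best-response sets are continua and ``each factor is a singleton'' does not follow from Steps 1--2. What is missing is an exploitation argument: that any non-uniform sampling distribution fails to \emph{guarantee} the value against some \PII/ deviation (and dually, that any non-uniform sequence choice by \PII/ can be exploited by some \PI/ deviation), so that such strategies are not maximin/minimax and therefore appear in no equilibrium --- this is precisely what your ad hoc analysis of the smallest case accomplishes, but no general version of it is supplied. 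You should know, however, that the paper's own proof does not close this either: Appendix \ref{proof:Bayesian} verifies that the symmetric profile is an equilibrium and that its parameters $(P^*,\{p'^*_k\})$ are uniquely determined within that family, but it never rules out asymmetric equilibria. So on the uniqueness claim your proposal is no weaker than the paper's argument; it is simply not a complete proof of the theorem as literally stated, and neither is the paper's.
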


For the proof, see Appendix \ref{proof:Bayesian}.

\subsection{Numerical approximations of $P^*$}
\label{sec:NumericalMethods}

In this section, we list a few numerical algorithms by which the value of $P^*$ can be approximated.

\subsubsection{Controlled approximations}

\paragraph{Bisection method:}

Directly using the Bisection method \cite{book:NumericalRecipes} for $\Delta G'(P)$ \eqref{eq:DeltaGPrime(P)}.

\paragraph{Interval iteration:}

Applying $F(.)$ (from equation \eqref{eq:Fdef}) iteratively starting from the whole $\mathbb{R}$ number line:

\begin{equation}
    \mathbb{F}_{n+1} = F(\mathbb{F}_n), \quad \mathbb{F}_0 = \mathbb{R}
\end{equation}

Works well if $\mathbb{K}_A=\mathbb{K}_B$, because in this case:

\begin{equation}
\label{eq:F1DKLbounds}
    \mathbb{F}_1 = [-D_{KL}(\underline{p}(A)||\underline{p}(B),D_{KL}(\underline{p}(B)||\underline{p}(A)]
\end{equation}

every iteration provides a shrinking exact interval for $\vartheta^*$, because $F(.)$ is a contraction on the log-odds space.

\begin{remark}
    If $\mathbb{K}_A \ne \mathbb{K}_B$, then a modified variable and contraction can be introduced:

    \begin{equation}
        Z_A = \sum_{k \in \mathbb{K}_{AB}} p_k(A), \quad Z_B = \sum_{k \in \mathbb{K}_{AB}} p_k(B)
    \end{equation}

    \begin{equation}
        \chi = \sqrt{\frac{Z_A}{Z_B}} \log(P) - \sqrt{\frac{Z_B}{Z_A}} \log(1-P)
    \end{equation}

    For which a modified iteration can be defined:

    \begin{equation}
        \invbreve{F}(\chi) = \frac{1}{\sqrt{Z_A Z_B}} 
        \left (
        \Delta \invbreve{H} +
        \sum_{k \in \mathbb{K}_{AB}} (p_k(A)-p_k(B)) \log(P(\chi) p_k(A) + (1-P(\chi) p_k(B)))
        \right )
    \end{equation}

    \begin{equation}
        \Delta \invbreve{H} = \invbreve{H}_A - \invbreve{H}_B, \quad
        \invbreve{H}_\theta = - \sum_{k\in \mathbb{K}_{AB}} p_k(\theta) \log(p_k(\theta))
    \end{equation}

    \begin{equation}
        \chi^* = \invbreve{F}(\chi^*)
    \end{equation}

    \begin{equation}
        \invbreve{F}(\mathbb{R}) = 
        [-\invbreve{D}_{KL}(\underline{p}(A)||\underline{p}(B)),
        \invbreve{D}_{KL}(\underline{p}(B)||\underline{p}(A))]
    \end{equation}

    \begin{equation}
        \invbreve{D}_{KL}(\underline{p}(A)||\underline{p}(B)) = 
        \frac{
        \sum_{k\in \mathbb{K}_{AB}} p_k(A)
        \log \left ( 
        \frac{p_k(A)}{p_k(B)}
        \right )
        }{\sqrt{\sum_{k \in \mathbb{K}_{AB}} p_k(A)}
        \sqrt{\sum_{k \in \mathbb{K}_{AB}} p_k(B)}}
    \end{equation}

\end{remark}

\begin{conjecture}
\label{conj:Restricted}

    $\invbreve{F}(\chi)$ is a contraction:

    \begin{equation}
        |\invbreve{F}'(\chi)| \le \invbreve{q}, \quad \invbreve{q}<1
    \end{equation}
    
\end{conjecture}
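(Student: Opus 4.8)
The plan is to reduce the conjecture to a pointwise bound on the derivative and then promote it to a uniform one by a boundary analysis. Since $P \mapsto \chi = \sqrt{Z_A/Z_B}\,\log P - \sqrt{Z_B/Z_A}\,\log(1-P)$ is a smooth, strictly increasing bijection from $(0,1)$ onto $\mathbb{R}$ (its derivative $\sqrt{Z_A/Z_B}/P + \sqrt{Z_B/Z_A}/(1-P)$ is positive, and $\chi \to -\infty$ as $P \to 0^+$, $\chi \to +\infty$ as $P \to 1^-$), I would work throughout in the variable $P$ and evaluate $\invbreve{F}'(\chi)$ via the chain rule as $(\mathrm{d}\invbreve{F}/\mathrm{d}P)/(\mathrm{d}\chi/\mathrm{d}P)$.

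First I would differentiate. Writing $w_k = P\,p_k(A) + (1-P)\,p_k(B)$ for the mixture weight, only the logarithms in $\invbreve{F}$ depend on $P$, and term-by-term differentiation gives
\begin{equation}
\frac{\mathrm{d}\invbreve{F}}{\mathrm{d}P} = \frac{1}{\sqrt{Z_A Z_B}} \sum_{k \in \mathbb{K}_{AB}} \frac{(p_k(A) - p_k(B))^2}{w_k}.
\end{equation}
Using $\sqrt{Z_A/Z_B} = Z_A/\sqrt{Z_A Z_B}$ and $\sqrt{Z_B/Z_A} = Z_B/\sqrt{Z_A Z_B}$, the factor $1/\sqrt{Z_A Z_B}$ cancels against the one hidden in $\mathrm{d}\chi/\mathrm{d}P$, leaving the scale-free ratio
\begin{equation}
\invbreve{F}'(\chi) = \frac{\sum_{k \in \mathbb{K}_{AB}} (p_k(A) - p_k(B))^2/w_k}{Z_A/P + Z_B/(1-P)}.
\end{equation}
As a ratio of manifestly nonnegative quantities, $\invbreve{F}'(\chi) \ge 0$, so $|\invbreve{F}'| = \invbreve{F}'$ and it suffices to bound it from above.

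The heart of the argument is an elementary per-term identity. For each $k$ a direct computation, resting on $\tfrac{1-P}{P} + \tfrac{P}{1-P} + 2 = \tfrac{1}{P(1-P)}$, yields
\begin{equation}
\left( \frac{p_k(A)}{P} + \frac{p_k(B)}{1-P} \right) - \frac{(p_k(A) - p_k(B))^2}{w_k} = \frac{p_k(A)\,p_k(B)}{P(1-P)\,w_k}.
\end{equation}
Summing over $\mathbb{K}_{AB}$ and simplifying $P(1-P)\bigl(Z_A/P + Z_B/(1-P)\bigr) = (1-P)Z_A + P Z_B$ gives the key closed form
\begin{equation}
\invbreve{F}'(\chi) = 1 - \frac{\sum_{k \in \mathbb{K}_{AB}} p_k(A)\,p_k(B)/w_k}{(1-P)Z_A + P Z_B}.
\end{equation}
Since every index $k \in \mathbb{K}_{AB}$ has both $p_k(A) > 0$ and $p_k(B) > 0$ by definition of the overlap, the subtracted fraction is strictly positive, establishing $\invbreve{F}'(\chi) < 1$ at every $\chi \in \mathbb{R}$.

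The remaining step, which I expect to be the main obstacle, is upgrading this strict pointwise inequality to a single constant $\invbreve{q} < 1$ over the noncompact domain $\mathbb{R}$, since a priori $\invbreve{F}'$ could tend to $1$ at the ends. I would resolve this through the endpoint limits in the closed form above: as $P \to 0^+$ (i.e. $\chi \to -\infty$) one has $w_k \to p_k(B)$, so the numerator $\sum_k p_k(A)p_k(B)/w_k \to Z_A$ while $(1-P)Z_A + P Z_B \to Z_A$, forcing the subtracted term to $1$ and hence $\invbreve{F}'(\chi) \to 0$; the limit $P \to 1^-$ is symmetric and again gives $0$. Therefore $\invbreve{F}'$ is continuous and nonnegative on $\mathbb{R}$, strictly below $1$, and vanishes at both ends, so it extends continuously to the two-point compactification and attains its supremum at some finite $\chi_0$. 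Setting $\invbreve{q} = \invbreve{F}'(\chi_0) < 1$ gives $|\invbreve{F}'(\chi)| \le \invbreve{q} < 1$ for all $\chi$, which proves that $\invbreve{F}$ is a contraction.
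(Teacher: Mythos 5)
Your proof is correct, and it reaches the crucial pointwise bound by a genuinely different route than the paper's own argument. Both start identically: writing $w_k = P\,p_k(A)+(1-P)\,p_k(B)$, the chain rule gives $\invbreve{F}'(\chi)=\sum_{k\in\mathbb{K}_{AB}}\frac{(p_k(A)-p_k(B))^2}{w_k}\Big/\Bigl(\frac{Z_A}{P}+\frac{Z_B}{1-P}\Bigr)$, exactly as in the paper. From here the paper proceeds structurally: it views this expression as a functional of the pair of normalized distributions, shows it equals $1$ precisely on ``extreme'' pairs with disjoint supports, and argues by convexity along continuous elementary swaps that non-extreme pairs give strictly smaller values; it then asserts (without derivation) that $\invbreve{F}'(\chi)\to 0$ as $\chi\to\pm\infty$ and finishes by compactness. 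You instead invoke the per-term identity $\frac{p_k(A)}{P}+\frac{p_k(B)}{1-P}-\frac{(p_k(A)-p_k(B))^2}{w_k}=\frac{p_k(A)\,p_k(B)}{P(1-P)\,w_k}$ --- the very identity the paper uses to prove $\Delta G''(P)>0$ in its Bayesian equilibrium proof, but curiously does not deploy here --- to obtain the closed form $\invbreve{F}'(\chi)=1-\frac{\sum_k p_k(A)\,p_k(B)/w_k}{(1-P)Z_A+P Z_B}$. This is an improvement on two counts. First, strict positivity of the subtracted term is immediate, since every $k\in\mathbb{K}_{AB}$ has $p_k(A)\,p_k(B)>0$; you thereby bypass the paper's swap-chain connectivity and monotone-extremization claims, which are asserted rather than fully proved and are presumably why the author left the statement as a conjecture with only a ``proof attempt''. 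Second, the endpoint limits $\invbreve{F}'(\chi)\to 0$, which the paper merely states, follow from your closed form by inspection ($w_k\to p_k(B)$ as $P\to 0^+$, $w_k\to p_k(A)$ as $P\to 1^-$). The final uniformity step (zero limits at infinity, continuity, compactness of the extended domain) is the same in both arguments; your handling of the degenerate case where the supremum is $0$ is harmlessly implicit, since then $\invbreve{F}'\equiv 0$ and any $\invbreve{q}<1$ works. What the paper's longer route buys in exchange is structural insight --- the bound $1$ is saturated exactly by disjoint-support pairs, indicating how the contraction constant degrades as the overlap $\mathbb{K}_{AB}$ shrinks --- while your closed form is better suited to extracting explicit quantitative bounds on $\invbreve{q}$.
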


See the proof attempt in appendix \ref{appendix:Restricted}.

\paragraph{Fixed point iteration with error estimation:}

Standard iteration of $F(.)$ starting from zero:

\begin{equation}
    \vartheta_{n+1} = F(\vartheta_n), \quad \vartheta_0 = 0
\end{equation}

and estimating the error \cite{book:BanachFixedPoint}:

\begin{equation}
    |\vartheta^* - \vartheta_n| \le \frac{|\vartheta_n-\vartheta_{n-1}|}{1-q}
\end{equation}

where equation \eqref{eq:qdef} gives an upper bound for $q$.

\paragraph{Controlled Newton–Raphson method:}

Newton–Raphson method \cite{book:NumericalRecipes} for $\Delta(\vartheta) = \vartheta - F(\vartheta)$, for which the error can be estimated by knowing that 

\begin{equation}
    |F''(\theta)| \le \frac{1}{2} q = Q
\end{equation}

The iteration has the following explicit form:

\begin{equation}
    \vartheta_{n+1} = \frac{F(\vartheta_n) - \vartheta_n F'(\vartheta_n)}{1-F'(\vartheta_n)}
\end{equation}

If we reach a point where $2 Q |\vartheta_n-F(\vartheta_n)| \le \left ( 1- F'(\vartheta_n)\right )^2$, then the following bounds hold for the exact fixed point $\vartheta^*$:

\begin{equation}
    \vartheta^* - \vartheta_n \ge \frac{1-F'(\vartheta_n)}{Q} \left ( \sqrt{1- 2 Q \frac{|\vartheta_n-F(\vartheta_n)|}{\left ( 1- F'(\vartheta_n)\right )^2}} 
    -1
    \right ) 
\end{equation}

\begin{equation}
    \vartheta^* - \vartheta_n \le \frac{1-F'(\vartheta_n)}{Q} \left ( \sqrt{1+ 2 Q \frac{|\vartheta_n-F(\vartheta_n)|}{\left ( 1- F'(\vartheta_n)\right )^2}} 
    -1
    \right ) 
\end{equation}

\subsubsection{Monte Carlo Methods}

\paragraph{MCMC:} Markov chain Monte Carlo (MCMC) \cite{book:MCMC} can be combined with Blahut–Arimoto algorithm \cite{paper:MCMCBlahutArimoto,arxiv:MCMCBlahutArimoto}, which with some modifications might yield a general and scalable stochastic approximation for the equilibrium prior.

\subsection{Examples and Visualization}

In this section, we visualize the equilibrium values for various Bayesian games based on Theorem~\ref{thm:Bayesian}.
The numerical results are provided by 
the controlled numerical methods listed in Section~\ref{sec:NumericalMethods}.

\begin{figure}[H]
    \centering
    \includegraphics[scale=0.8]{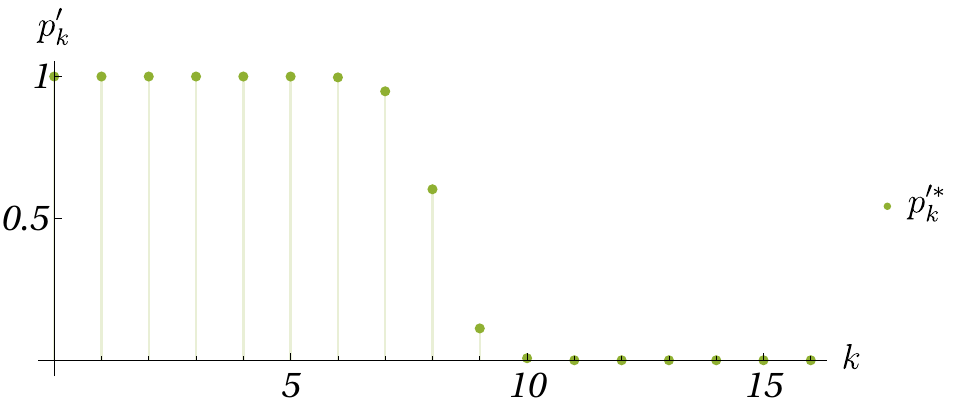}
    \caption{Illustration of $p'^*_k$ for \BG{N=17,K_A=10,K_B=16,M=27}, in which case $P^*\approx 0.4953$. (The game has the same parameters as the illustration in figure \ref{fig:pkAB}.)}
    \label{fig:ppkAB}
\end{figure}

\paragraph{Prior and splitting ratio plots:}

For aesthetic reasons, we will allow Bayesian games with $K_A \ge K_B$, with the following convention \footnote{$P^*_o$, $p'^*_{o,k}$ and $G^*_o = \Delta G^*_o + \log(2)$ stands for the original quantities described in the previous sections (where we assumed that $K_A<K_B$)}:

\begin{equation}
\label{eq:BPKAKBextended}
    P^*(K_A,K_B) =
        \begin{cases}
        \pi(A) = P^*_o(K_A,K_B) & \text{if } K_A < K_B \\
        1/2                     & \text{if } K_A = K_B \\
        \pi(B) = P^*_o(K_B,K_A) & \text{if } K_A > K_B
    \end{cases}
\end{equation}

\begin{equation}
    p'^*_k(K_A,K_B) =
        \begin{cases}
        p'^*_{o,k}(K_A,K_B) & \text{if } K_A < K_B \\
        1/2              & \text{if } K_A = K_B \\
        p'^*_{o,k}(K_B,K_A) & \text{if } K_A > K_B
    \end{cases}
\end{equation}

\begin{equation}
    G^*(K_A,K_B) =
        \begin{cases}
        G^*_o(K_A,K_B)  & \text{if } K_A < K_B \\
        0                 & \text{if } K_A = K_B \\
        G^*_o(K_B,K_A)  & \text{if } K_A > K_B
    \end{cases}
\end{equation}

\begin{figure}[H]
    \centering
    \begin{subfigure}[b]{0.3\textwidth}
        \includegraphics[width=\textwidth]{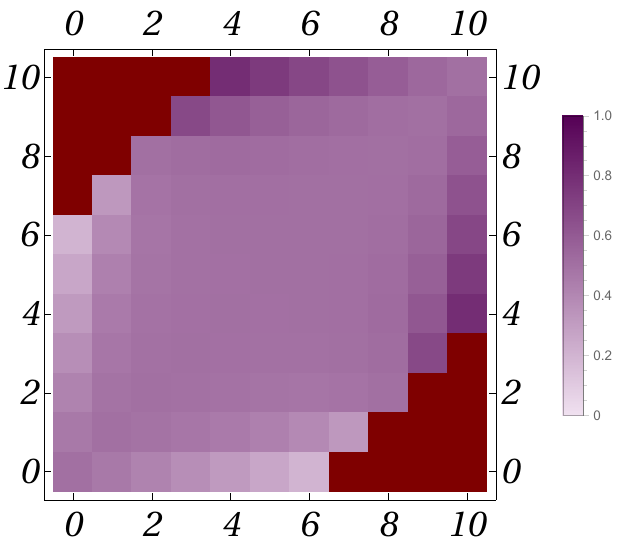}
        \caption{$P^*(K_A,K_B)$}
        \label{fig:BGame4_10_P}
    \end{subfigure}
    \hfill 
    \begin{subfigure}[b]{0.3\textwidth}
        \includegraphics[width=\textwidth]{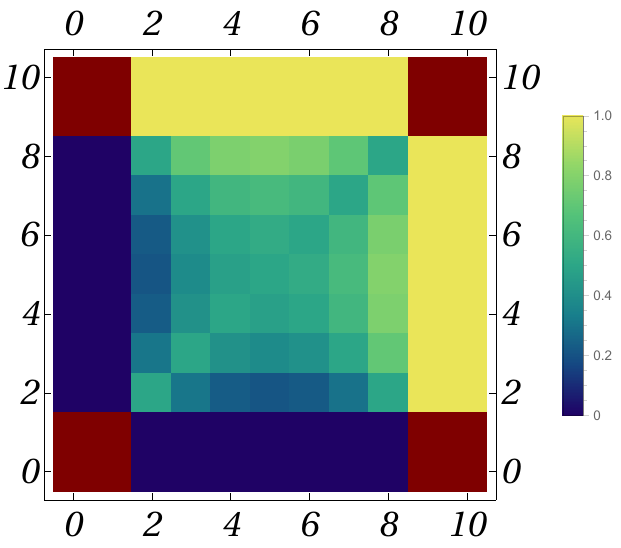}
        \caption{$p'^*_{k=2}(K_A,K_B)$}
        \label{fig:BGame4__10_ppk_2}
    \end{subfigure}
    \hfill 
    \begin{subfigure}[b]{0.3\textwidth}
        \includegraphics[width=\textwidth]{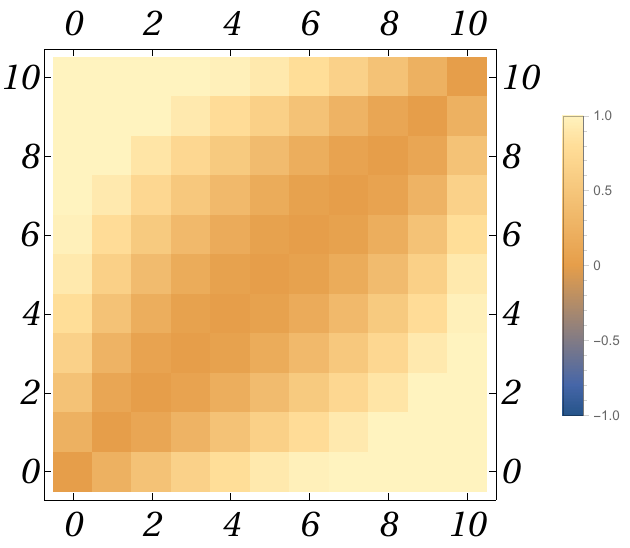}
        \caption{$G^*(K_A,K_B)/\log(2)$}
        \label{fig:BGame4__10_G}
    \end{subfigure}
    
    \caption{\BG{N=4,K_A,K_B,M=10}}
    \label{fig:BGame4__10_PppkG}
\end{figure}

\begin{figure}[H]
    \centering
    \begin{subfigure}[b]{0.185\textwidth}
        \includegraphics[width=\textwidth]{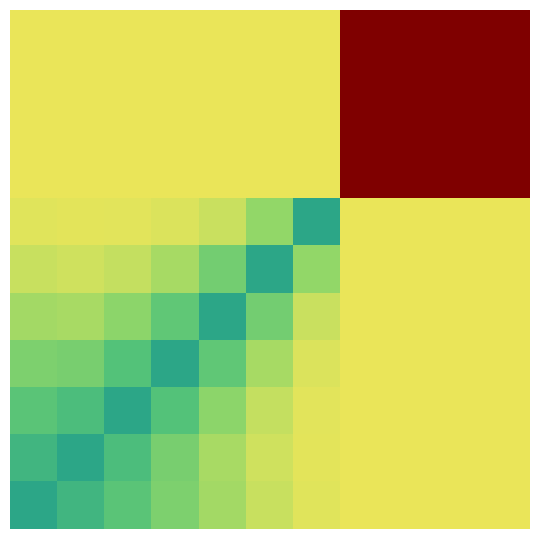}
        \caption{$k=0$}
        \label{fig:ppkBG_4_0}
    \end{subfigure}
    \hspace{0.00\textwidth} 
    \begin{subfigure}[b]{0.185\textwidth}
        \includegraphics[width=\textwidth]{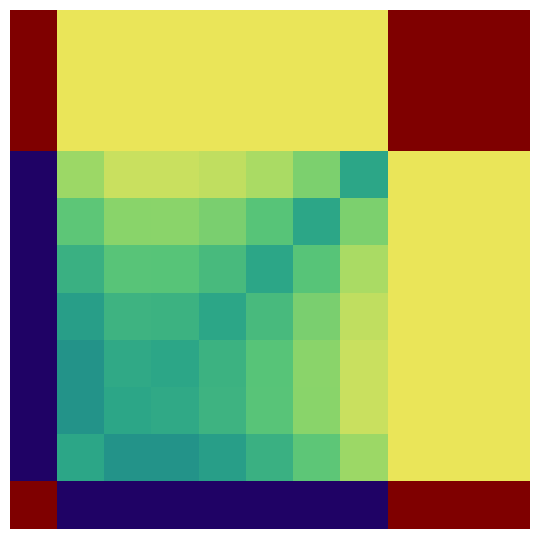}
        \caption{$k=1$}
        \label{fig:ppkBG_4_1}
    \end{subfigure}
    \hspace{0.00\textwidth} 
    \begin{subfigure}[b]{0.185\textwidth}
        \includegraphics[width=\textwidth]{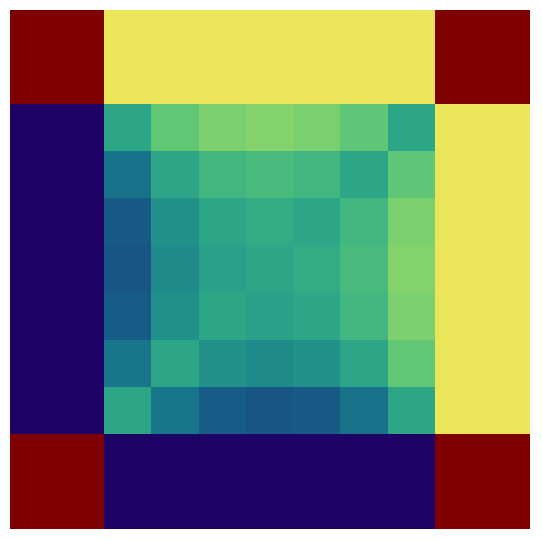}
        \caption{$k=2$}
        \label{fig:ppkBG_4_2}
    \end{subfigure}
    \hspace{0.00\textwidth} 
        \begin{subfigure}[b]{0.185\textwidth}
        \includegraphics[width=\textwidth]{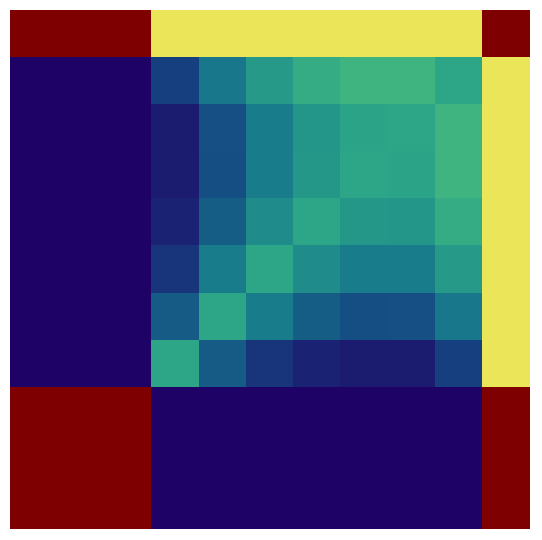}
        \caption{$k=3$}
        \label{fig:ppkBG_4_3}
    \end{subfigure}
    \hspace{0.00\textwidth} 
        \begin{subfigure}[b]{0.185\textwidth}
        \includegraphics[width=\textwidth]{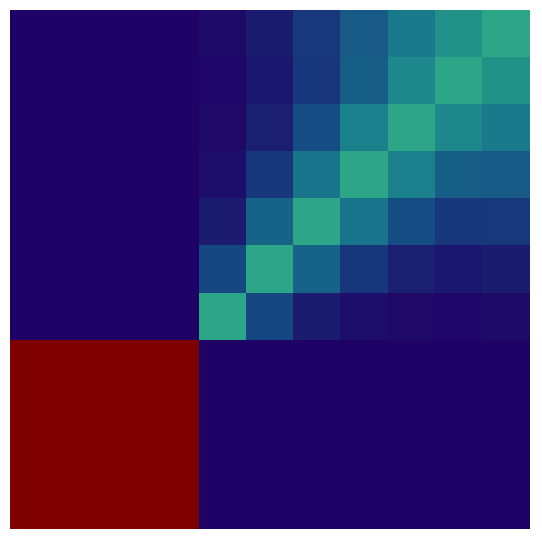}
        \caption{$k=4$}
        \label{fig:ppkBG_4_4}
    \end{subfigure}

    \caption{$p'^*_k(K_A,K_B)$ for \BG{4,K_A,K_B}.
    (Axes and the colour scale have the same meaning as in figure \ref{fig:BGame4__10_ppk_2}.)
    }
    \label{fig:ppk_InfBG_4_01234}
\end{figure}

\begin{remark}
    We do not associate prior values for Bayesian games, where \PI/ can always guess the scenario correctly (while splitting ratios and growth factors can be well-defined for these Sure winning games).
    For some Bayesian games, $p'^*_k$ can be undefined if there can be no $k$ number of $\bb$-s in the sampled bits. If a quantity has no well-defined value for a given pair of parameters, then we indicated this with a dark red region on the plots.

\end{remark}

\begin{figure}[H]
    \centering
    \begin{subfigure}[b]{0.3\textwidth}
        \includegraphics[width=\textwidth]{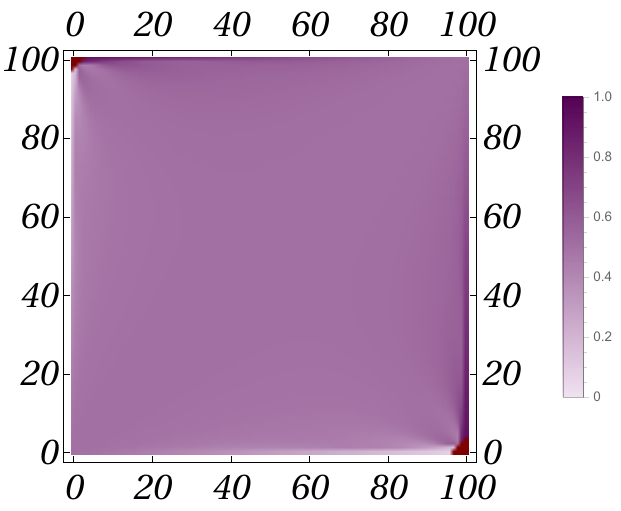}
        \caption{$P^*(K_A,K_B)$}
        \label{fig:BGame4__100_P}
    \end{subfigure}
    \hfill 
    \begin{subfigure}[b]{0.3\textwidth}
        \includegraphics[width=\textwidth]{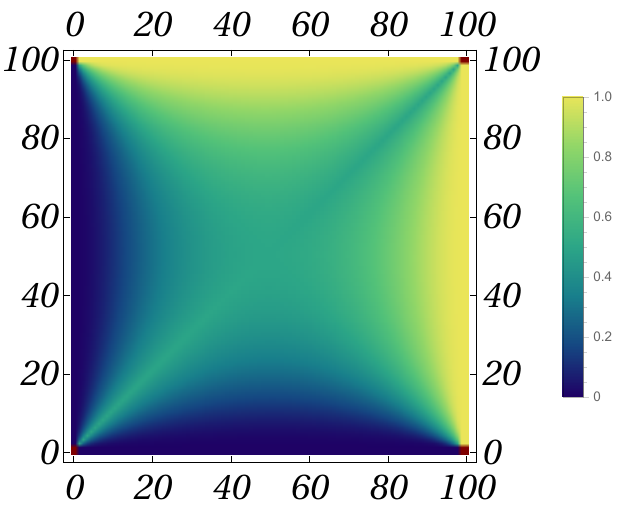}
        \caption{$p'^*_{k=2}(K_A,K_B)$}
        \label{fig:BGame4__100_ppk_2}
    \end{subfigure}
    \hfill 
    \begin{subfigure}[b]{0.3\textwidth}
        \includegraphics[width=\textwidth]{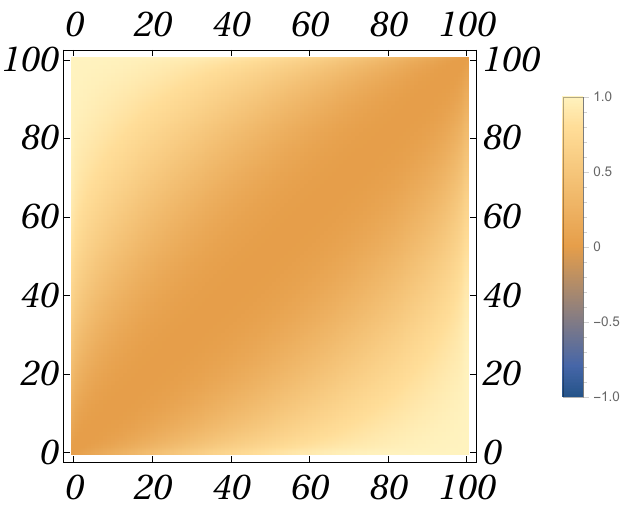}
        \caption{$G^*(K_A,K_B)/\log(2)$}
        \label{fig:BGame4__100_G}
    \end{subfigure}
    
    \caption{\BG{N=4,K_A,K_B,M=100}}
    \label{fig:BGame4__100_PppkG}
\end{figure}

\begin{figure}[H]
    \centering
    \begin{subfigure}[b]{0.24\textwidth}
        \includegraphics[width=\textwidth]{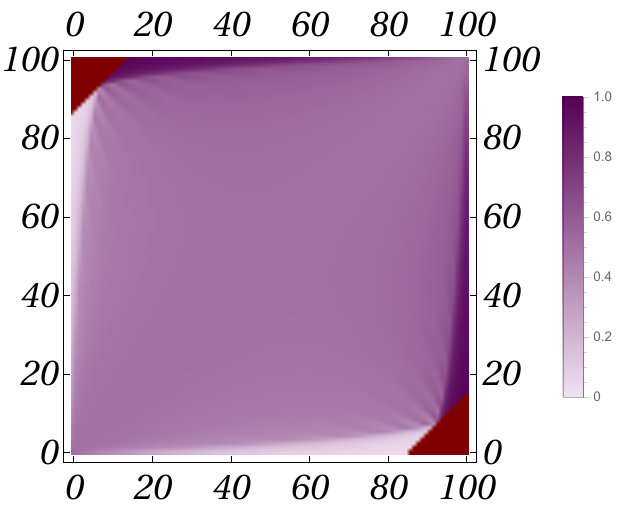}
        \caption{$P^*(K_A,K_B)$}
        \label{fig:BGame15__100_P}
    \end{subfigure}
    \hfill 
    \begin{subfigure}[b]{0.24\textwidth}
        \includegraphics[width=\textwidth]{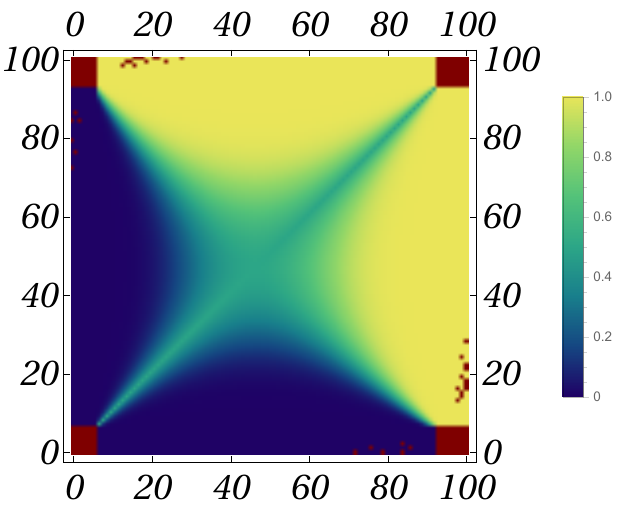}
        \caption{$p'^*_{k=7}(K_A,K_B)$}
        \label{fig:BGame15__100_ppk_7}
    \end{subfigure}
    \hfill 
    \begin{subfigure}[b]{0.24\textwidth}
        \includegraphics[width=\textwidth]{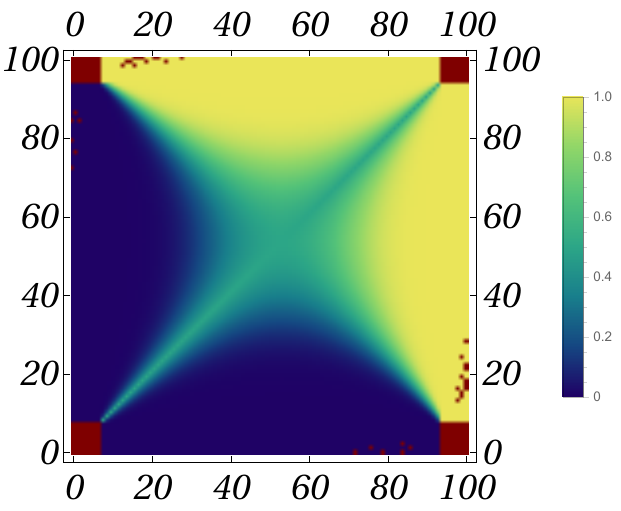}
        \caption{$p'^*_{k=8}(K_A,K_B)$}
        \label{fig:BGame15__100_ppk_8}
    \end{subfigure}
    \hfill 
    \begin{subfigure}[b]{0.24\textwidth}
        \includegraphics[width=\textwidth]{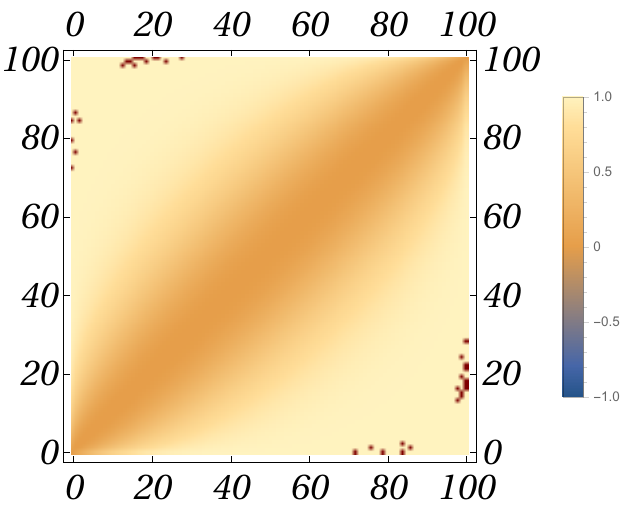}
        \caption{$G^*(K_A,K_B)/\log(2)$}
        \label{fig:BGame15__100_G}
    \end{subfigure}
    
    \caption[]{\BG{N=15,K_A,K_B,M=100}\footnotemark}
    \label{fig:BGame15__100_PppkG}
\end{figure}

\footnotetext{The freckles seen on \ref{fig:BGame15__100_ppk_7}, \ref{fig:BGame15__100_ppk_8} and \ref{fig:BGame15__100_G} are caused by numerical artifact, signalling the limits of the performed numerical calculations.}

Mainly for aesthetic reasons, we show the complete set of splitting ratios for Bayesian games, where 6-long bits can be chosen, \BG{N,K_A,K_B,M=6}.
This ``policy pyramid'' can be seen in figure \ref{fig:BayesianStrategyPyramid_6}.

\begin{figure}[H]
    \centering
    \includegraphics[width=10 cm]{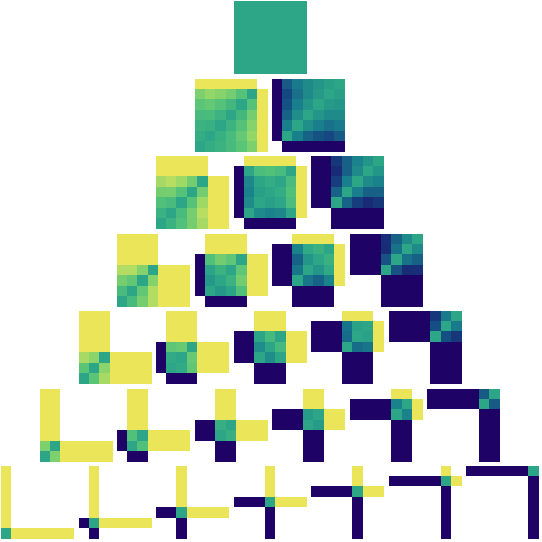}
    \caption{
    ``Policy pyramid'' for Bayesian games \BG{N,K_A,K_B,M=6}.
    On the top $p'^*_{k=0}(K_A,K_B)$ values are presented for \BG{N=0,K_A,K_B,M=6} games.
    In each row \BG{N=0,K_A,K_B,M=6}, \BG{N=1,K_A,K_B,M=6}, \dots, \BG{N=6,K_A,K_B,M=6} games are listed,
    while in each row $p'^*_{k=0}(K_A,K_B)$, $p'^*_{k=1}(K_A,K_B)$, \dots, $p'^*_{k=N}(K_A,K_B)$ policy plots are shown.
    (Axes and the colour scale have the same meaning as in figures \ref{fig:BGame4__10_ppk_2}, \ref{fig:BGame4__100_ppk_2}, \ref{fig:BGame15__100_ppk_7}, \ref{fig:BGame15__100_ppk_8}. The only difference is that missing values are represented by white colouring instead of dark red.)
    }
    \label{fig:BayesianStrategyPyramid_6}
\end{figure}

\subsection{Interpretation of the results}

The construction and the results are in some ways, similar to the Reference prior \cite{paper:ReferencePrior,book:Bernardo}. However, we will see that in the $N\to\infty$ limit, $P^*_N$ seems to converge to a different limit prior than the Reference prior.
    This is because of the slight difference in the objective of the optimization problem.
    \begin{itemize}
        \item The Reference prior is related to the mutual information $I(\Pi;X)$,
        \item while the equilibrium prior is related to conditional entropy $H(\Pi|X)$. 
    \end{itemize}

\begin{remark}
    The ``Prior'' depends on the game, $P^*_N \ne P^*_{N+1}$ even if other parameters of the game ($K_A,K_B,M$) are kept the same.
    
\end{remark}

\begin{remark}
    Exchangeability (central to de Finetti \cite{book:deFinetti}) is emerging by the uniform randomization of sequences and sampling, both by \PI/ and \PII/.
    
\end{remark}

\section{Limiting cases}

\subsection{Binomial games}

\subsubsection{Binomial Fisher games}

There is an important limit case, when $M \to \infty$, while $K_A/M \to x_A \in (0,1)$ and $K_B/M \to x_B \in (0,1)$ \footnote{$x_A, x_B$ are similar fractions as mole fractions in \href{https://goldbook.iupac.org/terms/view/A00296}{chemistry} \cite{book:ChemistryGoldBook}}.

This sequence of games will be denoted by:

\begin{equation}
    \lim_{\begin{smallmatrix} M \to \infty & \\ K_A/M \to x_A \\ K_B/M \to x_B \end{smallmatrix}} \textswab{Game}(N, K_A, K_B, M) = \overline{\textswab{Game}}(N, x_A, x_B)
\end{equation}

This limit is hopefully intuitive; however, because of the importance of the construction, it might be useful to specify the meaning of this triple limit.

In technical terms, \InfG{N, x_A, x_B} is the equivalence class of ``convergent'' infinite sequences of the form \footnote{the construction is in some way parallel to the construction of real numbers as a collection of equivalent Cauchy sequences of rational numbers $\mathbb{R} = \overline{\mathbb{Q}}$ \cite{book:TerenceTaoAnalysis}}:

\begin{equation}
    \textswab{G}_i = \textswab{Game}(N, K_{A,i}, K_{B,i}, M_i)
\end{equation}

\begin{equation}
    \begin{split}
        \{\textswab{G}_i\}_{i=1}^\infty \in \overline{\textswab{Game}}(N, x_A, x_B) \iff & \lim_{i\to \infty} M_i = \infty \ \wedge \\ 
        & \lim_{i\to \infty} K_{A,i}/M_i = x_A \ \wedge \\
        & \lim_{i\to \infty} K_{B,i}/M_i = x_B
    \end{split}
\end{equation}

One particular choice of this kind of sequence might be:

\begin{equation}
    \{ \textswab{Game}(N, [ x_A \ i ], [ x_B \ i ] ,i) \}_{i=1}^\infty \in  \overline{\textswab{Game}}(N, x_A, x_B)
\end{equation}

Or if both parameters are rational: $x_A = a_A/b_A$, $x_B = a_B/b_B$, $a_\theta, b_\theta \in \mathbb{N}$

\begin{equation}
    \{ \textswab{Game}(N, i \ (a_A \ b_B), i \ (a_B \ b_A) ,  i \ (b_A \ b_B)) \}_{i=1}^\infty \in  \overline{\textswab{Game}}(N, x_A, x_B)
\end{equation}

\begin{remark}
    The concept of simultaneous approximation of two real numbers ($x_A,x_B$) with rationals having the same denominator appears in the Littlewood Conjecture \cite{paper:LittlewoodConjecture} 
    
\end{remark}

Intuitively, one can expect that in this sequence, some aspects of the symmetric solution will ``diverge'', while other aspects will ``converge'' to a specific strategy.

Concretely, for \PII/, the number of sequences for a given scenario undergo a combinatorial explosion, while her mixing between scenarios might converge to a definite distribution.

For \PI/, the sampling part of the action ``diverges'' (undergoes a combinatorial explosion), while her policy will ``converge'' to a definite policy. \footnote{To define a proper concept of convergence, we need to define topology or a metric on the space of policies. Without providing a general definition, in our case we will combine $(k^*,\nu^*)$ to one single variable $s^* = (k^* + \nu^*)/(N+1) \in [0,1]$, and associate the standard topology and metric to this interval.}

\begin{theorem}[Symmetrical equilibrium of Binomial Fisher games]
\label{thm:BinomialSymmetric}
\InfG{N,x_A,x_B}, where $x_A, x_B \in (0,1)$, $N \in \mathbb{N}$, has converging policy for \PI/:

\begin{itemize}
    \item \PI/ samples $N$ bits uniformly from the provided (infinite) sequence. Based on $k$ -- the number of $\bb$-s -- she performs the following action:
    \begin{itemize}
        \item if $k<k^*$ she guesses A
        \item if $k=k^*$ then 
        \begin{itemize}
            \item she guesses A with probability $\nu^*$ or B with probability $1-\nu^*$
        \end{itemize} 
        \item if $k>k^*$ she guesses B
    \end{itemize}
\end{itemize}

\PII/ might have a convergent policy:

\begin{itemize}
    \item \PII/'s probability of choosing scenario A or B converges to $P^*$ and $1-P^*$ if $(x_A,x_B) \notin \mathbb{S}_N$, and has an accumulation point in the interval $[\underline{P}^*,\overline{P}^*]$ if $(x_A,x_B) \in \mathbb{S}_N$;
    \begin{itemize}
        \item after choosing a scenario, she picks a sequence uniformly from the allowed sequences for scenario A or B.
    \end{itemize}
\end{itemize}

The parameters $(k^*, P^*, \nu^*)$ or $(k^*, \underline{P}^*, \overline{P}^*, \nu^*)$ can be determined from the parameters of the game $(N, x_A, x_B)$:

\begin{equation}
    \label{eq:BinomialDistribution}
    p_k(A) = \binom{N}{k} x_A^k (1-x_A)^{N-k}, \quad
    p_k(B) = \binom{N}{k} x_B^k (1-x_B)^{N-k}
\end{equation}

\begin{equation}
    \label{thm:SymBinEqNu}
    \nu^* = \frac{\sum_{k \ge k^*} p_k(B) - \sum_{k < k^*} p_k(A)}{p_{k^*}(A)+p_{k^*}(B)}
\end{equation}

$k^*$ is the smallest integer, for which the sum of probabilities becomes greater than $1$:

\begin{equation}
    \label{thm:SymBinEqk}
    \sum_{k \le k^*} p_k(A)+p_k(B) > 1, \quad \mathrm{while} \quad \sum_{k<k^*} p_k(A)+p_k(B) \le 1
\end{equation}

The definition of $\mathbb{S}_N$ is the following:

\begin{equation}
    \label{eq:defOfS_N}
    \mathbb{S}_N = \{(x_A,x_B) \ | \ \nu^*(x_A,x_B) = 0 \}
\end{equation}

\begin{equation}
    \label{eq:SymBinEqP}
    P^* = \frac{p_{k^*}(B)}{p_{k^*}(A)+p_{k^*}(B)}, \quad \text{if } (x_A,x_B) \notin \mathbb{S}_N
\end{equation}

And if $(x_A,x_B) \in \mathbb{S}_N$

\begin{equation}
    \label{eq:SymBinEqPud}
    \underline{P}^* = \frac{p_{k^*-1}(B)}{p_{k^*-1}(A)+p_{k^*-1}(B)}, \quad
    \overline{P}^* = \frac{p_{k^*}(B)}{p_{k^*}(A)+p_{k^*}(B)}
\end{equation}

\end{theorem}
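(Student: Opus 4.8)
The plan is to derive the statement as a limit of the finite-game result, Theorem~\ref{thm:Symmetric}, applied to an arbitrary defining sequence $\textswab{G}_i = \textswab{Game}(N, K_{A,i}, K_{B,i}, M_i)$ representing $\InfG{N,x_A,x_B}$. For each $i$ that theorem already supplies symmetric-equilibrium parameters $(k^*_i, \nu^*_i, P^*_i)$ built from the hypergeometric weights $p^{(i)}_k(A), p^{(i)}_k(B)$, so the whole task reduces to controlling their limits as $i \to \infty$.

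First I would establish pointwise convergence of the weights. For fixed $N$ and $k \in \{0,\dots,N\}$, as $M_i \to \infty$ with $K_{A,i}/M_i \to x_A$, the hypergeometric weight converges to the binomial weight $\binom{N}{k} x_A^k (1-x_A)^{N-k}$ of \eqref{eq:BinomialDistribution}, and likewise for B; since $k$ ranges over a finite set this convergence is uniform in $k$, so every partial sum $\sum_{k \le j} p^{(i)}_k(\theta)$ converges to its binomial counterpart. I would also record that for $x_A < x_B$ the binomial family has a strictly monotone likelihood ratio $p_k(B)/p_k(A) = (x_B/x_A)^k((1-x_B)/(1-x_A))^{N-k}$, so the monotonicity hypothesis underlying Theorem~\ref{thm:Symmetric} survives the limit and the limiting characterization of $(k^*,\nu^*,P^*)$ is well posed.

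Next, writing $S_j = \sum_{k \le j}(p_k(A)+p_k(B))$ for the binomial cumulative sum, the limiting $k^*$ is the least $j$ with $S_j > 1$, as in \eqref{thm:SymBinEqk}. Away from $\mathbb{S}_N$ one has the strict separation $S_{k^*-1} < 1 < S_{k^*}$; by the uniform convergence the finite sums satisfy $S^{(i)}_{k^*-1} < 1 < S^{(i)}_{k^*}$ for all large $i$, forcing $k^*_i = k^*$ eventually. Continuity of the ratio formulas \eqref{thm:SymBinEqNu} and \eqref{eq:SymBinEqP} then yields $\nu^*_i \to \nu^*$ and $P^*_i \to P^*$, and the combined policy variable $s^*_i \to s^*$ converges, which is exactly the claimed convergence of \PI/'s policy and \PII/'s prior in the generic case.

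The hard part will be the boundary set $(x_A,x_B) \in \mathbb{S}_N$. Here I would first prove the equivalence $\nu^* = 0 \iff S_{k^*-1} = 1$, obtained by rewriting the numerator of \eqref{thm:SymBinEqNu} via $\sum_k p_k(A) = \sum_k p_k(B) = 1$. At this exact-boundary locus the finite sum $S^{(i)}_{k^*-1}$ may fall on either side of $1$, so $k^*_i$ alternates between $k^*-1$ and $k^*$ along the sequence; the monotone likelihood ratio makes the map $j \mapsto p_j(B)/(p_j(A)+p_j(B))$ increasing, so $\underline{P}^* < \overline{P}^*$ and every accumulation point of $P^*_i$ lies in $[\underline{P}^*,\overline{P}^*]$, matching \eqref{eq:SymBinEqPud}. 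The subtle point I would emphasise is that \PI/'s policy nonetheless converges on $\mathbb{S}_N$: a short computation using the boundary identity shows that $k^*_i = k^*$ forces $\nu^*_i \to 0$ while $k^*_i = k^*-1$ forces $\nu^*_i \to 1$, so in both regimes $s^*_i = (k^*_i + \nu^*_i)/(N+1) \to k^*/(N+1)$, and both regimes describe the same effective hard threshold ``guess A iff $k < k^*$.'' Showing that this discontinuity in $(k^*,\nu^*)$ cancels in the effective policy yet survives in the prior $P^*$ is the crux of the argument; everything else is continuity bookkeeping layered on top of Theorem~\ref{thm:Symmetric}.
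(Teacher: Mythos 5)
Your proposal is correct and follows essentially the same route as the paper's own proof: pointwise hypergeometric-to-binomial convergence (the paper's Lemma~\ref{lemma:BinomHyperApprox}), eventual constancy of $k^*_i$ away from $\mathbb{S}_N$ with continuity of the ratio formulas, and on $\mathbb{S}_N$ the same two-case analysis in which $k^*_i=k^*$ forces $\nu^*_i\to 0$ while $k^*_i=k^*-1$ forces $\nu^*_i\to 1$, so $s^*_i$ converges to $k^*/(N+1)$ while $P^*_i$ accumulates only at the two endpoints of $[\underline{P}^*,\overline{P}^*]$. The only difference is organizational: the paper packages these continuity facts as lemmas about generalized functions $s^\circ_N$, $P^\circ_N$ on an ``off-shell'' weight space $\mathbb{W}_2$ (Lemmas~\ref{lemma:SmoothnessOfFisherPolicies}, \ref{lemma:ContinuityOfFisherPolicies} and the finite-jump lemma for the prior), whereas you argue directly along the defining sequence of games via the cumulative sums $S_j$.
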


See Appendix \ref{appendix:BinomialGames} for the build-up of the proof, or see the specific proof in \ref{proof:BinomialFisher}.

\begin{remark}
    The function $P^*(x_A,x_B)$ has discontinuities or ``scars'' on $\mathbb{S}_N$. At these points, the function can jump, but any value between the lower and upper bounds $P^* \in [\underline{P}^*(x_A,x_B), \overline{P}^*(x_A,x_B)]$ could be chosen by \PII/. This technically means that for all such $P^*$, the maximal expected winning rate for \PI/ converges to the same $v^*$ value.
    
    Remarkably, this does not impact the convergence of \PI/'s strategy, expressed with $s^*(x_A,x_B)$, which is a continuous function in the limit.

\begin{figure}[H]
    \centering
    \begin{subfigure}[b]{0.4\textwidth}
        \includegraphics[width=\textwidth]{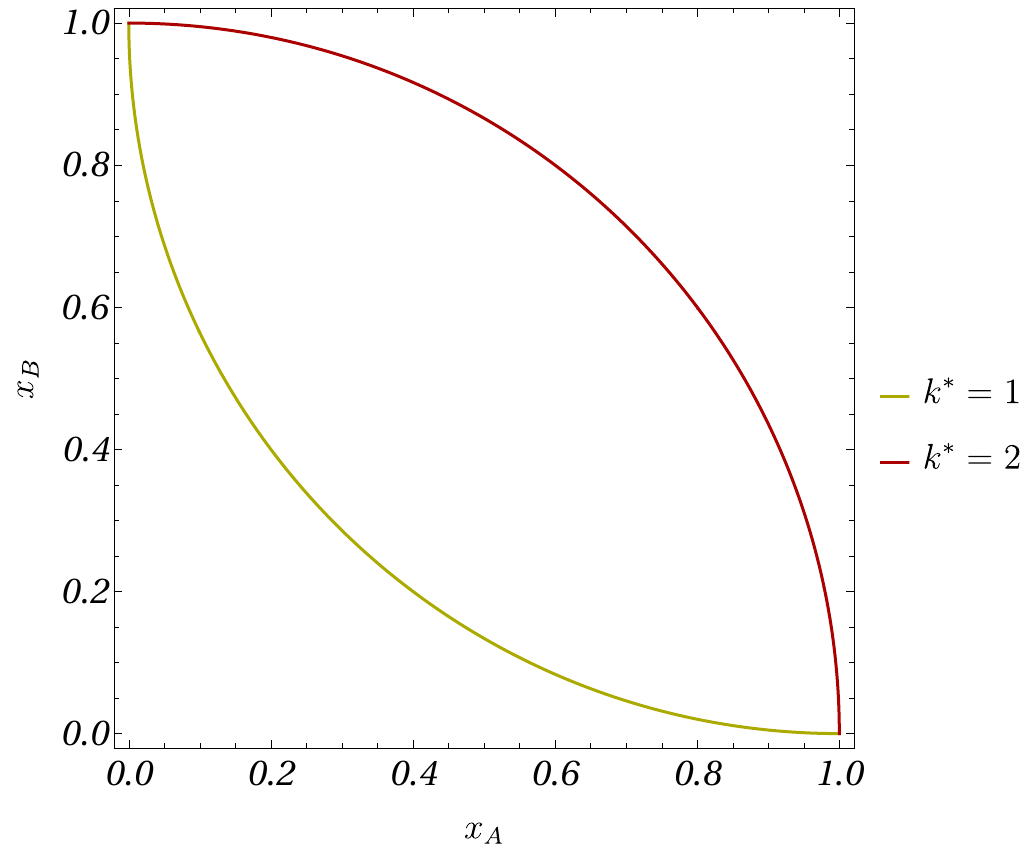}
        \caption{$N=2$}
        \label{fig:Scars_2}
    \end{subfigure}
    \hspace{0.05\textwidth} 
    \begin{subfigure}[b]{0.4\textwidth}
        \includegraphics[width=\textwidth]{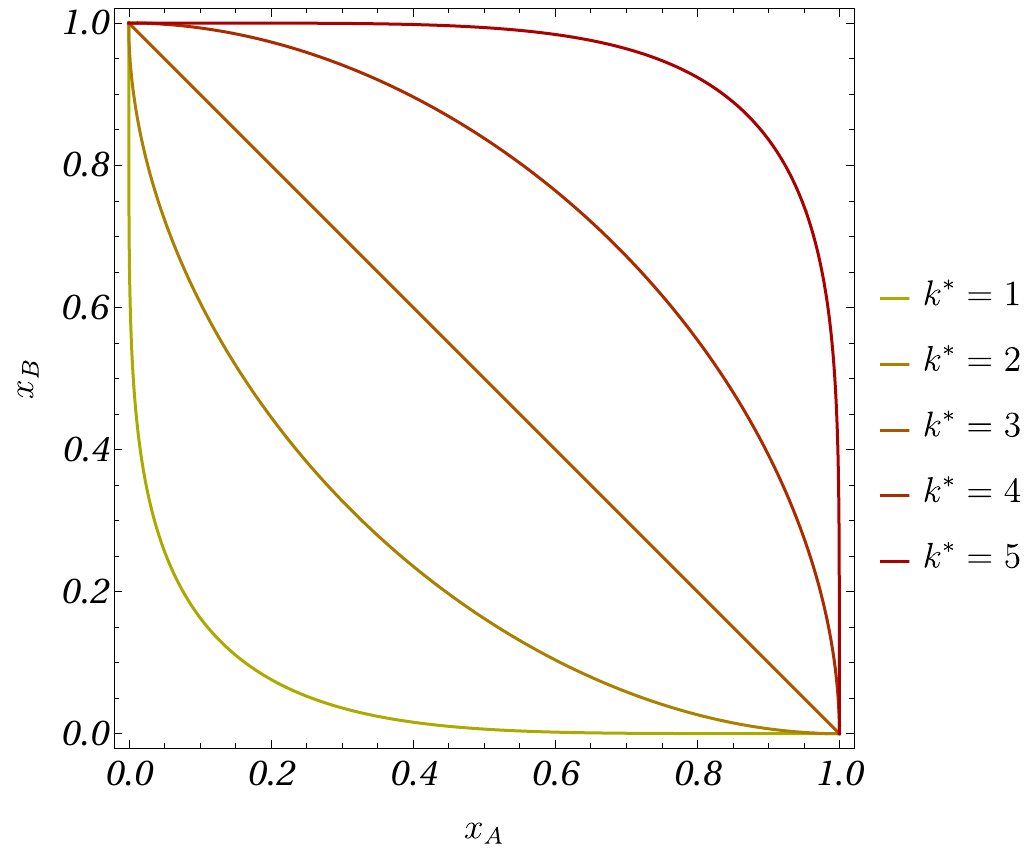}
        \caption{$N=5$}
        \label{fig:Scars_5}
    \end{subfigure}
    
    \caption{Scars, $\mathbb{S}_N \subset [0,1] \times [0,1]$ for $N=2$ and $N=5$.}
    \label{fig:Scars}
\end{figure}

\end{remark}

\begin{remark}
    The equilibrium quantity $k^*$ can be interpreted as the median \cite{book:DeGrootProbabilityAndStatistics,book:IntroToProbability} of a mixture of random variables $\kappa_A, \kappa_B$:

    \begin{equation}
        k^* = \mathbbm{m}
        \left [ \frac{1}{2} \kappa_A+
        \frac{1}{2}\kappa_B
        \right ]
    \end{equation}

    where $\kappa_A$ and $\kappa_B$ are characterized by $p_k(A)$ and $p_k(B)$.
    (The only difference is that such definition would imply the $\nu \in (0,1]$ choice instead of $\nu \in [0,1)$.)
    
\end{remark}

The intermediacy property of the median implies the following bounds for $k^*$:

\begin{theorem}
\label{thm:BinomialKMedianBounds}

    \begin{equation}
        \lfloor x_A N \rfloor
        \le
        \mathbbm{m}[\kappa_A]
        \le
        k^*
        \le
        \mathbbm{m}[\kappa_B]
        \le
        \lceil x_B N \rceil
    \end{equation}

    where $\kappa_A$ and $\kappa_B$ represent random variables with Binomial distribution:

    \begin{equation}
        \kappa_A \sim \mathrm{Binom}(N,x_A), \quad
        \kappa_B \sim \mathrm{Binom}(N,x_B)
    \end{equation}

\end{theorem}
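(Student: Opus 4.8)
The plan is to split the four-term chain into the two \emph{inner} inequalities $\mathbbm{m}[\kappa_A] \le k^* \le \mathbbm{m}[\kappa_B]$, which are governed by the mixture interpretation of $k^*$, and the two \emph{outer} inequalities $\lfloor x_A N\rfloor \le \mathbbm{m}[\kappa_A]$ and $\mathbbm{m}[\kappa_B] \le \lceil x_B N\rceil$, which are facts about the location of a single binomial median. First I would record the two ingredients that drive the inner part: (i) the identity $k^* = \mathbbm{m}[\tfrac12\kappa_A+\tfrac12\kappa_B]$ from the remark following Theorem~\ref{thm:BinomialSymmetric}, which is merely a restatement of the defining relation \eqref{thm:SymBinEqk}; and (ii) the comparison $\mathbbm{m}[\kappa_A] \le \mathbbm{m}[\kappa_B]$, which holds because $x_A < x_B$ forces the binomial CDF $F_A(k)=\Pr(\kappa_A \le k)$ to dominate $F_B(k)=\Pr(\kappa_B \le k)$ pointwise (the map $p \mapsto \Pr(\mathrm{Binom}(N,p)\le k)$ is decreasing), so the median shifts right with $p$.

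For the inner pair I would invoke Lemma~\ref{lemma:IntermediacyMedian} (Intermediacy of the median) with $\xi = \kappa_A$, $\eta = \kappa_B$ and mixing parameter $\lambda = \tfrac12$. Using ingredient (ii), the lemma gives $\mathbbm{m}[\kappa_A] \le \mathbbm{m}[\tfrac12\kappa_A+\tfrac12\kappa_B] \le \mathbbm{m}[\kappa_B]$, and ingredient (i) identifies the middle quantity with $k^*$, which is exactly the inner chain.

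For the outer pair I would appeal to the classical localisation of the binomial median (cf.\ \cite{paper:BinomialMedianMode}): for $X\sim\mathrm{Binom}(N,p)$ every median $m$ obeys $\lfloor Np\rfloor \le m \le \lceil Np\rceil$. Taking $p=x_A$ yields $\lfloor x_A N\rfloor \le \mathbbm{m}[\kappa_A]$ and taking $p=x_B$ yields $\mathbbm{m}[\kappa_B] \le \lceil x_B N\rceil$, closing both ends.

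The main obstacle is the uniqueness hypothesis built into Lemma~\ref{lemma:IntermediacyMedian}: a binomial (or the mixture) has a non-unique median precisely when some CDF value equals $\tfrac12$, and there the lemma does not apply verbatim and the symbol $\mathbbm{m}[\cdot]$ must be pinned to a fixed convention. To make the argument robust I would, in those boundary cases, bypass the lemma with a direct CDF comparison. Writing $F_\zeta = \tfrac12(F_A+F_B)$ for the mixture CDF, relation \eqref{thm:SymBinEqk} says exactly $F_\zeta(k^*) > \tfrac12$ and $F_\zeta(k^*-1) \le \tfrac12$; the sandwich $F_B \le F_\zeta \le F_A$ then gives $F_A(k^*) > \tfrac12$ (so $\Pr(\kappa_A \ge k^*+1) < \tfrac12$, whence no median of $\kappa_A$ exceeds $k^*$) and $F_B(k^*-1) \le \tfrac12$ (so $\Pr(\kappa_B \ge k^*) \ge \tfrac12$, whence no median of $\kappa_B$ lies strictly below $k^*$). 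This reproduces the inner inequalities without any uniqueness assumption, and I expect the only genuine care to be in fixing the median convention consistently with the $\nu \in [0,1)$ choice used to define $k^*$, so that the non-strict boundary cases land on the intended side.
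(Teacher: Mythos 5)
Your proposal follows essentially the same route as the paper's proof, which simply invokes Lemma~\ref{lemma:IntermediacyMedian} applied to the mixture $\tfrac12\kappa_A+\tfrac12\kappa_B$ (whose median is $k^*$ by \eqref{thm:SymBinEqk}) together with the binomial median bounds $\lfloor Np\rfloor \le \mathbbm{m}[\kappa] \le \lceil Np\rceil$ from \cite{paper:BinomialMedianMode}. Your extra steps --- verifying the lemma's hypothesis $\mathbbm{m}[\kappa_A]\le\mathbbm{m}[\kappa_B]$ via stochastic dominance, and the direct CDF-sandwich argument that bypasses the lemma's uniqueness assumption in the boundary cases, pinned to the $\nu\in[0,1)$ convention --- are not a different approach but a welcome tightening of details the paper's two-line proof leaves implicit.
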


\begin{proof}
    The statement follows from lemma \ref{lemma:IntermediacyMedian}, and bounds obtained for the median of the Binomial distribution in \cite{paper:BinomialMedianMode}:

    \begin{equation}
    \lfloor x N \rfloor 
    \le 
    \mathbbm{m}[\kappa] 
    \le
    \lceil x N \rceil, 
    \quad
    \kappa \sim \mathrm{Binom}(N,x)
    \end{equation}
    
\end{proof}

\subsubsection{Binomial Bayesian games}

One can define Binomial Bayesian games \InfBG{N,x_A,x_B} with the same triple limit,
and state an analogous theorem:

\begin{theorem}[Binomial Bayesian equilibrium]
\label{thm:BayesianBinomial}
\InfBG{N,x_A,x_B}, where $x_A, x_B \in (0,1)$, $N \in \mathbb{N}$, has converging policies:

\begin{itemize}
    \item \PII/ chooses scenario A or B with probability $P^*$ and $1-P^*$;
    \begin{itemize}
        \item then picks a sequence uniformly from the allowed sequences for scenario A or B.
    \end{itemize}

    \item \PI/ samples $N$ bits uniformly from the provided (infinite) sequence. Based on $k$ -- the number of $\bb$-s -- she determines $p'^*_k \in (0,1)$, and bets in the following way:
    \begin{itemize}
        \item places her capitals $p'^*_k$ portion to A
        \item places her capitals $1-p'^*_k$ portion to B
    \end{itemize}
\end{itemize}

The parameters $(P^*, \{p'^*_k\})$ can be determined from the parameters of the game $(N, x_A, x_B)$:

\begin{equation}
    p_k(A) = \binom{N}{k} x_A^k (1-x_A)^{N-k}, \quad
    p_k(B) = \binom{N}{k} x_B^k (1-x_B)^{N-k}
\end{equation}

\begin{equation}
    \label{eq:BinomialBayesianpp}
    p'^*_k = \frac{P^* \ p_k(A)}{P^* \ p_k(A) + (1-P^*) \ p_k(B)}
\end{equation}

while $P^*$ is the unique minimum of the growth rate difference:

\begin{equation}
    \label{eq:BinomialBayesianP}
    \begin{split}
        \Delta G(P)=&P \ \sum_{k} p_k(A) \log \left ( \frac{P \ p_k(A)}{P \ p_k(A)+(1-P) \ p_k(B)} \right ) + \\
                    &(1-P) \ \sum_{k} p_k(B) \log \left ( \frac{(1-P) \ p_k(B)}{P \  p_k(A)+(1-P) \ p_k(B)} \right )
    \end{split}
\end{equation}

\end{theorem}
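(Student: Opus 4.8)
The plan is to treat \InfBG{N,x_A,x_B} not as a new game to be solved from scratch, but as the $M\to\infty$ limit of the finite games \BG{N,K_A,K_B,M} already solved in Theorem~\ref{thm:Bayesian}, and to exploit the fact that the entire derivation in Section~\ref{sec:EquilibriumSolutionGeneralBayesianGame} is \emph{distribution-agnostic}: it uses only the separable, logarithmic form of the expected growth rate $G=\pi(A)\sum_k p_k(A)\log(2\,p'_k)+\pi(B)\sum_k p_k(B)\log(2(1-p'_k))$ together with the strict positivity of the numbers $p_k(A),p_k(B)$. I would therefore first record the standard convergence of the hypergeometric law to the binomial law: for fixed $N$ and each $k$, $\binom{K}{k}\binom{M-K}{N-k}/\binom{M}{N}\to\binom{N}{k}x^k(1-x)^{N-k}$ whenever $K/M\to x$, and since $k$ ranges over the finite set $\{0,\dots,N\}$ this convergence is uniform in $k$. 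This pins down the limiting distributions in \eqref{eq:BinomialDistribution} and, crucially, shows $p_k(A),p_k(B)>0$ for every $k\in\{0,\dots,N\}$ because $x_A,x_B\in(0,1)$.

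With the binomial $p_k$ in hand I would re-run the three steps of the finite proof. First, pointwise maximisation of $G$ in each $p'_\ell$ gives $\partial_{p'_\ell}G=0\iff p'^*_\ell=\frac{P\,p_\ell(A)}{P\,p_\ell(A)+(1-P)\,p_\ell(B)}$, which is exactly \eqref{eq:BinomialBayesianpp}; this computation never referenced the hypergeometric form, so it transfers verbatim, and strict positivity of the $p_k$ guarantees $p'^*_k\in(0,1)$, keeping \PI/ away from the ruinous all-in bets. Second, substituting $p'^*_k$ yields the one-variable objective $\Delta G(P)$ of \eqref{eq:BinomialBayesianP}, for which $\Delta G(0)=\Delta G(1)=0$. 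Third, I would reuse the closed form $\Delta G''(P)=\sum_k \frac{p_k(A)p_k(B)}{P(1-P)(P\,p_k(A)+(1-P)p_k(B))}$; every summand is strictly positive on $(0,1)$ because all $p_k$ are positive, so $\Delta G$ is strictly convex, and with the vanishing endpoint values this forces a unique interior minimiser $P^*\in(0,1)$. This already establishes existence and uniqueness of the claimed equilibrium for the limit game.

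It then remains to justify the word ``converging'' in the statement, i.e.\ that the finite-game equilibria $(P^*_M,\{p'^*_{k,M}\})$ actually converge to $(P^*,\{p'^*_k\})$, rather than merely that the limit game happens to possess such an equilibrium. Here I would argue through the scalar $P^*$: by the uniform convergence of the $p_k$ and continuity of the functional, $\Delta G'_M(P)\to\Delta G'_\infty(P)$ uniformly on compact subsets of $(0,1)$; each $\Delta G'_M$ is strictly increasing with a single root in $(0,1)$, so the roots $P^*_M$ converge to the unique root $P^*$ of $\Delta G'_\infty$. Convergence of the splitting ratios $p'^*_{k,M}\to p'^*_k$ then follows by feeding $P^*_M\to P^*$ together with $p_k(\cdot)\to$ binomial into the continuous Bayes formula. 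I would also note the contrast with the Fisher case of Theorem~\ref{thm:BinomialSymmetric}: because $\Delta G$ is \emph{strictly} convex and smooth, $P^*(x_A,x_B)$ varies continuously and no discontinuities analogous to the scars $\mathbb{S}_N$ arise.

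The main obstacle is the non-compactness of the continuous policy sets noted earlier: the set is open and the logarithmic utility is unbounded below, so the standard existence and continuity theorems for continuous (or separable) games cannot be invoked to pass to the limit automatically, and one must rule out mass escaping to the boundary, where a player bets all-in and realises a $-\infty$ loss. The key to defusing this is precisely the strict interiority established above: since $x_A,x_B\in(0,1)$ keep every $p_k$ bounded away from $0$, the optimal ratios $p'^*_k$ are bounded away from $\{0,1\}$ uniformly along the sequence, so the equilibria live in a fixed compact subset of the interior and the limit cannot degenerate. The remaining bookkeeping — verifying that the reduction to the single scalar $P^*$ stays legitimate in the limit, and that \PI/'s best response remains the pure, permutation-invariant Bayes policy — is routine given strict convexity.
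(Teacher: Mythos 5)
Your proposal is correct, and its skeleton is the same as the paper's: treat \InfBG{N,x_A,x_B} as the limit of the finite games \BG{N,K_A,K_B,M}, invoke the hypergeometric-to-binomial convergence (the paper's Lemma~\ref{lemma:BinomHyperApprox}), observe that the finite-game equilibrium formulas of Theorem~\ref{thm:Bayesian} transfer verbatim to binomial weights, and finish with a continuity argument carrying the finite-game equilibria to the limit equilibrium. The difference lies in the key lemma used for that last step. The paper promotes the equilibrium data to ``off-shell'' functions $P^\circ(\underline{\underline{p}})$ and $p'^\circ_k(P,\underline{\underline{p}})$ of arbitrary positive weight arrays $\underline{\underline{p}}\in\mathbb{W}_2$ and proves $C^1$ regularity (Lemmas~\ref{lemma:DiffP} and \ref{lemma:Diffpp}), with the implicit function theorem doing the work for $P^\circ$ and the strictly positive second derivative $\Delta G^\circ{}''>0$ supplying the nondegeneracy; convergence is then a one-line composition of continuity with convergence of the distributions. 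You work instead directly with the one-variable equation: $\Delta G'_M\to\Delta G'_\infty$ uniformly on compact subsets of $(0,1)$, each $\Delta G'_M$ strictly increasing (the same convexity input), plus uniform endpoint coercivity coming from the $p_k$ being bounded away from zero, so the unique roots $P^*_M$ converge to the unique root $P^*$, after which the splitting ratios follow from the continuous Bayes formula. Your route is more elementary (no implicit function theorem) and has the virtue of making the non-compactness/all-in danger explicit, which the paper handles only implicitly through the off-shell framework and the finite-game theorem. The paper's route buys strictly more than this theorem needs: genuine differentiability of $P^*$ and $p'^*_k$ in the underlying distributions, which underwrites the later remark that all equilibrium quantities are continuous in $(x_A,x_B)$, and the off-shell machinery is shared with the Binomial Fisher convergence proof. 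One point you should make explicit: for small $i$ the hypergeometric games may have restricted supports $\mathbb{K}_A,\mathbb{K}_B\subsetneq\{0,\dots,N\}$, so your uniform-convergence and monotonicity claims hold ``for $i$ large enough'' (namely once $K_{A,i}$, $M_i-K_{A,i}$, $K_{B,i}$, $M_i-K_{B,i}$ all exceed $N$); this is harmless for the limit but needed for the statements as you phrase them.
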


See Section~\ref{section:BinomialBayesianProof} in Appendix \ref{appendix:BinomialGames} for the build-up of the proof, or see the specific proof at \ref{proof:LimitBayesian}.

\begin{remark}
    All equilibrium quantities are continuous as a function of $x_A,x_B$ for all $N \in \mathbb{N}$,  $0<x_A \le x_B<1$.
    
\end{remark}

\subsection{Examples and Visualization}

\subsubsection{Binomial Fisher games}

\paragraph{Prior plots:} Examples show limiting prior plots $P^*(x_A,x_B)$ in fig. \ref{fig:P_InfG_1_2} and \ref{fig:P_InfG_10_15}.

\begin{figure}[H]
    \centering
    \begin{subfigure}[b]{0.4\textwidth}
        \includegraphics[width=\textwidth]{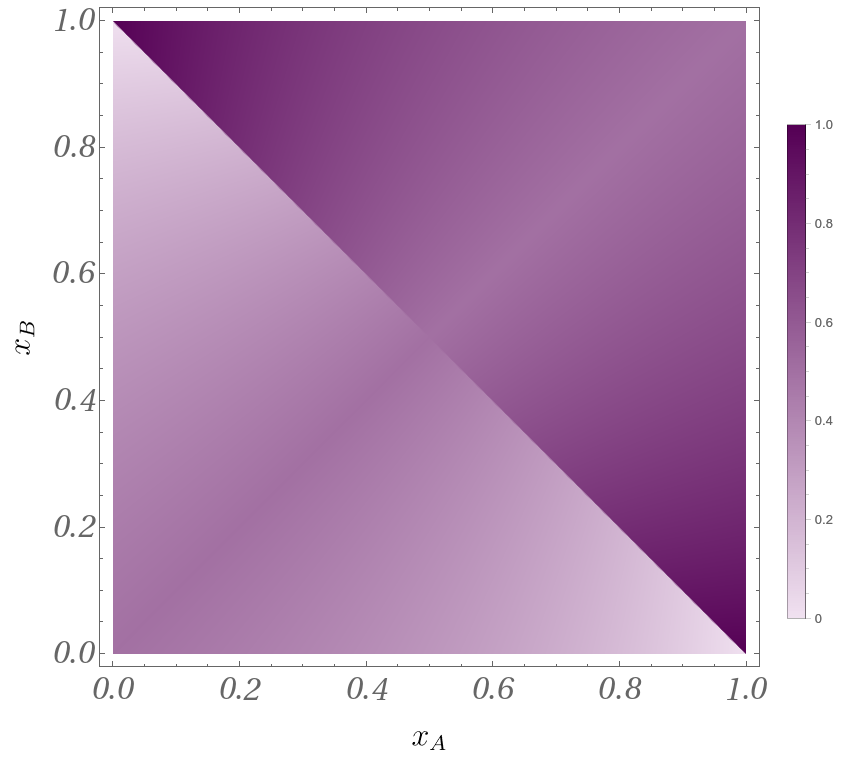}
        \caption{\InfG{1,x_A,x_B}}
        \label{fig:PG_1}
    \end{subfigure}
    \hspace{0.05\textwidth} 
    \begin{subfigure}[b]{0.4\textwidth}
        \includegraphics[width=\textwidth]{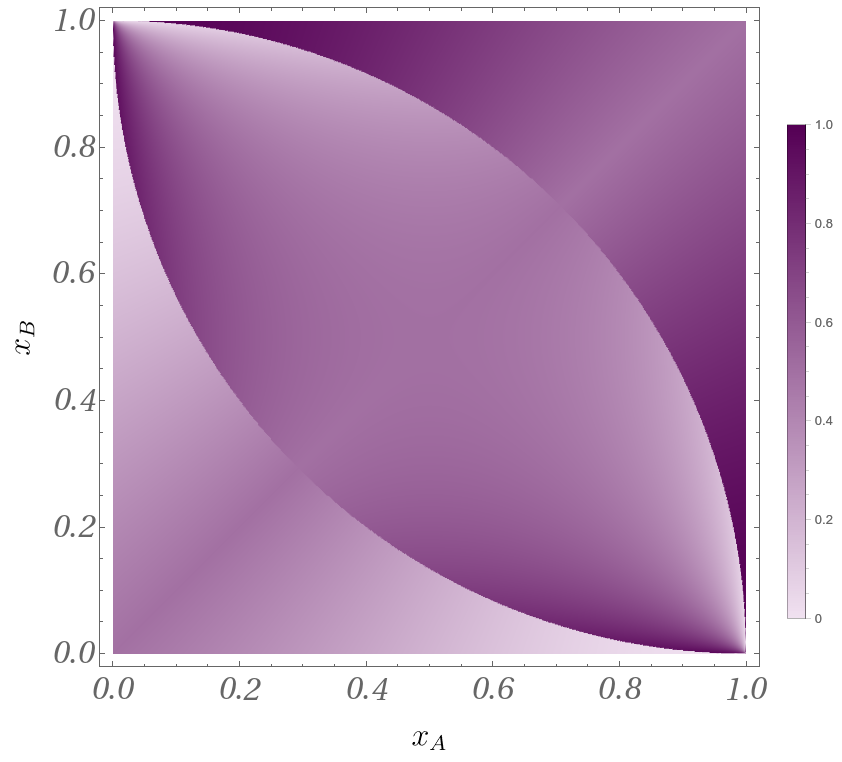}
        \caption{\InfG{2,x_A,x_B}}
        \label{fig:PG_2}
    \end{subfigure}
    
    \caption{$P^*(x_A,x_B)$ for Binomial Fisher games.}
    \label{fig:P_InfG_1_2}
\end{figure}

\begin{figure}[H]
    \centering
    \begin{subfigure}[b]{0.4\textwidth}
        \includegraphics[width=\textwidth]{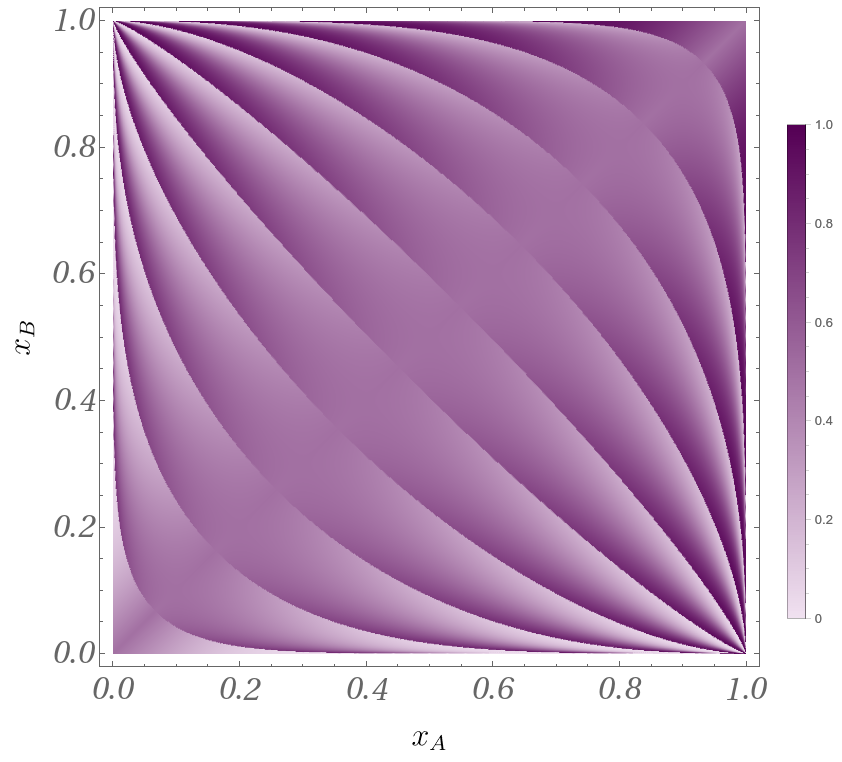}
        \caption{\InfG{10,x_A,x_B}}
        \label{fig:PG_10}
    \end{subfigure}
    \hspace{0.05\textwidth} 
    \begin{subfigure}[b]{0.4\textwidth}
        \includegraphics[width=\textwidth]{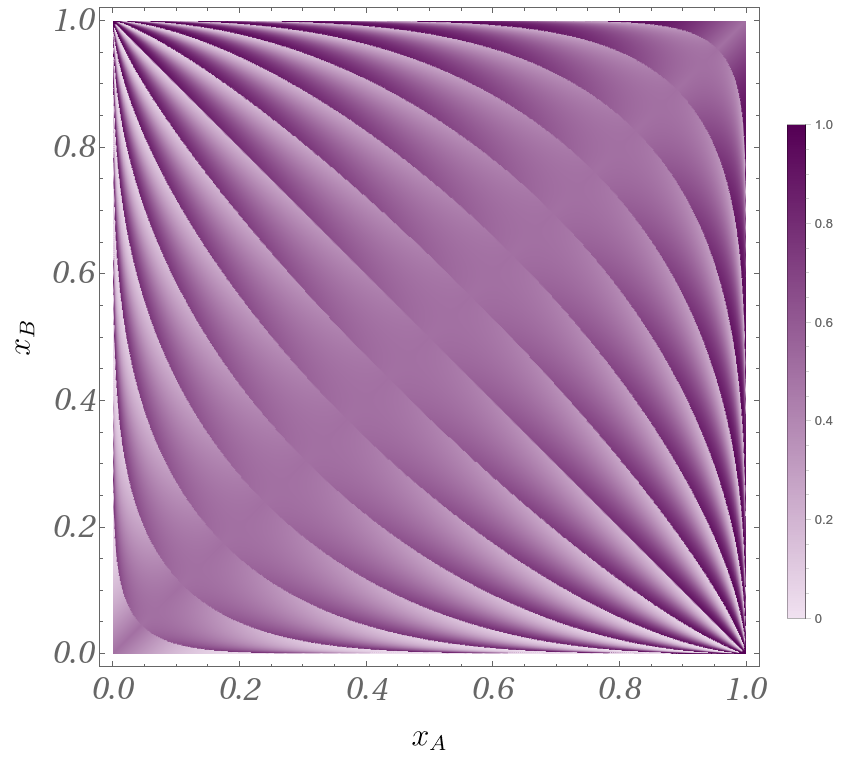}
        \caption{\InfG{15,x_A,x_B}}
        \label{fig:PG_15}
    \end{subfigure}
    
    \caption{$P^*(x_A,x_B)$ for Binomial Fisher games.}
    \label{fig:P_InfG_10_15}
\end{figure}

\paragraph{Policy plots:} Examples show continuous limiting policy plots $s^*(x_A,x_B)$ in fig. \ref{fig:s_InfG_1_2} and \ref{fig:s_InfG_10_15}.

\begin{figure}[H]
    \centering
    \begin{subfigure}[b]{0.4\textwidth}
        \includegraphics[width=\textwidth]{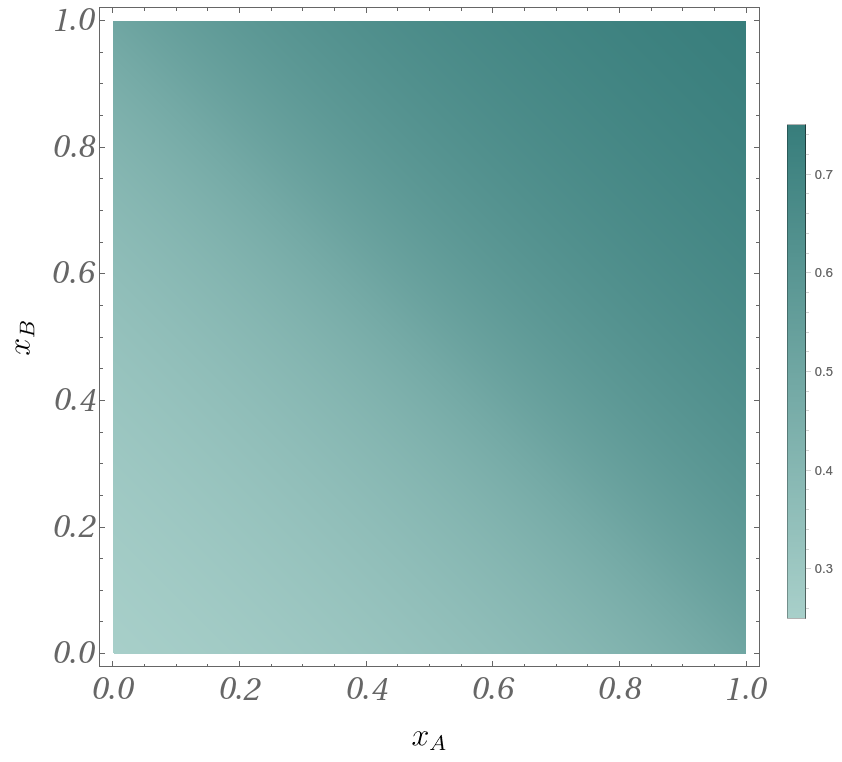}
        \caption{\InfG{1,x_A,x_B}}
        \label{fig:sG_1}
    \end{subfigure}
    \hspace{0.05\textwidth} 
    \begin{subfigure}[b]{0.4\textwidth}
        \includegraphics[width=\textwidth]{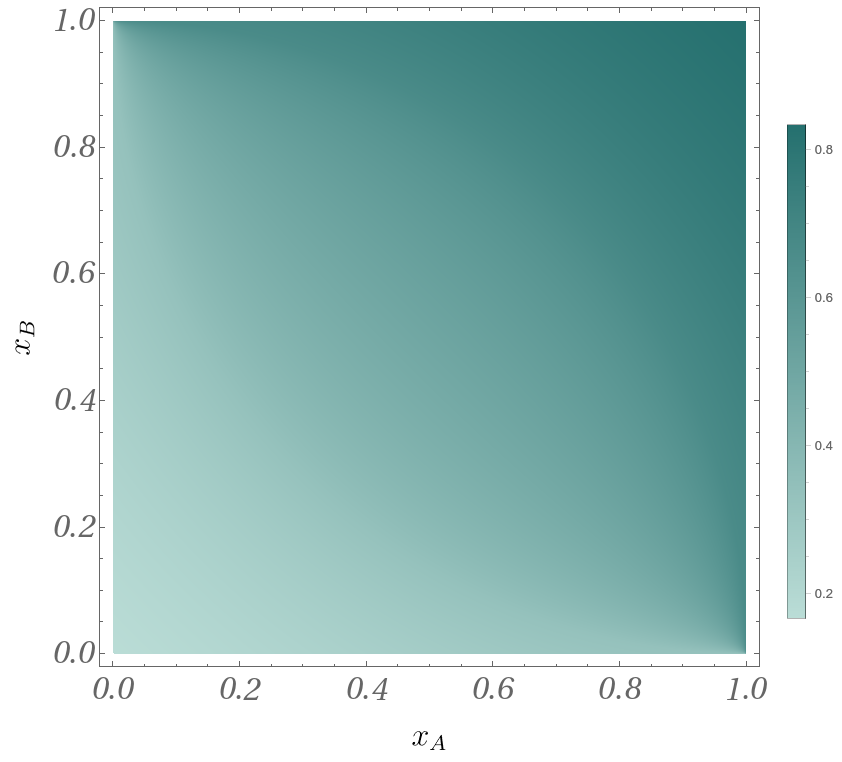}
        \caption{\InfG{2,x_A,x_B}}
        \label{fig:sG_2}
    \end{subfigure}
    
    \caption{$s^*(x_A,x_B)$ for Binomial Fisher games.}
    \label{fig:s_InfG_1_2}
\end{figure}

\begin{figure}[H]
    \centering
    \begin{subfigure}[b]{0.4\textwidth}
        \includegraphics[width=\textwidth]{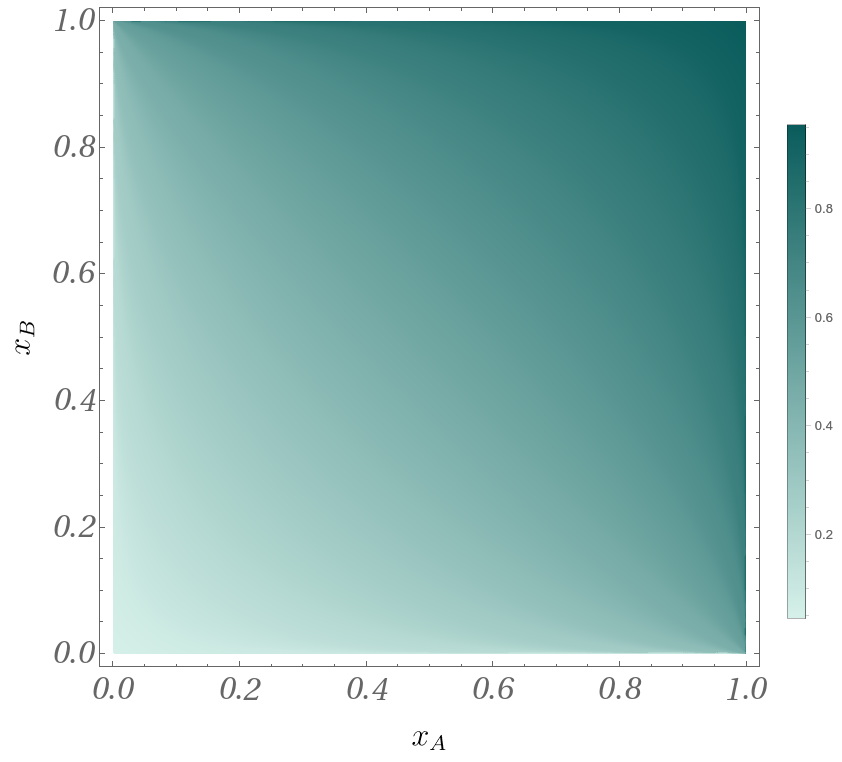}
        \caption{\InfG{10,x_A,x_B}}
        \label{fig:sG_10}
    \end{subfigure}
    \hspace{0.05\textwidth} 
    \begin{subfigure}[b]{0.4\textwidth}
        \includegraphics[width=\textwidth]{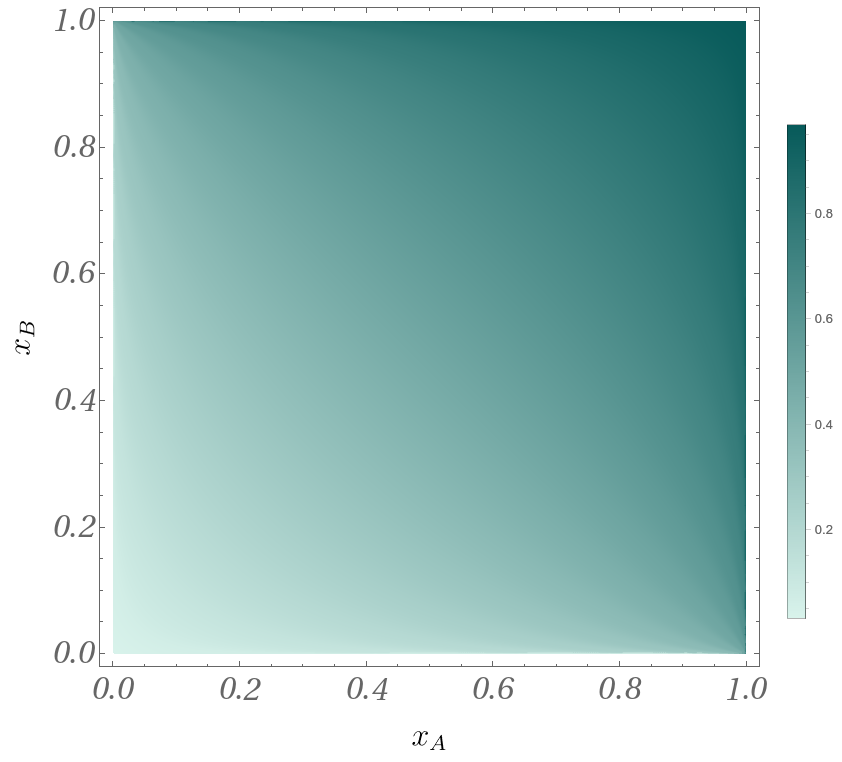}
        \caption{\InfG{15,x_A,x_B}}
        \label{fig:sG_15}
    \end{subfigure}
    
    \caption{$s^*(x_A,x_B)$ for Binomial Fisher games.}
    \label{fig:s_InfG_10_15}
\end{figure}

\subsubsection{Binomial Bayesian games}

\paragraph{Prior plots:} Examples show continuous limiting prior plots $P^*(x_A,x_B)$ in fig. \ref{fig:P_InfBG_1_2} and \ref{fig:P_InfBG_10_15}.

\begin{figure}[H]
    \centering
    \begin{subfigure}[b]{0.4\textwidth}
        \includegraphics[width=\textwidth]{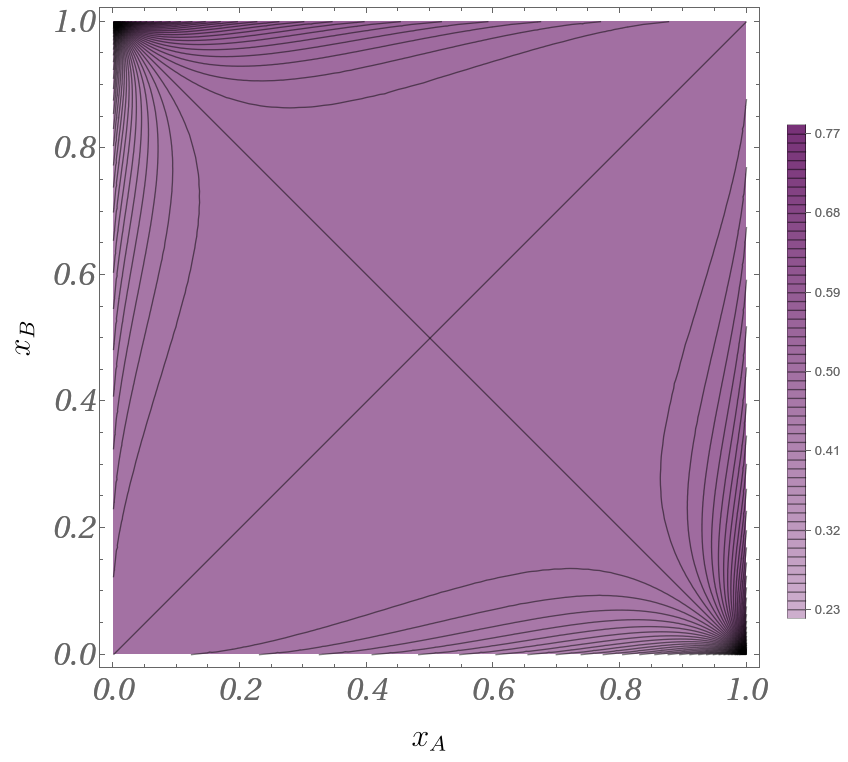}
        \caption{\InfBG{1,x_A,x_B}}
        \label{fig:PBG_1}
    \end{subfigure}
    \hspace{0.05\textwidth} 
    \begin{subfigure}[b]{0.4\textwidth}
        \includegraphics[width=\textwidth]{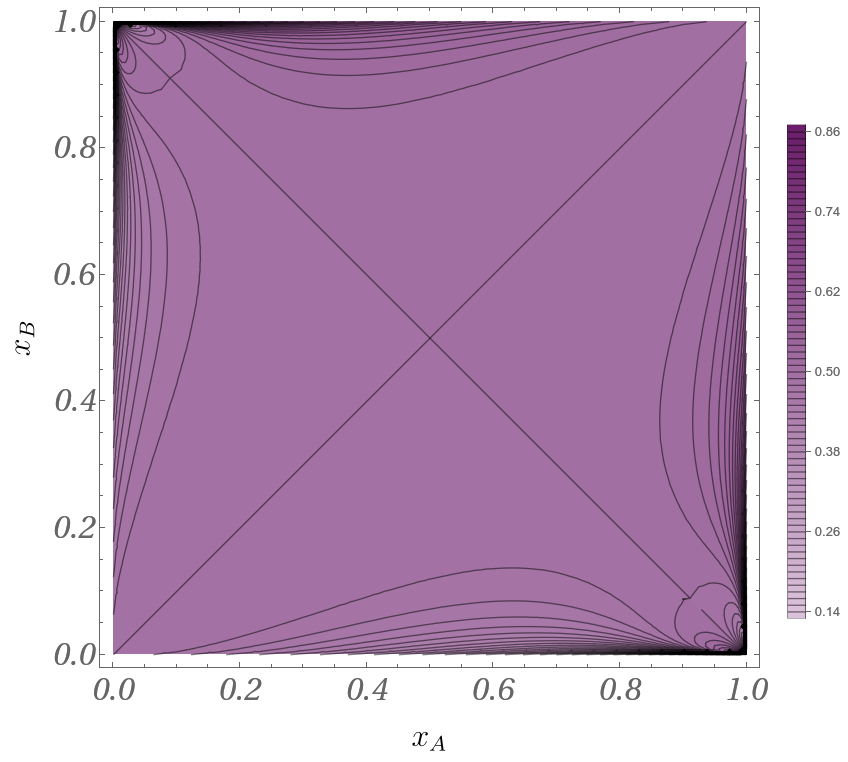}
        \caption{\InfBG{2,x_A,x_B}}
        \label{fig:PBG_2}
    \end{subfigure}
    
    \caption{$P^*(x_A,x_B)$ for Binomial Bayesian games. Contour lines show $1\%$ difference.}
    \label{fig:P_InfBG_1_2}
\end{figure}

\begin{figure}[H]
    \centering
    \begin{subfigure}[b]{0.4\textwidth}
        \includegraphics[width=\textwidth]{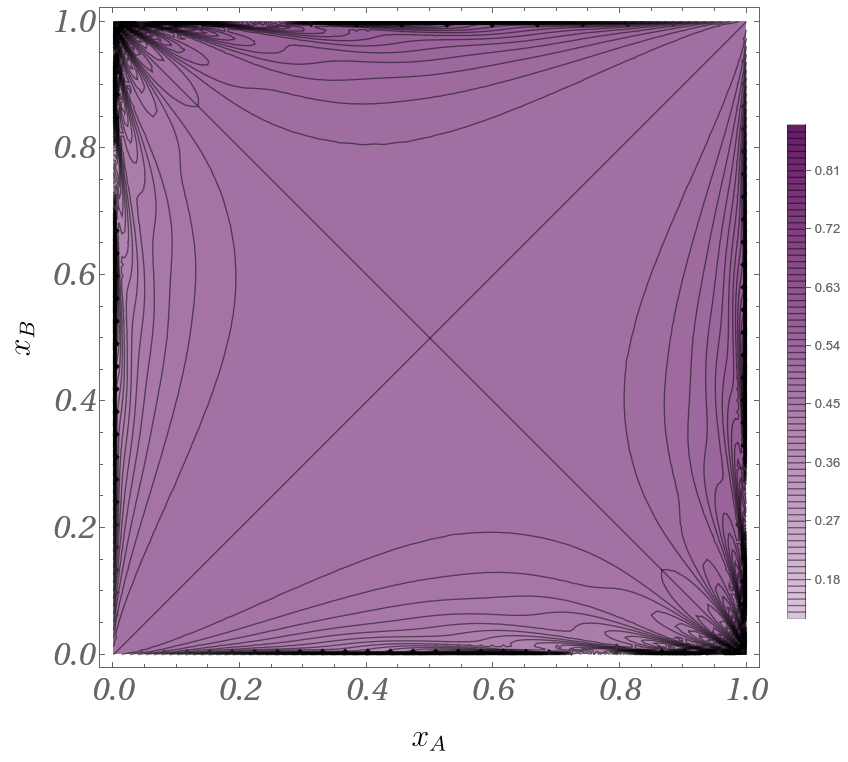}
        \caption{\InfBG{10,x_A,x_B}}
        \label{fig:PBG_10}
    \end{subfigure}
    \hspace{0.05\textwidth} 
    \begin{subfigure}[b]{0.4\textwidth}
        \includegraphics[width=\textwidth]{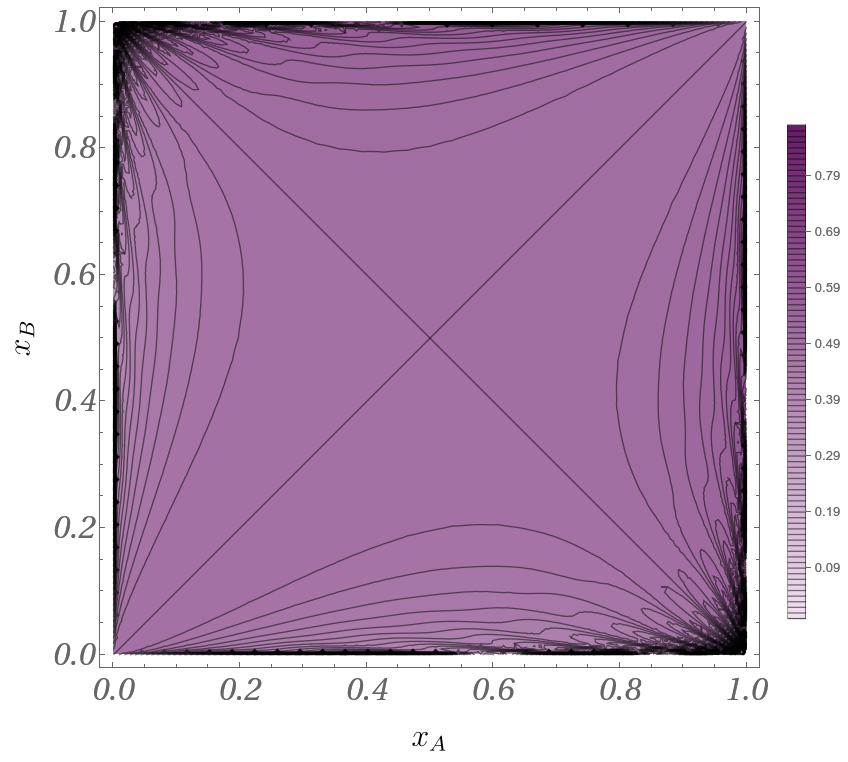}
        \caption{\InfBG{15,x_A,x_B}}
        \label{fig:PBG_15}
    \end{subfigure}
    
    \caption{$P^*(x_A,x_B)$ for Binomial Bayesian games. Contour lines show $1\%$ difference.}
    \label{fig:P_InfBG_10_15}
\end{figure}

\paragraph{Policy plots:} Examples show continuous limiting policy plots i.e. splitting ratios $p'^*_k(x_A,x_B)$ in fig. \ref{fig:ppk_InfBG_1_01}, \ref{fig:ppk_InfBG_2_012} and \ref{fig:ppk_InfBG_6_0123456}.

\begin{figure}[H]
    \centering
    \begin{subfigure}[b]{0.4\textwidth}
        \includegraphics[width=\textwidth]{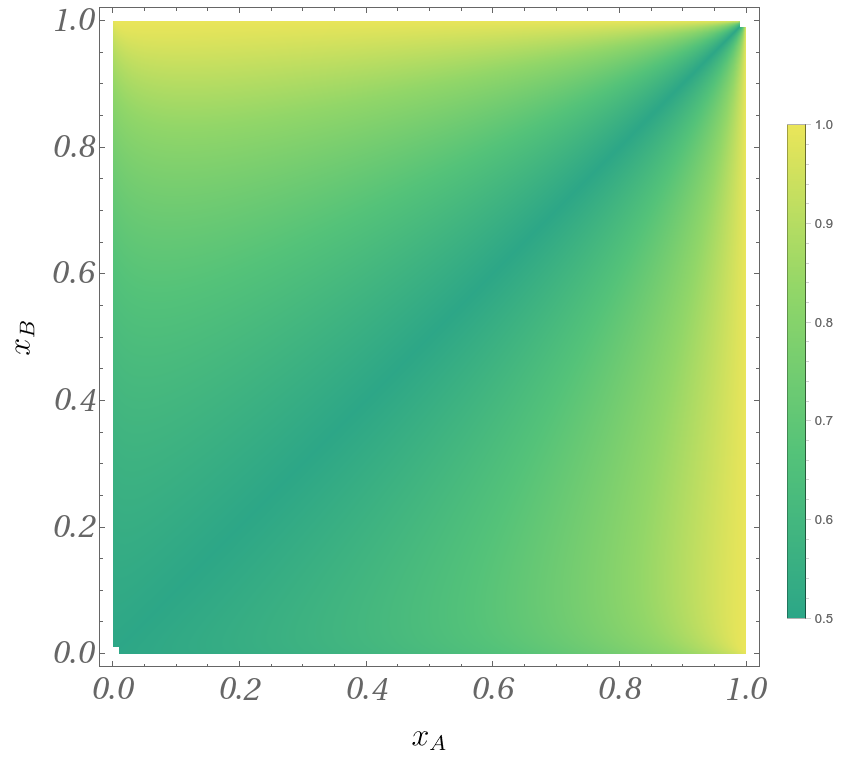}
        \caption{$k=0$}
        \label{fig:ppkBG_1_0}
    \end{subfigure}
    \hspace{0.05\textwidth} 
    \begin{subfigure}[b]{0.4\textwidth}
        \includegraphics[width=\textwidth]{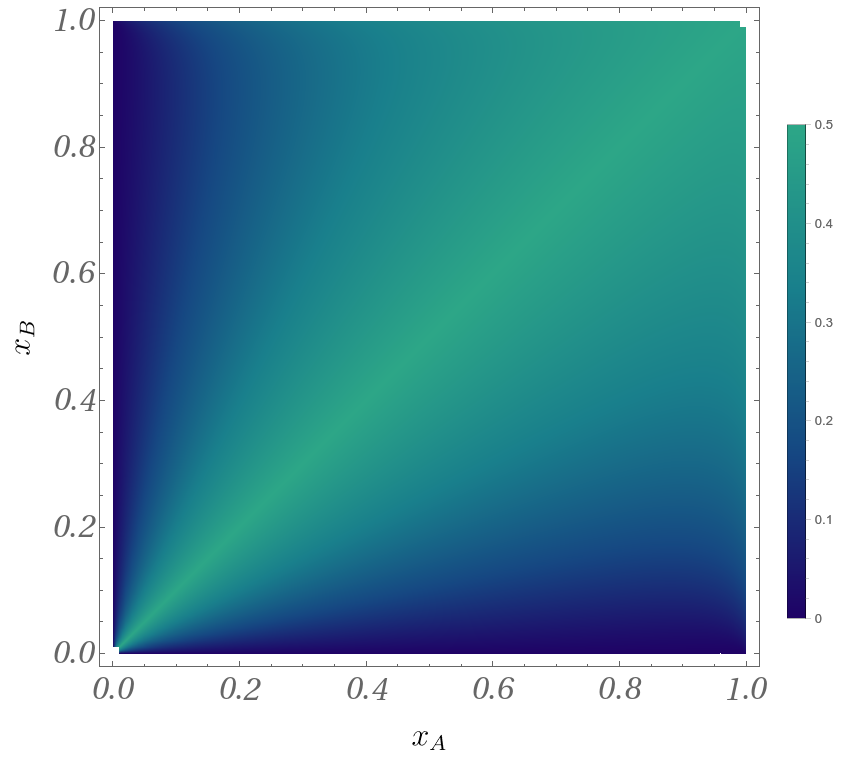}
        \caption{$k=1$}
        \label{fig:ppkBG_1_1}
    \end{subfigure}
    
    \caption{$p'^*_k(x_A,x_B)$ for \InfBG{1,x_A,x_B}.}
    \label{fig:ppk_InfBG_1_01}
\end{figure}

\begin{figure}[H]
    \centering
    \begin{subfigure}[b]{0.3\textwidth}
        \includegraphics[width=\textwidth]{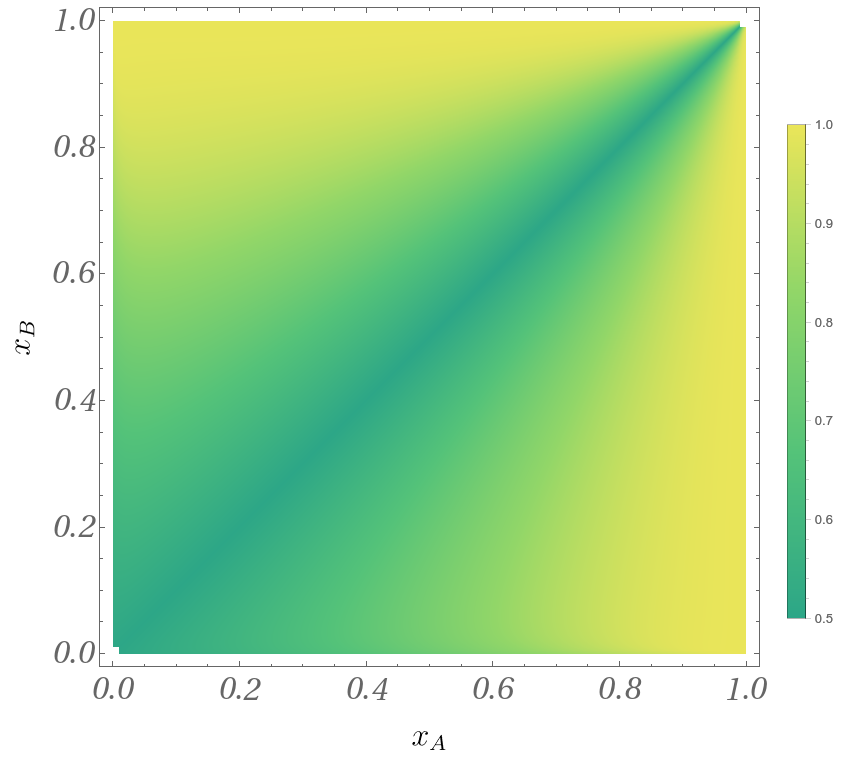}
        \caption{$k=0$}
        \label{fig:ppkBG_2_0}
    \end{subfigure}
    \hspace{0.01\textwidth} 
    \begin{subfigure}[b]{0.3\textwidth}
        \includegraphics[width=\textwidth]{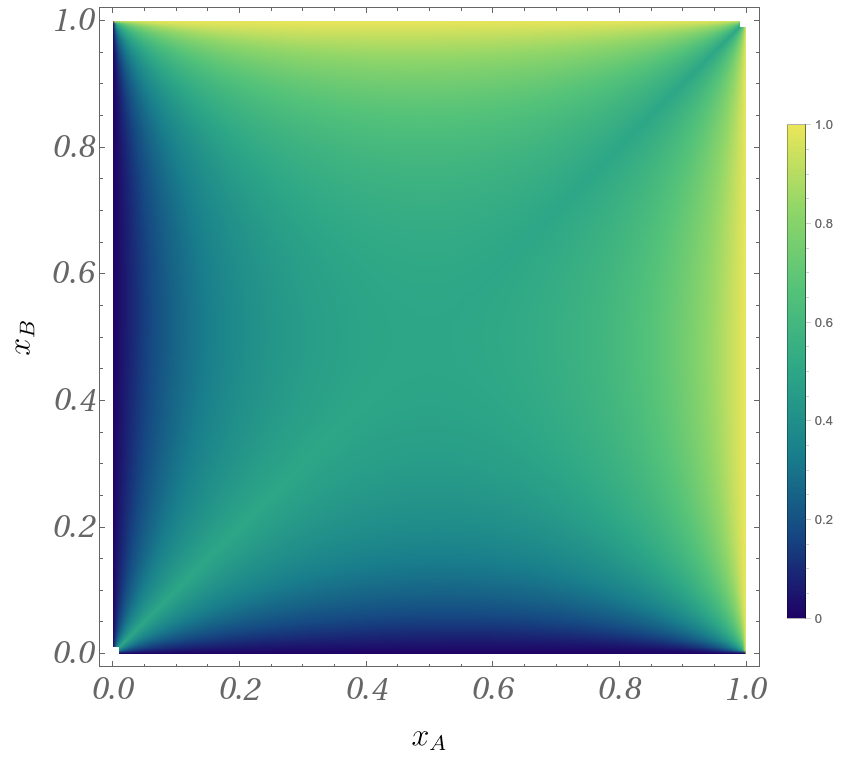}
        \caption{$k=1$}
        \label{fig:ppkBG_2_1}
    \end{subfigure}
    \hspace{0.01\textwidth} 
    \begin{subfigure}[b]{0.3\textwidth}
        \includegraphics[width=\textwidth]{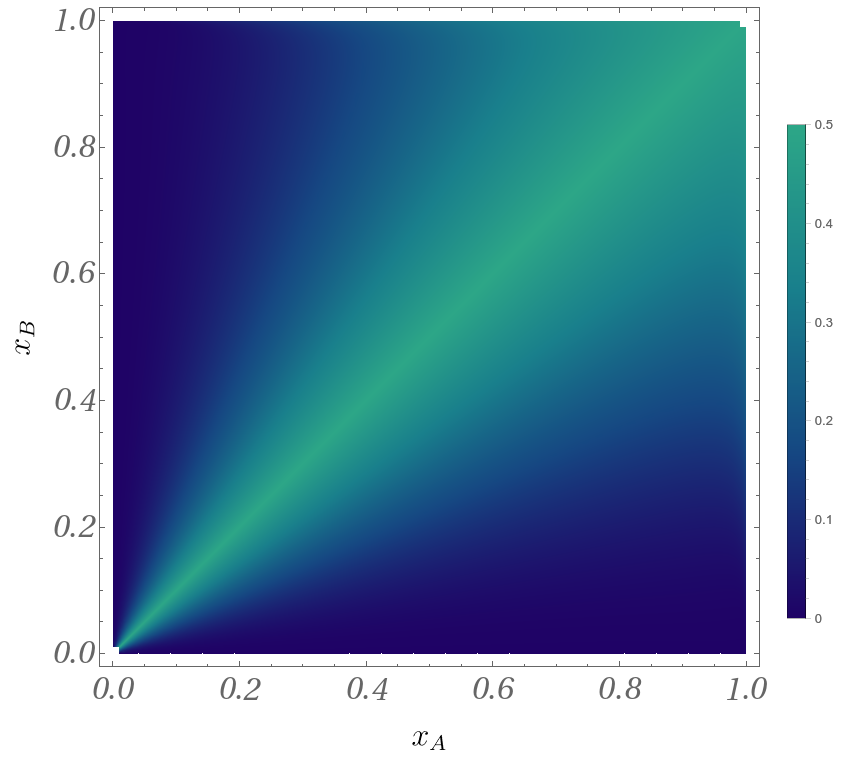}
        \caption{$k=2$}
        \label{fig:ppkBG_2_2}
    \end{subfigure}
    
    \caption{$p'^*_k(x_A,x_B)$ for \InfBG{2,x_A,x_B}.}
    \label{fig:ppk_InfBG_2_012}
\end{figure}

\begin{figure}[H]
    \centering
    \begin{subfigure}[b]{0.130\textwidth}
        \includegraphics[width=\textwidth]{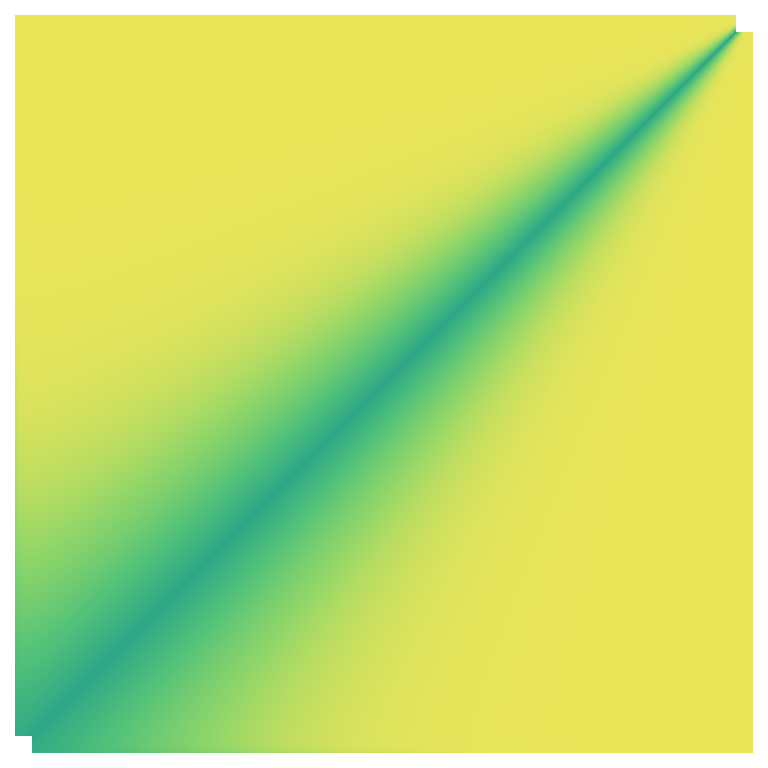}
        \caption{$k=0$}
        \label{fig:ppkBG_6_0}
    \end{subfigure}
    \hspace{0.00\textwidth} 
    \begin{subfigure}[b]{0.130\textwidth}
        \includegraphics[width=\textwidth]{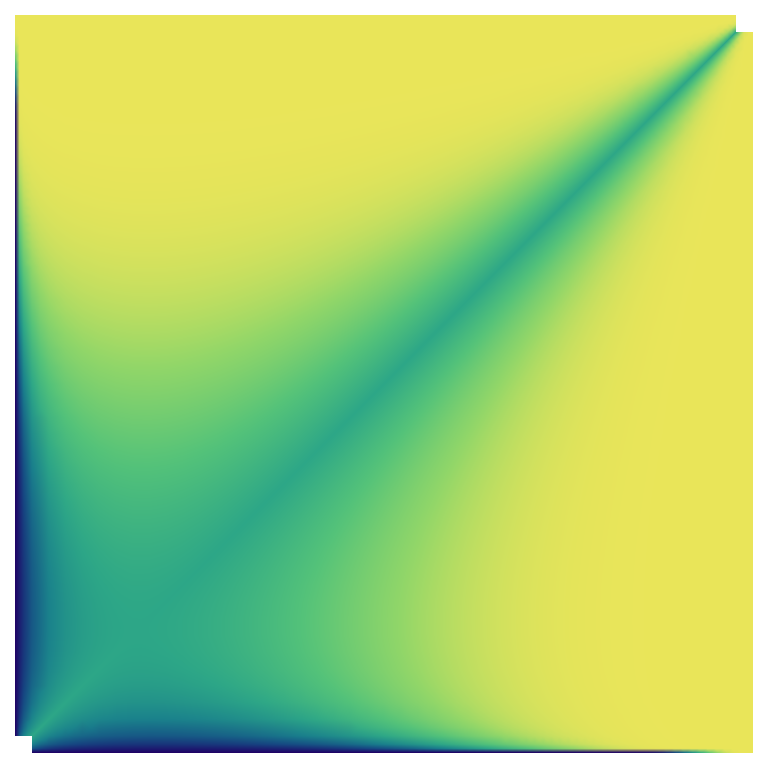}
        \caption{$k=1$}
        \label{fig:ppkBG_6_1}
    \end{subfigure}
    \hspace{0.00\textwidth} 
    \begin{subfigure}[b]{0.130\textwidth}
        \includegraphics[width=\textwidth]{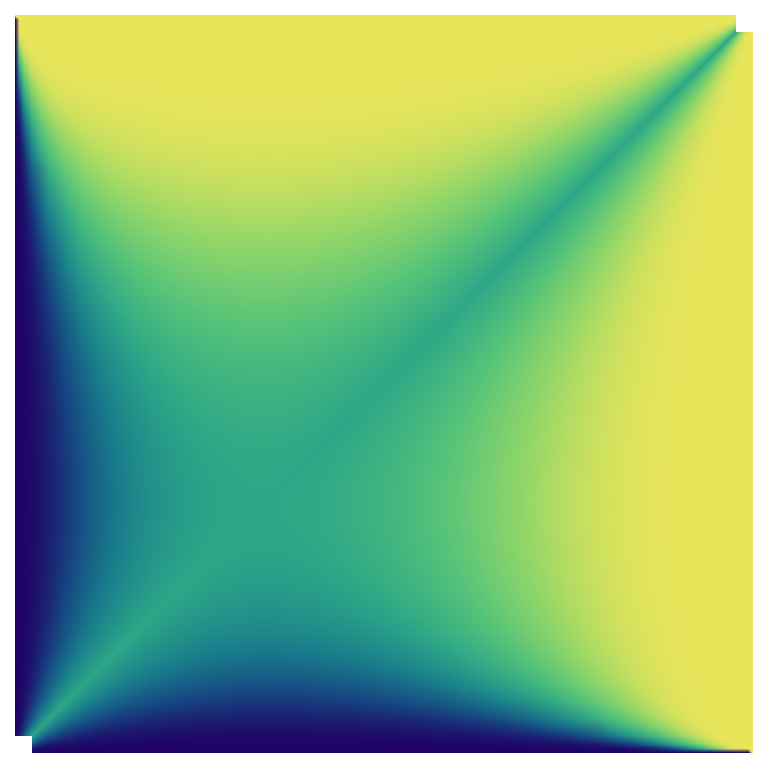}
        \caption{$k=2$}
        \label{fig:ppkBG_6_2}
    \end{subfigure}
    \hspace{0.00\textwidth} 
        \begin{subfigure}[b]{0.130\textwidth}
        \includegraphics[width=\textwidth]{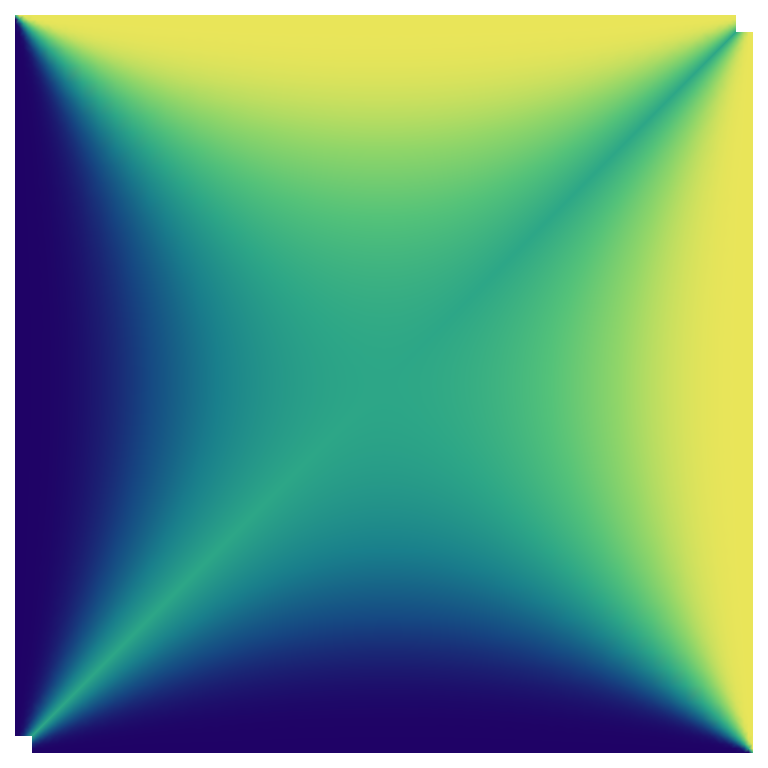}
        \caption{$k=3$}
        \label{fig:ppkBG_6_3}
    \end{subfigure}
    \hspace{0.00\textwidth} 
        \begin{subfigure}[b]{0.130\textwidth}
        \includegraphics[width=\textwidth]{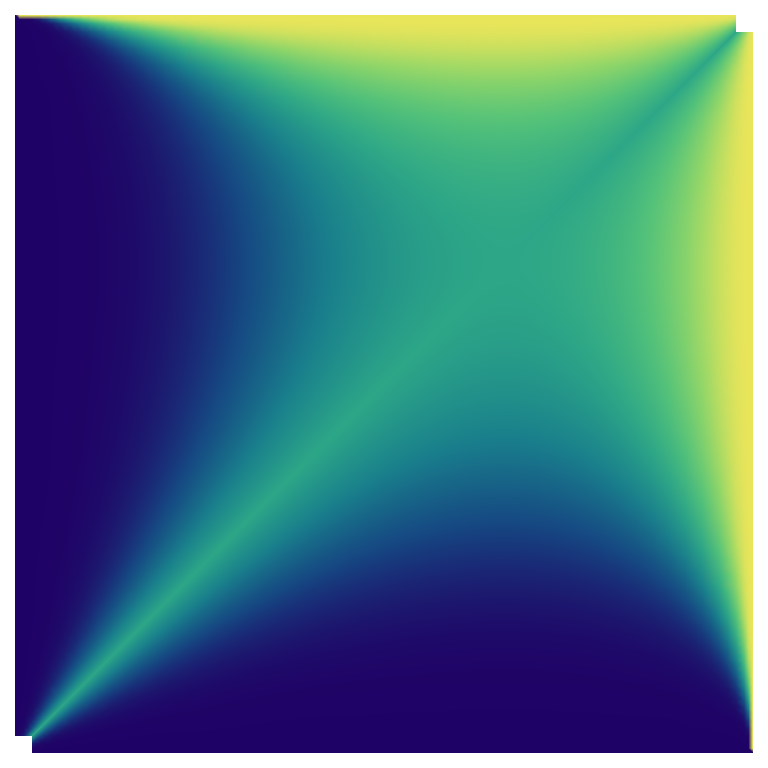}
        \caption{$k=4$}
        \label{fig:ppkBG_6_4}
    \end{subfigure}
    \hspace{0.00\textwidth} 
        \begin{subfigure}[b]{0.130\textwidth}
        \includegraphics[width=\textwidth]{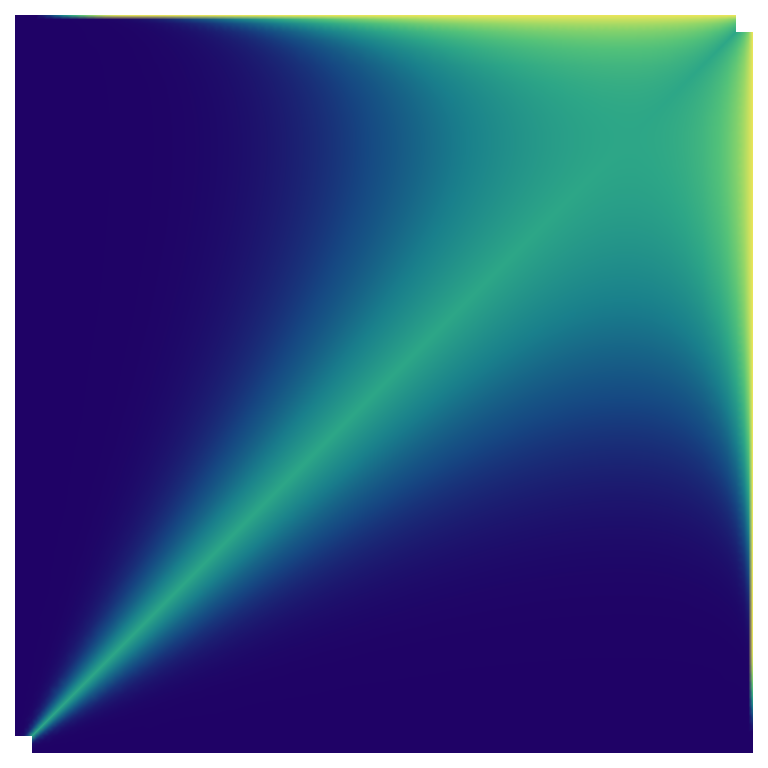}
        \caption{$k=5$}
        \label{fig:ppkBG_6_5}
    \end{subfigure}
    \hspace{0.00\textwidth} 
        \begin{subfigure}[b]{0.130\textwidth}
        \includegraphics[width=\textwidth]{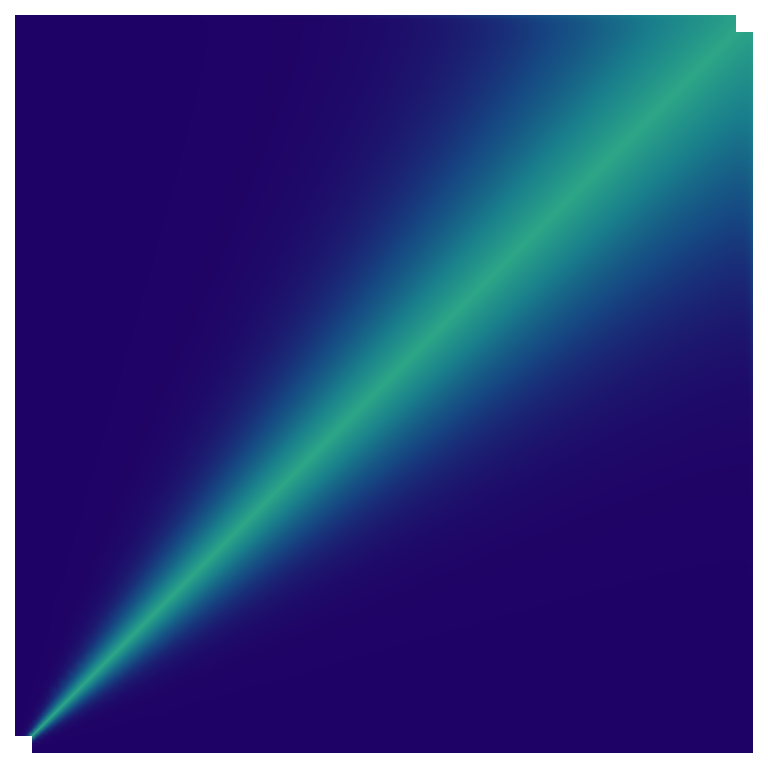}
        \caption{$k=6$}
        \label{fig:ppkBG_6_6}
    \end{subfigure}
    \hspace{0.00\textwidth} 
    
    \caption{$p'^*_k(x_A,x_B)$ for \InfBG{6,x_A,x_B}.
    (Axes and the color scale have the same meaning as in figures \ref{fig:ppk_InfBG_1_01} and \ref{fig:ppk_InfBG_2_012}.)
    }
    \label{fig:ppk_InfBG_6_0123456}
\end{figure}

\subsubsection{Interpretation}

\paragraph{Emergent probabilities:}

Here, I want to emphasize that the distributions appearing in the construction $p_k(A)$ and $p_k(B)$ do not represent independently introduced ``stochastic'' variables. 
The imagined distribution of the number of $\bb$-s in the sample emerges from the joint mixing (or randomization) of \PI/ and \PII/.

In the limit $M \to \infty$, sampling and sequence choosing strategies do not converge to a proper distribution, while the imagined distributions of $k$ do converge. 
This is why the emergent binomial distributions $p_k(A)$ and $p_k(B)$ remain a useful part of the description of the limit case but are fundamentally only a consequence of the mixed strategies of both players.

``The importance of randomization in applied statistics can scarcely
be exaggerated. From the personalistic viewpoint it is one of the most
important ways to bring groups of people into virtual unanimity; from
the objectivistic viewpoint it not only makes possible great reductions
in maximum loss, but it is seen as an invention by which the theory of
probability is brought to bear on situations to which probability on
first (objectivistic) sight would seem irrelevant.''
\cite{book:Savage}

\subsection{Limiting policies for $N \to \infty$}

\subsubsection{Fisher policy limit}

\paragraph{Notation:}

\begin{equation}
    x_0^*(x_A,x_B) = \frac{\log \left ( \frac{1-x_A}{1-x_B} \right )}{\log \left ( \frac{(1-x_A) x_B}{(1-x_B) x_A} \right )}
\end{equation}

\begin{theorem}[Binomial Fisher limiting policy]
\label{thm:BinomialFisherLimitingPolicy}
    For any $0 < x_A < x_B < 1$ the equilibrium policy of \PI/ in a Binomial Fisher game converges to:
    \begin{equation}
        \lim_{N \to \infty} s^*_N(x_A,x_B) = 
        s^*_\loopedsquare(x_A,x_B) =  x_0^*(x_A,x_B)
    \end{equation}
    meaning that for fixed $x_A,x_B$:
    \begin{equation}
        \lim_{N \to \infty} \frac{k^*_N + \nu^*_N}{N} =
        \lim_{N \to \infty} \frac{k^*_N}{N} =
        s^*_\loopedsquare =
        x_0^*
    \end{equation}
\end{theorem}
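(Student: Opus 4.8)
The plan is to identify $k^*_N$ as the point where two large-deviation tails cross, and to read off its leading-order location from the binomial rate functions. By \eqref{thm:SymBinEqk}, $k^*$ is the smallest integer with $\sum_{k\le k^*}\big(p_k(A)+p_k(B)\big)>1$; writing $F_\theta(k)=\Pr(\kappa_\theta\le k)$ this is $F_A(k^*)+F_B(k^*)>1$, which rearranges to $\Pr(\kappa_B\le k^*)>\Pr(\kappa_A\ge k^*+1)$. Since $F_A+F_B$ is nondecreasing in $k$, the quantity $F_A(k)+F_B(k)-1$ is nondecreasing, so
\begin{equation}
    k^* = \min\{\, k : \Pr(\kappa_B\le k) > \Pr(\kappa_A\ge k+1)\,\}
\end{equation}
is precisely the threshold at which the left tail of $\kappa_B$ overtakes the right tail of $\kappa_A$; in particular $\Pr(\kappa_B\le k)\le\Pr(\kappa_A\ge k+1)$ forces $k<k^*$, while the reverse inequality forces $k\ge k^*$. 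Theorem~\ref{thm:BinomialKMedianBounds} already gives $\lfloor x_A N\rfloor\le k^*\le\lceil x_B N\rceil$, so $k^*/N$ is confined to the ``valley'' $(x_A,x_B)$ where both tails are exponentially small and the comparison is delicate.

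Next I would introduce the binomial rate functions
\begin{equation}
    I_\theta(x) = x\log\frac{x}{x_\theta} + (1-x)\log\frac{1-x}{1-x_\theta}, \qquad \theta\in\{A,B\},
\end{equation}
and record the elementary but decisive fact that their difference is affine and strictly increasing,
\begin{equation}
    g(x) := I_A(x)-I_B(x) = x\log\frac{x_B}{x_A} + (1-x)\log\frac{1-x_B}{1-x_A}, \qquad g'(x) = \log\frac{x_B(1-x_A)}{x_A(1-x_B)} > 0 .
\end{equation}
A one-line computation (solve $x\log\frac{x_B}{x_A}=(1-x)\log\frac{1-x_A}{1-x_B}$) shows that the unique zero of $g$ is exactly $x=x_0^*(x_A,x_B)$, the value in the statement; thus $I_A(x)<I_B(x)$ for $x<x_0^*$ and $I_A(x)>I_B(x)$ for $x>x_0^*$. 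This is the geometric heart of the theorem: the two tails cross precisely where the rate functions agree. Since $g(x_A)=-I_B(x_A)<0$ and $g(x_B)=I_A(x_B)>0$, we also get $x_0^*\in(x_A,x_B)$, consistent with the a priori localisation.

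The analytic input is the Chernoff/Cramér estimate for binomial deviations: $\tfrac1N\log\Pr(\kappa_\theta\ge m_N)\to -I_\theta(x)$ whenever $m_N/N\to x>x_\theta$, and $\tfrac1N\log\Pr(\kappa_\theta\le m_N)\to -I_\theta(x)$ whenever $m_N/N\to x<x_\theta$ (equivalently via Stirling applied to the dominant term). Fix $x\in(x_0^*,x_B)$ and set $k_N=\lfloor xN\rfloor$; then $I_A(x)>I_B(x)$ gives $\tfrac1N\log\big(\Pr(\kappa_A\ge k_N+1)/\Pr(\kappa_B\le k_N)\big)\to -g(x)<0$, so $\Pr(\kappa_A\ge k_N+1)<\Pr(\kappa_B\le k_N)$ for all large $N$, whence $k^*\le\lfloor xN\rfloor$ and $\limsup_N k^*/N\le x$; letting $x\downarrow x_0^*$ yields $\limsup_N k^*/N\le x_0^*$. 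Symmetrically, for $x\in(x_A,x_0^*)$ one has $I_A(x)<I_B(x)$, hence $\Pr(\kappa_A\ge k_N+1)>\Pr(\kappa_B\le k_N)$ for large $N$, so $k^*>\lfloor xN\rfloor$ and $\liminf_N k^*/N\ge x$; letting $x\uparrow x_0^*$ gives $\liminf_N k^*/N\ge x_0^*$. Therefore $k^*_N/N\to x_0^*$. Finally $\nu^*\in[0,1)$ is bounded, so $\nu^*_N/N\to0$ and $\nu^*_N/(N+1)\to0$, and hence both $(k^*_N+\nu^*_N)/N$ and $s^*_N=(k^*_N+\nu^*_N)/(N+1)$ converge to the same limit $x_0^*$, as claimed.

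The main obstacle is purely the bookkeeping around the large-deviation estimate: one must confirm that the sub-exponential prefactors and the harmless shift between $\Pr(\kappa_A\ge k)$ and $\Pr(\kappa_A\ge k+1)$ cannot move the crossing at the scale of $N$. This is guaranteed precisely because $g$ is strictly monotone with an isolated zero, so for every fixed $x\ne x_0^*$ the two tails differ by a genuine exponential factor $e^{\mp N|g(x)|+o(N)}$ that swamps any polynomial correction; crucially, no uniform large-deviation control near $x_0^*$ is required, since $x$ is driven to $x_0^*$ only after the limit $N\to\infty$ has been taken.
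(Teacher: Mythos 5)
Your proof is correct (modulo the standard Chernoff/Stirling tail facts you cite) and rests on the same core mechanism as the paper's: compare the right tail of $\kappa_A$ with the left tail of $\kappa_B$ on the exponential scale, and locate the limit of $k^*_N/N$ at the crossing point $x_0^*$ where the two rate functions agree. The execution differs in a way worth recording. The paper's argument (Appendix \ref{appendix:Limiting}, which the paper itself flags as a proof \emph{attempt}) plugs the two defining inequalities of $k^*_N$ into explicit non-asymptotic two-sided tail bounds with $1/\sqrt{2N}$-type prefactors, squeezes $I(k^*_N/N,x_A)-I(k^*_N/N,x_B)\to 0$, and then passes --- implicitly, via continuity and strict monotonicity of the difference --- to $k^*_N/N\to x_0^*$. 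You instead observe that the left-hand side of \eqref{thm:SymBinEqk} is nondecreasing in $k^*$, so that $k^*_N$ is a threshold for the condition $\Pr(\kappa_B\le k)>\Pr(\kappa_A\ge k+1)$, and you trap it between deterministic sequences $\lfloor xN\rfloor$ with $x$ fixed on either side of $x_0^*$; this needs only pointwise exponential asymptotics at fixed $x\ne x_0^*$ and, as you rightly emphasize, no uniform large-deviation control near the crossing. Your route is the more elementary and self-contained one: there is no prefactor bookkeeping, and the step the paper leaves implicit (from the squeeze to the limit) is replaced by your explicit observation that $g=I_A-I_B$ is affine with strictly positive slope and unique zero $x_0^*\in(x_A,x_B)$. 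What the paper's squeeze buys in exchange is a quantitative statement: evaluating the bounds at $k^*_N$ itself gives $|I_A(k^*_N/N)-I_B(k^*_N/N)|=\mathcal{O}(\log N/N)$, hence a convergence rate $|k^*_N/N-x_0^*|=\mathcal{O}(\log N/N)$, which your trapping argument does not directly provide.
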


The proof will be presented in Appendix \ref{appendix:Limiting}.

\paragraph{Tabulated values:}

See tabulated explicit numerical values in Table~\ref{tab:FisherPolicyLimit}.

\begin{table}[H]
\centering
\begin{tabular}{|c|c|c|c|c|c|c|c|c|}
\hline
$x_A \backslash x_B$ & $20\%$ & $30\%$ & $40\%$ & $50\%$ & $60\%$ & $70\%$ & $80\%$ & $90\%$ \\
\hline
$10\%$ & 0.1452 & 0.1862 & 0.2263 & 0.2675 & 0.3116 & 0.3608 & 0.4197 & 0.5000 \\
$20\%$ &  & 0.2477 & 0.2933 & 0.3390 & 0.3869 & 0.4391 & 0.5000 & 0.5803 \\
$30\%$ &  &  & 0.3489 & 0.3971 & 0.4467 & 0.5000 & 0.5609 & 0.6392 \\
$40\%$ &  &  &  & 0.4497 & 0.5000 & 0.5533 & 0.6131 & 0.6884 \\
$50\%$ &  &  &  &  & 0.5503 & 0.6029 & 0.6610 & 0.7325 \\
$60\%$ &  &  &  &  &  & 0.6511 & 0.7067 & 0.7737 \\
$70\%$ &  &  &  &  &  &  & 0.7523 & 0.8138 \\
$80\%$ &  &  &  &  &  &  &  & 0.8548 \\
\hline
\end{tabular}
\caption{Binomial Fisher policy limit $s^*_\loopedsquare(x_A,x_B)$ up to 4 digits.}
\label{tab:FisherPolicyLimit}
\end{table}

\paragraph{Visualization:}

See the visualization of $s^*_\loopedsquare(x_A,x_B)$ in figure \ref{fig:FisherPolicyLimit}.

\begin{figure}[H]
    \centering
    \includegraphics[width=12 cm]{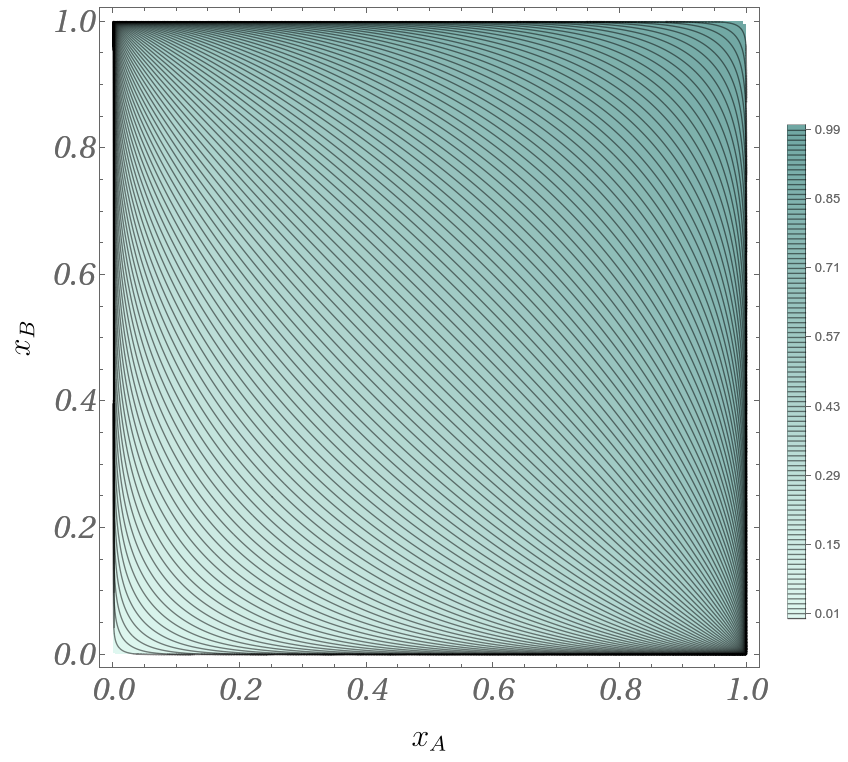}
    \caption{Binomial Fisher policy limit $s^*_\loopedsquare(x_A,x_B)$. Contour lines show $1\%$ difference.}
    \label{fig:FisherPolicyLimit}
\end{figure}

\begin{remark}
    Because of Theorem~\ref{thm:FiniteKMedianBounds}, we can easily get upper and lower bounds for $s^*_\loopedsquare$:

    \begin{equation}
        \lim_{N \to \infty} \frac{\lfloor N x_A \rfloor}{N} 
        =
        x_A \le s^*_\loopedsquare(x_A,x_B) \le x_B
        =
        \lim_{N \to \infty} \frac{\lceil N x_B \rceil}{N}
\end{equation}

\end{remark}

\subsubsection{Binomial Fisher limiting prior bounds}

\begin{conjecture}[Binomial Fisher limiting prior bounds]
\label{conj:FisherLimitingPriorBounds}
    The prior of Binomial Fisher games does not converge as $N \to \infty$, but it has finite upper and lower bounds in the limit:

    \begin{equation}
        \limsup\limits_{N \to \infty} P^*_N(x_A,x_B) = \overline{P}^*_\loopedsquare(x_A,x_B)
    \end{equation}

    \begin{equation}
        \liminf\limits_{N \to \infty} P^*_N(x_A,x_B) = \underline{P}^*_\loopedsquare(x_A,x_B)
    \end{equation}

    The conjectured explicit expression for the bounds can be found in equation \ref{deriv:liminfPN} and \ref{deriv:limsupPN}.
    
\end{conjecture}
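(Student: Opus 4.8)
\section*{Proof proposal for Conjecture~\ref{conj:FisherLimitingPriorBounds}}

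The plan is to reduce the whole statement to the oscillation of a single integer sequence and then analyse that oscillation. The starting point is an \emph{exact} closed form for $P^*_N$. Writing $C = \log\frac{x_A(1-x_B)}{x_B(1-x_A)}$ (which is negative since $x_A<x_B$), the binomial likelihood ratio in \eqref{eq:BinomialDistribution} satisfies
\[
\log\frac{p_k(A)}{p_k(B)} = k\log\frac{x_A}{x_B} + (N-k)\log\frac{1-x_A}{1-x_B} = C\,(k - N x_0^*),
\]
where the term linear in $N$ cancels \emph{precisely} because $x_0^*$ is the ratio defined just before Theorem~\ref{thm:BinomialFisherLimitingPolicy}. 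Substituting into \eqref{eq:SymBinEqP} gives the exact identity
\[
P^*_N = \frac{1}{1 + e^{\,C\,(k^*_N - N x_0^*)}},
\]
and on the scars $(x_A,x_B)\in\mathbb{S}_N$ the two branches $\underline P^*,\overline P^*$ of \eqref{eq:SymBinEqPud} are this same formula evaluated at $k^*_N$ and $k^*_N-1$. Since $t\mapsto (1+e^{Ct})^{-1}$ is continuous and, because $C<0$, strictly increasing, the problem collapses entirely to determining $\liminf$ and $\limsup$ of the real sequence $\varepsilon_N := k^*_N - N x_0^*$.

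Second, I would pin down $k^*_N$ to $o(1)$ accuracy. By the median characterisation \eqref{thm:SymBinEqk}, $k^*_N$ is the least integer $k$ with $F_B(k) > \Pr(\kappa_A > k)$, where $F_B(k)=\Pr(\kappa_B\le k)$; equivalently it is the least integer exceeding the real crossover point $\tilde k_N$ at which the left tail of $\kappa_B$ meets the right tail of $\kappa_A$. In the large-deviation window $k \approx N x_0^*$ each tail is dominated by its extreme term times a convergent geometric series, with ratios $r_A=\frac{(1-x_0^*)x_A}{x_0^*(1-x_A)}<1$ and $r_B=\frac{x_0^*(1-x_B)}{(1-x_0^*)x_B}<1$ (both strict because $x_A<x_0^*<x_B$ by the bounds of Theorem~\ref{thm:BinomialKMedianBounds}). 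Balancing $\frac{p_{k+1}(A)}{1-r_A}=\frac{p_k(B)}{1-r_B}$ and using the exact ratio above yields $\tilde k_N = N x_0^* + \tilde\varepsilon_\infty + o(1)$ with the $N$-independent constant $\tilde\varepsilon_\infty = \tfrac1C\log\frac{1-r_A}{r_A(1-r_B)}$. Hence $k^*_N = \lceil \tilde k_N\rceil$ and $\varepsilon_N = \tilde\varepsilon_\infty + \bigl(1-\{N x_0^* + \tilde\varepsilon_\infty\}\bigr) + o(1)$.

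Third, I would read off the bounds. For $x_0^*$ irrational, Weyl's equidistribution theorem makes $\{N x_0^* + \tilde\varepsilon_\infty\}$ equidistributed in $[0,1)$, so $\varepsilon_N$ is dense in $[\tilde\varepsilon_\infty,\tilde\varepsilon_\infty+1]$; in particular $P^*_N$ does not converge, and $\liminf\varepsilon_N=\tilde\varepsilon_\infty$, $\limsup\varepsilon_N=\tilde\varepsilon_\infty+1$. Feeding these into the exact formula, the factors $e^{C\tilde\varepsilon_\infty}=\frac{x_B(x_0^*-x_A)}{x_A(x_B-x_0^*)}$ and $e^{C(\tilde\varepsilon_\infty+1)}=\frac{(1-x_B)(x_0^*-x_A)}{(1-x_A)(x_B-x_0^*)}$ collapse to the closed forms
\[
\underline P^*_\loopedsquare = \frac{x_A\,(x_B-x_0^*)}{x_0^*\,(x_B-x_A)}, \qquad \overline P^*_\loopedsquare = \frac{(1-x_A)\,(x_B-x_0^*)}{(1-x_0^*)\,(x_B-x_A)},
\]
which one checks satisfy $0<\underline P^*_\loopedsquare<\overline P^*_\loopedsquare<1$ and should coincide with the conjectured expressions \ref{deriv:liminfPN} and \ref{deriv:limsupPN}.

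The hard part is twofold. First, the tail balance must be made rigorous with \emph{uniform} control of the $o(1)$ error in $\tilde k_N$: an error that does not decay faster than the distance from $N x_0^*+\tilde\varepsilon_\infty$ to the nearest integer can flip the ceiling $\lceil\tilde k_N\rceil$ and corrupt the extreme values, so sharp Stirling/Bahadur--Rao type estimates for the binomial tails are needed, together with an argument that the integer-crossing instants form a vanishing fraction. Second, and more seriously, the clean envelope is attained only when $x_0^*(x_A,x_B)$ is irrational; for rational $x_0^*$ the orbit $\{N x_0^*\}$ is finite, equidistribution fails, and the genuine $\liminf/\limsup$ may lie strictly inside $[\underline P^*_\loopedsquare,\overline P^*_\loopedsquare]$, while the $N$-dependent scar membership $(x_A,x_B)\in\mathbb{S}_N$ must be tracked simultaneously. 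Resolving this Diophantine dependence uniformly over all $(x_A,x_B)$ is, I expect, exactly the gap that keeps the statement a conjecture rather than a theorem.
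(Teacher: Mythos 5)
Your proposal is correct, and your closed forms agree with the paper's conjectured bounds: converting the log-odds in \eqref{deriv:liminfPN}, \eqref{deriv:limsupPN} to probabilities (using $e^{\alpha_A\beta}=\tfrac{x_0^*(1-x_A)}{(1-x_0^*)x_A}$ and $e^{\alpha_B\beta}=\tfrac{x_0^*(1-x_B)}{(1-x_0^*)x_B}$) gives exactly your $\underline{P}^*_\loopedsquare=\tfrac{x_A(x_B-x_0^*)}{x_0^*(x_B-x_A)}$ and $\overline{P}^*_\loopedsquare=\tfrac{(1-x_A)(x_B-x_0^*)}{(1-x_0^*)(x_B-x_A)}$. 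But your route is genuinely different from the paper's. The paper keeps $P^*_N$ and $\nu^*_N$ coupled through the binomial tails, invokes an asymptotic expansion for the tail ratios $T_A(k^*_N+1,N)/T_A(k^*_N,N)\to e^{-I'_A(k^*_N/N)}$ (Lemma \ref{lemma:TA(k+1)/TA(k)}, built on a cited expansion of the binomial tail), and then obtains the two bounds by letting the residual factor sweep over $\nu^*\in[0,1)$; attainment of the bounds is left entirely conjectural. You instead exploit the exact cancellation $\log\left(p_k(A)/p_k(B)\right)=C\,(k-Nx_0^*)$ to collapse \eqref{eq:SymBinEqP} into the exact identity $P^*_N=\left(1+e^{C(k^*_N-Nx_0^*)}\right)^{-1}$, reducing the whole question to the oscillation of the single sequence $k^*_N-Nx_0^*$; the geometric tail balance then localizes $k^*_N$ as $\lceil Nx_0^*+\tilde\varepsilon_\infty+o(1)\rceil$, and Weyl equidistribution of $\{N x_0^*\}$ supplies precisely the sharpness mechanism the paper lacks. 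The two parametrizations of the oscillation are equivalent (your fractional part $1-\{N x_0^*+\tilde\varepsilon_\infty\}$ is in bijection with the paper's $\nu^*_N$), so each approach certifies the same confinement; what yours buys is an actual argument that both endpoints are accumulation points, i.e.\ the content that makes the conjecture a conjecture, while the paper's tail-ratio lemma is machinery it reuses for the Bayesian asymptotics. Your caveat about rational $x_0^*$ is also a genuine refinement the paper glosses over: when $x_0^*\in\mathbb{Q}$ the orbit $\{N x_0^*\}$ is finite, the liminf/limsup can sit strictly inside $[\underline{P}^*_\loopedsquare,\overline{P}^*_\loopedsquare]$, and the conjecture as stated would need an irrationality or genericity hypothesis on $(x_A,x_B)$ — worth flagging explicitly, together with the uniform control of the $o(1)$ term near integer crossings, as the two obstructions to promoting this from conjecture to theorem.
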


A derivation supporting this conjecture is presented in Appendix \ref{appendix:Limiting}.

\subsubsection{Binomial Bayesian limiting prior}

\paragraph{Notation:}

A phase factor can be introduced to characterize the asymptotic behaviour of equilibrium quantities:

\begin{equation}
    \varphi = 2 \pi N x_0^* \mod 2 \pi
\end{equation}

\begin{conjecture}[Binomial Bayesian limiting prior]
\label{conj:BayesianLimitPrior}

For a general \InfBG{N,x_A,x_B} the prior $P^*_N(x_A,x_B)$ converges to a periodic function $P^{*,\varphi}_\loopedsquare$, depending on the phase factor $\varphi = \varphi_N(x_A,x_B)$ as $N \to \infty$:

\begin{equation}
    P^*_N(x_A,x_B) = P^{*,\varphi_N(x_A,x_B)}_\loopedsquare(x_A,x_B) + \mathcal{O}(1/N)
\end{equation}

The implicit expression for the conjectured asymptotic log-odds $\vartheta^{*,\varphi}_\loopedsquare$ can be found in equations \eqref{eq:AsymptoticBayesianLogOdds01}, \eqref{eq:AsymptoticBayesianLogOdds02}, \eqref{eq:AsymptoticBayesianLogOdds03}.

\end{conjecture}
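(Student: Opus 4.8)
The plan is to reduce the transcendental minimization defining $P^*_N$ to a fixed-point equation in the log-odds variable $\vartheta = \log\frac{P}{1-P}$, and then to analyze that equation asymptotically by localizing it to an $O(1)$ window of indices $k$ around the likelihood crossover $k = N x_0^*$. By Theorem~\ref{thm:BayesianBinomial}, $P^*_N$ is the unique minimizer of $\Delta G(P)$ in \eqref{eq:BinomialBayesianP}, and by the strict convexity established there it is the unique zero of $\Delta G'(P)$ in \eqref{eq:DeltaGPrime(P)}. The first step is to record the clean identity
\[
\Delta G'(P) = \sum_k p_k(A)\log r_k - \sum_k p_k(B)\log(1-r_k), \qquad r_k = \frac{P\,p_k(A)}{P\,p_k(A)+(1-P)\,p_k(B)},
\]
obtained by cancelling the two binomial-peak contributions $\log P - H_A$ and $\log(1-P) - H_B$ against $\log\frac{P}{1-P} - \Delta H$. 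Writing $\Lambda_k = \log\frac{p_k(A)}{p_k(B)}$ and using that for binomials this log-likelihood ratio is \emph{exactly linear}, $\Lambda_k = -L\,(k - N x_0^*)$ with $L = \log\frac{(1-x_A)x_B}{(1-x_B)x_A}$ and $x_0^*$ the crossover abscissa of Theorem~\ref{thm:BinomialFisherLimitingPolicy}, the condition $\Delta G'(P)=0$ becomes
\[
\sum_k p_k(A)\,\log\!\big(1+e^{-\vartheta-\Lambda_k}\big) = \sum_k p_k(B)\,\log\!\big(1+e^{\vartheta+\Lambda_k}\big).
\]

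The second step is to localize. I set $N x_0^* = n_0 + \delta$ with $n_0 = \lfloor N x_0^*\rfloor$ and $\delta = \{N x_0^*\} = \varphi/2\pi$, and index $k = n_0 + j$, so that $\Lambda_{k} = -L(j-\delta)$. Away from either binomial peak the envelope is geometric: $\log p_{n_0+j}(\theta) = \log p_c + (j-\delta)\log r_\theta + O((j-\delta)^2/N)$, where $p_c$ is the common value $p_{Nx_0^*}(A)=p_{Nx_0^*}(B)$ and $r_\theta$ is the limiting neighbour ratio at the crossover, with $r_A/r_B = e^{-L}$. The soft-plus weights are of order $1$ only on an $O(1)$ window of $j$, and the geometric envelope makes the tails summable; hence, dividing both sides by $p_c$ (common to both, so it cancels), the equation converges as $N\to\infty$ to the $\delta$-indexed lattice-sum identity
\[
\sum_{j\in\mathbb{Z}} r_A^{\,j-\delta}\log\!\big(1+e^{-\vartheta+L(j-\delta)}\big) = \sum_{j\in\mathbb{Z}} r_B^{\,j-\delta}\log\!\big(1+e^{\vartheta-L(j-\delta)}\big),
\]
whose unique root is $\vartheta^{*,\varphi}_\loopedsquare$; this is the content of the implicit equations referenced in the conjecture, and periodicity in $\varphi$ is immediate because the shift $\delta\mapsto\delta+1$ merely reindexes $j\mapsto j+1$.

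The third step is error control and transfer back to $P^*$. I would bound the gap between the finite-$N$ balance equation and its limit through three sources: the $O((j-\delta)^2/N)$ curvature corrections to the geometric envelope, which contribute $O(1/N)$ after summation since the window is concentrated; the tail beyond the window, which is $O(e^{-cN})$; and the Stirling/local-limit corrections to $p_c$, which are identical on both sides and cancel. Because the rescaled curvature $\Delta G''(P)/p_c$ is bounded below by a positive constant uniformly in $N$ and $\delta$, the $O(1/N)$ perturbation of the balance function displaces its root by only $O(1/N)$; applying the Lipschitz logistic map $P = e^\vartheta/(1+e^\vartheta)$ then yields $P^*_N = P^{*,\varphi_N}_\loopedsquare + O(1/N)$, as claimed.

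I expect the decisive obstacle to lie in Step three: proving that the rescaled curvature $\Delta G''/p_c$ stays bounded away from $0$ \emph{uniformly in the phase} $\delta\in[0,1)$, so that non-degeneracy of the root is not lost for phases placing every integer $k$ far from the exact crossover, and verifying that the peak cancellation producing the clean form of $\Delta G'$ leaves a genuinely $O(1/N)$ remainder rather than a larger one. Establishing the geometric-envelope approximation with an honest, summable remainder — in effect a local limit theorem in the large-deviation regime with explicit next-order control — is the main technical burden; once that is secured, the fixed-point and implicit-function argument closing the estimate is routine.
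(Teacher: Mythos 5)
Your derivation is sound at the same level of rigour as the paper's own supporting argument (the statement is a conjecture, and the paper's Appendix~\ref{appendix:Limiting} derivation explicitly disclaims being a proof), but you reach the limiting equation by a genuinely different route. You stay in ``direct'' space: exploiting the exact linearity of the binomial log-likelihood ratio, you localize the balance condition $\Delta G_A(P)=\Delta G_B(P)$ (your $\Delta G'(P)=0$ is the same condition, by the envelope identity $\Delta(P)=\Delta G_A(P)-\Delta G_B(P)=\Delta G'(P)$ recorded in the paper's Bayesian-equilibrium proof) to an $O(1)$ window around $n_0=\lfloor Nx_0^*\rfloor$, and your limit object is a lattice sum over $j\in\mathbb{Z}$ carrying the phase through the fractional part $\delta=\varphi/2\pi$. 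The paper instead works in Fourier space (Section~\ref{deriv:BinomialBayesianAsymptotics}): Stirling's formula, rewriting $\sum_k$ as a Dirac-comb integral, the Poisson summation formula, interchange of $\sum_m$ and $\int$, and a Laplace expansion of each harmonic around $x_0^*$, yielding the series with coefficients $C_\theta(\alpha_\theta-2\pi i m/\beta)$ in \eqref{eq:AsymptoticBayesianLogOdds02}--\eqref{eq:AsymptoticBayesianLogOdds03}. The two limiting implicit equations are Poisson duals of one another; concretely, for the $A$-side (and analogously for $B$),
\begin{equation*}
    \sum_{j\in\mathbb{Z}} e^{-\alpha_A\beta(j-\delta)}\log\bigl(1+e^{-\vartheta+\beta(j-\delta)}\bigr)
    \;=\;
    -\frac{1}{\beta}\,e^{-\alpha_A\vartheta}\sum_{m\in\mathbb{Z}} C_A\!\left(\alpha_A-\frac{2\pi i m}{\beta}\right) e^{\,i m\varphi+2\pi i m\vartheta/\beta},
    \qquad \varphi = 2\pi\delta,
\end{equation*}
so equating your two lattice sums is exactly the fixed-point equation \eqref{eq:AsymptoticBayesianLogOdds01}. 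This is the one point where your write-up has a genuine (if small) hole: you \emph{assert} that your lattice-sum identity ``is the content of the implicit equations referenced in the conjecture,'' but as written it is a different-looking equation, and the identification requires precisely this Poisson-summation step, i.e.\ computing the Fourier transform of your geometric-envelope-times-softplus summand and recognizing the paper's analytically continued $C_\theta$. As for what each route buys: yours makes $2\pi$-periodicity in $\varphi$ and absolute convergence of the limit sums (from $0<\alpha_A<1$, $\alpha_B=\alpha_A-1$) manifest, and avoids distributional manipulations, the sum--integral interchange, and analytic continuation, so it is the better platform for upgrading the conjecture to a theorem; the paper's Fourier form directly delivers the closed-form zeroth-harmonic ($m=0$) approximation $P^\approx_\loopedsquare$ of Remark~\ref{remark:BayesianLimitPrior}, the first-order term of Conjecture~\ref{conj:AsymptoticExpansionSubleadingTerm}, and the extension to isoelastic games. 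The obstacles you flag in your step three are real but no worse than what the paper leaves open; note that the uniform-in-$\delta$ non-degeneracy you worry about does hold, since $\Delta G''(P)$ is a sum of positive terms and the term with $|j-\delta|\le 1$ alone contributes $\Theta(p_c)$ uniformly in $\delta\in[0,1)$, so the rescaled curvature $\Delta G''/p_c$ is bounded below.
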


A derivation to support this conjecture is presented in Appendix \ref{appendix:Limiting}.

\begin{remark}[Binomial Bayesian limiting prior approximation]
\label{remark:BayesianLimitPrior}

There is a finite 
$P^\approx_\loopedsquare$ for \InfBG{N,x_A,x_B}, which is a ``good approximation'' of $P^*_N(x_A,x_B)$ as $N \to \infty$:

\begin{equation}
    P^\approx_\loopedsquare(x_A,x_B) = \frac{ \log \left ( \frac{(1-x_0^*) x_B}{(1-x_B)x_0^*}  \right )}{\log \left ( \frac{(1-x_A) x_B}{(1-x_B) x_A} \right )}, \quad
    x_0^*(x_A,x_B) = \frac{\log \left ( \frac{1-x_A}{1-x_B} \right )}{\log \left ( \frac{(1-x_A) x_B}{(1-x_B) x_A} \right )}
\end{equation}

\end{remark}

A derivation which motivates this approximation can be found in Appendix \ref{appendix:Limiting}.

\paragraph{Tabulated values:}
See tabulated explicit numerical values of the Binomial Bayesian limiting prior approximation in Table~\ref{tab:BayesianPriorApprox}.

\begin{table}[H]
\centering
\begin{tabular}{|c|c|c|c|c|c|c|c|c|}
\hline
$x_A \backslash x_B$ & $20\%$ & $30\%$ & $40\%$ & $50\%$ & $60\%$ & $70\%$ & $80\%$ & $90\%$ \\
\hline
$10\%$ & 0.4761 & 0.4651 & 0.4598 & 0.4584 & 0.4604 & 0.4661 & 0.4772 & 0.5000 \\
$20\%$ &  & 0.4887 & 0.4832 & 0.4816 & 0.4833 & 0.4889 & 0.5000 & 0.5228 \\
$30\%$ &  &  & 0.4944 & 0.4928 & 0.4945 & 0.5000 & 0.5111 & 0.5339 \\
$40\%$ &  &  &  & 0.4983 & 0.5000 & 0.5055 & 0.5167 & 0.5396 \\
$50\%$ &  &  &  &  & 0.5017 & 0.5072 & 0.5184 & 0.5416 \\
$60\%$ &  &  &  &  &  & 0.5056 & 0.5168 & 0.5402 \\
$70\%$ &  &  &  &  &  &  & 0.5113 & 0.5349 \\
$80\%$ &  &  &  &  &  &  &  & 0.5239 \\
\hline
\end{tabular}
\caption{Binomial Bayesian limit prior approximation $P^\approx_\loopedsquare(x_A,x_B)$ up to 4 digits.}
\label{tab:BayesianPriorApprox}
\end{table}

\paragraph{Visualization:}
See the visualization of $P^\approx_\loopedsquare(x_A,x_B)$ in figure \ref{fig:BayesianPriorApprox}.

\begin{figure}[H]
    \centering
    \includegraphics[width=12 cm]{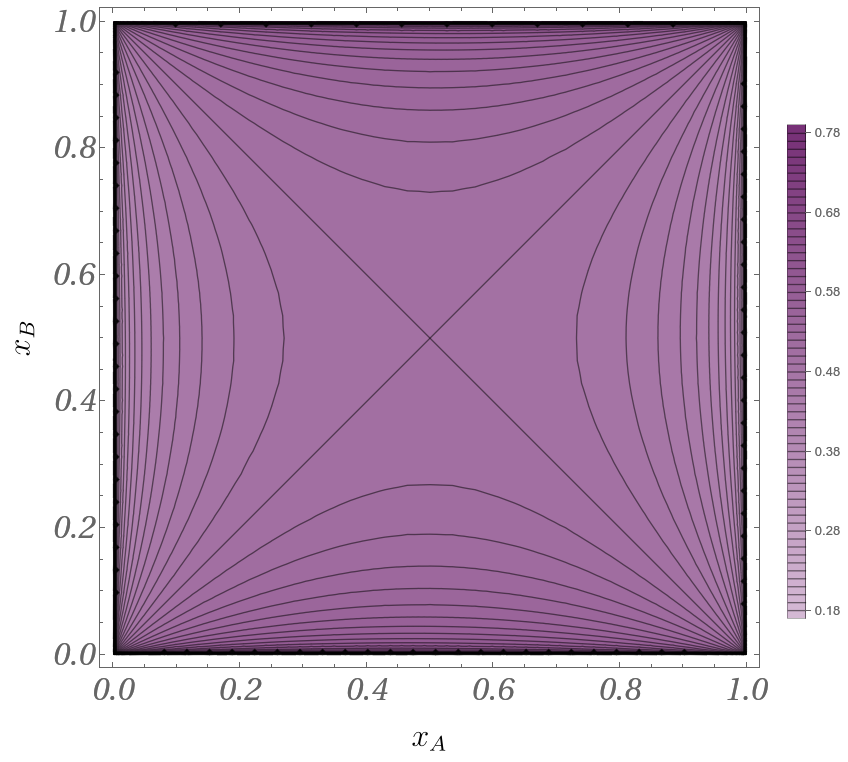}
    \caption{Binomial Bayesian limiting prior approximation $P^\approx_\loopedsquare(x_A,x_B)$. Contour lines show $1\%$ difference.}
    \label{fig:BayesianPriorApprox}
\end{figure}

\paragraph{Numerical evidence:}

\begin{figure}[H]
    \centering
    \includegraphics[width=12 cm]{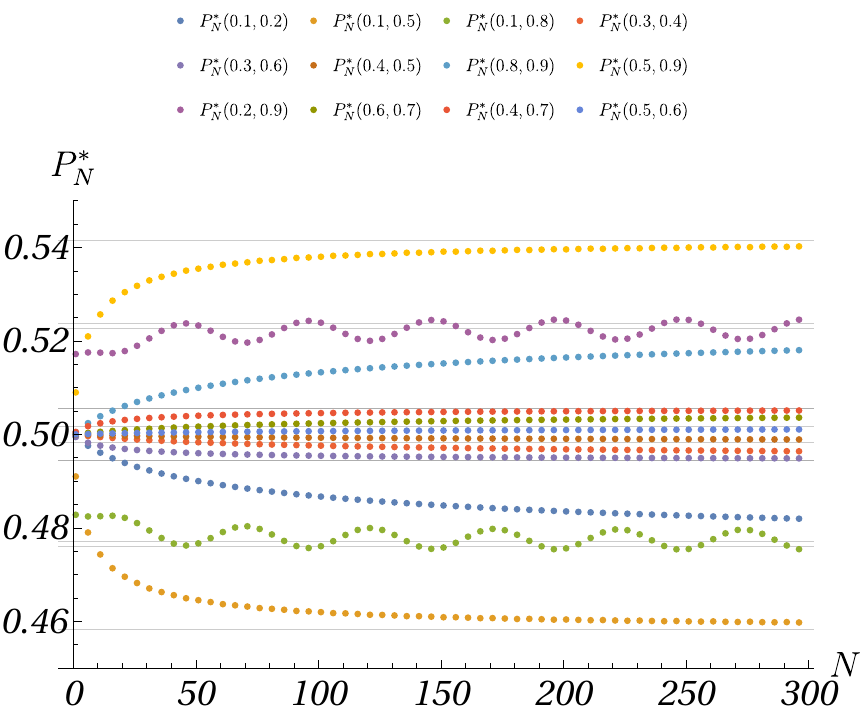}
    \caption{Numerically calculated $P^*_N(x_A,x_B)$ values, and gridlines at the limit prior approximations $P^\approx_\loopedsquare(x_A,x_B)$.}
    \label{fig:PriorNumerics}
\end{figure}

The behaviour of $P^*_N(0.1,0.8)$ and $P^*_N(0.2,0.9)$ on figure \ref{fig:PriorNumerics} shows a visible oscillatory pattern.
\footnote{The sampling is set to $\Delta N=5$, which generates a Stroboscopic effect \cite{paper:Fans}, resulting an apparent wave with relatively long wave length in $N$.}
The plotted values are the results of controlled approximations. Therefore, the oscillation is not caused by numerical errors.

All other examples seemingly converge slowly but steadily to values, which can be approximated well with the Binomial Bayesian limiting prior approximation $P^{\approx}_\loopedsquare(x_A,x_B)$.
Qualitatively, the behaviour is consistent with the conjectured periodic Binomial Bayesian limiting prior asymptotics $P^{*,\varphi}_\loopedsquare(x_A,x_B)$.

\paragraph{First-order asymptotics:}

\begin{conjecture}
\label{conj:AsymptoticExpansionSubleadingTerm}
    The Binomial Bayesian prior has the following asymptotic expansion for any $0 < x_A < x_B < 1$:
    
    \begin{equation}
        \vartheta^*_N(x_A,x_B) = \vartheta^{*,\varphi}_\loopedsquare(x_A,x_B) + \frac{1}{N} \sampi^\varphi(x_A,x_B) +
        \mathcal{O} \left ( \frac{1}{N^2} \right )
    \end{equation}

    where $\sampi^\varphi(x_A,x_B)$\footnote{$\sampi$, sampi ``like pi'' is an archaic letter of the Greek alphabet \cite{book:ArchaicGreece}. Originally, it might stand for a sibilant sound, probably [ss] or [ts]. It also represents $900$ among the Milesian numerals \cite{book:GreekMathematics,book:ConwayBookOfNumbers}. The symbol is part of UTF-8 \href{https://www.unicode.org/charts/beta/nameslist/n_0370.html}{character encoding} \cite{book:Unicode} \href{https://www.fileformat.info/info/unicode/char/03e1/index.htm}{U+03E1}, and can be accessed in mathematical software such as {\it Wolfram Mathematic} \cite{tool:WolframSampi} (the form used in this text is generated by {\it Mathematica}, using the \href{https://www.ctan.org/tex-archive/fonts/wasy}{wasy10} Font Family)} can be defined explicitly by knowing $\vartheta^{*,\varphi}_\loopedsquare$.
    The explicit definition can be found in equation \eqref{eq:sampiPhiGammaResult}.
    
\end{conjecture}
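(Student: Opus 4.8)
The plan is to work throughout in the log-odds coordinate $\vartheta = \log\frac{P}{1-P}$ and to exploit the \emph{exact} linearity of the binomial log-likelihood ratio. Writing $\Lambda = \log\frac{x_B(1-x_A)}{x_A(1-x_B)} > 0$, a one-line computation from \eqref{eq:BinomialDistribution} gives
\begin{equation}
\log\frac{p_k(B)}{p_k(A)} = \Lambda\,(k - N x_0^*),
\end{equation}
with $x_0^*$ the crossover defined before Theorem \ref{thm:BinomialFisherLimitingPolicy}. Hence the posterior log-odds at observation $k$ is $u_k = \vartheta - \Lambda(k - N x_0^*)$, and the stationarity condition for $\Delta G$ behind \eqref{eq:BinomialBayesianP} (equivalently the fixed point $\vartheta = F(\vartheta)$ of \eqref{eq:Fdef}) becomes a lattice sum in $u_k$ weighted by the binomial envelope. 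The first structural fact I would establish is that the objective $-\Delta G(P) = H(\Pi\mid X)$ \emph{localizes}: the binary entropy of the posterior $p'^*_k = 1/(1+e^{-u_k})$ decays geometrically once $|u_k|$ exceeds a few units, so only an $O(1)$-wide window of lattice points around $k \approx N x_0^*$ contributes at each order in $1/N$. Setting $k = \lfloor N x_0^*\rfloor + j$ and $\theta = \{N x_0^*\} \in [0,1)$, so that $\varphi = 2\pi\theta$, the increments become $u_k = \vartheta - \Lambda(j-\theta)$, exposing the phase $\theta$ as the alignment of the integer lattice with the continuous crossover.

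Second, I would produce a two-term expansion of the fixed-point map. Across the $O(1)$ window the envelope factors as $p_k(A),\,p_k(B) = \Omega_N(k)\,e^{\mp \Lambda(j-\theta)/2}\,(1 + O(1/N))$, where the common prefactor $\Omega_N(k) = \sqrt{p_k(A)p_k(B)}$ carries the local-CLT/Stirling normalization and the $O(1)$ geometric tilt from the curvature of the large-deviation rate function; the $O(1/N)$ remainder is an Edgeworth correction together with the quadratic term in the expansion of $-N\,D(k/N \,\|\, x_A)$ (and its $B$-counterpart) about $x_0^*$. Substituting into \eqref{eq:DeltaGPrime(P)} and collecting orders yields, at fixed $\varphi$,
\begin{equation}
F_N(\vartheta) = F_\infty^{\varphi}(\vartheta) + \tfrac{1}{N}\,F_1^{\varphi}(\vartheta) + O(1/N^2),
\end{equation}
uniformly on a neighbourhood of the solution, where $F_\infty^{\varphi}$ is a rapidly converging, $N$-independent lattice sum in $j$ (geometric weights against binary-entropy terms) depending only on $x_A, x_B$ and $\theta$. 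Its unique stationary point is $\vartheta_0 = \vartheta^{*,\varphi}_\loopedsquare$, which simultaneously reproduces the leading periodic term of Conjecture \ref{conj:BayesianLimitPrior}.

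Third, I would transfer the expansion of the map to its fixed point. The strict convexity $\Delta G'' > 0$ from Section \ref{sec:EquilibriumSolutionGeneralBayesianGame} (and the contraction property in the spirit of Conjecture \ref{conj:Restricted}) guarantees that $1 - (F_\infty^{\varphi})'(\vartheta_0)$ is bounded away from zero, so the implicit function theorem applied to $\vartheta - F_N(\vartheta) = 0$ gives
\begin{equation}
\vartheta^*_N(x_A,x_B) = \vartheta^{*,\varphi}_\loopedsquare + \frac{1}{N}\,\frac{F_1^{\varphi}(\vartheta_0)}{1 - (F_\infty^{\varphi})'(\vartheta_0)} + O(1/N^2),
\end{equation}
which identifies $\sampi^\varphi(x_A,x_B) = F_1^{\varphi}(\vartheta_0)\big/\big(1 - (F_\infty^{\varphi})'(\vartheta_0)\big)$ and matches the explicit form \eqref{eq:sampiPhiGammaResult}. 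Evaluating $F_1^{\varphi}$ and $(F_\infty^{\varphi})'$ at $\vartheta_0$ is then a finite, if tedious, computation of the localized lattice sums.

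The main obstacle is the uniform two-order control of these sums in the \emph{large-deviation regime}: the crossover $k \approx N x_0^*$ sits in the tails of both binomials, so the central limit theorem does not apply directly and one must tilt each binomial by an exponential change of measure (a saddle-point/Edgeworth analysis) before expanding. The delicate points are (i) keeping the geometric tail of the lattice sum absolutely convergent uniformly in $N$ and in $(x_A,x_B)$ on compact subsets of $\{0<x_A<x_B<1\}$, so that the $O(1/N^2)$ remainder is genuinely negligible; and (ii) tracking the fractional part $\theta = \{N x_0^*\}$ consistently through both $F_\infty^{\varphi}$ and $F_1^{\varphi}$, since it is precisely this lattice alignment that prevents the limit from existing in the ordinary sense and forces the periodic, phase-dependent coefficients. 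Poisson summation over the window provides an alternative bookkeeping that makes the harmonics in $\varphi$ explicit and may be the cleanest route to certifying the remainder.
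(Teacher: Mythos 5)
Your plan is, in substance, the same derivation the paper itself gives in Appendix~\ref{appendix:Limiting} (Sections~\ref{sec:AsymptoticExpansionBinomialBayesianPriorApproximation}, \ref{deriv:BinomialBayesianAsymptotics} and the closing subsection where \eqref{eq:sampiPhiGammaResult} is obtained): use the exact linearity $\log\bigl(p_k(B)/p_k(A)\bigr)=\beta\,(k-Nx_0^*)$, localize the equilibrium condition in an $O(1)$ window around the crossover (your localization claim is correct, and it is the direct-space counterpart of the paper's convergence conditions $\alpha_A\in(0,1)$, $\alpha_B\in(-1,0)$ for the $C_\theta$ integrals), expand the envelope to two orders (Stirling series plus rate-function curvature, your ``tilting + Edgeworth''), and solve the perturbed equilibrium equation to order $1/N$. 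The genuine difference is bookkeeping: you keep the sum in direct space, indexed by $j=k-\lfloor Nx_0^*\rfloor$ with the fractional part $\{N x_0^*\}$ carrying the phase, whereas the paper passes to frequency space via the Dirac comb and Poisson summation, so the phase enters through harmonics $e^{im\varphi}$ multiplying analytically continued closed-form coefficients $C_\theta(\alpha_\theta-2\pi i m/\beta)$; these are dual representations, and what the paper's route buys is explicitness — the localized integrals have closed forms (contour/Beta-function identities), which is precisely how \eqref{eq:sampiPhiGammaResult} becomes an explicit formula, while your $F_1^{\varphi}$ and $(F_\infty^{\varphi})'$ remain abstract lattice sums.

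Two concrete soft spots. First, a normalization issue in your step three: the raw fixed-point map \eqref{eq:Fdef} degenerates as $N\to\infty$, since $\vartheta-F_N(\vartheta)=\Delta G_A-\Delta G_B=O\!\left(e^{-N\varepsilon(x_A,x_B)}/\sqrt{N}\right)$ uniformly on compacts (so $F_N$ tends to the identity), and the contraction constant \eqref{eq:qdef} is the total variation distance between the two binomials, which tends to $1$. Hence the expansion $F_N=F_\infty^{\varphi}+\tfrac1N F_1^{\varphi}+O(1/N^2)$ with a nondegenerate limit only makes sense after dividing out the exponential scale (the paper's normalized $\Delta g^{\spadesuit,\varphi}_\theta$), and your appeal to finite-$N$ convexity of $\Delta G$ and the contraction property to bound $1-(F_\infty^{\varphi})'(\vartheta_0)$ away from zero does not transfer through this renormalization: nondegeneracy and uniqueness of the root of the \emph{limiting} phase-dependent equation must be argued separately, and the paper's own remark following \eqref{eq:CtildeAB} warns that the limiting implicit equation can have multiple solutions. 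Second, your identification $\sampi^{\varphi}=F_1^{\varphi}(\vartheta_0)/\bigl(1-(F_\infty^{\varphi})'(\vartheta_0)\bigr)$ is the generic implicit-function-theorem answer, but the claim that it matches \eqref{eq:sampiPhiGammaResult} is asserted rather than verified; in the paper's normalization the denominator collapses via $\alpha_A-\alpha_B=1$, and note that $(F_\infty^{\varphi})'$ also receives contributions from the $\vartheta$-dependence of the phase argument $\omega=\varphi+2\pi\vartheta/\beta$, so the check is not trivial. Finally, be aware that the ``main obstacle'' you name — uniform two-order control of the tilted sums, uniformly in the phase — is exactly what the paper does not supply either (it extends integration limits, interchanges sums and integrals, and drops remainders without control); this is why the statement is a conjecture, and your proposal, as written, leaves it one.
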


\begin{figure}[H]
    \centering
    \includegraphics[width=12 cm]{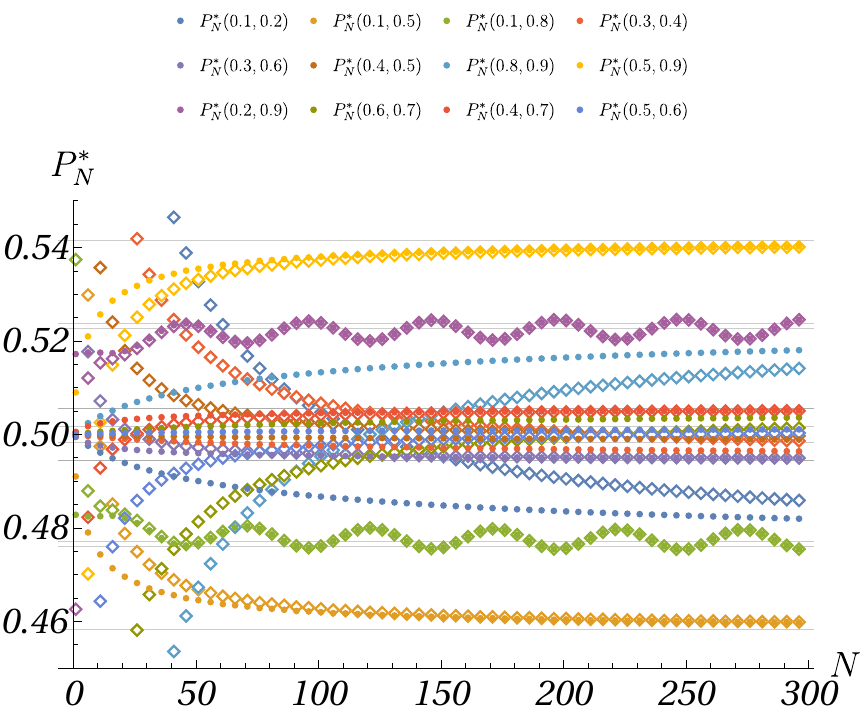}
    \caption{Numerically calculated $P^*_N(x_A,x_B)$ values marked by coloured dots ($\bullet$), and the values of first-order asymptotics $P^{*,(1)}_N(x_A,x_B)$ marked by coloured diamonds ($\diamond$).  
    (Gridlines are placed at zeroth-order limiting prior approximations $P^\approx_\loopedsquare(x_A,x_B)$.)
    }
    \label{fig:PriorNumericsAsymptotics}
\end{figure}

The derivation supporting the conjecture is presented in Appendix \ref{appendix:Limiting}.

\begin{equation}
    P^{*,(1)}_N(x_A,x_B) =
    \sigma \left (
    \vartheta^{*,\varphi_N(x_A,x_B)}_\loopedsquare(x_A,x_B) + \frac{1}{N} \sampi^{\varphi_N(x_A,x_B)}(x_A,x_B)
    \right )
\end{equation}

where $\sigma(.)$ stands for the sigmoid function $\sigma(x) = 1/(1+e^{-x})$.

\paragraph{First-order approximation:}

\begin{remark}
\label{remark:AsymptoticExpansionSubleadingTermApproximation}

The Binomial Bayesian prior can be approximated by the following asymptotic expansion for $0 < x_A < x_B < 1$:
    
    \begin{equation}
        \vartheta^*_N(x_A,x_B) \approx \vartheta^\approx_\loopedsquare(x_A,x_B) + \frac{1}{N} \sampi(x_A,x_B) +
        \mathcal{O} \left ( \frac{1}{N^2} \right )
    \end{equation}

    where $\sampi(x_A,x_B)$ can be expressed with $x_A$ and $x_B$ using elementary functions.
    The explicit expression can be found in equation \eqref{eq:sampiResult01} and \eqref{eq:sampiResult02}.

\end{remark}

\begin{figure}[H]
    \centering
    \includegraphics[width=12 cm]{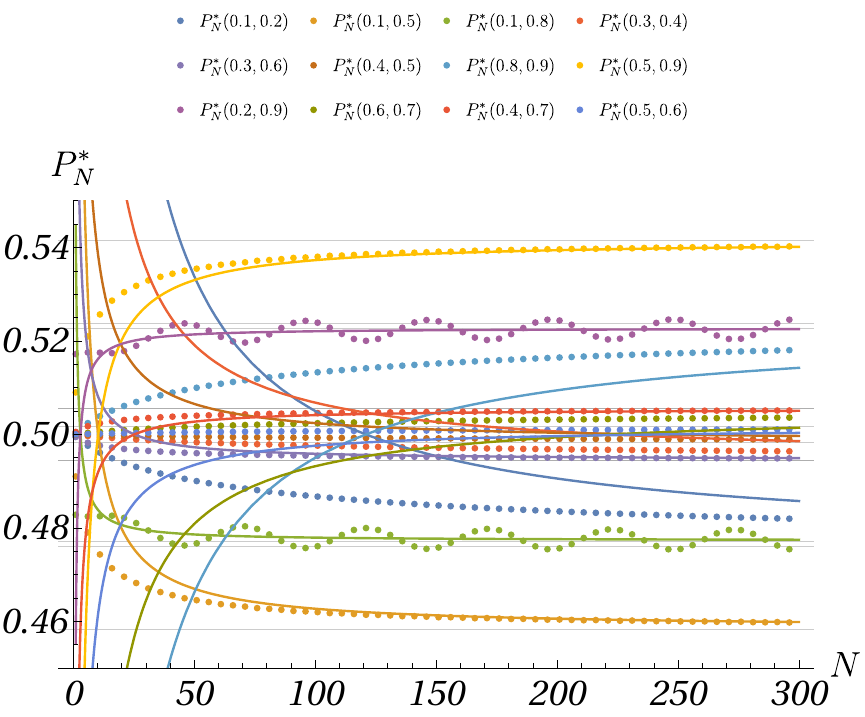}
    \caption{Numerically calculated $P^*_N(x_A,x_B)$ values marked by coloured dots ($\bullet$), and the values of first-order approximation $P^{\approx,(1)}_N(x_A,x_B)$ represented by continuous coloured lines (\textbf{---}).  
    (Gridlines are placed at zeroth-order limiting prior approximations $P^\approx_\loopedsquare(x_A,x_B)$.)
    }
    \label{fig:PriorNumericsApproximation}
\end{figure}

The derivation supporting the approximation is presented in Appendix \ref{appendix:Limiting}.

\begin{equation}
    P^{\approx,(1)}_N(x_A,x_B) =
    \sigma \left (
    \vartheta^\approx_\loopedsquare(x_A,x_B) + \frac{1}{N} \sampi(x_A,x_B)
    \right )
\end{equation}

\section{Unification through relative risk aversion}
\label{sec:StatisticalGames}

\subsection{Description of the general Statistical game}

\begin{definition}[Statistical game]
\label{def:StatisticalGame}

There are two players, \PI/ and \PII/.
\PII/ needs to choose between scenario A or B first and then produce a binary sequence of length $M$ containing precisely $K_A$ or $K_B$ number of $1$-s. (Without losing generality, we will assume $K_A \le K_B$.)
Following this, \PI/ (not knowing the actions of \PII/) can sample $N$ number of bits. After observing their value, she determines what portion of her capital $p'$ she places on scenario A (while the other $1-p'$ portion is placed on scenario B).

The portion \PI/ places on the scenario, chosen by \PII/, will be doubled, while the other part of her capital will be lost.
For this game, we will assume that \PI/ has an isoelastic utility function:

\begin{equation}
    u_\gamma(c) = \frac{c^{1-\gamma}-1}{1-\gamma}
\end{equation}
with relative risk aversion parameter $\gamma > 0, \gamma \ne 1$.
Furthermore, we will assume that \PI/ and \PII/ are playing a zero-sum game\footnote{zero-sum in utilities (not in capital)}.

The above defined Statistical Game will be denoted as 
\SG{N, K_A, K_B, M, \gamma}.

\end{definition}

\begin{remark}
    The crucial difference between the definition of Bayesian games \ref{def:BayesianGame} and Statistical games \ref{def:StatisticalGame} is the utility function for \PI/.
    It is logarithmic for Bayesian games \BG{N,K_A,K_B,M} and isoelastic with relative risk aversion parameter $\gamma$ for Statistical games \SG{N,K_A,K_B,M,\gamma}.
    
\end{remark}

\subsection{Simplest nontrivial example}

The simplest statistically nontrivial general Statistical game is \SG{N=1,K_A=0,K_B=1,M=2,\gamma}.
To find its equilibrium solution, we can introduce very similar notation and go through analogous steps to Section~\ref{sec:SimplestNontrivialBayesian}.

\paragraph{Expected utility:}
The expected utility for \PI/ looks the following in the notation borrowed from Section~\ref{sec:SimplestNontrivialBayesian}:

\begin{equation}
\label{eq:Simplest_EU}
    \begin{split}
        U_\gamma = P & \left ( 
        q \ u_\gamma(p'_{(1,\wb)}) + (1-q) u_\gamma(p'_{(2,\wb)})
        \right ) + \\
        (1-P) r & \left (
        q \ u_\gamma(1-p'_{(1,\wb)}) + (1-q) u_\gamma(1-p'_{(2,\bb)})
        \right) + \\
        (1-P) (1-r) & \left (
        q \ u_\gamma(1-p'_{(1,\bb)}) + (1-q) u_\gamma(1-p'_{(2,\wb)}
        \right )
    \end{split}
\end{equation}

\paragraph{Equilibrium parameters:}
Following the steps analogous to the derivation in Section~\ref{sec:SimplestNontrivialBayesian}, we get:

\begin{equation}
    p'_{(1,\bb)} = p'_{(2,\bb)} = p'^*_1 = 0,
\end{equation}

\begin{equation}
\label{eq:SimplestEUsplitting}
    p'^*_{(1,\wb)} = \frac{P^{1/\gamma}}{P^{1/\gamma}+((1-P) r)^{1/\gamma}}, \quad  p'^*_{(2,\wb)} = \frac{P^{1/\gamma}}{P^{1/\gamma}+((1-P)(1-r))^{1/\gamma}}
\end{equation}

\begin{equation}
    r^* = 1/2, \quad q^* = 1/2
\end{equation}

\paragraph{Finding $P^*_\gamma$:}
Substituting back to equation \eqref{eq:Simplest_EU}, we get the following expression for the expected utility as a function of $P$:

\begin{equation}
    U_\gamma(P) =
    \frac{1}{1-\gamma}
    \left (
    P
    \left (
    \frac{P^{(1-\gamma)/\gamma}}{(P^{1/\gamma}+((1-P) /2)^{1/\gamma})^{1-\gamma} }
    - 1
    \right ) +
    \frac{(1-P)}{2}
    \left (
    \frac{((1-P) /2)^{1/\gamma})}{(P^{1/\gamma}+((1-P) /2)^{1/\gamma})^{1-\gamma} }
    - 1
    \right )
    \right )
\end{equation}

\begin{figure}[H]
    \centering
    \includegraphics[width=12 cm]{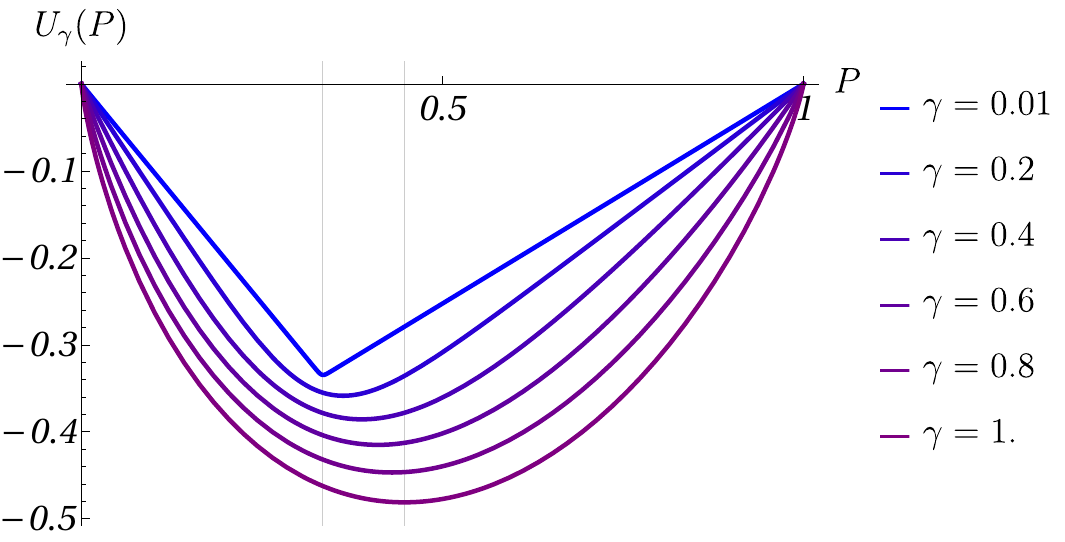}
    \caption{Expected utility for \SG{N=1, K_A=0, K_B=1, M=2} as a function of $P$ for different $\gamma$ (relative risk aversion) values.
    (Vertical gridlines are placed at $1/3$ and $1/\sqrt{5}$ values.)}
    \label{fig:EUP_1012_01}
\end{figure}

$P^*_\gamma$ is defined as the value, where $U_\gamma(P)$ takes its minimum.
We can take the derivative with respect to $P$, and find the root of this expression. For general $\gamma \ne 1$ parameters this gives an explicit, but transcendental equation for $P^*_\gamma$:

\begin{equation}
\label{eq:SimplestEUPs}
    \left (
    Q (2 P)^{1/\gamma} - P Q^{1/\gamma}
    \right )
    \left (
    P^{1/\gamma} + (Q/2)^{1/\gamma}
    \right )^\gamma 
    =
    \frac{P Q}{2}
    \left (
    (2 P)^{1/\gamma} + Q^{1/\gamma}
    \right ), \quad Q=1-P, \quad P=P^*_\gamma
\end{equation}

In general, equation \eqref{eq:SimplestEUPs} has no closed-form solution, but it can be effectively solved numerically.
Numerically calculated values are plotted in figures \ref{fig:EURT_01}, \ref{fig:EURT_03} and \ref{fig:EURT_010} in Section~\ref{sec:EntropiesSimplestPriors}.

\begin{remark}
    There are a few special relative risk aversion values $\gamma$, for which equation \eqref{eq:SimplestEUPs} has a closed form solution:

    \begin{equation}
        P^*_{\gamma=1/2} = 2/5, \quad
        P^*_{\gamma=2} = 1/2, \quad
        P^*_{\gamma=3} = (3 + \sqrt[4]{12} - \sqrt{3})/6
    \end{equation}

\end{remark}

\begin{remark}
    Based on numerical evidence and explicit calculation, we can determine three important limit cases for $P^*_\gamma$:

    \begin{equation}
        \lim_{\gamma \to 0} P^*_\gamma = \frac{1}{3}, \quad
        \lim_{\gamma \to 1} P^*_\gamma = \frac{1}{\sqrt{5}}, \quad
        \lim_{\gamma \to \infty} P^*_\gamma = \frac{1}{2}
    \end{equation}

\end{remark}

\paragraph{Equilibrium splitting ratios:}

Based on the numerically calculated $P^*_\gamma$, and equation \eqref{eq:SimplestEUsplitting}, we can numerically determine the equilibrium splitting ratios $p'^*_{\gamma,k}$.
The results of numerical calculations are shown in figure \ref{fig:pp(gamma)_1012}.
    
\begin{figure}[H]
    \centering
    \includegraphics[width=10 cm]{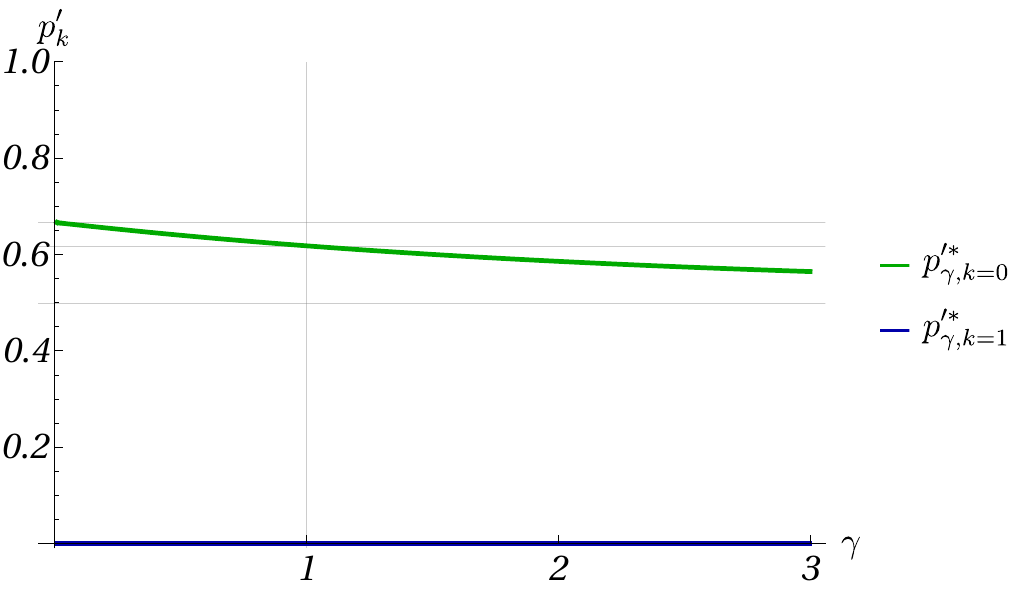}
    \caption{Splitting strategies, $p'^*_{\gamma,k}$ for \SG{N=1, K_A=0, K_B=1, M=2} as a function of $\gamma$ (relative risk aversion parameter). Horizontal gridlines are placed at $\{2/3 , (\sqrt{5}-1)/2, 1/2\}$.}
    \label{fig:pp(gamma)_1012}
\end{figure}

\subsection{General Statistical games}

\subsubsection{Main theorem on Statistical games}

\begin{theorem}[Isoelastic equilibrium]
\label{thm:StatisticalGameEquilibrium}
\SG{N,K_A,K_B,M,\gamma} has a unique Nash equilibrium, in which:

\begin{itemize}
    \item \PII/ chooses scenario A or B with probability $P^*_\gamma$ and $1-P^*_\gamma$;
    \begin{itemize}
        \item then picks an allowed sequence with equal probability (from $\mathcal{K}_A$ or $\mathcal{K}_B$).
    \end{itemize}

    \item \PI/ first samples uniformly $N$ bits from the provided sequence. Based on $k$ -- the number of $\bb$-s -- she determines $p'^*_{\gamma,k} \in [0,1]$, and bets in the following way:
    \begin{itemize}
        \item places her capitals $p'^*_{\gamma,k}$ portion to A
        \item places her capitals $1-p'^*_{\gamma,k}$ portion to B
    \end{itemize}
\end{itemize}

The parameters $(P^*_\gamma, \{p'^*_{\gamma,k}\})$ can be determined from the parameters of the game $(N, K_A, K_B, M, \gamma)$:

\begin{equation}
    \label{thm:StatisticalEqHypergeom}
    p_k(A) = \frac{\binom{K_A}{k} \binom{M-K_A}{N-k}}{\binom{M}{N}}, \quad
    p_k(B) = \frac{\binom{K_B}{k} \binom{M-K_B}{N-k}}{\binom{M}{N}}
\end{equation}

\begin{equation}
\label{eq:ppkPgamma}
    p'_k(P) = \frac{\left ( P \ p_k(A) \right )^{1/\gamma}}{\left ( P \ p_k(A) \right )^{1/\gamma} + \left ( (1-P) \ p_k(B) \right )^{1/\gamma}}
\end{equation}

\begin{equation}
    p'^*_k = p'_k(P^*_\gamma)
\end{equation}

while $P^*_\gamma$ is the unique minimum of the expected utility:

\begin{equation}
    \label{eq:SGameExpectedUtility}
    U_\gamma(P) = P \ \left ( \sum_k p_k(A) u_\gamma(p'_k(P)) \right ) 
    + (1-P) \left ( \sum_k p_k(B) u_\gamma(1-p'_k(P)) \right )
\end{equation}

where the utility function for a given relative risk aversion $\gamma>0, \gamma \ne 1$ is:

\begin{equation}
    u_\gamma(c) = \frac{c^{1-\gamma}-1}{1-\gamma}
\end{equation}

\end{theorem}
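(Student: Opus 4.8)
The plan is to follow the template of the Bayesian case (Theorem~\ref{thm:Bayesian}), which is precisely the $\gamma=1$ specialisation, replacing the logarithm everywhere by the isoelastic utility $u_\gamma$ and re-deriving the two first-order conditions and the convexity that made that argument work. I would adopt the symmetric Ansatz of Game Table~\ref{game:BayesianGameAnsatz}: \PII/ mixes the scenarios with prior $P$ and chooses uniformly among the allowed sequences, while \PI/ samples $N$ indices uniformly and bets a ratio $p'_k$ that depends only on the number $k$ of $\bb$-s seen. The first reduction is to argue that this parametrisation loses nothing on the equilibrium path: because \PII/ randomises uniformly over the sequences with a fixed number of ones and \PI/ samples uniformly, the likelihood of any observed sample depends only on $k$ (it is hypergeometric, $p_k(A)$ or $p_k(B)$), so order-dependent policies confer no advantage; and because each per-$k$ objective is strictly concave in $p'_k$, the unique pure maximiser beats any randomisation over ratios. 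This collapses \PI/'s relevant freedom to the vector $\{p'_k\}$ and \PII/'s to the single prior $P$.

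Next I would compute \PI/'s best response for fixed $P$. The expected utility \eqref{eq:SGameExpectedUtility} is, termwise in $k$, built from the strictly concave maps $p'_k \mapsto u_\gamma(p'_k)$ and $p'_k \mapsto u_\gamma(1-p'_k)$ (note $u_\gamma''(c) = -\gamma\,c^{-\gamma-1}<0$), so the stationarity condition
\begin{equation}
P\,p_k(A)\,(p'_k)^{-\gamma} = (1-P)\,p_k(B)\,(1-p'_k)^{-\gamma}
\end{equation}
pins down a unique global maximiser, which rearranges to the stated formula \eqref{eq:ppkPgamma}. Substituting $p'^*_k(P)$ back yields $U_\gamma(P)$, the function \PII/ must minimise.

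The heart of the argument is to show $U_\gamma(P)$ has a unique interior minimiser. By the envelope theorem the terms carrying $\mathrm{d}p'^*_k/\mathrm{d}P$ cancel (they are proportional to the stationarity condition), leaving $U_\gamma'(P)=\sum_k p_k(A)\,u_\gamma(p'^*_k)-\sum_k p_k(B)\,u_\gamma(1-p'^*_k)$, whose vanishing is exactly \PII/'s indifference between A and B. Differentiating once more,
\begin{equation}
U_\gamma''(P)=\sum_k\bigl[p_k(A)\,u_\gamma'(p'^*_k)+p_k(B)\,u_\gamma'(1-p'^*_k)\bigr]\,\frac{\mathrm{d}p'^*_k}{\mathrm{d}P},
\end{equation}
and since $u_\gamma'>0$ and the elementary computation gives $\mathrm{d}p'^*_k/\mathrm{d}P>0$ (the bet on A grows with the prior on A), every summand is positive, so $U_\gamma$ is strictly convex — exactly as for $\gamma=1$. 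Together with the boundary signs $U_\gamma'(0^+)<0<U_\gamma'(1^-)$, both inherited from $u_\gamma(0)<0$ (finite when $\gamma<1$, divergent to $-\infty$ when $\gamma>1$), the intermediate value theorem and strict convexity deliver a unique $P^*_\gamma\in(0,1)$.

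Finally I would assemble the saddle point: the profile globally maximises \PI/'s utility for \PII/ fixed (strict concavity in each $p'_k$) and globally minimises \PII/'s objective for \PI/ fixed (strict convexity of $U_\gamma$), hence is a Nash equilibrium of the reduced game. The hard part will be the rigour around two points rather than any single calculation. First, this is a non-compact continuous game — for $\gamma>1$ the utility $u_\gamma$ is unbounded below — so the standard existence theorems for continuous or separable games are unavailable and the equilibrium must be certified by direct verification; here one checks that the optimal $p'^*_k$ lies strictly in $(0,1)$ whenever both $p_k(A),p_k(B)>0$, so \PI/ never exposes herself to a $-\infty$ loss. Second, promoting the reduced optimality to the full strategy space and obtaining the claimed uniqueness requires showing that the uniform randomisations are mutually unexploitable: against the symmetric opponent, any non-uniform sampling by \PI/ or non-uniform sequence placement by \PII/ can be answered so as to strictly hurt the deviator, which forces uniform randomisation on both sides and makes $(P^*_\gamma,\{p'^*_k\})$ the unique equilibrium value and reduced policy.
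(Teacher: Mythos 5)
Your proposal is correct in its essentials, but it takes a genuinely different route from the paper's own proof. The paper (Appendix~\ref{appendix:Unifiction}) derives the same best-response splitting ratios \eqref{eq:ppkPgamma} and the same indifference condition $U_A(P^*_\gamma)=U_B(P^*_\gamma)$, but then changes variables to log-odds $\vartheta$, rewrites indifference as a fixed-point equation $\vartheta^*_\gamma=\Phi(\vartheta^*_\gamma)$, bounds $|\Phi'(\vartheta)|$ via H\"older's inequality and an analysis of shifted hyperbolic tangents to obtain the contraction constant \eqref{eq:GlobalBoundPhi}, and invokes the Banach fixed point theorem. Your route --- envelope theorem to identify $U_\gamma'(P)=U_A(P)-U_B(P)$, termwise positivity of $U_\gamma''(P)$ (strict provided $\mathbb{K}_{AB}\ne\emptyset$, i.e.\ the game is not sure-winning), boundary signs, and the intermediate value theorem --- is exactly the generalization to arbitrary $\gamma$ of the convexity argument the paper uses for the Bayesian case $\gamma=1$ (Section~\ref{sec:EquilibriumSolutionGeneralBayesianGame}), and which the paper itself relegates to a closing remark as the fallback when $\mathbb{K}_A \ne \mathbb{K}_B$, where the exponent \eqref{eq:OverlineLambdaDefinition} is undefined and the contraction bound degenerates. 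Your argument is therefore more elementary and more uniform in coverage: it needs neither H\"older nor Banach, and the indices with $p_k(A)=0$ or $p_k(B)=0$ are handled seamlessly (there $p'^*_k\in\{0,1\}$ is constant in $P$ and drops out of $U_\gamma''$). What the paper's contraction route buys instead is constructivity: an explicit iteration with a geometric error bound, which is what powers the controlled numerical methods of Section~\ref{sec:NumericalMethods} and the generalized-divergence interval bounds; pure convexity gives existence and uniqueness of $P^*_\gamma$ but no such algorithmic byproduct. One caveat applies to both proofs equally: what is actually established is optimality of the splitting ratios and existence/uniqueness of $P^*_\gamma$ \emph{within the symmetric Ansatz}; uniqueness of the Nash equilibrium over the full strategy space (ruling out asymmetric equilibria) is asserted rather than proven, and your closing sketch --- that deviations ``can be answered so as to strictly hurt the deviator'' --- is not by itself a valid uniqueness argument, since Nash uniqueness is not a matter of punishing deviations. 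You flag this honestly; the paper leaves it implicit.
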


For the proof, see Appendix \ref{appendix:Unifiction}.

\subsection{Examples and Visualization}

\begin{figure}[H]
    \centering
    \includegraphics[width=12 cm]{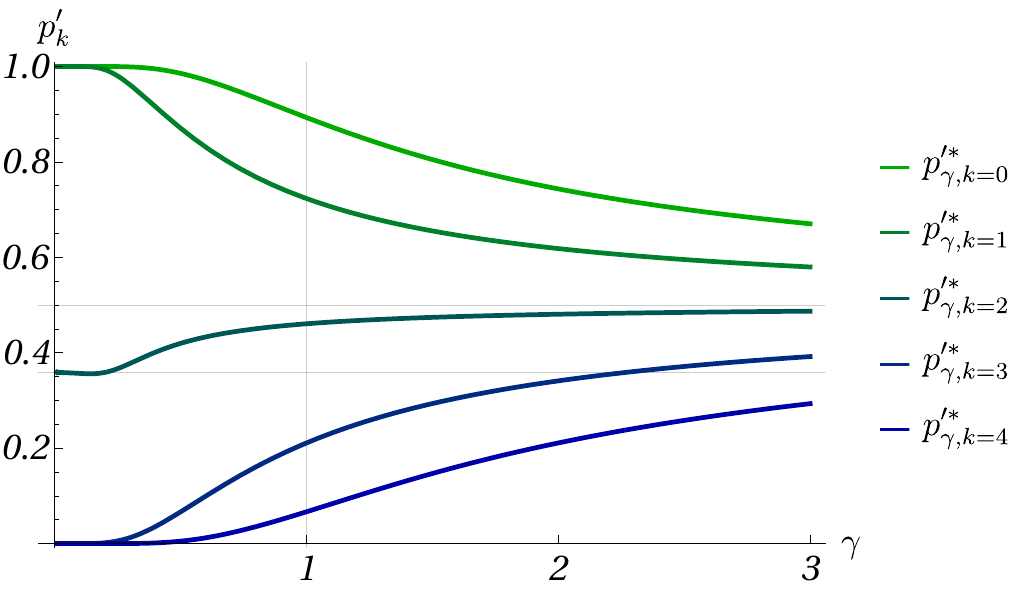}
    \caption{Splitting strategies, $p'^*_{\gamma,k}$ for \SG{N=4, K_A=5, K_B=8, M=14} as a function of $\gamma$ (relative risk aversion parameter). Horizontal gridlines are placed at $\{1/2 , 14/39 \approx 0.359 \}$.}
    \label{fig:pp(gamma)_45814}
\end{figure}

\subsection{Unification of Fisher and Statistical games}

\begin{theorem}
    The equilibrium strategies of a Statistical game \SG{N,K_A,K_B,M,\gamma} in the $\gamma \to 0$ limit can be mapped to the symmetric equilibrium strategies of a Fisher game \G{N,K_A,K_B,M} \footnote{if $\nu^* \ne 0$} with the following identification:
    \begin{equation}
        \lim_{\gamma \to 0} p'^*_{\gamma,k^*} = \nu^*
    \end{equation}
        
    \begin{equation}
        \lim_{\gamma \to 0} P^*_\gamma = P^*_0
    \end{equation}

    Where $(P^*_\gamma,\{p'^*_{\gamma,k}\})$ are the equilibrium parameters of 
    \SG{N,K_A,K_B,M,\gamma},
    while $(k^*,\nu^*,P^*_0)$ are the equilibrium parameters of 
    \G{N,K_A,K_B,M}.
    
\end{theorem}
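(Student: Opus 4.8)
The plan is to exploit the fact that the isoelastic optimal split from Theorem~\ref{thm:StatisticalGameEquilibrium}, equation \eqref{eq:ppkPgamma},
\begin{equation}
    p'_k(P) = \frac{(P\,p_k(A))^{1/\gamma}}{(P\,p_k(A))^{1/\gamma}+((1-P)\,p_k(B))^{1/\gamma}} = \frac{1}{1+\left(\tfrac{(1-P)\,p_k(B)}{P\,p_k(A)}\right)^{1/\gamma}},
\end{equation}
degenerates into a \emph{hard threshold} as $\gamma\to 0$ (so $1/\gamma\to\infty$): for fixed $P$ it tends to $1$ when $P\,p_k(A)>(1-P)p_k(B)$, to $0$ when $P\,p_k(A)<(1-P)p_k(B)$, and stays at $1/2$ on the indifference locus $P\,p_k(A)=(1-P)p_k(B)$. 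This is exactly the ``guess A / guess B'' policy of the Fisher game, which is why the two equilibria are comparable. First I would prove the prior statement $\lim_{\gamma\to0}P^*_\gamma = P^*_0$, and then use it to pin down $\lim_{\gamma\to0}p'^*_{\gamma,k^*}$.

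For the prior, note that $u_\gamma(c)=(c^{1-\gamma}-1)/(1-\gamma)\to u_0(c)=c-1$, and that this convergence is \emph{uniform} on $[0,1]$ (the maximiser of $c^{1-\gamma}-c$ sits at $c=(1-\gamma)^{1/\gamma}\to e^{-1}$, where the gap is $O(\gamma)$; crucially $u_\gamma$ stays bounded on $[0,1]$ for $\gamma<1$, unlike the logarithmic case). Writing $U_\gamma(P)=\max_{\{q_k\}\in[0,1]^{|\mathbb{K}|}}F_\gamma(P,\{q_k\})$ with $F_\gamma(P,\{q_k\})=\sum_k[P\,p_k(A)\,u_\gamma(q_k)+(1-P)p_k(B)\,u_\gamma(1-q_k)]$ affine in $P$ exhibits $U_\gamma$ as a convex function and gives $|U_\gamma-U_0|\le \sup_{P,q}|F_\gamma-F_0|\to0$, i.e.\ uniform convergence $U_\gamma\to U_0$ on $[0,1]$. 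Substituting the hard threshold into $u_0$ identifies $U_0(P)=v(P)-1$, where $v(P)=\sum_k\max(P\,p_k(A),(1-P)p_k(B))$ is precisely \PI/'s threshold winning probability, which \PII/ minimises. Its one-sided slopes at $P^*_0$ are $\sum_{k<k^*}p_k(A)-\sum_{k\ge k^*}p_k(B)$ and $\sum_{k\le k^*}p_k(A)-\sum_{k>k^*}p_k(B)$, which by \eqref{thm:SymEqNu} equal $-\nu^*(p_{k^*}(A)+p_{k^*}(B))$ and $(1-\nu^*)(p_{k^*}(A)+p_{k^*}(B))$; the hypothesis $\nu^*\ne0$ makes the left slope strictly negative and the right slope strictly positive, so $P^*_0$ is the \emph{unique} minimiser. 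A routine argmin-convergence argument (take a subsequence $P^*_{\gamma_n}\to\bar P$, use $U_{\gamma_n}(P^*_{\gamma_n})\le U_{\gamma_n}(P^*_0)$ together with uniform convergence to deduce $U_0(\bar P)\le U_0(P^*_0)$, hence $\bar P=P^*_0$) then yields $P^*_\gamma\to P^*_0$.

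For the split at the critical index I would avoid computing the rate $P^*_\gamma-P^*_0$ directly and instead use the envelope theorem. Since \PI/ chooses each $p'_k$ at an interior optimum, the cross terms in $\tfrac{d}{dP}U_\gamma$ vanish, so stationarity of $U_\gamma$ at the interior minimiser $P^*_\gamma$ collapses to the \PII/-indifference condition
\begin{equation}
    \sum_k p_k(A)\,u_\gamma(p'^*_{\gamma,k}) = \sum_k p_k(B)\,u_\gamma(1-p'^*_{\gamma,k}).
\end{equation}
Now let $\gamma\to0$. By the previous paragraph $P^*_\gamma\to P^*_0$, and for every $k\ne k^*$ the strict inequality $P^*_0 p_k(A)\neq(1-P^*_0)p_k(B)$ (strict MLR, since only $k^*$ is the indifference index) persists for small $\gamma$ by continuity, forcing $p'^*_{\gamma,k}\to1$ for $k<k^*$ and $\to0$ for $k>k^*$. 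Along any subsequence on which $p'^*_{\gamma,k^*}\to\ell\in[0,1]$, uniform convergence of $u_\gamma$ to $u_0(c)=c-1$ turns the indifference condition into the linear equation $p_{k^*}(A)(\ell-1)-\sum_{k>k^*}p_k(A)=-\sum_{k<k^*}p_k(B)-p_{k^*}(B)\ell$, whose unique solution is $\ell=\big(\sum_{k\ge k^*}p_k(A)-\sum_{k<k^*}p_k(B)\big)/(p_{k^*}(A)+p_{k^*}(B))$; using $\sum_k p_k(A)=\sum_k p_k(B)=1$ this equals the Fisher value $\nu^*$ of \eqref{thm:SymEqNu}. As every convergent subsequence has the same limit, $p'^*_{\gamma,k^*}\to\nu^*$.

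The main obstacle is conceptual rather than computational: at $P=P^*_0$ the critical split $p'_{k^*}$ equals $1/2$ for \emph{every} $\gamma$, so naively substituting the limiting prior yields the wrong answer $1/2$ instead of $\nu^*$. The nontrivial limit arises only from the joint limit in which $P^*_\gamma$ approaches $P^*_0$ at rate $\Theta(\gamma)$, which lets the factor $((1-P^*_\gamma)p_{k^*}(B)/(P^*_\gamma p_{k^*}(A)))^{1/\gamma}$ tend to a finite positive number. The envelope/indifference route is what lets us extract this limit cleanly without resolving the $\Theta(\gamma)$ asymptotics of the prior by hand; the remaining care is simply to justify the termwise passage to the limit (finite sum, $u_\gamma$ bounded on $[0,1]$ for $\gamma<1$) and to observe that $\nu^*\ne0$ is exactly what guarantees both the uniqueness of the minimiser and that the limiting split lies in the interior $(0,1)$.
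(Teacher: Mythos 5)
Your proof is correct, but it takes a genuinely different route from the paper's. The paper works on the log-odds variable: it starts from the fixed-point characterization $\vartheta^*_\gamma = \Phi(\vartheta^*_\gamma)$ coming out of the contraction-mapping proof of Theorem~\ref{thm:StatisticalGameEquilibrium}, rescales around $\vartheta_0 = \log\left(p_{k^*}(B)/p_{k^*}(A)\right)$ by setting $\vartheta = \vartheta_0 + \gamma\tau$, and computes the limit function $\Psi(\tau) = \lim_{\gamma\to 0}\left[(\vartheta_0+\gamma\tau)-\Phi(\vartheta_0+\gamma\tau)\right]/\gamma$ explicitly, tracking which terms survive up to $\mathcal{O}(e^{-|\lambda_k|/\gamma})$ corrections; the unique sign change of $\Psi$ at $\tau_0 = \log\left(\nu^*/(1-\nu^*)\right)$ yields $\vartheta^*(\gamma) = \vartheta_0 + \gamma\,\tau^*(\gamma)$ with $\tau^*(\gamma)\to\tau_0$, which delivers both claimed limits at once through $p'^*_{\gamma,k^*} = e^{\tau^*(\gamma)}/(1+e^{\tau^*(\gamma)})$. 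You instead work on the value function: uniform convergence $u_\gamma\to u_0$ on $[0,1]$ gives uniform convergence $U_\gamma\to U_0 = v - 1$, identifying the $\gamma\to 0$ value function with the Fisher winning rate; convexity plus your one-sided slope computation (which is exactly where the hypothesis $\nu^*\ne 0$ enters in both treatments, since $\tau_0$ is finite precisely when $\nu^*\in(0,1)$) gives uniqueness of the minimizer, hence $P^*_\gamma\to P^*_0$ by argmin convergence; then the envelope identity $U_\gamma'(P) = U_A(P)-U_B(P)$ turns \PII/'s indifference into a linear equation for any subsequential limit $\ell$ of $p'^*_{\gamma,k^*}$, solved uniquely by $\nu^*$. (One small precision: for indices in $\mathbb{K}_A\setminus\mathbb{K}_B$ or $\mathbb{K}_B\setminus\mathbb{K}_A$ the optimal split sits at the boundary rather than at an interior stationary point, but it is constant in $P$ there, so the envelope identity and your termwise limits go through unchanged.) Note that your limit equation is the paper's $\Psi(\tau)=0$ in disguise: substituting $\ell = (1+e^{-\tau})^{-1}$ reduces it to $\left(p_{k^*}(A)+p_{k^*}(B)\right)(\ell-\nu^*)=0$. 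What your route buys is elementarity and conceptual clarity — no contraction machinery, and the limit game is exhibited directly as the Fisher game via $U_0 = v-1$; what the paper's route buys is quantitative refinement — the explicit $\Psi$ and its perturbative part $\Psi_P$ give the rate $\vartheta^*(\gamma)-\vartheta_0 = \gamma\tau_0 + \mathcal{O}(\gamma^2)$ and underlie the higher-order expansions in $\gamma$ (and the exponentially small correction terms) derived in the surrounding remarks, which a soft compactness argument cannot produce. Your closing observation — that naively substituting $P^*_0$ gives $1/2$, and the true limit lives at the $\Theta(\gamma)$ scale of $P^*_\gamma - P^*_0$ — is precisely the mechanism the paper makes explicit through $\tau^*(\gamma)$.
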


For the proof, see Appendix \ref{appendix:Unifiction}.

\subsection{Unification by generalized entropies}

\subsubsection{Expected Utility-based entropy}

\paragraph{Generalized entropy for probability vectors:}

\begin{equation}
\label{eq:HEU}
    \Hbar_\gamma(\underline{p}) = H^{\mathrm{EU}}_\gamma(\underline{p}) = 
    \frac{1- \left( \sum_k p_k^{1/\gamma} \right )^\gamma}{1-\gamma} = 
    \frac{1-||\underline{p}||_{1/\gamma}}{1-\gamma}
\end{equation}

\paragraph{Generalized conditional entropy:}

\begin{equation}
    H^{\mathrm{EU}}_\gamma[Y|X] = \mathbb{E}_{x \sim X} H^{\mathrm{EU}}_\gamma[Y|X = x]
\end{equation}

\paragraph{Generalized entropy and Expected Utility:}

The expected utility in eq. \eqref{eq:SGameExpectedUtility} can be expressed by the generalized entropy in the following way:

\begin{equation}
    U_\gamma(P) = -H^{\mathrm{EU}}_\gamma[\Pi|X]
\end{equation}

\subsubsection{Connection with other generalized entropies}

\begin{itemize}
    \item Connection with Rényi \cite{paper:RenyiOriginal,book:RenyiAczel} and Tsallis \cite{paper:TsallisOriginal,book:TsallisGellMann,book:Tsallis} entropy
    \begin{itemize}
        \item The order parameter $\alpha$ and deformation parameter $q$, can be identified with $1/\gamma$
    \end{itemize}
    \item In the limit $\gamma \to 0$, all 3 entropies give the same equilibrium strategy, meaning all could unify Bayesian and ``Frequentist'' statistics.
    \item However, the proposed entropy \eqref{eq:HEU} can be derived from expected utility theory with an isoelastic utility function. Its parameter has an intuitive interpretation, namely $\gamma$ representing the relative risk aversion of the deciding agent.
    \begin{itemize}
        \item Brave or reckless as $\gamma \to 0$, balanced when $\gamma \to 1$, and cautious, shy or anxious as $\gamma \to \infty$
    \end{itemize}
\end{itemize}

\paragraph{Rényi entropy:} \cite{paper:RenyiOriginal}

\begin{equation}
    H_\alpha(\underline{p}) = \frac{1}{1-\alpha} \log \left ( \sum_k p_k^\alpha \right )
\end{equation}

\paragraph{Conditional Rényi entropy:} \cite{paper:ConditionalRenyi}

\begin{equation}
    H_\alpha(Y|X) = \mathbb{E}_{x \sim X} H_\alpha(Y|X=x)
\end{equation}

\paragraph{Tsallis entropy:} \cite{paper:TsallisOriginal}

\begin{equation}
    S_q(\underline{p}) = \frac{\sum_k p_k^q - 1}{1-q}
\end{equation}

\paragraph{Conditional Tsallis entropy:} \cite{book:TsallisGellMann,paper:ConditionalTsallis}

\begin{equation}
    S_q(Y|X) = \frac{S_q(X,Y) - S_q(X)}{1+ (1-q) S_q(X)}
\end{equation}

\paragraph{Connecting with $\gamma$ parameter:}

\begin{equation}
    H^\mathrm{R}_\gamma(\underline{p}) = H_{\alpha = 1/\gamma}(\underline{p}), \quad
    H^\mathrm{T}_\gamma(\underline{p}) = S_{q=1/\gamma}(\underline{p})
\end{equation}

\begin{equation}
    H^\mathrm{R}_\gamma(Y|X) = H_{\alpha = 1/\gamma}(Y|X), \quad
    H^\mathrm{T}_\gamma(Y|X) = S_{q = 1/\gamma}(Y|X)
\end{equation}

\subsubsection{Simplest nontrivial example}
\label{sec:EntropiesSimplestPriors}

\begin{figure}[H]
    \centering
    \includegraphics[width=10 cm]{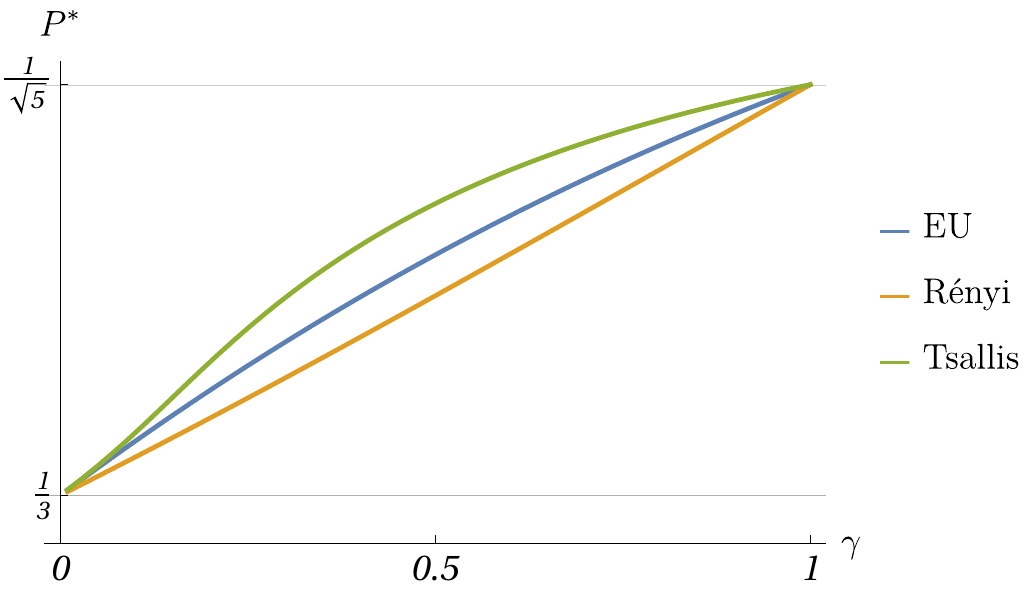}
    \caption{$P^*$ for a continuous betting game \CG{N=1,K_A=0,K_B=1,M=2} calculated by maximizing Expected Utility ($H^\text{EU}_\gamma(\Pi|X)$), Rényi entropy ($H^\text{R}_\gamma(\Pi|X)$) and Tsallis entropy ($H^\text{T}_\gamma(\Pi|X)$).}
    \label{fig:EURT_01}
\end{figure}

\begin{figure}[H]
    \centering
    \begin{subfigure}[b]{0.45\textwidth}
        \includegraphics[width=\textwidth]{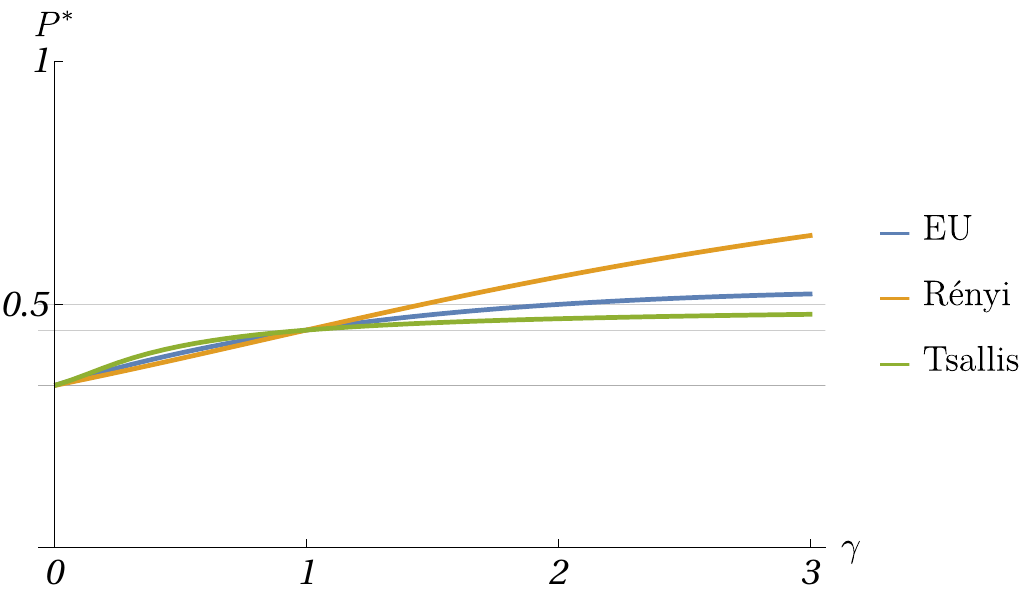}
        \caption{$\gamma \in (0,3]$}
        \label{fig:EURT_03}
    \end{subfigure}
    \hspace{0.01\textwidth} 
    \begin{subfigure}[b]{0.45\textwidth}
        \includegraphics[width=\textwidth]{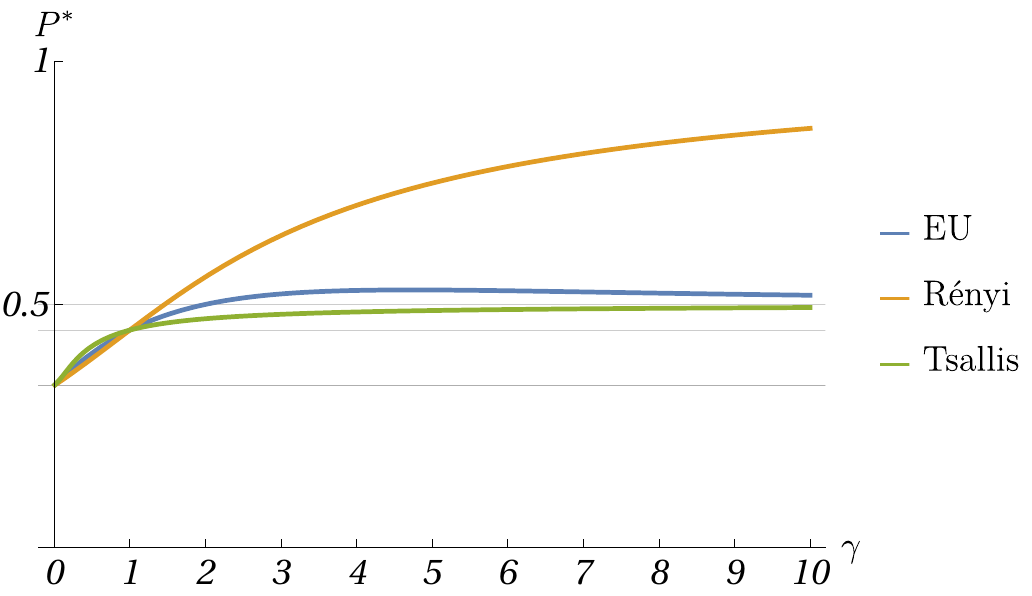}
        \caption{$\gamma \in (0,10]$}
        \label{fig:EURT_010}
    \end{subfigure}
    
    \caption{$P^*$ for a continuous betting game \CG{N=1,K_A=0,K_B=1,M=2} calculated by maximizing Expected Utility ($H^\text{EU}_\gamma(\Pi|X)$), Rényi entropy ($H^\text{R}_\gamma(\Pi|X)$) and Tsallis entropy ($H^\text{T}_\gamma(\Pi|X)$).}
    \label{fig:EURT_shy}
\end{figure}

\subsection{Generalized quantities}

\paragraph{Generalized divergence:}

\begin{equation}
    D_\mathrm{EU}^\gamma(\underline{p}||\underline{q}) =
    \frac{\gamma}{1-\gamma}
    \log
    \left (
    \sum_k q_k 
    \left (
    \frac{p_k}{q_k}
    \right )^{1/\gamma}
    \right )
    =
    \frac{\gamma}{1-\gamma}
    \log
    \left (
    \sum_k p_k 
    \left (
    \frac{p_k}{q_k}
    \right )^{1/\gamma-1}
    \right )
\end{equation}

Using this divergence, we can make an analogous statement to equation \eqref{eq:F1DKLbounds}, for $\Phi(.)$ (defined in \eqref{eq:PhiDef1}, \eqref{eq:PhiDef2}):

\begin{equation}
    \mathbb{G}_{n+1} = \Phi(\mathbb{G}_n),
    \quad \mathbb{G}_0 = \mathbb{R}
\end{equation}

\begin{equation}
    \mathbb{G}_1 = [
    -D_\mathrm{EU}^\gamma(\underline{p}(A)||\underline{p}(B)),
    D_\mathrm{EU}^\gamma(\underline{p}(B)||\underline{p}(A))
    ]
\end{equation}

Direct calculation shows that:

\begin{equation}
    \lim_{\gamma \to 1}
    D_\mathrm{EU}^\gamma(\underline{p}||\underline{q}) =
    D_{KL}(\underline{p}||\underline{q})
\end{equation}

\begin{remark}
    This expected utility-based divergence is related to the Rényi divergence \cite{arxiv:RenyiAndKLDivergence,paper:RenyiOriginal}

    \begin{equation}
        D_\mathrm{EU}^\gamma(\underline{p}||\underline{q}) =
        D_{\alpha=1/\gamma}(\underline{p}||\underline{q})
    \end{equation}

\end{remark}

\section{Game theoretical framework for statistics}
\label{sec:PhilosophiPart}

\subsection{Motivation}

``To us, probability is the very guide of life.'' \cite{book:Butler}
However, the exact interpretation of the concept is still debated among statisticians, philosophers, economists, and various other practitioners who wish to live by its guidance.
Probability theory and statistics are tremendously successful disciplines, providing a plethora of useful tools both for theoretical research and real-world problems.
Nevertheless, for many students and professionals, some techniques or the interpretation of results might cause unease because of the lack of a unified, coherent structure.

This work attempts to take a step back and look at the discipline with a fresh eyes.
It aims to identify the tasks in which probabilistic and statistical concepts might arise and approach these problems with a broader scope.
Hopefully, this more general approach will provide a philosophically simpler and more coherent framework in which most of the successful techniques and concepts of statistics can be derived and interpreted.
The present work collects the first and simplest building blocks and begins the construction of such a framework. It can hopefully provide tools in the form of concepts, formulas, theorems, computable algorithms, controlled approximations and executable code to empower those who genuinely care about making decisions in the face of uncertainty.

(For a more detailed and personal narrative about the questions which motivated this work, see Appendix \ref{Appendix:PersonalMotivation}.)

\subsection{Scope}

\subsubsection{Scope of the problem}

The present work attempts to discuss a more general problem than statistics itself, which is decision-making in the face of uncertainty.
The pressure of acting without being able to collect all necessary information is a fundamental part of any organism. Therefore, mechanisms and heuristics to collect and process data and form  actionable strategies accordingly are more ancient than abstract thinking and reasoning. \cite{book:EvolutionCognition}

Historically, many important problems and concepts of such decision problems came from economics, which can be viewed as the science and art of making choices under scarcity \footnote{and information can be a very real resource which is usually not fully available}.
In the first half of the 20\textsuperscript{th} century, various kinds of uncertainties were discussed in foundational works by Knight \cite{book:Knight} and Keynes \cite{book:Keynes}.
The concept of risk (where we can in some way associate probabilities to possibilities), is central in modern economics. However, the concept of Knightian uncertainty or ambiguity (in which case only the set of possibilities is known, but we have no further information which might help to associate probabilities to them) seems to fell out of favour \cite{book:JuliaUncertainty}.

It might be possible that there is no universal theory for such fundamentally uncertain situations, and only isolated heuristics can be identified, adopted by various agents.
However, I will argue that a surprisingly general framework can be built by adopting a small set of assumptions.

We have seen in previous sections that constructing a toy model in which decisions under uncertainty can be introduced and analysed does not require the external introduction of probabilities and stochastic variables. All these concepts can be born out from the setup of the dilemma (modelled by a game with deterministic rules) and are interpretable as instrumental intermediate concepts emerging in the construction of equilibrium strategies.

\subsubsection{Scope of this work}

For the sake of precise and rigorous mathematical statements, the scope of this present work has been limited to the simplest statistical problems, in which the fundamental concept of uncertainty and the proposed decision-making process can be demonstrated.
Thus the framework presented in the previous sections is far from complete.
There are various directions on how this work could be naturally extended. For an incomplete list of future directions, see Section~\ref{sec:FutureWork}.

The work builds mainly on the results of mathematical explorations. It can be viewed as a pursuit of coherence and philosophical simplicity rather than a new theory which explains and predicts experimental observations. \footnote{Although there are real-world phenomena in the realm of experimental economics, such as Ellsberg paradox \cite{paper:Ellsberg} related to Ambiguity aversion \cite{book:EconomicsDictionary,paper:GilboaSchmeidler}, which are hard to interpret by applying mainstream interpretations of probability, but could be a natural phenomenon in a game theoretic framework.}

A secondary aim of this work was an attempt to collect related concepts from diverse disciplines that might be understood with a refined definition of uncertainty.

Hopefully, this handful of ideas can serve as the foundation for broader joint work, which can restart the serious investigation of the framework and its applications.
From the point of view of developed methods applicable to real-life statistical problems, the current work is a humble achievement; however, hopefully, it can serve as a solid foundation demonstrating the consistency and applicability of the presented game theoretic framework.

\subsubsection{Problems outside of scope}

To avoid the false impression that the presented refined concept of uncertainty could encompass all unknowns, which real-life agents need to face, I explicitly spell out some limitations of the framework.

A major assumption that has been made is that the agent knows all possibilities of an unknown parameter of her environment. This is never a finite set in the real world when unpredictable, unexpected events can alter the environment and make previously imagined categorisation impossible.
\footnote{This could be relaxed in a way that we only assume that future consequences will be describable by arbitrary long strings. A future generalisation possibility is addressed in Section~\ref{sec:Solomonoff}; however, if rewards could take arbitrary values, then the framework seems to break down and fail to converge.}

Another general assumption is that agents know all the utilities for all possible future consequences. This is not realistic, both for complex or numerous consequences.
In particular, this means, for example that we assume that the agent's utilities do not change as new information has been gathered, actions have been made, or simply time elapsed.
There are numerous cases where these assumptions do not hold.

\subsection{Proposed framework}

The proposed framework is fundamentally built on Game Theory \cite{book:EssentialGameTheory,book:GameTheory,review:NeumannMorgensternGameThoery,book:GameTheoryOriginal}.
Not because of any ``rationality'' assumption but on the contrary, because this looks like the most natural framework to model the behaviour of adaptable agents, whose primary goal is to become and stay successful in their environment. \footnote{As in evolutionary game theory \cite{book:EvolutionaryGames,book:DarwinianDynamics}, where we do not assume ``rationally'' contemplated strategic behaviour from the agents, but a long evolutionary history can produce action patterns \emph{as if} it would come from a game theoretic reasoning.}
In this way, this is not primarily a prescriptive framework demanding some logical or other consistency but a partially descriptive theory that tries to draw general conclusions about agents and their behaviour, which successfully achieved their goals.

\paragraph{Experimenter and Nature:}
The ``Experimenter'' (in previous sections called \PI/) can face a dilemma, which can be characterized by possible actions, uncertain possible states (or parameter values) of ``Nature'', and the consequences of possible action-state pairs.
In the game-theoretic framework, the Experimenter is advised to take the following steps to determine her strategy in this situation:
Evaluate her utilities for all consequences, and form a strategy \emph{as if}, the unknown parameters had been chosen by an animated ``Nature'' (in previous sections \PII/ played this role), whose goal is to maximize the Experimenters expected regret \footnote{for a given state of ``Nature'', and action of Experimenter, the regret of the Experimenter is the difference between the maximal utility which could have been achieved -- by the best action for that state -- and the actually achieved utility}.
The game theoretic framework suggests the strategy to the Experimenter, which is equivalent to the equilibrium strategy in the previously constructed imaginary game. \cite{paper:Milnor,book:Wald,book:Savage}

\subsubsection{Normative attitude}

The framework does not aim to be prescriptive but suggestive in a way that it is slightly more ambitious than a descriptive theory and dares to suggest a default choice among possible policies.

\subsubsection{Alternative frameworks}

Acting in realistic environments is inherently an art and not a task which could be carried out by following a formal prescription.
Expected utility theory \cite{plato:ExpectedUtility} is not the only normative framework \cite{sep:NormativeTheories}. This is important to point out to avoid the possibility of a dogmatic solidification of the theory.
    \begin{itemize}
        \item Heuristics in the context of bounded ``rationality'' \cite{sep:BoundedRationality}, also know as behavioral economics \cite{paper:Tversky,book:Tversky}
        \item Imprecise probabilities \cite{paper:GilboaSchmeidler,book:GilboaDecision,sep:ImpreciseProbabilities}
        \item Heuristics in mathematics \cite{paper:PolyaHeuristics,book:GoodThinking}
        \item Algorithmic probability theory \cite{book:Solomonoff,book:Vitanyi,arxiv:MullerSolomonoff}
        \item Berge equilibrium \cite{book:BergeEquilibrium}
        \item Change of the definition of the agent, i.e. identity constructions
        \item Hierarchical constructions allow different types of opponents, while the type (being an unknown parameter) could be chosen by a ``higher level'' player.
        Similar to Bayesian hierarchical models \cite{book:BayesianDataAnalysis}
        \item Yet unknown or unimaginable construction (for example, a formalization of ``wise'' Optimism)
    \end{itemize}

\subsubsection{History of the approach}

For historical background see a quote from the textbook {\it Theoretical statistics} by D.R. Cox and D.V. Hinkley:

``Many of the basic ideas of statistical decision theory were stated by
Neyman and Pearson (1933b). A systematic theory for the situation
when there is no prior distribution was developed by Wald and set out
in a book (Wald, 1950) completed just before his death; this included
a formulation of common statistical problems in decision theory
terms. Important contributions continuing Wald’s work are by
Girshick and Savage (1951) and Stein (1956 and unpublished
lecture notes). A detailed introduction is given by Ferguson (1967).
This work lead to the view, widely held for a period, that all statistical
problems should be formulated in terms of decision making.
von Neumann and Morgenstern developed an axiomatic approach
to subjective utilities in connexion with their work on game theory
(von Neumann and Morgenstern, 1953, 1st ed. 1935); see DeGroot
(1970), Fishburn (1969) and, for an elementary account, Chernoff
and Moses (1959).
In the late 1950’s and early 1960’s theoretical interest shifted from
the situation without prior distributions to Bayesian decision theory
in which personalistic prior distributions and utilities are central to
the argument; an authoritative account was given by Raiffa and
Schlaifer (1961). There are now a number of excellent elementary
introductions (Aitchison, 1970a; Lindley, 1971b; Raiffa, 1968).
There is also an extensive economic, psychological, sociological and
operational research literature on decision making. A review on the
practical measurement of utilities is given by Hull, Moore and Thomas'' \cite{book:CoxStatistics}

\section{Future work and extension}
\label{sec:FutureWork}

\includegraphics[width=0.15\textwidth]{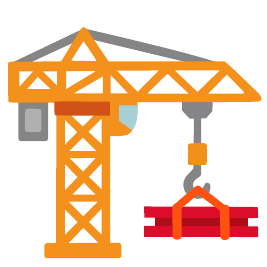}
\includegraphics[width=0.15\textwidth]{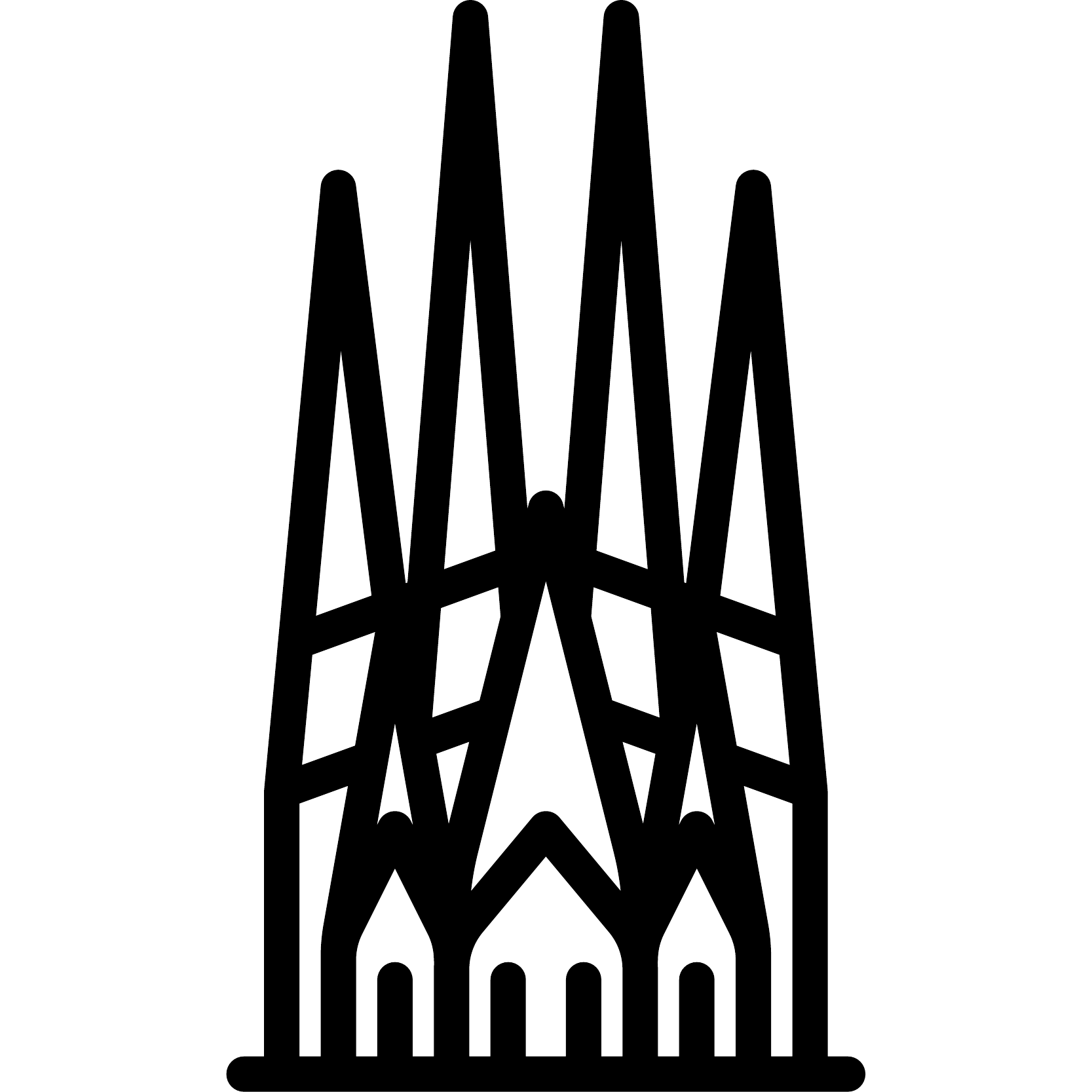}

\subsection{Assumptions about the unknown}

When we face uncertainty, there is an important metaphysical question: what assumptions do we make about the unknown?

In this work, we considered only ``Balanced games'', where the utility of guessing correctly does not depend on the scenario (A or B).
However, in a general dilemma -- where all 4 possible choosing-guessing pairs can have different utilities -- different assumptions can lead to different behaviour. In the following I list four different potential metaphysical constructions, each yielding different prototypical games for such a dilemma:

\begin{itemize}
    \item \begin{CJK*}{UTF8}{bsmi}\bf 鬼\end{CJK*} \footnote{Demon, evil spirit or ghost. For context see \href{https://humanum.arts.cuhk.edu.hk/Lexis/lexi-mf/search.php?word=\%E9\%AC\%BC}{Chinese Character Database}, \href{https://en.wiktionary.org/wiki/\%E9\%AC\%BC}{Wiktionary} or \cite{book:ChineseSymbols}}:
    $U_{1,2} = (U_1, -U_1)$
    Assuming that the unknown is a ``malevolent demon'', playing a zero-sum game with us. \cite{book:Savage,book:CoxStatistics}
    \item \Pisces \ \footnote{Pisces, symbolizing Alchemical final transmutation (Projection) \cite{book:Alchemy}. (The analogy came from my impression that changing an agent's utility function is hard, such as changing \href{https://www.scientificamerican.com/article/fact-or-fiction-lead-can-be-turned-into-gold/}{base metals to gold}, or mental transmutation.)}:
    $U_{1,2} = (-R_1, R_1)$
    This construction assumes that we can somehow change our utility function to regret \cite{book:EssentialGameTheory}, and then the unknown is involved in a zero-sum game not regarding our utility but regarding our regret.
    \item $\mathghost$ \footnote{Ghost or trickster, which is not motivated to harm, but to spook and cause unease. Because, in many cases, the details of an imagined ghost's equilibrium strategy can not be directly observed, the naming has been inspired by the Faddeev-Popov ghosts \cite{book:PeskinSchroeder} in Quantum Field Theory}:
    $U_{1,2} = (U_1, R_1)$
    Assuming that we are playing a non-zero-sum game with the unknown, which is not trying to minimise our utility but to maximise our regret. \cite{book:Savage,book:CoxStatistics}
    \item  \hexacube{} \footnote{symbolising a dice, which were manufactured and used from prehistoric times. 2500 - 1900 BC \href{http://jameelcentre.ashmolean.org/collection/4/6739/6741/11231}{Mohenjo-daro}, 3100 - 2400 BC \href{https://nms.scran.ac.uk/database/record.php?usi=000-100-040-457-C}{Orkney}, 3550 – 2300 BC \href{https://www.parstimes.com/history/oldest_backgammon.html}{Burnt City} \cite{paper:BurntCityDice}}:
    $(U_1, -)$
    This is essentially the Bayesian framework, in which we assume that regardless of our utilities, the unknown can be viewed as a stochastic actor, choosing actions with prior probability $\pi(P)$.
\end{itemize}

I will argue that out of these candidates, the Utility-Regret ($\mathghost$) non-zero-sum game seems to have the most desirable properties.

\paragraph{Further possibilities:}

\begin{itemize}
    \item $\odot$: $U_{1,2} = (U_1'$, $R_1')$ A ``wisely optimistic'' \cite{book:OptimismPessimismPsychology} metaphysical construction, which might be based on some kind of relatedness or sharing between the agents, and the forces influencing the uncertain parameters. 
    \item {\bf ?}: yet unexplored possibilities. 
\end{itemize}

\subsection{Target of the Inference}

It can be shown that the equilibrium of such statistical games depends on the target of our inference, i.e. the unknown quantities we are interested in.
Regarding this aspect of statistical games, I mention 3 main categories:

\begin{itemize}
    \item ``Platonian'' Inference, focusing and betting on parameters
    \begin{itemize}
        \item This current work can be viewed as a purely ``Platonian'' Inference.
        \item In the case of a continuous parameter space, further complications can occur, such as a default measure or the ``density of states'' on the parameter space.
        \item the ``Platonian'' framework seems to be closer to previous works on Objective or Non-informative priors such as the Reference prior \cite{paper:ReferencePrior,book:Bernardo}
    \end{itemize}
    \item ``Aristotelian'' Inference, focusing and betting on future observable data
    \begin{itemize}
        \item For example, the work of R. L. Kashyap \cite{Kashyap1974, Kashyap1971}
        \item In this framework, not only priors and probabilities can be considered imaginary, but models as well. In this context, a model can be viewed as an abstract construction connecting collected data with potential future data.
        \item In some cases, ``Platonian'' Inference can be viewed as the limiting case of an ``Aristotelian'' Inference.
    \end{itemize}
    \item General Inference: betting on a general Target space.
    \begin{itemize}
        \item A natural example might be the case when we can only observe indirect data to our interests.
        \item A further example might be data compression and the choice of coding protocols.
    \end{itemize}
\end{itemize}

\subsection{Correspondence with Bayesians and Frequentists}

The correspondence principle \cite{sep:Correspondence,book:BohrCorrespondence}, or more specifically, the general correspondence principle \cite{sep:Correspondence} requires that a more general scientific framework has to incorporate a more specific previous framework:

``The most important heuristic restriction is the General Correspondence Principle. Roughly speaking, this is the requirement that any acceptable new theory L should account for the success of its predecessor S by ‘degenerating’ into that theory under those conditions under which S has been well confirmed by tests.'' \cite{book:CorrespondencePost}

Decision theory might even provide natural quantitative tools by which theories can be compared. A natural measure of divergence of policies could be the cost for which an agent with different preferences (and thus different optimal policy) is willing to adopt another policy.

\begin{itemize}
    \item ``Delegation premium'' / ``Independence tax''
    \begin{itemize}
        \item How much tax would a gambler be willing to pay for being able to play her own optimal policy instead of adopting another decision-making policy?
    \end{itemize}
    \item Policy-(Believes/Preferences/Priors) acceptance matrix
\end{itemize}

\subsection{Effective approximative numerical methods}

\begin{itemize}
    \item Finding an effective approximative numerical method such as Markov Chain Monte Carlo (MCMC) \cite{book:MCMC}
    \begin{itemize}
        \item MCMC can be combined with Blahut–Arimoto algorithm \cite{paper:MCMCBlahutArimoto,arxiv:MCMCBlahutArimoto}, which, with some modifications, might yield a general and scalable stochastic approximation for the equilibrium prior.
    \end{itemize}
\end{itemize}

\subsection{Embedding into larger game Contests}

In Appendix \ref{appendix:UtilityFunctions}, we introduced a Contest view (Section~\ref{sec:ContestView}), and we were able to argue that in this individualistic and competitive framework, the logarithmic utility function is aligned with evolutionary fitness.

However, when we defined Bayesian games, we independently assumed that it is zero-sum.
This assumption could be relaxed by embedding the individual game into a competitive contest, where, among other gamblers, one insider can not only play but influence the model parameters.
Such an extended individualistic and competitive framework might naturally align with Bayesian games and generalise well for unbalanced games.

\subsection{Generalizing Game Theory with a player representing uncertainty}

If multiple ``rational'' agents face each other and uncertainty simultaneously, then the uncertain parameters could be modelled by a ``regret maximising'' player.

A natural question might be: Whose regret ought to be maximised? A somewhat unusual answer might be that all rational players can project their own personal ghosts, maximising their own personal regrets, resulting in their own personal priors for the uncertain parameters.
(Without mentioning these weakly inconsistent priors) this framework has been suggested in the context of Bayesian games and types \cite{arxiv:GameTheoryRegret,paper:GameTheoryRegret}. However, the concept has the potential to be further generalised.

A simple toy example could be a battle situation, where two opposing generals have to plan their strategies, while their positions might be weather-dependent in different way. Suppose the generals have absolutely no information about the uncertain weather. In that case, both may prepare more for different weather conditions, even if they know they will face the same reality on the day of battle.

\subsection{Using the concept in Reinforcement Learning}

A toy model in Reinforcement Learning \cite{book:RL}, which could be analysed in depth using this framework, might be a ``one-armed bandit'' type problem \cite{book:BanditBook}:

\begin{itemize}
    \item The bandit has one stochastic arm, which can give $\{-1,1\}$ reward.
    \begin{itemize}
        \item if it is a type A bandit, we gain $1$ with $p_A = 20\%$ chance and $-1$ with $1-p_A=80\%$ chance,
        \item if it is a type B bandit, we gain $1$ with $p_B=80\%$ chance and $-1$ with $1-p_B=20\%$ chance.
    \end{itemize}
    \item In every round, we can decide if we pull the stochastic arm or refuse to play (in which case we gain a $0$ reward).
    \item We have no prior knowledge about the type of the bandit $\theta \in \Theta = \{A, B\}$, but we can assume a game theoretical setup to find optimal policies for such a gamble.
\end{itemize}

\subsection{Foundations of Statistical Physics}

Statistical physics is a tremendously successful branch of physics, even if its foundations are debatable (and sometimes are debated).
I would mention two main frameworks in which this branch of physics is usually introduced:

\begin{itemize}
    \item Counting microstates and introducing ensembles
    \begin{itemize}
        \item Boltzmann's principle and counting of microstates \cite{book:Greiner,book:PathriaStatisticalMechanics,book:StatisticalPhysicsBerkeley,book:Sommerfeld}
        \item Gibbs entropy and H-Theorem \cite{book:WeinbergModernPhysics}
        \item Boltzmann’s law \cite{book:Feynman,book:FeynmanStatisticalMechanics}
    \end{itemize}
    \item Ergodicity in dynamical systems \cite{book:Krylov, book:ErgodicTheory, book:Landau}
\end{itemize}

A game theoretic foundation in a nutshell would look the following:
We define a statistical physics game where \PI/ needs to bet on (a possibly coarse-grained partition of) the state of the system, but she can choose the time (or multiple instances) of observation.
\PII/, on the other hand, can choose an initial condition for the system (possibly with some constraints).

A natural conjecture is that under specific properties of the dynamical system (ergodicity, chaos, topological mixing, etc.) and in a specific limit (the observation time interval goes to infinity), the equilibrium strategy of \PII/ choosing initial states will converge to the invariant measure.
While the equilibrium betting strategy of \PI/ for finding the system in specific domains will be computable by an ensemble average for this invariant measure.

In this framework, we would not need to say that without knowing the initial condition, we somehow deduced some properties of the system after the ``thermalisation'' time \footnote{``although the motion of systems with a very large number of
degrees of freedom obeys the same laws of mechanics as that of systems consisting of a small number of particles, the existence of many degrees of freedom results in laws of a different kind.'' \cite{book:Landau}}. However, we could say that if we can freely pick the time of observation, then because of certain properties of a dynamical system, we can come up with a suitable strategy by which we can bet on the value of some observable quantities.

\subsection{Quantum metrology}

The concept can be generalised to find an optimal set of measurements for a quantum state with unknown parameters.

A natural generalisation of Bayesian games is to use the Quantum relative entropy \cite{book:QuantumComputationAndInformation,book:NeumannQuantum} (between the chosen density matrix by \PII/ and the estimated density matrix by \PI/) to quantify the result of our estimate.
(In spirit this would be very similar to the game suggested by Kashyap for classical bits \cite{Kashyap1971,Kashyap1974}.)

\subsection{Universal Inductive Inference}
\label{sec:Solomonoff}

We can also formulate the framework as a data compression game.
Imagine the following game:
First, \PII/ chooses an arbitrary $M$-long binary sequence $D$.
\PI/ can take an $N$-long sample from the sequence
and propose a coding/compressing protocol for the whole $M$-long sequence based on the gathered data.
To describe the coding/compressing protocol $C_1$, she can use a  Universal Turing Machine, i.e. her code will be represented as a $|C_1|$-long input tape.

After this, the whole $M$-long binary sequence will be compressed by the protocol suggested by \PI/, resulting in a $|C_1(D)|$ long string. To be able to decompress the data, both the protocol and the compressed data have to be sent or saved; therefore, the real length of the encoding is $L_1 = |C_1| + |C_1(D)|$. \footnote{In this way, the framework is related to the Minimum description length principle \cite{book:MinimumDescriptionLength,arxiv:MinimumDescriptionLengthRevisited} as well.}

If \PI/ would know data $D$ in advance, she could${}^*$ find the shortest coding protocol $C^D$ for the data, which simply outputs $D$ for an empty input.
The length of this optimal code is, by definition, the data's Kolmogorov complexity \cite{book:Vitanyi} $K(D)=|C^D|$.

If we assume that \PI/'s utility function is the length of her compression, then her regret is the difference between her protocol's compression length and the optimal compression length, i.e. the Kolmogorov complexity of the data:
$\Delta L = |C_1| + |C_1(D)| - K(D)$.

If we assume such a game between \PI/ and \PII/, in which \PII/ tries to maximise \PI/'s regret, then the equilibrium coding protocols can be linked to probabilities very similar to those which would be suggested by the Solomonoff induction formula \cite{book:Solomonoff,book:UniversalArtificialIntelligence}.

\subsection{Poisson games}

Limiting case, where all $N, K_A, K_B, M \to \infty$, while:

\begin{equation}
    \lim_{i \to \infty} N_i \frac{K_{A,i}}{M_i} = \lambda_A \in \mathbb{R}_+, \quad \lim_{i \to \infty} N_i \frac{K_{B,i}}{M_i} = \lambda_B \in \mathbb{R}_+
\end{equation}

\subsection{Connection with other frameworks}

\begin{itemize}
    \item Classical probability \cite{book:Laplace}
    \item Frequentist \cite{book:VonMises}
    \item Bayesian
    \begin{itemize}
        \item Subjective \cite{book:deFinetti}
        \item Objective \cite{book:Jaynes}
    \end{itemize}
    \item Axiomatic theory of probability \cite{book:Kolmogorov,book:Klenke}
    \begin{itemize}
        \item Probabilistic proof methods \cite{book:ProofsFromTHEBOOK,book:TheProbabilisticMethod}
    \end{itemize}
    \item Decision theory \cite{book:DecisionTheory,book:GilboaDecision,book:IntroductionToDecisionTheory,paper:DecisionTheoryAnOverview}
    \item Other related fields
    \begin{itemize}
        \item Control theory \cite{book:Control}
        \item Algorithmic probability theory \cite{book:Solomonoff,book:Vitanyi,book:UniversalArtificialIntelligence}
        \item Operations Research \cite{book:OperationsResearch,book:IntroductionOperationsResearch}
    \end{itemize}
\end{itemize}

\section{Acknowledgement}

I am deeply grateful to Anita L. Verő, who helped and supported me and was my partner also in writing and discussing the material. It is difficult to overstate her supportive, motivating and creative role in the birth of this publication.

I want to thank all my friends, colleagues and family for their support and for having the patience to listen to my often unrefined thoughts on the topic.

I am especially thankful to Balázs Gimes, who patiently allowed me to articulate my initial, raw and disorganised thoughts on the subject. Through his questions and attentive listening, he helped me transform these initially nebulous concepts into the form that led to this work.
I want to thank György Fehér for initial coaching an support.\footnote{I need to mention László Ujfalusi, with whom we discovered a ``lost Greek letter'' $\sampi$ many years ago \cite{personal:Ujfalusi}, and committed ourselves to use and popularize it for scientific notation. This is the main reason the symbol has been used in \ref{conj:AsymptoticExpansionSubleadingTerm}.}
I want to thank Zoltán Szabó for his help with the publishing process, as well as for his comments and encouragement that contributed to the final appearance of the work.

To my great surprise, in 2016, R. L. Kashyap replied to me about some details of one of his papers published in 1974. At this point, I would like to express my gratitude for his help and kindness.

Discussions with Roger Germundsson at Wolfram Research, helped me reviewing the field of game theory in depth, and learning about high quality mathematical software design. 

I want to express my deep gratitude to all readers of this relatively long and sometimes technical work. I do believe that a substantial part of a theoretical work's value comes from the readers and the ideas emerging from the interaction. If this work can inspire anybody to think deeper about probability, uncertainty, statistics or decision-making by reading, skimming or appreciating mathematical details, that brings me great joy and gratification.

I am thankful to all my teachers who motivated, challenged, helped and supported me by sharing their knowledge and providing opportunities.

Finally, I would like to thank my former supervisor, Zoltán Bajnok, who effectively demonstrated the value and usefulness of theoretical toy models. Finding the smallest and simplest examples, which show nontrivial behaviour, contributed to a great many results and conjectures collected in this work.

\newpage

\appendix

\section{Proof of the symmetric equilibrium}

\label{Appendix:FisherGameEquilibrium}

The proof belongs to Theorem~\ref{thm:Symmetric}.

\begin{proof}
\label{proof:Symmetric}
To prove that this is indeed a Nash equilibrium, we need to show that:

\begin{itemize}
    \item For \PI/:
    \begin{itemize}
        \item \PI/ can expect the same winning rate $v$ for all actions which are present with non-zero probability in her strategy. (The expectation is calculated with respect to \PII/'s mixed strategy.)
        \item For all actions excluded from the equilibrium strategy, the expected utility can not be greater than $v$.
    \end{itemize}
    \item \PII/ can expect the same winning rate for all possible sequences (this is because \PI/'s mixed sequence choosing strategy contains all possible sequences).
\end{itemize}

Formally:

\begin{equation}
    \label{eq:1equal}
    \E_{\alpha_2 \sim \sigma_2} \left [ u_1[a_1,\alpha_2] \right ] = v, \quad 
    \forall a_1 \in \supp(\sigma_1) = \{ a_1 \in \mathcal{A}_1 | \sigma_1(a_1) > 0 \}
\end{equation}

\begin{equation}
    \label{eq:dominated}
    \E_{\alpha_2 \sim \sigma_2} \left [ u_1[a_1,\alpha_2] \right ] \le v, \quad 
    \forall a_1 \in \supp(\sigma_1)^{\complement} = \{ a_1 \in \mathcal{A}_1 | \sigma_1(a_1) = 0 \}
\end{equation}

Since we assume a zero-sum game, the winning rate of \PII/ is proportional to the winning rate of \PI/.

\begin{equation}
    \label{eq:2equal}
    \E_{\alpha_1 \sim \sigma_1} \left [ u_1[\alpha_1,a_2] \right ] = v, \quad 
    \forall a_2 \in \supp (\sigma_2) = \mathcal{A}_2
\end{equation}

It is easy to see that because of the randomized sampling of \PI/, the expected utility for \PII/ depends only on the choice of A or B and does not depend on which sequence has been chosen from $\mathcal{K}_A$ or $\mathcal{K}_B$.

Similarly, because of the mixing of \PII/, \PI/'s utility is independent of the concrete choice of the sampling.

Some useful notation:
\begin{equation}
\label{eq:FisherNotation_p*AB}
    p_{k^*}(A) = p^*_A, \quad p_{k^*}(B) = p^*_B
\end{equation}

\begin{equation}
\label{eq:FisherNotation_SigmaAB}
    \sum_{k < k^*} p_k(A) = \Sigma_A, \quad \sum_{k > k^*} p_k(B) = \Sigma_B
\end{equation}

First, we show that if \PI/ would change her policy not to guess B when $k=\ell > k^*$, then she would decrease her chance of winning.

This is equivalent to the statement:

\begin{equation}
    \label{le:PpA>QpB}
    \ell > k^* \implies P^* \ p_\ell(A) < (1-P^*) \  p_\ell(B)
\end{equation}

\begin{equation}
    \ell > k^* \implies  \frac{p_\ell(A)}{p_\ell(B)} < \frac{1-P^*}{P^*}
\end{equation}

To prove this, we show that if $k, k-1 \in \mathbb{K}_{AB}$:

\begin{equation}
    \label{le:pa/pb}
    \frac{p_{k}(A)}{p_{k}(B)} < \frac{p_{k-1}(A)}{p_{k-1}(B)}
\end{equation}

\begin{equation}
    \frac{p_{k}(A)}{p_{k-1}(A)} \frac{p_{k-1}(B)}{p_{k}(B)} < 1
\end{equation}

By direct simplification of the binomial factors in Theorem~\ref{thm:SymEqHypergeom}, we get:

\begin{equation}
    \frac{(K_A-k+1) (k-K_B+M-N)}{(K_B-k+1) (k-K_A+M-N)} < 1
\end{equation}

If $k \le K_B$ and $N-k < M-K_A$

\begin{equation}
    (K_B - K_A)(M-N+1) > 0
\end{equation}

This is true for all $K_B>K_A$ and $M \ge N$ cases.

From the definition of $P^*$ we see that:

\begin{equation}
    \label{eq:(1-P)/P}
    \frac{1-P^*}{P^*} = \frac{p_{k^*}(A)}{p_{k^*}(B)}
\end{equation}

Equation \eqref{eq:(1-P)/P} together with equation \ref{le:pa/pb} proves \ref{le:PpA>QpB}.

With essentially the same line of reasoning, we can see that:

\begin{equation}
    \label{le:PpA<QpB}
    \ell < k^* \implies P^* \ p_\ell(A) > (1-P^*) \  p_\ell(B)
\end{equation}

This means that if \PI/ would change her policy not to guess A when $k=\ell < k^*$, then she would decrease her chance of winning.
This proves requirement \eqref{eq:dominated}.

After this point, the proof can be split into two cases:

\begin{itemize}
    \item the case, when $\nu^* > 0$,
    \item and the case when $\nu^* = 0$.
\end{itemize}

\paragraph{Mixed policy i.e. $\nu^*>0$:}

\PI/ is mixing between two policies. We will call the policy Up when she guesses A if $k=k^*$, and we will call Down if she guesses B if $k=k^*$.

It is useful to introduce 4 possibly different values:

\begin{align}
    \hat{v}_A   &= \sum_{k \le k^*} p_k(A) = \Sigma_A + p^*_A, & \hat{v}_B   &= \sum_{k > k^*} p_k(B) = \Sigma_B \\
    \check{v}_A &= \sum_{k < k^*} p_k(A) = \Sigma_A,           & \check{v}_B &= \sum_{k \ge k^*} p_k(B) = \Sigma_B + p^*_B
\end{align}

To prove requirement \eqref{eq:1equal} we need to show that the expected winning rates for both Up and Down strategies are equal:

\begin{equation}
    \hat{v} = P^* \ \hat{v}_A + (1-P^*) \ \hat{v}_B
\end{equation}

\begin{equation}
    \check{v} = P^* \ \check{v}_A + (1-P^*) \ \check{v}_B
\end{equation}

Taking their difference results:

\begin{equation}
    \begin{split}
        \hat{v} - \check{v} &= P^* (\hat{v}_A - \check{v}_A) + (1-P^*) (\hat{v}_B - \check{v}_B) \\
                            &= P^* \ p^*_A - (1-P^*) p^*_B \\
                            &= \frac{p^*_B}{p^*_A+p^*_B} \ p^*_A - \frac{p^*_A}{p^*_A+p^*_B} \ p^*_B \\
                            & = 0
    \end{split}  
\end{equation}

The final step is to prove requirement \eqref{eq:2equal}. In order to do this, we need to show that the chance of winning the game for \PI/ is the same if \PII/ chooses A or B.

\begin{equation}
\label{eq:vAnu}
    v_A = \nu^* \ \hat{v}_A + (1-\nu^*) \ \check{v}_A = \check{v}_A + \nu^* (\hat{v}_A - \check{v}_A) = \check{v}_A + \nu^* \ p^*_A
\end{equation}

\begin{equation}
\label{eq:vBnu}
    v_B = \nu^* \ \hat{v}_B + (1-\nu^*) \ \check{v}_B = \check{v}_B + \nu^* (\hat{v}_B - \check{v}_B) = \check{v}_B - \nu^* \ p^*_B
\end{equation}

Recalling expression \ref{thm:SymEqNu} for $\nu^*$, and using the new notation:

\begin{equation}
    \nu^* = \frac{\Sigma_B + p^*_B - \Sigma_A}{p^*_A + p^*_B}
\end{equation}

\begin{equation}
    v_A = \Sigma_A + \frac{\Sigma_B + p^*_B - \Sigma_A}{p^*_A + p^*_B} \ p^*_A
\end{equation}

\begin{equation}
    v_A = \frac{p^*_B \Sigma_A + p^*_A \Sigma_B + p^*_A \ p^*_B}{p^*_A + p^*_B} 
\end{equation}

\begin{equation}
    v_B = \Sigma_B + p^*_B - \frac{\Sigma_B + p^*_B - \Sigma_A}{p^*_A + p^*_B} \ p^*_B
\end{equation}

\begin{equation}
    v_B = \frac{p^*_A \Sigma_B + p^*_B \Sigma_A + p^*_A \ p^*_B}{p^*_A + p^*_B}
\end{equation}

\begin{equation}
    v^* = v_A = v_B
\end{equation}

\paragraph{Pure policy i.e. $\nu^*=0$:}

Based on \ref{thm:SymEqNu}, $\nu^*$ can be $0$, only if there is a $k^*$, for which:

\begin{equation}
    \sum_{k \geq k^*} p_k(B) - \sum_{k < k^*} p_k(A) = 0
\end{equation}

or 

\begin{equation}
    \label{proof:SymEqnu=0}
    \Sigma_B + p^*_B - \Sigma_A = 0
\end{equation}

In this case, $P^*$ is not fixed by an equality but only bounded by an inequality:

\begin{equation}
    \begin{split}
    P^* \ p_{k^*}(A)   &\le (1-P^*) \ p_{k^*}(B) \\
    P^* \ p_{k^*-1}(A) &\ge (1-P^*) \ p_{k^*-1}(B)
    \end{split}
\end{equation}

\begin{equation}
    \frac{p_{k^*-1}(B)}{p_{k^*-1}(A)+p_{k^*-1}(B)} \le P^* \le \frac{p_{k^*}(B)}{p_{k^*}(A)+p_{k^*}(B)}
\end{equation}

The choice in Theorem~\ref{thm:Symmetric} satisfies this inequality. (However, the proof shows that there are other choices which would give an equilibrium in this case.)

\PI/ is not mixing. Therefore, there is no requirement for her indifference about the policies  $\hat{\phi}$ and $\check{\phi}$. When she adopts a strategy with $\nu^* = 0$, she uses only $\check{\phi}$.

Therefore, the only further requirement is that \PII/ has to be indifferent about choosing A or B:

\begin{equation}
    v_A = v_B
\end{equation}

\begin{equation}
    \check{v}_A = \check{v}_B
\end{equation}

\begin{equation}
    \Sigma_A = \Sigma_B + p^*_B
\end{equation}

Which is satisfied because of equation \ref{proof:SymEqnu=0}.

\end{proof}

\subsubsection{Intermediacy lemma}
\label{sec:IntermediacyMedian}

The following proof belongs to the intermediacy lemma of the median \ref{lemma:IntermediacyMedian}:

\begin{proof}

    Let $\xi$ and $\eta$ be real-valued random variables, and $\zeta = \lambda \xi + (1-\lambda) \eta$ be their mixture with $\lambda \in [0,1]$.

    We can define positive and negative $p$–quantile \cite{book:DeGrootProbabilityAndStatistics,book:IntroToProbability} tail sets for real-valued random variables in the following way:

    \begin{equation}
        \mathbb{T}^{-}_p[\xi] = 
        \{ x \in \mathbb{R} \ | \ 
        \Pr(\xi \le x) < p
        \}
    \end{equation}

        \begin{equation}
        \mathbb{T}^{+}_p[\xi] = 
        \{ x \in \mathbb{R} \ | \ 
        \Pr(\xi \ge x) < p
        \}
    \end{equation}

    In general, the definition of the $p$-quantiles and the median (which can be viewed as the $0.5$-quantile) does not guarantee a unique value; therefore, it is better to define them as a set-valued function:

    \begin{equation}
        c \in \mathbb{M}_p[\xi]
        \iff
        \Pr(\xi \le c) \ge p
        \ \wedge \  
        \Pr(\xi \ge c) \ge p
    \end{equation}

    The $p$–quantile set-valued function can be expressed by the tail sets in the following way:

    \begin{equation}
        \mathbb{M}_p[\xi] = 
        \mathbb{R} \setminus
        (\mathbb{T}^{-}_p[\xi] \cup
        \mathbb{T}^{+}_p[\xi])
    \end{equation}

    Because $\zeta$ is a mixture, $\Pr(\zeta \le x) = \lambda \Pr(\xi \le x)+ (1-\lambda) \Pr(\eta \le x)$. It is easy to see that:

    \begin{equation}
        \mathbb{T}^{-}_p[\xi] \cap
        \mathbb{T}^{-}_p[\eta]
        \subset
        \mathbb{T}^{-}_p[\zeta]
    \end{equation}

    and analogously that:

    \begin{equation}
        \mathbb{T}^{+}_p[\xi] \cap
        \mathbb{T}^{+}_p[\eta]
        \subset
        \mathbb{T}^{+}_p[\zeta]
    \end{equation}

    Therefore $c \in \mathbb{M_p}[\zeta]$ can not be in $\mathbb{T}^{-}_p[\xi]$ or $\mathbb{T}^{-}_p[\eta]$ i.e. less than any median point in $\mathbb{M}_p[\xi]$ or $\mathbb{M}_p[\eta]$;
    and $c$ can not be in $\mathbb{T}^{+}_p[\xi]$ or $\mathbb{T}^{+}_p[\eta]$ i.e. greater than any median point in $\mathbb{M}_p[\xi]$ or $\mathbb{M}_p[\eta]$.

    If the $0.5$-quantiles i.e. the medians are unique values and $\mathbbm{m}[\xi] \le \mathbbm{m}[\eta]$ then we have:

    \begin{equation}
        \mathbbm{m}[\xi]
        \le
        \mathbbm{m}[\zeta]
        \le
        \mathbbm{m}[\eta]
    \end{equation}

    This completes the proof.
    
\end{proof}

\section{Proof of the Bayesian equilibrium}

The proof belongs to Theorem~\ref{thm:Bayesian}.

\begin{proof}
\label{proof:Bayesian}

It is straightforward to see that because of \PII/'s mixing strategy, \PI/ has the same expected utility for any specific sampling.
Similarly, because of the randomized sampling of \PI/, the expected utility of \PII/ depends only on the chosen scenario (A or B) and not the specific sequence (chosen from $\mathcal{K}_A$ or $\mathcal{K}_B$).

The expected growth rate difference can be expressed in the following way:

\begin{equation}
    \label{eq:G(P,p')}
    \Delta G(P,\{p'\}) = P \ \sum_{k \in \mathbb{K}_A} p_k(A) \log(p'_k) + (1-P) \ \sum_{k \in \mathbb{K}_B} p_k(B) \log(1-p'_k)
\end{equation}

This expression has to be maximized by a set of splitting ratios described by $\{p'_k\}$, which is surprisingly simple. After taking the partial derivative respect to $p'_\ell$:

\begin{equation}
    P \ p_\ell(A) \frac{1}{p'^*_\ell} - (1-P) \ p_\ell(B) \frac{1}{1-p'^*_\ell} = 0 \quad \forall \ell \in \mathbb{K}_A \cap \mathbb{K}_B
\end{equation}

Which can be satisfied by setting:

\begin{equation}
    p'^*_\ell = \frac{P \ p_\ell(A)}{P \ p_\ell(A)+(1-P) \  p_\ell(B)}
\end{equation}

When $\ell \in \mathbb{K}_A \setminus \mathbb{K}_B$, the derivative is:

\begin{equation}
    P \ p_\ell(A) \frac{1}{p'_\ell}
\end{equation}

which is positive on the whole $p'^*_\ell \in (0,1]$ domain. This implies that in this case, $\Delta G$ is maximized if:

\begin{equation}
    p'^*_\ell = 1, \quad \forall \ell \in \mathbb{K}_A \setminus \mathbb{K}_B
\end{equation}

Using the same reasoning, we get:

\begin{equation}
    p'^*_\ell = 0, \quad \forall \ell \in \mathbb{K}_B \setminus \mathbb{K}_A
\end{equation}

To formally prove that this is indeed a global maximum, we can check the second derivative with respect to $p'_m$:

\begin{equation}
    \begin{split}
    \frac{\partial^2 }{\partial p'_m \partial p'_\ell} \Delta G(P, \{p'\}) \Bigr|_{\{p'\}=\{p'^*\}} &=
    - \delta_{m,\ell} \left ( \frac{P \ p_\ell(A)}{{p'^*_\ell}^2} +
    \frac{(1-P) \ p_\ell(B)}{(1-p'^*_\ell)^2}
    \right ) \\
    &= -  \delta_{m,\ell} \frac{(P \ p_\ell(A)+(1-P) \  p_\ell(B))^3}{P \ (1-P) \ p_\ell(A) \ p_\ell(B)}
    \end{split}
\end{equation}

which gives a strictly negative definite Hessian for all $P \in (0,1)$, $p_\ell(A)>0$, $p_\ell(B)>0$. Ensuring that this is a global maximum.

\paragraph{Finding $P^*$:}

To find the global saddle point of the expression \eqref{eq:G(P,p')}, we can take its partial derivative with respect to $P$.
By introducing the following notation:

\begin{equation}
    \label{eq:Appendix_GA(P)}
    \Delta G_A(P) = \sum_{k \in \mathbb{K}_A} p_k(A) \log \left ( \frac{P \ p_k(A)}{P \ p_k(A)+(1-P) \ p_k(B)} \right )
\end{equation}

\begin{equation}
    \label{eq:Appendix_GB(P)}
    \Delta G_B(P) = \sum_{k \in \mathbb{K}_B} p_k(B) \log \left ( \frac{(1-P) \ p_k(B)}{P \ p_k(A)+(1-P) \ p_k(B)} \right )
\end{equation}

The partial derivative can be expressed as \footnote{we use ``the apparatus of partial derivatives, in which even the notation is ambiguous'' \cite{book:Arnold}, which is hopefully still more useful than confusing}:

\begin{equation}
    \frac{\partial}{\partial P} \Delta G(P, \{p'\}) \Bigr|_{\{p'\}=\{p'^*\}} = \Delta(P) = \Delta G_A(P) - \Delta G_B(P)
\end{equation}

Direct calculation shows that:

\begin{equation}
    \Delta (P) = \frac{d}{d P} \Delta G(P,\{p'^*(P)\}) = \Delta G'(P)
\end{equation}

\begin{equation}
    \Delta (P) = 0 \iff \frac{d}{d P} \Delta G(P,\{p'^*(P)\}) = \Delta G'(P) = 0
\end{equation}

To show the existence and uniqueness of $P^*$, for which $\Delta(P^*) = 0$, it is useful to change variables from $P \in (0,1)$ to log-odds $\vartheta \in \mathbb{R}$, using the following transformation \footnote{also known as ``logit'' \cite{book:StatisticalLearning}. ``Evidence'' \cite{book:Jaynes} as a measure of the Bayes factor is also a related concept.}:

\begin{equation}
    \vartheta = \log \left ( \frac{P}{1-P} \right ), \quad P = \frac{e^{\vartheta/2}}{e^{\vartheta/2}+e^{-\vartheta/2}}
\end{equation}

Using the convention $p_k(A) = 0$ if $k \notin \mathbb{K}_A$, and $p_k(B) = 0$ if $k \notin \mathbb{K}_B$, we can rewrite $\Delta$ as:

\begin{equation}
    \begin{split}
    \Delta(\vartheta) = & \vartheta + \sum_{k \in \mathbb{K}_A} p_k(A) \log(p_k(A)) - 
                        \sum_{k \in \mathbb{K}_B} p_k(B) \log(p_k(B)) - \\
                        & \sum_{k \in \mathbb{K}} (p_k(A) - p_k(B))
                        \log \left ( e^{\vartheta/2} p_k(A) + e^{-\vartheta/2} p_k(B) \right )
    \end{split}
\end{equation}

or by the entropy difference $\Delta H = H_A-H_B$:

\begin{equation}
    \begin{split}
    \Delta(\vartheta) = & \vartheta - \Delta H - 
                         \sum_{k \in \mathbb{K}} (p_k(A) - p_k(B))
                        \log \left ( e^{\vartheta/2} p_k(A) + e^{-\vartheta/2} p_k(B) \right )
    \end{split}
\end{equation}

The root of $\Delta(\vartheta)$ can be redefined as the iterative fixed point of the function $F(\vartheta)$:

\begin{equation}
    \vartheta_{n+1} = F(\vartheta_n)
\end{equation}

\begin{equation}
    \label{eq:Fdef}
    F(\vartheta) = \Delta H + \sum_{k \in \mathbb{K}} (p_k(A) - p_k(B))
                        \log \left ( e^{\vartheta/2} p_k(A) + e^{-\vartheta/2} p_k(B) \right )
\end{equation}

First, we show that $F$ is a contraction:

\begin{equation}
    \frac{d}{d \vartheta} F(\vartheta) = F'(\vartheta) = \frac{1}{2} \sum_{k \in \mathbb{K}} (p_k(A) - p_k(B))
                        \left ( \frac{e^{\vartheta/2} p_k(A) - e^{-\vartheta/2} p_k(B)}{e^{\vartheta/2} p_k(A) + e^{-\vartheta/2} p_k(B)} \right )
\end{equation}

Using Hölder's inequality \cite{book:Holder}, and that $|(x-y)/(x+y)| \le 1$ for all $x,y \ge 0$, $x+y>0$

\begin{equation}
    |F'(\vartheta)| \le \frac{1}{2} \sum_{k \in \mathbb{K}} |p_k(A) - p_k(B)|
\end{equation}

This is the total variation distance \cite{book:MarkovChainsMixingTimes} of probability measures $p_{k}(A)$ and $p_{k}(B)$, which is strictly less then $1$ if there exists an index $k$, for which $p_k(A) > 0 \wedge p_k(B)>0$ i.e. $\mathbb{K}_A \cap \mathbb{K}_B = \mathbb{K}_{AB} \ne \emptyset$.

\begin{equation}
    \label{eq:qdef}
    |F'(\vartheta)| \le \frac{1}{2} \sum_{k \in \mathbb{K}} |p_k(A) - p_k(B)| = q < 1
\end{equation}

This implies that $F:\mathbb{R} \to \mathbb{R}$ is a contraction, with $q<1$:

\begin{equation}
    |F(\vartheta_2) - F(\vartheta_1)| = \left | \int_{\vartheta_1}^{\vartheta_2} F'(\vartheta) d \vartheta \right | \le \int_{\vartheta_1}^{\vartheta_2} |F'(\vartheta)| d \vartheta \le q\ |\vartheta_2 - \vartheta_1|
\end{equation}

The Banach fixed point theorem \cite{book:BanachFixedPoint} guarantees a unique fixed point for this contraction:

\begin{equation}
    \exists! \ \vartheta^* \in \mathbb{R}, \quad F(\vartheta^*) = \vartheta^*
\end{equation}

This completes the proof.

\end{proof}

\section{Proof of the Convergence of Binomial games}
\label{appendix:BinomialGames}

For the convergence of both Binomial Fisher games and Binomial Bayesian games, the following lemma \ref{lemma:BinomHyperApprox} is essential:

\begin{lemma}[Binomial approximation of Hypergeometric distribution]
\label{lemma:BinomHyperApprox}

For any sequences of natural numbers $\{M_i\}_{i=0}^\infty$, $\{K_i\}_{i=0}^\infty$, if $\ M_i \to \infty$ and $K_i/M_i \to x \in (0,1)$, then

\begin{equation}
    \forall k \in \{0,\dots,N \}, \ \lim_{i \to \infty} p_k(N,K_i,M_i) = p_k(N,x)
\end{equation}

where $p_k(N,K,M)$ is the Hypergeometric distribution:

\begin{equation}
    p_k(N,K,M) = \frac{\binom{K}{k} \binom{M-K}{N-k}}{\binom{M}{N}}
\end{equation}

and $p_k(N,x)$ stands for the Binomial distribution:

\begin{equation}
     p_k(N,x) = \binom{N}{k} x^k (1-x)^{N-k}
\end{equation}

\end{lemma}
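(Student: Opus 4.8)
The plan is to compute the hypergeometric probability directly by expanding each binomial coefficient into a falling-factorial product and then tracking its leading-order behaviour as $M_i \to \infty$. First I would write each coefficient in product form, $\binom{K_i}{k} = \tfrac{1}{k!}\prod_{j=0}^{k-1}(K_i-j)$, and similarly for $\binom{M_i-K_i}{N-k}$ and $\binom{M_i}{N}$, and collect the factorial constants into the prefactor $\binom{N}{k} = \tfrac{N!}{k!\,(N-k)!}$. This yields
\[
p_k(N,K_i,M_i) = \binom{N}{k}\,
\frac{\left(\prod_{j=0}^{k-1}(K_i - j)\right)\left(\prod_{j=0}^{N-k-1}(M_i - K_i - j)\right)}
{\prod_{j=0}^{N-1}(M_i - j)},
\]
where the numerator and denominator each contain exactly $N$ linear factors in the growing parameters.

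Next I would note that, since $K_i/M_i \to x \in (0,1)$ and $M_i \to \infty$, both $K_i \to \infty$ and $M_i - K_i \to \infty$ (using $x > 0$ and $x < 1$ strictly). Hence for all sufficiently large $i$ every factor above is strictly positive and all three coefficients lie in their generic regime, so the product identity is valid. Dividing both numerator and denominator by $M_i^{N}$ regroups the factors into ratios, which I would take to the limit factor by factor: for each fixed $j$ one has $(K_i - j)/M_i \to x$, $(M_i - K_i - j)/M_i \to 1-x$, and $(M_i - j)/M_i \to 1$. Because $N$ is fixed, there are only finitely many ($N$) such factors, so the limit of the product is the product of the limits, giving $x^k$ from the first group, $(1-x)^{N-k}$ from the second, and $1$ in the denominator. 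Collecting these,
\[
\lim_{i\to\infty} p_k(N,K_i,M_i) = \binom{N}{k}\, x^k (1-x)^{N-k} = p_k(N,x),
\]
as claimed, and the convergence holds for each $k \in \{0,\dots,N\}$ uniformly in the trivial sense that there are finitely many such $k$.

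There is no genuine analytic obstacle here: the result is elementary precisely because $N$ (and therefore $k$) is held fixed while only $M_i$ and $K_i$ grow, so no uniformity over an unbounded index range is needed. The only point requiring care is the bookkeeping at the start — confirming $K_i \to \infty$ and $M_i - K_i \to \infty$, which relies on $x$ being strictly interior to $(0,1)$, so that each per-factor limit is legitimate and no binomial coefficient degenerates to zero along the sequence. Everything else is a routine factor-by-factor passage to the limit in a finite product.
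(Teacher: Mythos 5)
Your proposal is correct and follows essentially the same route as the paper's proof: expand each binomial coefficient into falling-factorial products, cancel the factorials into the prefactor $\binom{N}{k}$, and pass to the limit factor by factor in the resulting finite product of ratios (the paper pairs numerator and denominator factors as $\frac{K-j}{M-j}$ rather than normalizing by $M^N$, but this is a cosmetic difference). No gaps to report.
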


\begin{proof}

    \begin{equation}
        p_k(N,K,M) = \frac{
        \frac{K(K-1)\dots(K-(k-1))}{k!}
        \frac{(M-K)(M-K-1)\dots(M-K-(k-1))}{(N-k)!}
        }
        {
        \frac{M(M-1)\dots(M-(k-1)) \ (M-k)(M-k-1)\dots(M-(N-1))}{N!}
        }
    \end{equation}

    \begin{equation}
    \begin{split}
        p_k(N,K,M) =& \frac{N!}{k!(N-k)!}
        \left ( \frac{K}{M} \frac{K-1}{M-1} \dots \frac{K-(k-1)}{M-(k-1)} \right ) \\
        & \left ( \frac{M-K}{M-k} \frac{M-K-1}{M-k-1} \dots \frac{M-K-(N-k-1)}{M-(N-1)} \right )
    \end{split}
    \end{equation}

    which simplifies term by term to:

    \begin{equation}
        \lim_{i \to \infty} p_k(N,K_i,M_i) = \frac{N!}{k!(N-k)!} x^k (1-x)^{N-k} 
        = p_k(N,x)
    \end{equation}

\end{proof}

The proof can be found in standard textbooks on probability theory \cite{book:IntroToProbability} or is frequently left to the reader as an exercise \cite{book:Renyi1970}.

\begin{remark}
    Explicit upper and lower bounds for the total variation distance can be obtained between Hypergeometric and Binomial distributions:

    \begin{equation}
         \frac{1}{28} \frac{N-1}{M-1} \le ||\underline{p}(N,K,M) - \underline{p}(N,K/M)||_{\mathrm{TV}} \le \frac{N-1}{M-1}
    \end{equation}
    (see the proof in the following \href{https://stat.ethz.ch/~kuensch/papers/hypergeom.pdf}{derivation}.)
    
\end{remark}

\subsection{Binomial Fisher games}

\paragraph{Notation:}

We introduce a $2 \times (N+1)$ dimensional array of positive real numbers, which will represent general ``Off-shell'' \cite{book:Weinberg} ``probabilities'' or ``weights'' (which does not have to be normalized to $1$):

\begin{equation}
    \underline{\underline{p}} \in 
    \mathbb{W}_2
    =
    \mathbb{R}_{>0}^{2 \times (N+1)}
\end{equation}

We can represent normalized ``On-shell'' probability distributions by the following identification:

\begin{equation}
    p_{A,k} = p_k(A), \quad p_{B,k} = p_{k}(B)
\end{equation}

We can rewrite the requirement for $k^*$ in Theorem~\ref{thm:Fisher_ks} in the following way:

\begin{equation}
    \sum_{k \le k^*} p_k(A)+p_k(B) > 1 \iff \sum_{k \le k^*} p_k(A) > \sum_{k > k^*} p_k(B)
\end{equation}

Introducing an indexed difference function:

\begin{equation}
    \label{eq:delta_k}
    \delta_{i}(\underline{\underline{p}}) =  \sum_{k > i} p_{B,k} - \sum_{k \le i} p_{A,k}
\end{equation}

We can define a function, which gives an integer for a general set of weights $\underline{\underline{p}}$ \footnote{this can be viewed as an ``Off-shell'' \cite{book:Weinberg} generalizations of equilibrium parameters, where $\underline{p}_A$ and $\underline{p}_B$ are not necessarily normalized to 1, but are positive.}

\begin{equation}
    k^\circ_N(\underline{\underline{p}}) = \sum_{k'=0}^N 
    \mathds{1} \left ( \delta_{k'}(\underline{\underline{p}}) > 0 \right )
\end{equation}

Where $\mathds{1}(\varphi)$ is the indicator function, which gives $1$ if the logical formula $\varphi$ is true, and 0 if $\varphi$ is false.

We can promote the expression for $\nu^*$ in Theorem~\ref{thm:SymEqNu} to a general function as well:

\begin{equation}
    \nu^\circ_N(\underline{\underline{p}}) = \frac{\sum_{k\ge k^*} p_{B,k} - \sum_{k < k^*} p_{A,k}}{p_{A,k^*}+p_{B,k^*}}, \quad 
    k^* \leftarrow k^\circ_N(\underline{\underline{p}})
\end{equation}

which we can rewrite to:

\begin{equation}
    \label{eq:nuCirc}
    \nu^\circ_N(\underline{\underline{p}}) = \frac{
    \delta_{k^*}(\underline{\underline{p}}) +
    p_{A,k^*}+p_{B,k^*} }{p_{A,k^*}+p_{B,k^*}}, \quad 
    k^* \leftarrow k^\circ_N(\underline{\underline{p}})
\end{equation}

The generalization of the expression for $P^*$ in Theorem~\ref{thm:Fisher_Ps} is straightforward:

\begin{equation}
    P^\circ_N(\underline{\underline{p}}) = \frac{p_{B,k^*}}{p_{A,k^*}+p_{B,k^*}}, \quad 
    k^* \leftarrow k^\circ_N(\underline{\underline{p}})
\end{equation}

Finally, we can write down the strategy parameter, which describes the equilibrium strategy of \PI/:

\begin{equation}
    s^\circ_N(\underline{\underline{p}}) = \frac{1}{N+1} \left (
    k^\circ_N(\underline{\underline{p}}) + \nu^\circ_N(\underline{\underline{p}})
    \right )
\end{equation}

\begin{lemma}[Smoothness in the bulk]
    \label{lemma:SmoothnessOfFisherPolicies}
    If for some $\underline{\underline{p}} \in \mathbb{W}_2 =\mathbb{R}_{>0}^{2 \times (N+1)}$ there is no $k^* \in \{0,\dots,N\}$ for which $\delta_{k^*}(\underline{\underline{p}}) = 0$, then there is some open neighborhood of $\underline{\underline{p}}$, $\mathcal{U} \subset \mathbb{W}_2$ where the functions:

    \begin{equation}
        k^\circ_N(\underline{\underline{p}}) = k^*, \quad \text{if } \underline{\underline{p}} \in \mathcal{U}
    \end{equation}

  and all equilibrium functions are smooth:

    \begin{equation}
        \nu^\circ_N, P^\circ_N, s^\circ_N \in C^\infty(\mathcal{U}, \mathbb{R})
    \end{equation}
    
\end{lemma}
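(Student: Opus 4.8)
The plan is to exploit that every ingredient of the equilibrium functions is assembled from the maps $\delta_i$, which are \emph{linear} — and hence $C^\infty$ — in the entries of $\underline{\underline{p}}$, and that the sole source of non-smoothness is the indicator $\mathds{1}(\delta_{k'} > 0)$ appearing in $k^\circ_N$. Under the stated hypothesis this indicator is locally constant, and once the index $k^*$ is frozen the remaining functions are affine combinations and ratios of smooth maps with a nonvanishing denominator. First I would record that each $\delta_i(\underline{\underline{p}}) = \sum_{k>i} p_{B,k} - \sum_{k\le i} p_{A,k}$ is a linear functional of the coordinates of $\underline{\underline{p}}$, so $\delta_i \in C^\infty(\mathbb{W}_2,\mathbb{R})$ for every $i \in \{0,\dots,N\}$.

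Next I would use the hypothesis $\delta_i(\underline{\underline{p}}) \neq 0$ for all $i \in \{0,\dots,N\}$ to produce the neighborhood. Setting $\varepsilon = \min_{0\le i \le N} |\delta_i(\underline{\underline{p}})| > 0$, continuity of the finitely many $\delta_i$ yields an open neighborhood $\mathcal{U} \subset \mathbb{W}_2$ of $\underline{\underline{p}}$ on which $|\delta_i(\cdot) - \delta_i(\underline{\underline{p}})| < \varepsilon$ for every $i$; consequently each $\delta_i$ keeps, throughout $\mathcal{U}$, the sign it carries at $\underline{\underline{p}}$. Hence each indicator $\mathds{1}(\delta_i > 0)$ is constant on $\mathcal{U}$, and therefore
\begin{equation}
k^\circ_N(\underline{\underline{q}}) = \sum_{k'=0}^N \mathds{1}(\delta_{k'}(\underline{\underline{q}}) > 0) = k^*, \quad \forall\, \underline{\underline{q}} \in \mathcal{U}.
\end{equation}
Thus $k^\circ_N$ is locally constant, so it is trivially $C^\infty$ (with vanishing derivatives), and $k^*$ may be treated as a fixed index on all of $\mathcal{U}$.

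Finally, with $k^*$ frozen I would read off the smoothness of the remaining functions directly from their defining formulas. Since $\underline{\underline{q}} \in \mathbb{R}_{>0}^{2\times(N+1)}$ forces every coordinate to be positive, the denominator $p_{A,k^*}+p_{B,k^*}$ is strictly positive on the whole of $\mathbb{W}_2$, so no shrinking of $\mathcal{U}$ is needed. Then
\begin{equation}
P^\circ_N = \frac{p_{B,k^*}}{p_{A,k^*}+p_{B,k^*}}, \qquad \nu^\circ_N = \frac{\delta_{k^*} + p_{A,k^*}+p_{B,k^*}}{p_{A,k^*}+p_{B,k^*}}, \qquad s^\circ_N = \frac{k^* + \nu^\circ_N}{N+1}
\end{equation}
are each affine combinations or quotients of smooth functions (the numerators $p_{B,k^*}$, $\delta_{k^*}+p_{A,k^*}+p_{B,k^*}$ being polynomial in the coordinates) over the common smooth, nowhere-vanishing denominator, so by the quotient rule they all lie in $C^\infty(\mathcal{U},\mathbb{R})$, completing the argument.

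The only genuine content, and the step where the no-zero hypothesis is indispensable, is the sign-preservation argument giving local constancy of $k^\circ_N$; everything else is routine. This is exactly the boundary that fails on the scar set $\mathbb{S}_N$ of Theorem~\ref{thm:BinomialSymmetric}: at a point where some $\delta_{k^*} = 0$ (equivalently $\nu^* = 0$) the indicator can jump, $k^\circ_N$ is discontinuous, and $P^\circ_N$ acquires the discontinuities noted in the remark following that theorem — so the hypothesis precisely excludes those loci and confines the claim to the ``bulk.''
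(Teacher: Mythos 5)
Your proof is correct and follows essentially the same route as the paper's: both arguments take $\varepsilon = \min_i |\delta_i(\underline{\underline{p}})| > 0$, use it to find a neighborhood (the paper makes this explicit as an $\ell^\infty$-ball of radius $\Delta\delta/(N+1)$, you invoke continuity of the finitely many linear functionals $\delta_i$) on which every $\delta_i$ keeps its sign so that $k^\circ_N$ is locally constant, and then conclude that $P^\circ_N$, $\nu^\circ_N$, $s^\circ_N$ are rational functions with the strictly positive denominator $p_{A,k^*}+p_{B,k^*}$, hence $C^\infty$. The only cosmetic difference is your abstract continuity step versus the paper's explicit hypercube radius, which changes nothing of substance.
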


\begin{proof}
    We can define a strictly positive quantity:

    \begin{equation}
        \Delta \delta = \min \{|\delta_k(\underline{\underline{p}})| k \in \{0,\dots,N\}\}
    \end{equation}

    if we take any different $\underline{\underline{q}} \in \mathbb{R}_+^{2 \times (N+1)}$, where

    \begin{equation}
        |q_{\theta,k} - p_{\theta,k}| < \Delta \delta / (N+1)
    \end{equation}

    then for all $k \in \{0,\dots,N\}$ $\delta_k(\underline{\underline{p}})$ and $\delta_k(\underline{\underline{q}})$ has the same sign, meaning that:

    \begin{equation}
        k^\circ_N(\underline{\underline{p}}) = k^\circ_N(\underline{\underline{q}})
    \end{equation}

    this implies that $k^\circ_N$ is constant on a finite open $p=\infty$ norm ``ball'' (or rather hypercube):

    \begin{equation}
        \mathcal{U} = B_\infty(\underline{\underline{p}},\Delta \delta / (N+1)) = 
        \left \{ 
        \underline{\underline{q}} \in \mathbb{W}_2 | \ 
        ||\underline{\underline{q}} - \underline{\underline{p}}||_\infty < \Delta \delta / (N+1))
        \right \}
    \end{equation}

    $k^\circ_N = k^*$ is constant on $\mathcal{U}$, therefore $\nu^\circ_N, P^\circ_N, s^\circ_N$ are only rational functions of $p_{\theta,k}$ variables, which are all infinitely continuously differentiable, i.e., smooth.

    \begin{equation}
        \nu^\circ_N, P^\circ_N, s^\circ_N \in C^\infty(\mathcal{U}, \mathbb{R})
    \end{equation}
    
\end{proof}

\begin{lemma}[Continuity of policies on the scars]
    \label{lemma:ContinuityOfFisherPolicies}
    If for some $\underline{\underline{p}} \in \mathbb{W}_2$ there is a $k^* \in \{0,\dots,N\}$ for which $\delta_{k^*}(\underline{\underline{p}}) = 0$, then
    $s^\circ_N$ is continuous at $\underline{\underline{p}}$:
    
    \begin{equation}
        s^\circ_N \in C^0(\underline{\underline{p}})
    \end{equation}
    
\end{lemma}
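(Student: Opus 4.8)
The plan is to reduce the continuity of $s^\circ_N$ at a scar point $\underline{\underline{p}}$ to a one-parameter sign analysis of the single difference $\delta_{k^*}$, exploiting the fact that at such a point $k^\circ_N$ can only jump by $1$ while $\nu^\circ_N$ jumps by exactly $-1$, so that their sum (and hence $s^\circ_N$) stays continuous even though neither summand is. First I would record two elementary facts about the family $\delta_i$ from \eqref{eq:delta_k}. Each map $\underline{\underline{p}} \mapsto \delta_i(\underline{\underline{p}})$ is continuous, being a finite linear combination of the entries $p_{\theta,k}$; and for fixed $\underline{\underline{p}}$ the sequence $i \mapsto \delta_i(\underline{\underline{p}})$ is strictly decreasing, since $\delta_{i+1}-\delta_i = -(p_{A,i+1}+p_{B,i+1})<0$ by positivity of the weights. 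Strict monotonicity forces the hypothesised index $k^*$ with $\delta_{k^*}(\underline{\underline{p}})=0$ to be unique, gives $\delta_i(\underline{\underline{p}})>0$ for $i<k^*$ and $\delta_i(\underline{\underline{p}})<0$ for $i>k^*$, and — because $\delta_N(\underline{\underline{p}}) = -\sum_{k=0}^N p_{A,k}<0$ always — shows $k^*\le N-1$, so $k^*+1$ is always in range. This last point removes the only potential boundary obstruction.

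Next I would localise. Since every $\delta_i(\underline{\underline{p}})$ with $i\ne k^*$ is nonzero and the $\delta_i$ are continuous, there is an open neighbourhood $\mathcal{U}$ of $\underline{\underline{p}}$ on which $\delta_i$ keeps its sign for each $i\ne k^*$; shrinking $\mathcal{U}$ I also keep $p_{A,k^*}+p_{B,k^*}$ and $p_{A,k^*+1}+p_{B,k^*+1}$ bounded away from $0$. On $\mathcal{U}$ the value of $k^\circ_N$ is then governed entirely by the sign of the scalar $\delta_{k^*}$: it equals $k^*$ where $\delta_{k^*}\le 0$ and $k^*+1$ where $\delta_{k^*}>0$, so $\mathcal{U}$ is partitioned (up to the scar locus $\delta_{k^*}=0$) into just these two cases.

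The heart of the argument is matching the two cases, which I would do by substituting into $\nu^\circ_N = 1 + \delta_{k^\circ_N}/(p_{A,k^\circ_N}+p_{B,k^\circ_N})$ (the form of \eqref{eq:nuCirc}). On the branch $\delta_{k^*}(\underline{\underline{q}})\le 0$ one has $k^\circ_N=k^*$ and, as $\underline{\underline{q}}\to\underline{\underline{p}}$, $\delta_{k^*}(\underline{\underline{q}})\to 0$ and the denominator $\to p_{A,k^*}+p_{B,k^*}>0$, so $\nu^\circ_N\to 1$ and $s^\circ_N\to (k^*+1)/(N+1)$. On the branch $\delta_{k^*}(\underline{\underline{q}})>0$ one has $k^\circ_N=k^*+1$, and using the recursion $\delta_{k^*+1}=\delta_{k^*}-(p_{A,k^*+1}+p_{B,k^*+1})$ with $\delta_{k^*}(\underline{\underline{q}})\to 0$ gives $\nu^\circ_N\to 1-1=0$, hence again $s^\circ_N\to (k^*+1)/(N+1)$. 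Since the value at $\underline{\underline{p}}$ itself is $(k^*+\nu^\circ_N)/(N+1)=(k^*+1)/(N+1)$, every sequence in $\mathcal{U}$ converging to $\underline{\underline{p}}$ yields $s^\circ_N\to s^\circ_N(\underline{\underline{p}})$, which is the claimed $s^\circ_N\in C^0(\underline{\underline{p}})$.

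The main obstacle is conceptual rather than computational: both $k^\circ_N$ and $\nu^\circ_N$ are genuinely discontinuous on the scar, so continuity survives only for the combination $k^\circ_N+\nu^\circ_N$, and the delicate step is to verify that the drop in $\nu^\circ_N$ is \emph{exactly} $-1$. This is precisely where the recursion $\delta_{k^*+1}-\delta_{k^*}=-(p_{A,k^*+1}+p_{B,k^*+1})$ and the positivity of the weights (guaranteeing nonvanishing denominators so the limits are well defined) are indispensable. I would also remark that the half-open convention $\nu\in[0,1)$ is consistent with this picture, since the boundary value $\nu^\circ_N=1$ at level $k^*$ encodes the same pure threshold policy as $\nu=0$ at level $k^*+1$, which is exactly the identification that makes the two branches meet.
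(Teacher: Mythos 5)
Your proof is correct and follows essentially the same route as the paper's: a two-case analysis on the sign of $\delta_{k^*}$, with the recursion $\delta_{k^*+1}-\delta_{k^*} = -(p_{A,k^*+1}+p_{B,k^*+1})$ forcing the jump of $\nu^\circ_N$ to cancel exactly the jump of $k^\circ_N$, so that both branches converge to $(k^*+1)/(N+1)$. The only difference is presentational (neighbourhood localisation rather than splitting an arbitrary convergent sequence into the $\delta_{k^*}>0$ and $\delta_{k^*}\le 0$ subsequences), and your explicit checks that $k^*$ is unique and that $\delta_N<0$ forces $k^*\le N-1$ are welcome details the paper leaves implicit.
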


\begin{proof}
    We will show that for any sequence $\left \{ \underline{\underline{p}}^{(i)} \right \}_{i=0}^\infty$, which converges to $\underline{\underline{p}}$, $s^\circ_N(\underline{\underline{p}}^{(i)})$ will converge to $(k^*+1)/(N+1)$:

    \begin{equation}
        \forall \left \{ \underline{\underline{p}}^{(i)} \right \}_{i=0}^\infty, \ \lim_{i\to\infty} \underline{\underline{p}}^{(i)} = \underline{\underline{p}} \implies
        \lim_{i\to\infty} s^\circ_N(\underline{\underline{p}}^{(i)}) = s^\circ_N (\underline{\underline{p}}) = (k^*+1)/(N+1)
    \end{equation}

    To show this, we split a sequence $\left \{ \underline{\underline{p}}^{(i)} \right \}_{i=0}^\infty$ to two subsequences, $\left \{ \underline{\underline{p}}^{> (i)} \right \}_{i=0}^\infty$ and $\left \{ \underline{\underline{p}}^{\le (i)} \right \}_{i=0}^\infty$:

    \begin{equation}
        \underline{\underline{p}}^{> (j)} = 
        \underline{\underline{p}}^{(i_j)}, \ \delta_{k^*}(\underline{\underline{p}}^{(i_j)}) > 0, \ i_{j+1} > i_j
    \end{equation}

        \begin{equation}
        \underline{\underline{p}}^{\le (j)} = 
        \underline{\underline{p}}^{(i_j)}, \ \delta_{k^*}(\underline{\underline{p}}^{(i_j)}) \le 0, \ i_{j+1} > i_j
    \end{equation}

    \paragraph{Greater than zero case:}

    For any $\left \{ \underline{\underline{p}}^{> (i)} \right \}_{i=0}^\infty$ sequence:

    \begin{equation}
        \lim_{j\to\infty} k^\circ_N (\underline{\underline{p}}^{> (j)}) = k^* + 1
    \end{equation}

    and 

    \begin{equation}
        \lim_{j\to\infty} \nu^\circ_N (\underline{\underline{p}}^{> (j)}) = 
        \frac{\delta_{k^*+1}(\underline{\underline{p}}) + p_{1,k^*+1}+p_{2,k^*+1} }
        {p_{1,k^*+1}+p_{2,k^*+1}}
    \end{equation}

    In general:

    \begin{equation}
        \delta_{k+1}(\underline{\underline{p}}) - \delta_k(\underline{\underline{p}}) =
        -(p_{A,(k+1)} + p_{B,(k+1)})
    \end{equation}

    because of $\delta_{k^*}(\underline{\underline{p}})=0$,

    \begin{equation}
        \lim_{j\to\infty} \nu^\circ_N (\underline{\underline{p}}^{> (j)}) = 
        \frac{-(p_{A,(k^*+1)} + p_{B,(k^*+1)}) + p_{A,k^*+1}+p_{B,k^*+1} }
        {p_{A,k^*+1}+p_{B,k^*+1}} = 0
    \end{equation}

    Therefore, the limit for $s^\circ_N$:

    \begin{equation}
        \lim_{j\to\infty} s^\circ_N (\underline{\underline{p}}^{> (j)}) = \left ( \lim_{j\to\infty} k^\circ_N(\underline{\underline{p}}^{> (j)}) + \lim_{j\to\infty} \nu^\circ_N(\underline{\underline{p}}^{> (j)}) \right )/(N+1) =
        \frac{k^*+1}{N+1}
    \end{equation}

    \paragraph{Less or equal to zero case:}

    For any $\left \{ \underline{\underline{p}}^{\le (i)} \right \}_{i=0}^\infty$ sequence:

    \begin{equation}
        \lim_{j\to\infty} k^\circ_N (\underline{\underline{p}}^{\le (j)}) = k^*
    \end{equation}

    and

    \begin{equation}
        \lim_{j\to\infty} \nu^\circ_N (\underline{\underline{p}}^{\le (j)}) = 
        \frac{\delta_{k^*}(\underline{\underline{p}}) + p_{A,k^*}+p_{B,k^*} }
        {p_{A,k^*}+p_{B,k^*}}
    \end{equation}

    because of $\delta_{k^*}(\underline{\underline{p}})=0$,

    \begin{equation}
        \lim_{j\to\infty} \nu^\circ_N (\underline{\underline{p}}^{\le (j)}) = 
        \frac{0 + p_{A,k^*}+p_{B,k^*} }
        {p_{A,k^*}+p_{B,k^*}} =
        1
    \end{equation}

    Therefore, the limit for $s^\circ_N$:

    \begin{equation}
        \lim_{j\to\infty} s^\circ_N (\underline{\underline{p}}^{\le (j)}) = \left ( \lim_{j\to\infty} k^\circ_N(\underline{\underline{p}}^{\le (j)}) + \lim_{j\to\infty} \nu^\circ_N(\underline{\underline{p}}^{\le (j)}) \right )/(N+1) =
        \frac{k^*+1}{N+1}
    \end{equation}

    \paragraph{Summary:}

    While $k^\circ_N$ and $\nu^\circ_N$ have different limits for 
    $\left \{ \underline{\underline{p}}^{> (j)} \right \}_{j=0}^\infty$ and 
    $\left \{ \underline{\underline{p}}^{\le (j)} \right \}_{j=0}^\infty$ sequences, their combination in $s^\circ_N$ converges to the same limit, meaning that the only accumulation point (or limit points \cite{book:Rudin}) of the $\left \{ s^\circ_N(\underline{\underline{p}}^{(i)}) \right \}_{i=0}^\infty$ is $\{(k^*+1)/(N+1)\}$.
    
\end{proof}

\begin{remark}
    \label{remark:PolicyEquivalenceS}
    In a Fisher game, the strategy of \PI/ is fully determined by the pair $\{k^*,\nu^*\}$. In the original construction we allowed only $\nu^* \in [0,1)$, but if we formally allow $\nu^*=1$ values as well, then the strategies: $\{k^*=k^*_1,\nu^*=0\}$ and $\{k^*=k^*_1+1,\nu^*=1\}$ are identical.

    Therefore two strategies for \PI/ $\{k^*_1,\nu^*_1\}$, $\{k^*_2,\nu^*_2\}$ are identical, if the composed $s^*_1=(k^*_1+\nu^*_1)/(N+1)$ and $s^*_2=(k^*_2+\nu^*_2)/(N+1)$ are identical.

    \begin{equation}
        \mu_{\{k^*_1,\nu^*_1\}} = \mu_{\{k^*_2,\nu^*_2\}} \iff
        s^*_1 = s^*_2
    \end{equation}
    
\end{remark}

\begin{lemma}[Finite jump of the prior]
    If for some $\underline{\underline{p}} \in \mathbb{W}_2$ there is a $k^\bullet \in \{0,\dots,N\}$ for which $\delta_{k^\bullet}(\underline{\underline{p}}) = 0$, then
    $P^\circ_N$ has a discontinuity at $\underline{\underline{p}}$, but is bounded by $\underline{P}^\bullet$ and $\overline{P}^\bullet$:
    
    \begin{equation}
    \label{lamma:PLowerUpper}
        \underline{P}^\bullet = \frac{p_{B,k^\bullet}}{p_{A,k^\bullet}+p_{B,k^\bullet}}, \quad
        \overline{P}^\bullet = \frac{p_{B,k^\bullet+1}}{p_{A,k^\bullet+1}+p_{B,k^\bullet+1}}
    \end{equation}
    
\end{lemma}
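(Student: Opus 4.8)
The plan is to use the strict monotonicity of the index-difference $\delta_i$ together with the fact that $k^\circ_N$ simply counts the indices with $\delta_{k'}>0$, so that in a neighbourhood of a scar the prior $P^\circ_N$ can take only two possible values. First I would record, directly from the definition \eqref{eq:delta_k}, that $\delta_i$ is strictly decreasing in $i$:
\begin{equation}
    \delta_{i+1}(\underline{\underline{p}}) - \delta_{i}(\underline{\underline{p}}) = -\left(p_{A,i+1}+p_{B,i+1}\right) < 0 .
\end{equation}
Consequently, when $\delta_{k^\bullet}(\underline{\underline{p}})=0$ we have $\delta_{k'}(\underline{\underline{p}})>0$ for every $k'<k^\bullet$ and $\delta_{k'}(\underline{\underline{p}})<0$ for every $k'>k^\bullet$; counting the strictly positive entries gives $k^\circ_N(\underline{\underline{p}})=k^\bullet$, so that $P^\circ_N(\underline{\underline{p}})=\underline{P}^\bullet$ exactly.

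Next I would exploit that each $\delta_i$ is an affine, hence continuous, function of the coordinates of $\underline{\underline{p}}$. This yields an open neighbourhood $\mathcal{U}$ of $\underline{\underline{p}}$ on which the strict inequalities $\delta_{k'}>0$ for $k'<k^\bullet$ and $\delta_{k'}<0$ for $k'>k^\bullet$ all persist, leaving only the sign of $\delta_{k^\bullet}$ free. I would then split the punctured neighbourhood into $\mathcal{U}^{>}=\{\underline{\underline{q}}\mid\delta_{k^\bullet}(\underline{\underline{q}})>0\}$, on which $k^\circ_N=k^\bullet+1$ and therefore $P^\circ_N(\underline{\underline{q}})=q_{B,k^\bullet+1}/(q_{A,k^\bullet+1}+q_{B,k^\bullet+1})$, and $\mathcal{U}^{\le}=\{\underline{\underline{q}}\mid\delta_{k^\bullet}(\underline{\underline{q}})\le 0\}$, on which $k^\circ_N=k^\bullet$ and $P^\circ_N(\underline{\underline{q}})=q_{B,k^\bullet}/(q_{A,k^\bullet}+q_{B,k^\bullet})$. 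On each region $P^\circ_N$ is a ratio of coordinates and hence continuous, so letting $\underline{\underline{q}}\to\underline{\underline{p}}$ inside each region produces the two one-sided limits $\overline{P}^\bullet$ and $\underline{P}^\bullet$.

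Both assertions then follow. Since every value of $P^\circ_N$ on the punctured neighbourhood equals one of the two continuous expressions above, the set of accumulation points at $\underline{\underline{p}}$ is exactly $\{\underline{P}^\bullet,\overline{P}^\bullet\}$, so $P^\circ_N$ is trapped between $\min(\underline{P}^\bullet,\overline{P}^\bullet)$ and $\max(\underline{P}^\bullet,\overline{P}^\bullet)$, which is the boundedness claim. The discontinuity is the statement that these two limits actually differ. The main obstacle is precisely this last point: for arbitrary off-shell weights in $\mathbb{W}_2$ the two ratios could coincide, making $P^\circ_N$ continuous after all. The structural input that rules this out for the distributions of interest is inequality \eqref{le:pa/pb}, which shows $p_k(A)/p_k(B)$ is strictly decreasing in $k$; this makes $p_{B,k}/(p_{A,k}+p_{B,k})$ strictly increasing in $k$ and forces $\underline{P}^\bullet<\overline{P}^\bullet$, establishing a genuine jump discontinuity whenever this ordering holds.
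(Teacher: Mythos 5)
Your proof is correct and follows essentially the same route as the paper's: the paper takes an arbitrary sequence converging to $\underline{\underline{p}}$, splits it into the subsequences with $\delta_{k^\bullet}>0$ and with $\delta_{k^\bullet}\le 0$, and computes the limit of $P^\circ_N$ along each, which is precisely your decomposition of a punctured neighbourhood into $\mathcal{U}^{>}$ and $\mathcal{U}^{\le}$ followed by one-sided limits of the coordinate ratios. Your closing paragraph, however, adds a point the paper's proof leaves implicit: showing that $\{\underline{P}^\bullet,\overline{P}^\bullet\}$ are the only possible accumulation points establishes boundedness but not the claimed discontinuity, since for arbitrary off-shell weights in $\mathbb{W}_2$ the two ratios can coincide (for instance $p_{A,k}=p_{B,k}$ for all $k$ gives $\underline{P}^\bullet=\overline{P}^\bullet=1/2$), so a genuine jump requires the strict monotonicity of $p_k(A)/p_k(B)$ from \eqref{le:pa/pb}, which holds for the hypergeometric and binomial weights to which the lemma is actually applied. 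Flagging this hypothesis explicitly is a worthwhile sharpening of the statement; your identification of the exact value $P^\circ_N(\underline{\underline{p}})=\underline{P}^\bullet$ at the scar is likewise a correct detail the paper does not record.
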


\begin{proof}
    For the proof, we can use the same constructions of 
     $\left \{ \underline{\underline{p}}^{> (j)} \right \}_{j=0}^\infty$ and
     $\left \{ \underline{\underline{p}}^{\le (j)} \right \}_{j=0}^\infty$ as in the previous proof, and observe that:

     \begin{equation}
         \lim_{j\to\infty} P^\circ_N (\underline{\underline{p}}^{> (j)}) = 
        \frac{p_{B,k^\bullet+1} }
        {p_{A,k^\bullet+1}+p_{B,k^\bullet+1}} = \overline{P}^\bullet
     \end{equation}

     \begin{equation}
         \lim_{j\to\infty} P^\circ_N (\underline{\underline{p}}^{\le (j)}) = 
        \frac{p_{B,k^\bullet} }
        {p_{A,k^\bullet}+p_{B,k^\bullet}} = \underline{P}^\bullet
     \end{equation}

     This shows that only these two accumulation points (or limit points \cite{book:Rudin}) are possible for the Symmetric equilibrium sequence $\left \{ P^\circ_N(\underline{\underline{p}}^{(i)}) \right \}_{i=0}^\infty$, which are $\{\underline{P}^\bullet, \overline{P}^\bullet\}$.
     
\end{proof}

Now we have all the ingredients to prove Theorem~\ref{thm:BinomialSymmetric} about the Symmetrical equilibrium of Binomial Fisher games:

\begin{proof}
\label{proof:BinomialFisher}

For any converging sequence of finite games:

\begin{equation}
    \textswab{G}_i = \textswab{Game}(N, K_{A,i}, K_{B,i}, M_i)
\end{equation}

\begin{equation}
    \begin{split}
        \{\textswab{G}_i\}_{i=1}^\infty \in \overline{\textswab{Game}}(N, x_A, x_B) \iff & \lim_{i\to \infty} M_i = \infty \ \wedge \\ 
        & \lim_{i\to \infty} K_{A,i}/M_i = x_A \ \wedge \\
        & \lim_{i\to \infty} K_{B,i}/M_i = x_B
    \end{split}
\end{equation}

Based on lemma \ref{lemma:BinomHyperApprox}, Hypergeometric probability distributions converge to Binomial distributions:

\begin{equation}
    \begin{split}
    \underline{p}(N,K_{A,i},M_i) \to \underline{p}(N,x_A) \\
    \underline{p}(N,K_{B,i},M_i) \to \underline{p}(N,x_B)
    \end{split}
\end{equation}

\paragraph{Limit policy for \PI/:}

The consequence of lemma \ref{lemma:SmoothnessOfFisherPolicies} and lemma \ref{lemma:ContinuityOfFisherPolicies}, is that:

\begin{equation}
    s^\circ_N \in C^0(\mathbb{W}_2,\mathbb{R})
\end{equation}

Therefore, the expression:

\begin{equation}
    s^*_\infty = \lim_{i\to\infty} s^*(N,K_{A,i},K_{B_i},M_i) = 
    \lim_{i\to\infty} s^\circ_N((\underline{p}(N,K_{A,i},M_i),\underline{p}(N,K_{B,i},M_i)))
\end{equation}

can be simplified to

\begin{equation}
    \lim_{i\to\infty} s^\circ_N((\underline{p}(N,K_{A,i},M_i),\underline{p}(N,K_{B,i},M_i))) = s^\circ_N((\underline{p}(N,x_A),\underline{p}(N,x_B)))
\end{equation}

\begin{equation}
    s^\circ_N((\underline{p}(N,x_A),\underline{p}(N,x_B))) = s^*_\text{Bin}(N,x_A,x_B)
\end{equation}

meaning that:

\begin{equation}
    s^*_\infty = s^*_\text{Bin}(N,x_A,x_B)
\end{equation}

Where $s^*_\text{Bin}(N,x_A,x_B)$ can be composed from $k^*_\text{Bin}(N,x_A,x_B)$ and $\nu^*_\text{Bin}(N,x_A,x_B)$, which can be calculated using the criterion \ref{thm:SymBinEqk} and formula \ref{thm:SymBinEqNu}.

\begin{equation}
    s^*_\text{Bin}(N,x_A,x_B) = (k^*_\text{Bin}(N,x_A,x_B) + \nu^*_\text{Bin}(N,x_A,x_B))/(N+1)
\end{equation}

From $s^*_\infty$ we can define the limit values of $k^*_\infty$ and $\nu^*_\infty$ \footnote{If $k^*_i$ and $\nu^*_i$ fails to converge, then we choose this as the definition from the converging $s^*_i$ sequence, and so we preserve the $\nu^* \in [0,1)$ convention. Because of remark \ref{remark:PolicyEquivalenceS}, this is only a parametrization choice of equivalent policies.}:

\begin{equation}
    k^*_\infty = \lfloor (N+1) \ s^*_\infty \rfloor, \quad
    \nu^*_\infty = (N+1) \ s^*_\infty - \lfloor (N+1) s^*_\infty \rfloor
\end{equation}

resulting that we can identify the limit strategy parameters by the parameters, which can be calculated using the limiting Binomial distribution:

\begin{equation}
    k^*_\infty = k^*_\text{Bin}, \quad \nu^*_\infty = \nu^*_\text{Bin}
\end{equation}

This means that the limit policy of \PI/ is well defined for all $0<x_A<x_B<1$, and can be calculated using \ref{thm:SymBinEqk} and \ref{thm:SymBinEqNu}.

\paragraph{Limit policy for \PII/:}

First we can observe, based on \eqref{eq:delta_k}, \eqref{eq:nuCirc} and \eqref{eq:defOfS_N} that:

\begin{equation}
    (x_A,x_B) \in \mathbb{S}_N \iff
    \exists k\in\{0,\dots,N\}, \ \delta_k((\underline{p}(N,x_A),\underline{p}(N,x_B))) = 0
\end{equation}

If $(x_A,x_B) \notin \mathbb{S}_n$, then because of lemma \ref{lemma:SmoothnessOfFisherPolicies} the smoothness of $P^\circ_N$ guarantees that we can use the Binomial limit distributions to obtain the limit value of $P^*_\infty$:

\begin{equation}
    P^*_\infty = \lim_{i\to\infty} P^*(N,K_{A,i},K_{B_i},M_i) = 
    \lim_{i\to\infty} P^\circ_N((\underline{p}(N,K_{A,i},M_i),\underline{p}(N,K_{B,i},M_i)))
\end{equation}

\begin{equation}
    \lim_{i\to\infty} P^\circ_N((\underline{p}(N,K_{A,i},M_i),\underline{p}(N,K_{B,i},M_i))) = P^\circ_N((\underline{p}(N,x_A),\underline{p}(N,x_B)))
\end{equation}

\begin{equation}
    P^\circ_N((\underline{p}(N,x_A),\underline{p}(N,x_B))) = P^*_\text{Bin}(N,x_A,x_B)
\end{equation}

where $P^*_\text{Bin}(N,x_A,x_B)$ can be calculated by \eqref{eq:SymBinEqP}, meaning that we can identify the limit prior by the prior that can be calculated using the limiting Binomial distribution:

\begin{equation}
    P^*_\infty = P^*_\text{Bin}(N,x_A,x_B)
\end{equation}

If, however, $(x_A,x_B) \in \mathbb{S}_N$, then $\delta_{k^*-1}((\underline{p}(N,x_A),\underline{p}(N,x_B)))=0$ the sequence $\{P^*_i\}_{i=0}^\infty$ can have two accumulation points:

\begin{equation}
    \mathrm{Acc} \left ( \{P^*_i\}_{i=0}^\infty \right) \subset \{\underline{P}^*, \overline{P}^*\}
\end{equation}

By using equation \ref{lamma:PLowerUpper}, while substituting
$k^\bullet = k^* -1$ we get:

\begin{equation}
    \underline{P}^* = \frac{p_{B,k^*-1}}{p_{A,k^*-1}+p_{B,k^*-1}}, \quad
    \overline{P}^* = \frac{p_{B,k^*}}{p_{A,k^*}+p_{B,k^*}}
\end{equation}

However, as $i \to \infty$, the game sequence $\{\mathfrak{G}_i\}_{i=0}^\infty$ ``converges to a degenerate game'', meaning that if \PI/ uses the strategy $\{k^*_\infty, \nu^*_\infty\}$, then the winning rate does not change, if \PII/ chooses any $P^* \in [\underline{P}^*, \overline{P}^*]$.

This completes the proof.
    
\end{proof}

\subsection{Binomial Bayesian games}

\label{section:BinomialBayesianProof}

\paragraph{Notation:}

We can define a function, which gives the growth rate difference for a general set of weights $\underline{\underline{p}} \in \mathbb{W}_2 = \mathbb{R}_{>0}^{2 \times (N+1)}$ \footnote{this can be viewed as the ``Off-shell'' \cite{book:Weinberg} generalizations of the growth rate difference function, where $\underline{p}_A$ and $\underline{p}_B$ are not necessarily normalized to 1, but are positive.}

\begin{equation}
    \begin{split}
        \Delta G^\circ(P,\underline{\underline{p}}) =& P \sum_k p_{A,k} \log \left ( 
        \frac{P \ p_{A,k}}{P p_{A,k} + (1-P) p_{B,k}}  
        \right ) + \\
        & (1-P) \sum_k p_{B,k} \log \left ( 
        \frac{(1-P) p_{B,k}}{P p_{A,k} + (1-P) p_{B,k}}  
        \right )
    \end{split}
\end{equation}

Its first derivative with respect to $P$:

\begin{equation}
    \begin{split}
        \Delta G^\circ{}'(P,\underline{\underline{p}}) =& \log(P) \sum_k p_{A,k} - \log(1-P) \sum_k p_{B,k} - \Delta H(\underline{\underline{p}}) + \\
        & - \sum_k (p_{A,k}-p_{B,k}) \log(P p_{A,k} + (1-P) p_{B,k})
    \end{split}
\end{equation}

where:

\begin{equation}
    \Delta H(\underline{\underline{p}}) = - \sum_k p_{A,k} \log(p_{A,k}) + 
    \sum_k p_{B,k} \log(p_{B,k})
\end{equation}

and the growth rate difference's second derivative with respect to $P$:

\begin{equation}
    \Delta G^\circ{}''(P,\underline{\underline{p}}) = \sum_k  \frac{p_{A,k} \ p_{B,k}}{P(1-P)(P p_{A,k} + (1-P) p_{B,k})}
\end{equation}

The generalized splitting ratio function looks the following:

\begin{equation}
    p'^\circ_k(P,\underline{\underline{p}}) = \frac{P \ p_{A,k}}{P \ p_{A,k}+(1-P) p_{B,k}}
\end{equation}

\begin{definition}
    The generalized $P^\circ(\underline{\underline{p}})$ is defined implicitly:

    \begin{equation}
        \Delta G^\circ{}'(P^\circ(\underline{\underline{p}}),\underline{\underline{p}}) = 0
    \end{equation}
    
\end{definition}

\begin{lemma}[Existence and unicity of the solution]
    The implicitly defined $P^\circ(\underline{\underline{p}})$ has a unique solution on the interval $(0,1)$ for any $\underline{\underline{p}} \in \mathbb{W}_2$.
    
\end{lemma}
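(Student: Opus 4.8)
The plan is to exploit the strict convexity of $\Delta G^\circ(\cdot, \underline{\underline{p}})$ in $P$ together with the blow-up of its derivative at the two endpoints. Since the defining equation $\Delta G^\circ{}'(P^\circ(\underline{\underline{p}}), \underline{\underline{p}}) = 0$ asks for a stationary point, I would first show that $\Delta G^\circ{}'(\cdot, \underline{\underline{p}})$ is strictly increasing on $(0,1)$, and then that it changes sign across the interval; together these pin down exactly one root.

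First I would record that, because every entry $p_{A,k}, p_{B,k}$ is strictly positive, the quantity $P\,p_{A,k} + (1-P)\,p_{B,k}$ is bounded away from $0$ for $P \in (0,1)$; hence $\Delta G^\circ(\cdot, \underline{\underline{p}})$ and its derivatives are smooth on the open interval and the already-derived formula
\begin{equation}
    \Delta G^\circ{}''(P, \underline{\underline{p}}) = \sum_k \frac{p_{A,k} \, p_{B,k}}{P(1-P)(P\,p_{A,k} + (1-P)\,p_{B,k})}
\end{equation}
applies verbatim. Every summand is strictly positive for $P \in (0,1)$, so $\Delta G^\circ{}'' > 0$ throughout and $\Delta G^\circ{}'(\cdot, \underline{\underline{p}})$ is strictly increasing. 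This immediately yields uniqueness: the derivative can vanish at most once.

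For existence I would examine the endpoint limits of
\begin{equation}
    \Delta G^\circ{}'(P, \underline{\underline{p}}) = \log(P) \sum_k p_{A,k} - \log(1-P) \sum_k p_{B,k} - \Delta H(\underline{\underline{p}}) - \sum_k (p_{A,k} - p_{B,k}) \log(P\,p_{A,k} + (1-P)\,p_{B,k}).
\end{equation}
The trailing sum stays bounded as $P \to 0^+$ or $P \to 1^-$, because its logarithmic arguments converge to the strictly positive limits $p_{B,k}$ and $p_{A,k}$ respectively, and $\Delta H(\underline{\underline{p}})$ is a finite constant. With $\sum_k p_{A,k} > 0$ and $\sum_k p_{B,k} > 0$, the dominant terms are $\log(P) \sum_k p_{A,k} \to -\infty$ as $P \to 0^+$ and $-\log(1-P) \sum_k p_{B,k} \to +\infty$ as $P \to 1^-$. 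Thus $\Delta G^\circ{}'$ runs from $-\infty$ to $+\infty$, and the intermediate value theorem produces a root in $(0,1)$; combined with strict monotonicity, this root is the unique $P^\circ(\underline{\underline{p}})$.

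I do not expect a genuine obstacle — the content is essentially the sign analysis just sketched. The single point requiring care is confirming that the term $\sum_k (p_{A,k} - p_{B,k}) \log(\cdots)$ contributes no competing singularity at either endpoint; this is exactly where the positivity of all weights, the defining feature of $\mathbb{W}_2$, is used, since it guarantees that the logarithm's argument remains in a compact subset of $(0, \infty)$ across all of $[0,1]$.
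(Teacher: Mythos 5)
Your proposal is correct and takes essentially the same approach as the paper, whose proof of this lemma simply invokes the convexity argument of Section~\ref{par:FindingPs}: the crux in both cases is the explicit formula showing $\Delta G^\circ{}''(P,\underline{\underline{p}}) > 0$ on $(0,1)$, valid because all weights in $\mathbb{W}_2$ are strictly positive. The only difference is cosmetic --- the paper secures existence of the interior minimum from $\Delta G(0)=\Delta G(1)=0$ together with strict convexity, whereas you let the strictly increasing derivative run from $-\infty$ to $+\infty$ and invoke the intermediate value theorem; these are interchangeable completions of the same argument.
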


\begin{proof}
    The same convexity arguments can be used for a general $\underline{\underline{p}} \in \mathbb{W}_2$ as in Section~\ref{par:FindingPs}
    
\end{proof}

\begin{lemma}[Differentiability of the prior]
    \label{lemma:DiffP}
    $P^\circ(\underline{\underline{p}})$ is continuously differentiable on the whole $\mathbb{R}_+^{2 \times (N+1)}$ domain:

    \begin{equation}
        P^\circ \in C^1(\mathbb{W}_2,(0,1))
    \end{equation}
    
\end{lemma}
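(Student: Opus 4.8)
**The plan is to apply the Implicit Function Theorem to the defining equation $\Delta G^\circ{}'(P, \underline{\underline{p}}) = 0$.** The generalized prior $P^\circ(\underline{\underline{p}})$ is defined implicitly as the root of $P \mapsto \Delta G^\circ{}'(P, \underline{\underline{p}})$ on $(0,1)$, and the previous lemma guarantees this root exists and is unique. To upgrade unique-existence to $C^1$-smoothness of the map $\underline{\underline{p}} \mapsto P^\circ(\underline{\underline{p}})$, I would invoke the standard Implicit Function Theorem, whose two hypotheses I need to verify: that the defining function is jointly $C^1$ in $(P, \underline{\underline{p}})$, and that the partial derivative with respect to $P$ is nonzero at the solution.

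First I would check joint $C^1$ regularity. The function $\Delta G^\circ{}'(P, \underline{\underline{p}})$ is built from $\log(P)$, $\log(1-P)$, the entropy term $\Delta H(\underline{\underline{p}})$, and the sum $\sum_k (p_{A,k} - p_{B,k}) \log(P p_{A,k} + (1-P) p_{B,k})$. On the open domain where $P \in (0,1)$ and every weight $p_{\theta,k} > 0$, each argument of every logarithm is strictly positive (in particular $P p_{A,k} + (1-P) p_{B,k} > 0$), so the whole expression is a composition of smooth functions and is therefore $C^\infty$ jointly in $(P, \underline{\underline{p}})$. Hence it is certainly $C^1$.

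Second, and this is the crux, I would verify the nondegeneracy condition $\frac{\partial}{\partial P} \Delta G^\circ{}'(P, \underline{\underline{p}}) \neq 0$ at $P = P^\circ(\underline{\underline{p}})$. But $\frac{\partial}{\partial P}\Delta G^\circ{}'= \Delta G^\circ{}''$, and the excerpt already supplies the closed form
\begin{equation}
    \Delta G^\circ{}''(P,\underline{\underline{p}}) = \sum_k  \frac{p_{A,k} \ p_{B,k}}{P(1-P)(P p_{A,k} + (1-P) p_{B,k})}.
\end{equation}
Every summand is strictly positive for $P \in (0,1)$ and $\underline{\underline{p}} \in \mathbb{W}_2 = \mathbb{R}_{>0}^{2\times(N+1)}$, so $\Delta G^\circ{}''(P, \underline{\underline{p}}) > 0$ throughout. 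In particular it is nonzero at the root $P^\circ(\underline{\underline{p}})$, which is exactly the invertibility condition the Implicit Function Theorem requires. Since the domain $\mathbb{W}_2$ is open and the theorem applies at every point, the implicit solution is continuously differentiable there; I would note that the same positivity is what already forced uniqueness in the preceding lemma, so the strict convexity in $P$ is doing double duty.

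\textbf{The main obstacle is essentially bookkeeping rather than substance:} I must ensure that the IFT is applied at points of $\mathbb{W}_2$ (where weights are strictly positive) so that no logarithm argument degenerates, and that $P^\circ \in (0,1)$ keeps the factor $P(1-P)$ away from zero. Both hold on the stated open domain, so there is no boundary subtlety to manage. A minor point worth stating explicitly is that the IFT is local, producing a $C^1$ branch in a neighborhood of each $\underline{\underline{p}}$; global $C^1$-ness on all of $\mathbb{W}_2$ then follows because the global solution is unique, so these local branches must agree with $P^\circ$ and patch together into a single continuously differentiable function on the connected open set $\mathbb{W}_2$.
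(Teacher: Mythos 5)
Your proposal is correct and follows essentially the same route as the paper: both invoke the Implicit Function Theorem applied to $\Delta G^\circ{}'(P,\underline{\underline{p}})=0$, with the strict positivity of $\Delta G^\circ{}''$ on $(0,1)\times\mathbb{W}_2$ supplying the nondegeneracy condition (the paper just additionally writes out the explicit partial derivatives of $\Delta G^\circ{}'$ with respect to $p_{A,\ell}$ and $p_{B,\ell}$, where you argue smoothness by composition). Your closing remark on patching the local IFT branches into a global $C^1$ function via uniqueness of the root is a point the paper leaves implicit, and it is a worthwhile addition.
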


\begin{proof}
    The statement is essentially the consequence of the Implicit function theorem \cite{book:ImplicitFunctionTheorem}.

    \begin{equation}
        \frac{\partial}{\partial P} \Delta G^\circ{}'(P,\underline{\underline{p}}) \Bigr|_{P=P^\circ(\underline{\underline{p}})} \ dP + 
        \frac{\partial}{\partial \underline{\underline{p}}} \Delta G^\circ{}'(P,\underline{\underline{p}})\Bigr|_{P=P^\circ(\underline{\underline{p}})} \ d \underline{\underline{p}}= 0
    \end{equation}

    \begin{equation}
        \frac{\partial P^\circ(\underline{\underline{p}})}{\partial \underline{\underline{p}}} = -\frac{1}{\Delta G^\circ{}''(P,\underline{\underline{p}})}
        \frac{\partial}{\partial \underline{\underline{p}}} \Delta G^\circ{}'(P,\underline{\underline{p}})\Bigr|_{P=P^\circ(\underline{\underline{p}})}
    \end{equation}

    or by components:

        \begin{equation}
        \frac{\partial P^\circ(\underline{\underline{p}})}{\partial p_{\theta,\ell}} = -\frac{1}{\Delta G^\circ{}''(P,\underline{\underline{p}})}
        \frac{\partial}{\partial p_{\theta,\ell}} \Delta G^\circ{}'(P,\underline{\underline{p}})\Bigr|_{P=P^\circ(\underline{\underline{p}})}
    \end{equation}

    \begin{equation}
        \begin{split}
            \frac{\partial}{\partial p_{A,\ell}} \Delta G^\circ{}'(P,\underline{\underline{p}}) = & \log(P) - \log(p_{A,\ell}) -1 +\\ 
            & - \log(P \ p_{A,\ell} + (1-P) p_{B,\ell}) - 
            P \frac{p_{A,\ell} -  p_{B,\ell}}{P \ p_{A,\ell} + (1-P) p_{B,\ell}} 
        \end{split}
    \end{equation}

        \begin{equation}
            \begin{split}
                \frac{\partial}{\partial p_{B,\ell}} \Delta G^\circ{}'(P,\underline{\underline{p}}) = &
            -\log(1-P) + \log(p_{B,\ell}) +1 +\\
            &\log(P \ p_{A,\ell} + (1-P) p_{B,\ell}) -(1-P) \frac{p_{A,\ell} -  p_{B,\ell}}{P \ p_{A,\ell} + (1-P) p_{B,\ell}} 
            \end{split}
    \end{equation}

    while $\Delta G^\circ{}''(P,\underline{\underline{p}})$ is strictly positive for all $P \in (0,1), \underline{\underline{p}} \in \mathbb{W}_2$, therefore all partial derivatives of $P^\circ(\underline{\underline{p}})$ exists, and are continuous.

\end{proof}

\begin{lemma}[Differentiability of the splitting ratios]
    \label{lemma:Diffpp}
    $p'^\circ_k(P,\underline{\underline{p}})$ is continuously differentiable on the whole $(0,1) \times \mathbb{W}_2$ domain:

    \begin{equation}
        p'^\circ_k \in C^1((0,1) \times \mathbb{W}_2,(0,1))
    \end{equation}
    
\end{lemma}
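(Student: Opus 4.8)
The plan is to observe that $p'^\circ_k$ is a rational function of the variables $(P, p_{A,k}, p_{B,k})$ whose denominator is strictly positive on the entire domain, from which continuous (in fact smooth) differentiability follows at once. First I would isolate the numerator and denominator,
\begin{equation}
    \mathcal{N}(P,\underline{\underline{p}}) = P \ p_{A,k}, \qquad
    \mathcal{D}(P,\underline{\underline{p}}) = P \ p_{A,k} + (1-P)\, p_{B,k},
\end{equation}
and note that both are polynomials — indeed multilinear — in the coordinates $(P, p_{A,k}, p_{B,k})$, hence $C^\infty$ on all of $\mathbb{R} \times \mathbb{R}^{2\times(N+1)}$. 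Since $p'^\circ_k$ depends only on the two entries $p_{A,k}$ and $p_{B,k}$ of $\underline{\underline{p}}$ (besides $P$), the remaining components play no role.

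The single point requiring verification is that $\mathcal{D}$ never vanishes on $(0,1) \times \mathbb{W}_2$. On this domain $P \in (0,1)$ forces both weights $P > 0$ and $1-P > 0$, while $\underline{\underline{p}} \in \mathbb{W}_2 = \mathbb{R}_{>0}^{2\times(N+1)}$ forces $p_{A,k} > 0$ and $p_{B,k} > 0$. Thus $\mathcal{D}$ is a sum of two strictly positive terms, so $\mathcal{D} > 0$ everywhere on the domain. This is exactly the same positivity observation that underlies the proof of Lemma~\ref{lemma:DiffP}, where $\Delta G^\circ{}''$ is seen to be strictly positive.

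Finally I would invoke the standard fact that a quotient $\mathcal{N}/\mathcal{D}$ of two $C^\infty$ functions is $C^\infty$ on any open set where the denominator is nonzero. Since $(0,1) \times \mathbb{W}_2$ is open and $\mathcal{D} > 0$ there, the quotient rule yields each partial derivative as a rational function with nonvanishing denominator $\mathcal{D}^2$, so $p'^\circ_k$ is continuously differentiable — indeed $C^\infty$, which is stronger than claimed. The range assertion $p'^\circ_k \in (0,1)$ follows because $0 < \mathcal{N} < \mathcal{D}$, the right inequality holding since $\mathcal{D} - \mathcal{N} = (1-P)\, p_{B,k} > 0$. There is essentially no obstacle here: the only ``work'' is confirming strict positivity of $\mathcal{D}$, which is immediate from the openness conditions defining the domain, so this lemma is the most elementary of the differentiability results in this section.
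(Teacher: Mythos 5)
Your proposal is correct and follows essentially the same route as the paper's proof: both arguments rest on the strict positivity of the denominator $P\,p_{A,k}+(1-P)\,p_{B,k}$ on the open domain $(0,1)\times\mathbb{W}_2$, the paper merely writing out the three partial derivatives explicitly where you invoke the quotient rule for smooth functions. Your additional verification of the codomain claim, $0 < P\,p_{A,k} < P\,p_{A,k}+(1-P)\,p_{B,k}$, is a small bonus the paper leaves implicit.
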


\begin{proof}
    Explicit calculation shows that:

    \begin{equation}
        \frac{\partial}{\partial p_{A,\ell}} p'^\circ_k
        (P,\underline{\underline{p}})
        = \delta_{k, \ell} \frac{p_{B,k} \ P (1-P)}{(P \ p_{A,k}+(1-P) \ p_{B,k})^2} 
    \end{equation}
    
    \begin{equation}
        \frac{\partial}{\partial p_{B,\ell}} p'^\circ_k
        (P,\underline{\underline{p}})
        = - \delta_{k, \ell} \frac{p_{A,k} \ P (1-P)}{(P \ p_{A,k}+(1-P) \ p_{B,k})^2} 
    \end{equation}

    \begin{equation}
        \frac{\partial}{\partial P} p'^\circ_k
        (P,\underline{\underline{p}})
        = \frac{p_{A,k} \ p_{B,k}}{(P \ p_{A,k}+(1-P) \ p_{B,k})^2} 
    \end{equation}

    therefore all derivatives exists and are continuous for any $P \in (0,1), \underline{\underline{p}} \in \mathbb{W}_2$.
    
\end{proof}

Now we have all the ingredients to prove Theorem~\ref{thm:BayesianBinomial} about the equilibrium of Binomial Bayesian games:

\begin{proof}
\label{proof:LimitBayesian}

For any converging sequence of finite Bayesian games:

\begin{equation}
    \textswab{BG}_i = \textswab{BGame}(N, K_{A,i}, K_{B,i}, M_i)
\end{equation}

\begin{equation}
    \begin{split}
        \{\textswab{BG}_i\}_{i=1}^\infty \in \overline{\textswab{BGame}}(N, x_A, x_B) \iff & \lim_{i\to \infty} M_i = \infty \ \wedge \\ 
        & \lim_{i\to \infty} K_{A,i}/M_i = x_A \ \wedge \\
        & \lim_{i\to \infty} K_{B,i}/M_i = x_B
    \end{split}
\end{equation}

Based on lemma \ref{lemma:BinomHyperApprox}, Hypergeometric probability distributions converge to Binomial distributions:

\begin{equation}
    \begin{split}
    \underline{p}(N,K_{A,i},M_i) \to \underline{p}(N,x_A) \\
    \underline{p}(N,K_{B,i},M_i) \to \underline{p}(N,x_B)
    \end{split}
\end{equation}

\paragraph{Prior convergence:}

The consequence of lemma \ref{lemma:DiffP} is that:

\begin{equation}
    P^\circ \in C^1(\mathbb{W}_2,(0,1))
\end{equation}

Therefore, the expression:

\begin{equation}
    P^*_\infty = \lim_{i\to\infty} P^*(N,K_{A,i},K_{B_i},M_i) = 
    \lim_{i\to\infty} P^\circ((\underline{p}(N,K_{A,i},M_i),\underline{p}(N,K_{B,i},M_i)))
\end{equation}

can be simplified to

\begin{equation}
    \lim_{i\to\infty} P^\circ((\underline{p}(N,K_{A,i},M_i),\underline{p}(N,K_{B,i},M_i))) = P^\circ((\underline{p}(N,x_A),\underline{p}(N,x_B)))
\end{equation}

\begin{equation}
    P^\circ((\underline{p}(N,x_A),\underline{p}(N,x_B))) = P^*_\text{Bin}(N,x_A,x_B)
\end{equation}

meaning that:

\begin{equation}
    P^*_\infty = P^*_\text{Bin}(N,x_A,x_B)
\end{equation}

where $P^*_\text{Bin}(N,x_A,x_B)$ can be calculated using the implicit formula \eqref{eq:BinomialBayesianP}.

\paragraph{Splitting ratio convergence:}

The same continuity argument based on lemma \ref{lemma:Diffpp} gives that for all $k \in \{0,\dots,N\}$ the splitting ratios converge to:

\begin{equation}
    p'^*_{k,\infty} = p'^*_{k,\mathrm{Bin}}(N,x_A,x_B)
\end{equation}

where $p'^*_{k,\mathrm{Bin}}(N,x_A,x_B)$ can be calculated using the formula \eqref{eq:BinomialBayesianpp}.

This completes the proof.
    
\end{proof}

\section{Derivation of limiting priors}
\label{appendix:Limiting}

Most calculations presented in this section do not yield a solid proof, but hopefully, they can show the path for a more careful derivation, which will lead to exact statements.

\paragraph{Notation:}

For the derivation of both Binomial Fisher and Bayesian limiting priors, the following notation will be useful:

$I(x,y)$ or $I(x,x_\theta)=I_\theta(x)$ will denote the so called rate function \cite{book:LargeDeviation, book:LargeDeviation2, arxiv:LDP} of a Binomial distribution:

\begin{equation}
\label{eq:BinomialRateFunctionI}
    I(x,x_\theta) = I_\theta(x) = x \log \left ( \frac{x}{x_\theta} \right ) +
    (1-x) \log \left ( \frac{1-x}{1-x_\theta} \right )
    \ge 0, \quad x \in [0,1]
\end{equation}

\begin{equation}
\label{eq:BinomialRateFunctionSymmetry}
    I(1-x,x-x_\theta) = I(x,x_\theta)
\end{equation}

Its derivative:

\begin{equation}
\label{eq:BinomialRateFunctionDerivativeI'}
    I'(x,x_\theta) = I'_\theta(x) 
    =
    \log \left (
    \frac{x}{x_\theta} \frac{1-x_\theta}{1-x}
    \right )
    =
    \log \left (
    \frac{x}{1-x}
    \right )
    -
    \log \left (
    \frac{x_\theta}{1-x_\theta}
    \right )
\end{equation}

and its second derivative:

\begin{equation}
\label{eq:BinomialRateFunctionSecondDerivativeI''}
    I''(x,x_\theta) = I''_\theta(x) 
    =
    \frac{1}{x(1-x)}
    =
    \kappa(x)
    > 0, \quad x \in (0,1)
\end{equation}

Another useful quantity will be:

\begin{equation}
\label{eq:beta}
    \beta = \log \left ( \frac{x_B (1-x_A)}{x_A (1-x_B)} \right )
    > 0
\end{equation}

\subsection{Binomial Fisher limiting policy}

\begin{lemma}
For all $0 < x_A < x_B < 1$

    \begin{equation}
        x_A \le 
        \lim_{N \to \infty} \frac{k^*_N(x_A,x_B)}{N}
        \le x_B
    \end{equation}
    
\end{lemma}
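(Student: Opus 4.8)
The plan is to obtain the bound as an immediate corollary of Theorem~\ref{thm:BinomialKMedianBounds}, which already supplies a finite-$N$ sandwich for $k^*_N$. That theorem states
\[
\lfloor x_A N \rfloor \le \mathbbm{m}[\kappa_A] \le k^*_N \le \mathbbm{m}[\kappa_B] \le \lceil x_B N \rceil,
\]
with $\kappa_A \sim \mathrm{Binom}(N,x_A)$ and $\kappa_B \sim \mathrm{Binom}(N,x_B)$. Discarding the inner median terms, I retain the purely elementary enclosure $\lfloor x_A N \rfloor \le k^*_N \le \lceil x_B N \rceil$, valid for every $N \in \mathbb{N}$. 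All the genuine analytic content (the intermediacy of the median from Lemma~\ref{lemma:IntermediacyMedian}, and the Binomial median bounds cited from \cite{paper:BinomialMedianMode}) is thereby already packaged into the cited theorem.

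Next I would divide through by $N$, giving
\[
\frac{\lfloor x_A N \rfloor}{N} \le \frac{k^*_N}{N} \le \frac{\lceil x_B N \rceil}{N},
\]
and then evaluate the two outer limits. Using the elementary estimates $x_A - \tfrac{1}{N} < \lfloor x_A N \rfloor / N \le x_A$ and $x_B \le \lceil x_B N \rceil / N < x_B + \tfrac{1}{N}$, both outer sequences converge, with $\lfloor x_A N \rfloor / N \to x_A$ and $\lceil x_B N \rceil / N \to x_B$ as $N \to \infty$. A squeeze argument then delivers the claim.

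The one subtlety is that the statement presupposes existence of $\lim_{N\to\infty} k^*_N/N$. To remain safe I would first phrase the conclusion in terms of the lower and upper limits: the sandwich gives $x_A \le \liminf_{N} k^*_N/N \le \limsup_N k^*_N/N \le x_B$ unconditionally, which is all the inequality genuinely requires. Existence of the limit itself is then supplied separately by Theorem~\ref{thm:BinomialFisherLimitingPolicy}, whose value $x_0^*$ indeed lies in $[x_A,x_B]$, consistent with the bound; once existence is granted, the $\liminf$ and $\limsup$ coincide and collapse to the stated inequality. There is no real obstacle here, so the lemma is a short corollary rather than a substantive argument.
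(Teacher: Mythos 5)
Your proposal is correct and follows essentially the same route as the paper: cite Theorem~\ref{thm:BinomialKMedianBounds} for the finite-$N$ sandwich $\lfloor x_A N \rfloor \le k^*_N \le \lceil x_B N \rceil$, divide by $N$, and squeeze. Your extra care in phrasing the conclusion via $\liminf$ and $\limsup$ (since the sandwich alone does not grant existence of $\lim_N k^*_N/N$) is a refinement the paper's own one-line proof silently glosses over, but it does not change the substance of the argument.
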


\begin{proof}
    The statement follows from Theorem~\ref{thm:BinomialKMedianBounds}:

    \begin{equation}
        x_A
        =
        \lim_{N \to \infty} \frac{\lfloor N x_A \rfloor}{N} 
        \le 
        \lim_{N \to \infty} \frac{k^*_N(x_A,x_B)}{N}
        \le 
        \lim_{N \to \infty} \frac{\lceil N x_B \rceil}{N}
        =
        x_B
\end{equation}
    
\end{proof}

The proof below will belong to Theorem~\ref{thm:BinomialFisherLimitingPolicy}.

\begin{proof}
    {\bf attempt:}
    The proof will rely on Chernoff bound \cite{paper:ChernoffBound} and a lower bound on tail distribution \cite{book:AshInformation} for Binomial random variables:

    \paragraph{Notation:}
    Tail distribution:

    \begin{equation}
        T_N(k^\bullet,x) = \sum_{k=k^\bullet}^N p_k(N,x)
    \end{equation}

    its upper and lower bounds if $0 < x < k^\bullet/N < 1$ \cite{book:AshInformation}:

    \begin{equation}
                \frac{e^{-N \cdot I(k^\bullet/N,x)}}{\sqrt{8 N \cdot k^\bullet/N (1-k^\bullet/N)}} 
        \le
        T_N(k^\bullet,x) 
        \le
        e^{-N \cdot I(k^\bullet/N,x)}
    \end{equation}

    \begin{equation}
        \underline{T}_N(k^\bullet,x) 
        =
        \frac{1}{\sqrt{2 N}} e^{-N \cdot I(k^\bullet/N,x)} 
        \le
        T_N(k^\bullet,x) 
        \le
        e^{-N \cdot I(k^\bullet/N,x)}
        =
        \overline{T}_N(k^\bullet,x)
    \end{equation}

    where $I(.,.)$ is the so called rate function introduced in \eqref{eq:BinomialRateFunctionI}.
    It has a positive second derivative for all $x \in (0,1)$, so the increment of $k^\bullet$ in the $T(k^\bullet,x)$ expression can be estimated in the following way:

    \begin{equation}
        \underline{T}^+_N(k^\bullet,x)
        =
        \frac{e^{-I'(k^\bullet/N,x)}}{\sqrt{2 N}}  e^{-N \cdot I(k^\bullet/N,x)}
        \le
        T_N(k^\bullet+1,x)
        \le
        e^{-N \cdot I(k^\bullet/N,x)}
        =
        \overline{T}^+_N(k^\bullet,x)
    \end{equation}

    \paragraph{Left and right tails:} For Binomial distributions, the left and right tails are connected:

    \begin{equation}
        \sum_{k\le k^\bullet} p_k(N,x) = 
        \sum_{k\ge N - k^\bullet} p_k(N,1-x) =
        T_N(N-k^\bullet,1-x)
    \end{equation}

    because of the symmetry of the rate function \eqref{eq:BinomialRateFunctionSymmetry}, formally the following inequalities hold:

    \begin{equation}
        \underline{T}_N(k^\bullet,x) 
        \le
        T_N(N-k^\bullet,1-x) 
        \le
        \overline{T}_N(k^\bullet,x)
    \end{equation}

    For the increment:

    \begin{equation}
        \underline{T}^+_N(N-k^\bullet,1-x) 
        \le
        T_N(N-(k^\bullet+1),1-x) 
        \le
        \overline{T}^+_N(N-k^\bullet,1-x)
    \end{equation}

    \paragraph{Requirements for $k^*$:}
    Recalling equation \ref{thm:SymBinEqk} and expressing it in terms of Binomial tail distributions, we get:

    \begin{equation}
        \sum_{k \le k^*} p_k(A)+p_k(B) > 1 \implies
        T_N(N-k^*,1-x_B) > T_N(k^*+1,x_A)
    \end{equation}

    \begin{equation}
        \sum_{k<k^*} p_k(A)+p_k(B) \le 1 \implies
        T_N(N-(k^*+1),1-x_B) \le T_N(k^*,x_A)
    \end{equation}

    Recalling that in the limit $x_A \le k^*/N \le x_B$, we can use the tail inequalities for the Binomial distribution, resulting in the following inequalities: 

    \begin{equation}
        \overline{T}_N(N-k^*,1-x_B) 
        >
        \underline{T}^+_N(k^*,x_A)
    \end{equation}

    \begin{equation}
        \underline{T}^+_N(N-k^*,1-x_B)
        \le
        \overline{T}_N(k^*,x_A) 
    \end{equation}

    After taking the logarithm and dividing by $N$, we get:

    \begin{equation}
        -I(k^*/N,x_B) > -I(k^*/N,x_A) + \frac{1}{N} \log \left ( \frac{e^{-I'(k^*/N,x_A)}}{\sqrt{2 N}} \right )
    \end{equation}

    \begin{equation}
        -I(k^*/N,x_B)  + \frac{1}{N} \log \left ( \frac{e^{I'(k^*/N,x_B)}}{\sqrt{2 N}} \right )
        \le
         -I(k^*/N,x_A)
    \end{equation}

    Together, the inequalities give:

    \begin{equation}
        \frac{1}{N} \log \left ( \frac{e^{-I'(k^*/N,x_A)}}{\sqrt{2 N}} \right )
        <
        I(k^*/N,x_A) - I(k^*/N,x_B)
        \le
        - \frac{1}{N} \log \left ( \frac{e^{I'(k^*/N,x_B)}}{\sqrt{2 N}} \right )
    \end{equation}

    And by using that $x_A \le k^*/N \le x_B$ again, we get:

    \begin{equation}
        \frac{1}{N} \log \left ( \frac{e^{-I'(x_B,x_A)}}{\sqrt{2 N}} \right )
        <
        I(k^*/N,x_A) - I(k^*/N,x_B)
        \le
        - \frac{1}{N} \log \left ( \frac{e^{I'(x_A,x_B)}}{\sqrt{2 N}} \right )
    \end{equation}

    \paragraph{Squeeze theorem:}
    This means that in the limit $N \to \infty$, the two rate functions have to be equal:

    \begin{equation}
        \lim_{N \to \infty} I(k^*_N/N,x_A) - I(k^*_N/N,x_B) = 0
    \end{equation}

    Therefore:

    \begin{equation}
        I(x_0^*,x_A) - I(x_0^*,x_B) = 0
    \end{equation}

    Direct calculation shows that this is satisfied by:

    \begin{equation}
    \label{eq:Fisherx0*}
        x_0^* = \frac{\log \left ( \frac{1-x_A}{1-x_B} \right )}{\log \left ( \frac{(1-x_A) x_B}{(1-x_B) x_A} \right )}
    \end{equation}

    This attempts to complete the proof.
    
\end{proof}

\subsubsection{Binomial Fisher limiting prior bounds}

The derivation supports conjecture \ref{conj:FisherLimitingPriorBounds}.

\paragraph{Notation:}
Left and right tail distributions:

\begin{equation}
    T_A(k^\bullet,N) = \sum_{k \ge k^\bullet} p_k(A),
    \quad
    T_B(k^\bullet,N) = \sum_{k \le k^\bullet} p_k(B)
\end{equation}

\paragraph{Derivation:}

Recalling equation \eqref{eq:SymBinEqP} we have (if $\nu^* \ne 0$):

\begin{equation}
    P^*_N \ p_{k^*_N}(A) = (1-P^*_N) \ p_{k^*_N}(B)
\end{equation}

which can be expressed by the left and right tail distributions:

\begin{equation}
\label{eq:P*TATB}
    P^*_N \left ( T_A(k^*_N,N) - T_A(k^*_N+1,N)  \right ) 
    = 
    (1-P^*_N) \left ( T_B(k^*_N,N) - T_B(k^*_N-1,N)  \right ) 
\end{equation}

From the expression for $\nu^*$ in Theorem~\ref{thm:SymBinEqNu} (or more directly from \eqref{eq:vAnu} and \eqref{eq:vBnu}) follows:

\begin{equation}
    1 - T_A(k^*_N) + \nu^* \ p_{k^*_N}(A) = 
    1 - T_B(k^*_N) + (1-\nu^*) \ p_{k^*_N}(B)
\end{equation}

which can be rearranged to:

\begin{equation}
\label{eq:nu*TATB}
    (1-\nu^*) T_A(k^*_N) + \nu^* \ T_A(k^*_N+1,N) =
    \nu^* \ T_B(k^*_N) + (1-\nu^*) T_B(k^*_N-1,N)
\end{equation}

Dividing equation \eqref{eq:P*TATB} with equation \eqref{eq:nu*TATB} results in the following expression for $P^*_N$:
    
\begin{equation}
    \frac{P^*_N}{1-P^*_N} = 
    \frac
    {T_B(k^*_N,N) - T_B(k^*_N-1,N)}
    {T_A(k^*_N,N) - T_A(k^*_N+1,N)}
    \frac
    {(1-\nu^*) T_A(k^*_N) + \nu^* \ T_A(k^*_N+1,N)}
    {\nu^* \ T_B(k^*_N) + (1-\nu^*) T_B(k^*_N-1,N)}
\end{equation}

\begin{lemma}
\label{lemma:TA(k+1)/TA(k)}
    \begin{equation}
        \lim_{N \to \infty} 
        \frac{T_A(k^*_N+1,N)}{T_A(k^*_N,N)} = 
        \lim_{N \to \infty} 
        e^{-I_A'(k^*_N/N)}, \quad
        \lim_{N \to \infty}
        \frac{T_B(k^*_N-1,N)}{T_B(k^*_N,N)} = 
        \lim_{N \to \infty} 
        e^{I_B'(k^*_N/N)}
    \end{equation}
    
\end{lemma}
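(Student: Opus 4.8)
The plan is to exploit the fact that, in the large-deviation regime $k^*_N/N \to x_0^* \in (x_A,x_B)$, each tail is dominated by its boundary term and decays geometrically, so that the tail ratio agrees to leading order with the single-step ratio of consecutive probabilities. For the right tail I would first rewrite
\begin{equation}
\frac{T_A(k^*_N+1,N)}{T_A(k^*_N,N)} = \frac{\sum_{k \ge k^*_N} p_{k+1}(A)}{\sum_{k \ge k^*_N} p_k(A)} = \frac{\sum_{k \ge k^*_N} r_k \, p_k(A)}{\sum_{k \ge k^*_N} p_k(A)}, \quad r_k = \frac{p_{k+1}(A)}{p_k(A)} = \frac{N-k}{k+1}\,\frac{x_A}{1-x_A},
\end{equation}
which displays the tail ratio as a $p_k(A)$-weighted average of the consecutive-term ratios $r_k$. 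A direct computation gives $r_{k^*_N} \to \tfrac{1-x_0^*}{x_0^*}\,\tfrac{x_A}{1-x_A} = e^{-I_A'(x_0^*)}$, which equals $\lim_{N\to\infty} e^{-I_A'(k^*_N/N)}$ by continuity of $I_A'$ and $k^*_N/N \to x_0^*$ (Theorem~\ref{thm:BinomialFisherLimitingPolicy}); the same limit holds for $r_{k^*_N+J}$ with any fixed $J$.

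The key structural fact is that $r_k$ is strictly decreasing in $k$. This at once yields the upper bound $T_A(k^*_N+1,N)/T_A(k^*_N,N) \le r_{k^*_N}$, since a weighted average of quantities all bounded by $r_{k^*_N}$ cannot exceed $r_{k^*_N}$. For a lower bound I would truncate at a fixed cutoff $J$:
\begin{equation}
\frac{T_A(k^*_N+1,N)}{T_A(k^*_N,N)} \ge r_{k^*_N+J}\,\frac{\sum_{k=k^*_N}^{k^*_N+J} p_k(A)}{\sum_{k\ge k^*_N} p_k(A)} = r_{k^*_N+J}\left(1 - \frac{T_A(k^*_N+J+1,N)}{T_A(k^*_N,N)}\right).
\end{equation}
Because $x_0^* > x_A$ strictly, for large $N$ the quantity $\rho_N := r_{k^*_N} < 1$ stays bounded away from $1$, and monotonicity gives $p_{k^*_N+j}(A) \le \rho_N^{\,j} p_{k^*_N}(A)$; hence $T_A(k^*_N+J+1,N) \le \rho_N^{\,J+1} p_{k^*_N}(A)/(1-\rho_N)$ while $T_A(k^*_N,N) \ge p_{k^*_N}(A)$, so the remainder term is at most $\rho_N^{\,J+1}/(1-\rho_N)$.

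I would close the argument with iterated limits. Sending $N \to \infty$ with $J$ fixed and squeezing gives
\begin{equation}
e^{-I_A'(x_0^*)}\left(1 - \frac{\rho^{\,J+1}}{1-\rho}\right) \le \liminf_{N\to\infty} \frac{T_A(k^*_N+1,N)}{T_A(k^*_N,N)} \le \limsup_{N\to\infty} \frac{T_A(k^*_N+1,N)}{T_A(k^*_N,N)} \le e^{-I_A'(x_0^*)},
\end{equation}
with $\rho = e^{-I_A'(x_0^*)} < 1$; letting $J \to \infty$ collapses the lower bound to $e^{-I_A'(x_0^*)}$, establishing the first limit. The second limit, for the left tail $T_B$, follows by the mirror argument in the decreasing-$k$ direction: the relevant step ratio is $p_{k-1}(B)/p_k(B) = \tfrac{k}{N-k+1}\tfrac{1-x_B}{x_B} \to e^{I_B'(x_0^*)}$, which is $<1$ since $x_0^* < x_B$, and the identical monotonicity-plus-truncation estimate applies. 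The main obstacle will be justifying the geometric tail bound uniformly in $N$ — in particular keeping $\rho_N$ bounded below $1$, which hinges on the strict separation $x_A < x_0^* < x_B$ recorded for the limiting policy — together with the clean interchange of the $N\to\infty$ and $J\to\infty$ limits, which the $\liminf$/$\limsup$ squeeze is designed to handle.
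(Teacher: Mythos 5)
Your proof is correct, but it takes a genuinely different route from the paper's. The paper imports an asymptotic expansion of the binomial tail from \cite{paper:AsymptoticExpansionBinomialTail}, namely
\begin{equation}
    \frac{T_A(k^*,N)}{p_{k^*}(N,x_A)} = \frac{x_A^{-1}-1}{x_A^{-1}-N/k^*} + \mathcal{O}(1/N),
\end{equation}
and then uses the identity $p_{k^*}(N,x_A)=T_A(k^*,N)-T_A(k^*+1,N)$ plus algebra to convert this into $T_A(k^*_N+1,N)/T_A(k^*_N,N) = e^{-I_A'(k^*_N/N)}+\mathcal{O}(1/N)$, with the mirror computation for $T_B$. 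Your argument is instead self-contained: you write the tail ratio as a $p_k(A)$-weighted average of the consecutive-term ratios $r_k=\frac{N-k}{k+1}\frac{x_A}{1-x_A}$, exploit that $r_k$ is strictly decreasing to get the one-line upper bound $r_{k^*_N}$ and the geometric domination $p_{k^*_N+j}(A)\le\rho_N^{\,j}p_{k^*_N}(A)$, and close with a truncation-plus-iterated-limits squeeze; all steps check out, including the reindexing (the edge term is harmless since $r_N=0$) and the left-tail mirror for $B$. The only external input you need is $k^*_N/N\to x_0^*$ with the strict separation $x_A<x_0^*<x_B$, which you correctly source from Theorem~\ref{thm:BinomialFisherLimitingPolicy}; this creates no circularity, since the paper's own proof of the lemma uses the same fact through the substitution $k^*=x_0^*N$, and the lemma is not used in proving that theorem. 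What each approach buys: yours avoids relying on a cited expansion and is elementary throughout, while the paper's is shorter and delivers a quantitative $\mathcal{O}(1/N)$ error term, which it exploits right after the lemma to strengthen an $o(1)$ factor in the prior-bound derivation to $\mathcal{O}(1/N)$. Your squeeze as written gives only the bare limit; if you let the cutoff grow, say $J\sim c\log N$, the same bounds yield an explicit rate of order $\log N/N$ --- close to, but slightly weaker than, the paper's rate.
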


\begin{proof}
    In \cite{paper:AsymptoticExpansionBinomialTail}, the following asymptotic expansion has been obtained for the tail of Binomial distribution:

    \begin{equation}
        \frac{\sum_{k=n}^{r n} p_n(r n,p)}{p_k(r n,p)} = \frac{p^{-1}-1}{p^{-1}-r} + \mathcal{O}(1/n)
    \end{equation}

    Substituting:

    \begin{equation}
        N = r n, \quad n=k^*=x^*_0 N, \quad p = x_A
    \end{equation}

    we get:

    \begin{equation}
        \frac{T_A(k^*,N)}{p_{k^*}(N,x_A)} =
        \frac{x_A^{-1} - 1}{x_A^{-1} - N/{k^*}} +
        \mathcal{O}(1/N)
    \end{equation}

    Because of

    \begin{equation}
        p_{k^*}(N,x_A) = 
        T_A(k^*,N) - T_A(k^*+1,N)
    \end{equation}

    we get:

    \begin{equation}
        \frac{T_A(k^*,N)}{T_A(k^*,N) - T_A(k^*+1,N)} =
        \frac{k^*/N - x_A k^*/N}{k^*/N - x_A} +
        \mathcal{O}(1/N)
    \end{equation}

    We can express the $T_A(k^*+1,N)/T_A(k^*,N)$ ratio:

    \begin{equation}
        \frac{T_A(k^*_N+1,N)}{T_A(k^*_N,N)} = 
        \frac{x_A}{1-x_A} \frac{1-k^*/N}{k^*/N}
        + \mathcal{O}(1/N)
    \end{equation}

    Recalling the expression for $I'_A(k^*/N)$ in \eqref{eq:BinomialRateFunctionDerivativeI'} we get:

    \begin{equation}
        e^{-I'_A(k^*/N)} =
        \frac{x_A}{1-x_A} \Big / \frac{k^*/N}{1-k^*/N} = 
        \frac{x_A}{1-x_A} \frac{1-k^*/N}{k^*/N}
    \end{equation}

    This proves that:

    \begin{equation}
        \frac{T_A(k^*_N+1,N)}{T_A(k^*_N,N)} = 
        e^{-I'_A(k^*/N)}
        + \mathcal{O}(1/N)
    \end{equation}

    A similar reasoning shows that:

\begin{equation}
    \frac{T_B(k^*_N-1,N)}{T_B(k^*_N,N)} = 
    e^{I_B'(k^*_N/N)}
    + \mathcal{O}(1/N)
\end{equation}
    
\end{proof}

Using lemma \ref{lemma:TA(k+1)/TA(k)} we can simplify the expression for $P^*_N$

\begin{equation}
    \frac{P^*_N}{1-P^*_N} = 
    \frac
    {1 - e^{I_B'(k^*_N/N)}}
    {1 - e^{-I_A'(k^*_N/N)}}
    \frac
    {(1-\nu^*) + \nu^* \ e^{-I_A'(k^*_N/N)}}
    {\nu^* + (1-\nu^*) e^{I_B'(k^*_N/N)}} + o(1)
\end{equation}

(In fact because of the proof of \ref{lemma:TA(k+1)/TA(k)} the $o(1)$ factor could be strengthened to $\mathcal{O}(1/N)$.)
Recalling that $\nu^*$ represents a probability, $\nu^* \in [0,1)$ and introducing the following quantities:

\begin{equation}
    \alpha_A = I'_A(x_0^*)/\beta, \quad
    \alpha_B = I'_B(x_0^*)/\beta
\end{equation}

we get the following bounds for $P^*_N$:

\begin{equation}
    - \alpha_A \beta 
    \lesssim 
    \log \left ( \frac{P^*_N}{1-P^*_N} \right ) - 
    \log \left ( \frac{1-e^{\alpha_B \beta}}
    {1-e^{-\alpha_A \beta}} \right )
    \lesssim
    - \alpha_B \beta
\end{equation}

where we used asymptotic notation $o()$ and $\lesssim$ in accordance with \cite{book:BabaiDiscreteMathematics}.
The results, in a more straightforward notation, look the following:

\begin{equation}
    \liminf_{N \to \infty} 
    \ \log \left ( \frac{P^*_N}{1-P^*_N} \right )
    \ge
    \log \left ( \frac{1-e^{\alpha_B \beta}}
    {1-e^{-\alpha_A \beta}} \right ) 
    -\alpha_A \beta 
\end{equation}

\begin{equation}
    \limsup_{N \to \infty} 
    \ \log \left ( \frac{P^*_N}{1-P^*_N} \right )
    \le
    \log \left ( \frac{1-e^{\alpha_B \beta}}
    {1-e^{-\alpha_A \beta}} \right ) 
    -\alpha_B \beta 
\end{equation}

Conjecture \ref{conj:FisherLimitingPriorBounds} states that these upper and lower bounds are sharp, i.e. the inequalities become equalities:

\begin{equation}
    \label{deriv:liminfPN}
    \liminf_{N \to \infty} 
    \ \log \left ( \frac{P^*_N}{1-P^*_N} \right )
    =
    \log \left ( \frac{1-e^{\alpha_B \beta}}
    {1-e^{-\alpha_A \beta}} \right ) 
    -\alpha_A \beta 
\end{equation}

\begin{equation}
    \label{deriv:limsupPN}
    \limsup_{N \to \infty} 
    \ \log \left ( \frac{P^*_N}{1-P^*_N} \right )
    =
    \log \left ( \frac{1-e^{\alpha_B \beta}}
    {1-e^{-\alpha_A \beta}} \right ) 
    -\alpha_B \beta 
\end{equation}

\subsection{Binomial Bayesian limiting prior approximation}
\label{sec:BinomialBayesianLimitingApproximation}

The derivation supports conjecture \ref{conj:BayesianLimitPrior}, but it starts with an approximation appearing in remark \ref{remark:BayesianLimitPrior}.

\subsubsection{Notation}

Recalling eq. \eqref{eq:BinomialDistribution}:

\begin{equation}
    \label{eq:Appendix_BinomialDistribution}
    p_k(A) = \binom{N}{k} x_A^k (1-x_A)^{N-k}, \quad
    p_k(B) = \binom{N}{k} x_B^k (1-x_B)^{N-k}
\end{equation}

We introduce some notation useful in the continuum ($N \to \infty$) limit:

\begin{equation}
    x = k/N, \quad k = [x N]
\end{equation}

To express $p'^*_k$ with continuum quantities, we introduce a new variable $\tau$:

\begin{equation}
    p'^*_k=\frac{P \ p_k(A)}{P \ p_k(A) + (1-P) p_k(B)}
\end{equation}

\begin{equation}
    \tau = - \log \left ( \frac{p'^*_k}{1-p'^*_k} \right ) 
\end{equation}

\begin{equation}
    \tau = - \log \left ( \frac{P}{1-P} \right ) -  
    \log \left ( \frac{x_A^k (1-x_A)^{N-k}}{x_B^k (1-x_B)^{N-k}} \right )
\end{equation}

introducing a positive ``slope'' quantity \eqref{eq:beta}:

\begin{equation}
    \beta = \log \left ( \frac{x_B (1-x_A)}{x_A (1-x_B)} \right )
\end{equation}

and log-odds $\vartheta$, instead of $P$:

\begin{equation}
\label{eq:varthetaP}
    \vartheta = \log \left ( \frac{P}{1-P} \right ), \quad
    P = \frac{e^\vartheta}{1+e^\vartheta}
\end{equation}

$\tau$ can be expressed as a linear function of $x$:

\begin{equation}
    \tau = - \vartheta + N \beta (x-x_0^*)
\end{equation}

where $x_0^*$ equals to:

\begin{equation}
    x_0^* = \log \left ( \frac{1-x_A}{1-x_B} \right )/\beta =  \frac{\log \left ( \frac{1-x_A}{1-x_B} \right )}{\log \left ( \frac{x_B (1-x_A)}{x_A (1-x_B)} \right )}
\end{equation}

Remarkably, this is the same expression as equation \eqref{eq:Fisherx0*}.
Conversely $x$ can be expressed by $\tau$:

\begin{equation}
    x = x_0^* + \frac{\tau + \vartheta}{N \beta}
\end{equation}

\subsubsection{Expression for the growth factor difference}

In the Binomial case, we can simplify the expression from eq. \eqref{eq:Appendix_GA(P)} and \eqref{eq:Appendix_GB(P)}:

\begin{equation}
    \Delta G_A(P) = \sum_{k} p_k(A) \log \left ( \frac{P \ p_k(A)}{P \ p_k(A)+(1-P) \ p_k(B)} \right )
\end{equation}

\begin{equation}
    \Delta G_B(P) = \sum_{k} p_k(B) \log \left ( \frac{(1-P) p_k(B)}{P \ p_k(A)+(1-P) \ p_k(B)} \right )
\end{equation}

\begin{equation}
    \Delta G_A(P) = \sum_{k} p_k(A) \log \left ( p'^*_k \right )
\end{equation}

\begin{equation}
    \Delta G_B(P) = \sum_{k} p_k(B) \log \left ( 1 - p'^*_k \right )
\end{equation}

\paragraph{Loss function:}

The expressions $\log \left ( p'^*_k \right )$, $\log \left ( 1- p'^*_k \right )$ can be interpreted as a loss function and expressed as functions of $\tau$:

\begin{equation}
    L_A(\tau) = \log \left ( p'^*_k \right ) =
    \log \left ( \frac{e^{-\tau}}{1 + e^{-\tau}} \right ) = 
    - \tau - \log \left ( 1 + e^{-\tau} \right )
\end{equation}

\begin{equation}
    L_B(\tau) = \log \left ( 1 - p'^*_k \right ) =
    \log \left ( \frac{e^{\tau}}{1 + e^{\tau}} \right ) = 
     \tau - \log \left ( 1 + e^{\tau} \right )
\end{equation}

\begin{equation}
    \Delta G_\theta(P) = \sum_{k} p_k(\theta) L_\theta(\tau_k), \quad
    \theta \in \{A,B\}
\end{equation}

\subsubsection{Approximations}
\label{sec:SEC_Approx}

\paragraph{Stirling’s formula:}

In the continuum limit, the discrete probability distributions $p_k(A)$ and $p_k(B)$ can be approximated by a density function, using Stirling's formula \cite{paper:Stirling} \footnote{for notation see also a paper on Large deviation principle \cite{arxiv:LDP}}:

\begin{equation}
    \frac{p_{[N x]}(\theta)}{1/N} \approx f_\theta(x) = \frac{\sqrt{N}}{\sqrt{2 \pi x (1-x)}} e^{- N I(x,x_\theta)}, \quad
    \theta \in \{A,B\}
\end{equation}

where $I(x,x_\theta)=I_\theta(x)$ is the so called rate function introduced in \eqref{eq:BinomialRateFunctionI}.

\begin{equation}
    I(x,x_\theta) = I_\theta(x) = x \log \left ( \frac{x}{x_\theta} \right ) +
    (1-x) \log \left ( \frac{1-x}{1-x_\theta} \right )
\end{equation}

This results in an approximate formula for the growth rate difference:

\begin{equation}
    \Delta G_\theta^\mathrm{S}(\vartheta) = \sum_{k} \frac{1}{N} f_\theta(k/N) L_\theta(\tau_k)
\end{equation}

\paragraph{Euler-Maclaurin formula:}

To transform the summation to an analytically more tractable integration, we can use the Euler-Maclaurin formula \cite{paper:EulerMaclaurinFormula, paper:EulerMaclaurinFormula_Ostrowski}:

\begin{equation}
    \Delta G_\theta^\mathrm{S,E}(\vartheta) = \int_0^1 f_\theta(x) L_\theta(\tau(x)) dx
\end{equation}

\paragraph{Change of variables:}

Finally we change the integration variable $x$ to $\tau$, and substitute the limits of integration $[-\vartheta - N \beta x_0^*, -\vartheta + N \beta (1 - x_0^*)]$ with $(-\infty, \infty)$:

\begin{equation}
    \Delta G_\theta^\mathrm{S,E,C}(\vartheta) = \int_{-\infty}^\infty 
    f_\theta \left ( x_0^* + \frac{\tau + \vartheta}{N \beta} \right ) L_\theta(\tau) \frac{d\tau}{N \beta}
\end{equation}

\paragraph{Approximated gains:}

In the next sections, we will perform calculations based on these approximated expressions:

\begin{equation}
    \Delta G_A^\clubsuit(\vartheta) = \Delta G_A^\mathrm{S,E,C}(\vartheta) = \int_{-\infty}^\infty 
    \frac{1}{N \beta} f_A \left ( x_0^* + \frac{\tau + \vartheta}{N \beta} \right ) L_A(\tau) d\tau
\end{equation}

\begin{equation}
    \Delta G_B^\clubsuit(\vartheta) = \Delta G_B^\mathrm{S,E,C}(\vartheta) = \int_{-\infty}^\infty 
    \frac{1}{N \beta} f_B \left ( x_0^* + \frac{\tau + \vartheta}{N \beta} \right ) L_B(\tau) d\tau
\end{equation}

\subsubsection{Performing the Integral}

\paragraph{Notations:}

Exponent related to the growth rate difference decrease:

\begin{equation}
    \varepsilon(x_A,x_B) = I(x_0^*,x_A) = I(x_0^*,x_B) > 0
\end{equation}

A symmetric expression for $\varepsilon(x_A,x_B)$ can look like:

\begin{equation}
    \varepsilon(x_A,x_B) = x_0^* \log
    \left (
    \frac{x_0^*}{\sqrt{x_A x_B}}
    \right ) +
    (1-x_0^*) \log
    \left (
    \frac{1-x_0^*}{\sqrt{(1-x_A) (1-x_B)}}
    \right )
\end{equation}

Further exponents:

\begin{equation}
    \alpha_A = I'_A(x_0^*)/\beta > 0, \quad
    \alpha_B = I'_B(x_0^*)/\beta < 0
\end{equation}

\paragraph{Leading order expression:}

By taking the first order Taylor expansion of $I_\theta(x)$ around $x_0^*$, $I_\theta(x) = \varepsilon(x_A,x_B) + I'_\theta(x_0^*) (x-x_0^*) + \mathcal{O}((x-x_0^*)^2)$ we get:

\begin{equation}
    \Delta G_\theta^\clubsuit(\vartheta) = 
    \frac{e^{-\alpha_\theta \vartheta}}{\sqrt{2 \pi x_0^* (1-x_0^*)}} \frac{e^{- N \varepsilon(x_A,x_B)}}{\sqrt{N} \beta}
    \left ( C_\theta + \mathcal{O}(1/N) \right )
\end{equation}

Where the constants $C_A$ and $C_B$ are the results of the following integrals:

\begin{equation}
    \label{eq:CA_o}
    C_A = - \int_{-\infty}^\infty e^{-\alpha_A \tau} (\tau + \log \left ( 1 + e^{-\tau} \right )) d \tau
\end{equation}

\begin{equation}
    \label{eq:CB_o}
    C_B = \int_{-\infty}^\infty e^{-\alpha_B \tau} (\tau - \log \left ( 1 + e^{\tau} \right )) d \tau
\end{equation}

Remarkably, these integrals can be expressed in closed form:

\begin{equation}
    \label{eq:CA_res}
    C_A = - \frac{\pi}{\alpha_A} \frac{1}{\sin(\pi \alpha_A)}, \quad \text{if } \alpha_A \in (0,1)
\end{equation}

\begin{equation}
    \label{eq:CB_res}
    C_B = - \frac{\pi}{\alpha_B} \frac{1}{\sin(\pi \alpha_B)}, \quad \text{if } \alpha_B \in (-1,0)
\end{equation}

The results can be obtained by symbolic integration \cite{tool:WolframIntegrate} available in \textit{Mathematica 13.0}, and are formally calculated in Section~\ref{sec:FormalCalculationOfTheIntegral}.

\subsubsection{Obtaining the prior}

In equilibrium, the growth rate difference has to be the same for scenarios A and B.
The equation for the approximated equilibrium quantity $\vartheta^*_\clubsuit$, looks the following:

\begin{equation}
    \Delta G_A^\clubsuit(\vartheta^*_\clubsuit) = \Delta G_B^\clubsuit(\vartheta^*_\clubsuit)
\end{equation}

which simplifies to:

\begin{equation}
    \frac{e^{-\alpha_A \vartheta^*_\clubsuit}}{\alpha_A \sin(\pi \alpha_A)} = 
    \frac{e^{-\alpha_B \vartheta^*_\clubsuit}}{\alpha_B \sin(\pi \alpha_B)} 
\end{equation}

\begin{equation}
    \vartheta^*_\clubsuit = \frac{\log \left (  \frac{\alpha_B \sin(\pi \alpha_B)}{\alpha_A \sin(\pi \alpha_A)}  \right ) }{\alpha_A - \alpha_B}
\end{equation}

\subsubsection{Simplifying the result}

\paragraph{Notation:}

\begin{equation}
    x_0^* = \log \left ( \frac{1-x_A}{1-x_B} \right )/\beta, \quad
    1-x_0^* = \log \left ( \frac{x_B}{x_A} \right )/\beta
\end{equation}

Recalling $I'_\theta(x)$ from \eqref{eq:BinomialRateFunctionDerivativeI'}:

\begin{equation}
    I'_\theta(x) = 
    \log \left ( \frac{x}{1-x} \frac{1-x_\theta}{x_\theta} \right )
\end{equation}

\begin{equation}
    \alpha_\theta = I'_\theta(x) / \beta
\end{equation}

\begin{equation}
    \alpha_\theta = \frac{\log \left ( \frac{x_0^*}{1-x_0^*} \frac{1-x_\theta}{x_\theta} \right )}{\beta} =
    \frac{\log \left ( \frac{x_0^*}{1-x_0^*} \right ) + \log \left ( \frac{1-x_\theta}{x_\theta} \right )}{\beta}
\end{equation}

\paragraph{Identities:}

First, we can observe that:

\begin{equation}
    \alpha_A - \alpha_B = 
    \frac{\log \left ( \frac{1-x_A}{x_A} \right ) - \log \left ( \frac{1-x_B}{x_B} \right )}{\beta} = 1
\end{equation}

Therefore, the expression, containing sinus terms, simplifies to:

\begin{equation}
    \frac{\sin(\pi \alpha_B)}{\sin(\pi \alpha_A)} = 
    \frac{\sin(\pi (\alpha_A - 1))}{\sin(\pi \alpha_A)} = -1
\end{equation}

This simplifies the result:

\begin{equation}
    \vartheta^*_\clubsuit = \log \left (  \frac{-\alpha_B}{\alpha_A}  \right )
    =
    \vartheta^*_\clubsuit = \log \left (  \frac{
    \log \left ( \frac{1-x_0^*}{x_0^*} \frac{x_B}{1-x_B} \right )
    }{
    \log \left ( \frac{x_0^*}{1-x_0^*} \frac{1-x_A}{x_A} \right )
    }  \right )
\end{equation}

\paragraph{Prior probability:}

Recalling equation \eqref{eq:varthetaP}, we can express the approximated prior probability $P^*_\clubsuit$:

\begin{equation}
    P^*_\clubsuit = \frac{e^{\vartheta^*_\clubsuit}}{1+e^{\vartheta^*_\clubsuit}}
    =
    \frac{
    \log \left ( \frac{1-x_0^*}{x_0^*} \frac{x_B}{1-x_B} \right )
    }
    {
    \log \left ( \frac{x_0^*}{1-x_0^*} \frac{1-x_A}{x_A} \right ) +
    \log \left ( \frac{1-x_0^*}{x_0^*} \frac{x_B}{1-x_B} \right )
    }
\end{equation}

or

\begin{equation}
\label{eq:P*_club}
    P^*_\clubsuit = \frac{
    \log \left ( \frac{1-x_0^*}{x_0^*} \frac{x_B}{1-x_B} \right )
    }
    {
    \beta
    } = 
    \frac{
    \log \left ( \frac{1-x_0^*}{x_0^*} \frac{x_B}{1-x_B} \right )
    }
    {
    \log \left ( \frac{x_B}{x_A} \frac{1-x_A}{1-x_B} \right )
    }
\end{equation}

Remark \ref{remark:BayesianLimitPrior} states that for $0<x_A<x_B<1$ values,

\begin{equation}
\label{eq:Papprox_club}
    P^\approx_\loopedsquare(x_A,x_B) = P^*_\clubsuit(x_A,x_B)
\end{equation}

is a ``good approximation'' of $P_N^*$ as $N \to \infty$.

\subsubsection{Formal calculation of the Integrals}
\label{sec:FormalCalculationOfTheIntegral}
In this section we derive the expressions \eqref{eq:CA_res},\eqref{eq:CB_res} from \eqref{eq:CA_o},\eqref{eq:CB_o}:

\begin{lemma}
\label{lemma:1/cosh}
    \begin{equation}
        J=\int_{-\infty}^\infty \frac{e^{a t}}{\cosh(t)} dt = \frac{\pi}{\cos\left ( \frac{\pi}{2} a\right )}, \quad \text{if } a \in (-1,1)
    \end{equation}
\end{lemma}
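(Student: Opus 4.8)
The plan is to evaluate $J$ by contour integration of $f(z)=e^{az}/\cosh(z)$ over a rectangle of height $\pi$, exploiting the quasi-periodicity $\cosh(t+i\pi)=-\cosh(t)$. Before doing this I would first record that the integral converges absolutely for $a\in(-1,1)$: as $t\to+\infty$ the integrand behaves like $2e^{(a-1)t}$ and as $t\to-\infty$ like $2e^{(a+1)t}$, both decaying precisely because $-1<a<1$. These two decay conditions are exactly the constraints that will later make the vertical sides of the contour vanish, so the hypothesis $a\in(-1,1)$ is used in an essential way.

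\textbf{Key steps.} I would take the positively oriented rectangular contour with corners $-R,\,R,\,R+i\pi,\,-R+i\pi$. The only singularity of $f$ in the strip $0\le\Im z\le\pi$ is the simple pole at $z=i\pi/2$, where $\cosh$ vanishes with $\sinh(i\pi/2)=i$, giving
\begin{equation}
    \mathrm{Res}_{z=i\pi/2} f = \frac{e^{i a\pi/2}}{\sinh(i\pi/2)} = \frac{e^{i a\pi/2}}{i},
\end{equation}
so that $\oint f = 2\pi i\cdot e^{i a\pi/2}/i = 2\pi e^{i a\pi/2}$. On the right edge $|e^{a(R+iy)}|=e^{aR}$ while $|\cosh(R+iy)|$ grows like $e^{R}/2$; since $a<1$ this contribution $\to 0$, and symmetrically the left edge $\to 0$ because $a>-1$. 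For the top edge, substituting $z=t+i\pi$ and using $\cosh(t+i\pi)=-\cosh(t)$ together with the reversed orientation yields $e^{i a\pi}\int_{-R}^{R}e^{at}/\cosh(t)\,dt$, which tends to $e^{i a\pi}J$.

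\textbf{Combining.} Letting $R\to\infty$ gives $(1+e^{i a\pi})J = 2\pi e^{i a\pi/2}$, and factoring $e^{i a\pi/2}$ out of the denominator produces
\begin{equation}
    J = \frac{2\pi e^{i a\pi/2}}{1+e^{i a\pi}} = \frac{2\pi}{e^{-i a\pi/2}+e^{i a\pi/2}} = \frac{\pi}{\cos\!\left(\tfrac{\pi}{2}a\right)},
\end{equation}
which is the claim. The main obstacle is purely bookkeeping: carefully tracking the orientation and the sign flip on the top edge, and justifying the vanishing of the vertical sides uniformly in $y\in[0,\pi]$. As an independent cross-check I would note the real-variable route via $u=e^{t}$ followed by $v=u^2$, which reduces $J$ to the standard Mellin integral $\int_0^\infty v^{s-1}/(1+v)\,dv=\pi/\sin(\pi s)$ with $s=(a+1)/2\in(0,1)$; since $\sin\!\big(\tfrac{\pi}{2}(a+1)\big)=\cos\!\big(\tfrac{\pi}{2}a\big)$, this recovers the same value and confirms the constant.
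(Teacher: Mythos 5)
Your proposal is correct and follows essentially the same route as the paper's own proof: the rectangular contour of height $\pi$ for $f_a(z)=e^{az}/\cosh(z)$, the antiperiodicity $\cosh(z+i\pi)=-\cosh(z)$ giving the factor $e^{ia\pi}J$ on the top edge, the single residue $e^{ia\pi/2}/i$ at $z=i\pi/2$, the vanishing vertical sides under $a\in(-1,1)$, and the same final algebra $(1+e^{ia\pi})J=2\pi e^{ia\pi/2}$. The Mellin-integral cross-check via $u=e^t$ is a nice independent confirmation not present in the paper, but the core argument is identical.
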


\begin{proof}
    The complex function $1/\cosh(z): \mathbb{C} \mapsto \mathbb{C}$ is antiperiodic on the imaginary axis, with antiperiod $\pi$

    \begin{figure}[H]
    \centering
    \includegraphics[width=12 cm]{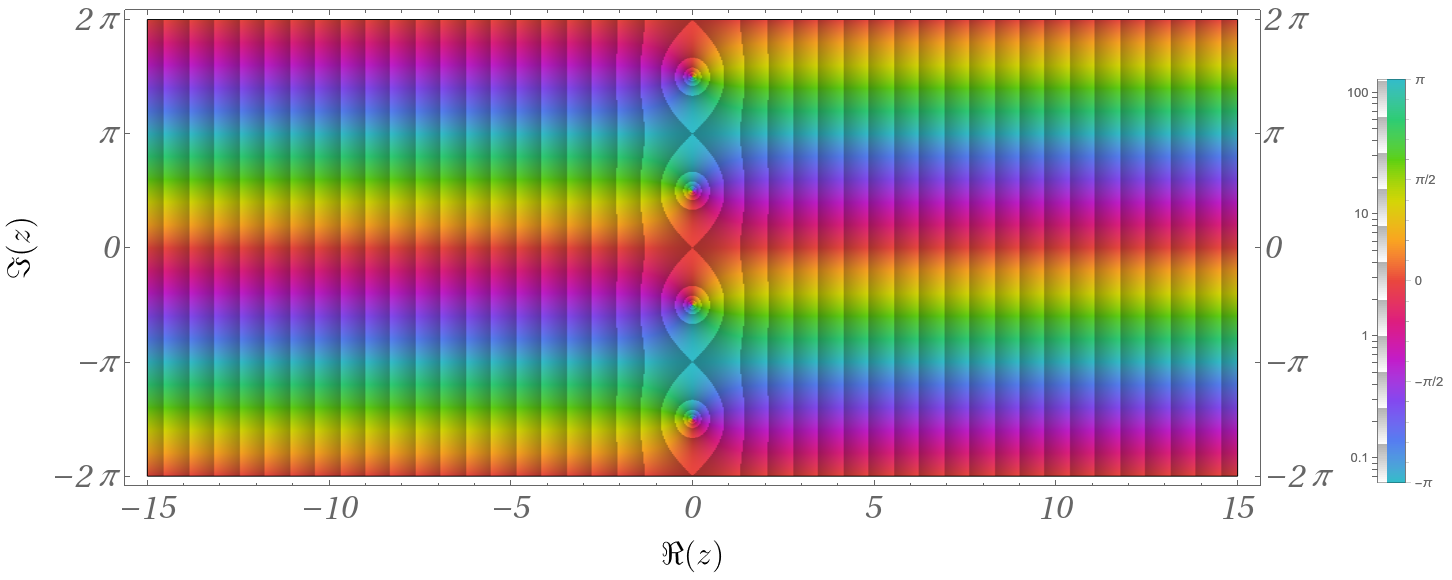}
    \caption{Complex plot \cite{tool:WolframComplexPlot} of $f_0 : \mathbb{C} \mapsto \mathbb{C}, f_0(z)=1/\cosh(z)$.}
    \label{fig:ComplexPlot}
    \end{figure}

    \begin{equation}
        \frac{1}{\cosh(z+i \pi)} = \frac{2}{e^{z+i \pi}+e^{-z-i \pi}} = 
        \frac{2}{e^{i \pi} e^{z}+ e^{-i \pi} e^{-z}} = \frac{-2}{ e^{z}+  e^{-z}} = \frac{-1}{\cosh(z)}
    \end{equation}

    and the absolute value of $|1/\cosh(x+i y)|$ goes to $0$ as $|x|$ tends to infinity:

    \begin{equation}
        \frac{1}{|\cosh(z)|}=\frac{1}{\sqrt{\cosh(z)\overline{\cosh(z)}}} = 
        \frac{1}{\sqrt{\cosh(z) \cosh(\overline{z})}}
    \end{equation}

    \begin{equation}
        \frac{1}{|\cosh(x+i y)|^2} = \frac{4}{(e^{x+i y}+e^{-x-i y})(e^{x-i y}+e^{-x+i y})}
    \end{equation}

    \begin{equation}
    \frac{1}{|\cosh(x+i y)|^2} = \frac{4}{e^{2 x}+e^{-2i y}+ e^{2i y} + e^{-2 x}}
    \end{equation}

    \begin{equation}
    \frac{1}{|\cosh(x+i y)|^2} = \frac{4}{e^{2 x}+2 \cos(2 y) + e^{-2 x}}
    \end{equation}

    meaning that

    \begin{equation}
    \label{eq:coshAsymptotic}
    \frac{1}{|\cosh(x+i y)|} < 2 e^{-|x|}
    \end{equation}

    Now we can define the parametric complex function:

    \begin{equation}
        f_a(z) = \frac{e^{a z}}{\cosh(z)}, \quad a \in \mathbb{R}
    \end{equation}

    This function is shifted by an extra $a$-dependent phase, while $z$ is shifted by $i \pi$ in the imaginary axis:

    \begin{equation}
        f_a(z+i \pi) = - e^{i a \pi} f_a(z)
    \end{equation}

    and due to eq. \eqref{eq:coshAsymptotic}:

    \begin{equation}
        |f_a(z)| < 2 e^{-|x|+a x}
    \end{equation}

    which goes to 0 as $|x| \to \infty$, if $a \in (-1,1)$.

    $f_a(z)$ has poles only at $\{i \pi/2 + i \pi k \}_{k\in \mathbb{Z}}$, and is analytical in every other point.

    Now, we can apply Cauchy's residue theorem on the complex contour integral:

    \begin{equation}
        \begin{split}
            \Gamma_1(T) = [-T,T], &\quad \Gamma_2(T) = [T,T+i \pi] \\
            \Gamma_3(T) = [T+ i \pi, -T + i \pi], &\quad \Gamma_4(T) = [-T + i \pi,-T]
        \end{split}
    \end{equation}

    \begin{equation}
        \Gamma(T) = (\Gamma_1(T), \Gamma_2(T), \Gamma_3(T), \Gamma_4(T)) 
    \end{equation}

    \begin{equation}
        \oint_{\Gamma(T)} f_a(z) dz = 2 \pi i \  \mathrm{Res}(f_a,i \pi/2)
    \end{equation}

    \begin{equation}
        \lim_{T \to \infty} \oint_{\Gamma(T)} f_a(z) dz = 2 \pi i \  \mathrm{Res}(f_a,i \pi/2)
    \end{equation}

    \begin{equation}
        J + 0 + (-1) (- e^{i a \pi} J) + 0 = 2 \pi i \ \mathrm{Res}(f_a,i \pi/2)
    \end{equation}

    To determine the residue of $f_a(z)$, we can approximate the expression in the denominator around $z_0=i \pi/2$:

    \begin{equation}
        \cosh(z) = 0 + \sinh(i \pi/2) (z-i \pi/2) + \mathcal{O}((z-i \pi/2)^2)
    \end{equation}

    \begin{equation}
        \cosh(z) = 0 + i (z-i \pi/2) + \mathcal{O}((z-i \pi/2)^2)
    \end{equation}

    meaning that the residue of $f_a(z)$ at $z_0=i \pi/2$ equals to:

    \begin{equation}
        \mathrm{Res}(f_a,i \pi/2) = \frac{e^{i a \pi/2}}{i}
    \end{equation}

    resulting:

    \begin{equation}
        J + (-1) (- e^{i a \pi} J) = 2 \pi i \frac{e^{i a \pi/2}}{i}
    \end{equation}

    \begin{equation}
        J = 2 \pi \frac{e^{i a \pi/2}}{1+e^{i a \pi}} =
        2 \pi \frac{1}{e^{-i a \pi/2}+e^{i a \pi/2}} =
        \frac{\pi}{\cos(\pi a/2)}
    \end{equation}
    
\end{proof}

Similar integrals can also be found in standard complex analysis textbooks such as \cite{book:SteinComplexAnalysis}.

\begin{lemma}
\label{lemma:C(alpha)}
    \begin{equation}
        C(\alpha) = \int_{-\infty}^\infty e^{-\alpha \tau} (\tau + \log \left ( 1 + e^{-\tau} \right )) d \tau = \frac{\pi}{\alpha} \frac{1}{\sin(\pi \alpha)}, \quad \text{if } \alpha \in (0,1)
    \end{equation}
\end{lemma}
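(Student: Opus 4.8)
The plan is to reduce $C(\alpha)$ to the integral $J$ already evaluated in Lemma~\ref{lemma:1/cosh}. The first and decisive observation is that the integrand simplifies: since $\tau + \log(1+e^{-\tau}) = \log(e^\tau) + \log(1+e^{-\tau}) = \log(1+e^\tau)$, we may rewrite $C(\alpha) = \int_{-\infty}^\infty e^{-\alpha\tau}\log(1+e^\tau)\,d\tau$. A quick inspection of the tails confirms convergence precisely on $\alpha\in(0,1)$: as $\tau\to+\infty$ the integrand behaves like $\tau\,e^{-\alpha\tau}$ (integrable since $\alpha>0$), and as $\tau\to-\infty$ it behaves like $e^{(1-\alpha)\tau}$ (integrable since $\alpha<1$).

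Next I would integrate by parts with $u=\log(1+e^\tau)$ and $dv = e^{-\alpha\tau}\,d\tau$, so that $du = (1+e^{-\tau})^{-1}\,d\tau$ and $v = -\alpha^{-1}e^{-\alpha\tau}$. The same tail estimates force the boundary term $\bigl[-\alpha^{-1}e^{-\alpha\tau}\log(1+e^\tau)\bigr]_{-\infty}^{\infty}$ to vanish at both ends, leaving
\begin{equation}
    C(\alpha) = \frac{1}{\alpha}\int_{-\infty}^\infty \frac{e^{-\alpha\tau}}{1+e^{-\tau}}\,d\tau .
\end{equation}

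To recognise the remaining integral, multiply numerator and denominator by $e^{\tau/2}$ to obtain the integrand $\frac{e^{(1/2-\alpha)\tau}}{2\cosh(\tau/2)}$, then substitute $t=\tau/2$. This turns the integral into $\int_{-\infty}^\infty \frac{e^{(1-2\alpha)t}}{\cosh(t)}\,dt$, which is exactly $J$ with parameter $a = 1-2\alpha$. Since $\alpha\in(0,1)$ yields $a\in(-1,1)$, Lemma~\ref{lemma:1/cosh} applies and gives $\pi/\cos\!\bigl(\tfrac{\pi}{2}(1-2\alpha)\bigr)$. Finally the co-function identity $\cos\!\bigl(\tfrac{\pi}{2}-\pi\alpha\bigr)=\sin(\pi\alpha)$ collapses this to $\pi/\sin(\pi\alpha)$, whence $C(\alpha) = \frac{\pi}{\alpha\,\sin(\pi\alpha)}$, as claimed.

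There is little genuine obstruction here: the argument rests on the algebraic identity $\tau+\log(1+e^{-\tau})=\log(1+e^\tau)$, followed by one integration by parts and one substitution into a previously computed integral. The single point demanding care is the vanishing of the boundary term at both limits — this is exactly where the hypothesis $\alpha\in(0,1)$ is used in full, the upper limit requiring $\alpha>0$ and the lower limit requiring $\alpha<1$, so I would record those two tail estimates explicitly rather than assert them.
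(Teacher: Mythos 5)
Your proposal is correct and follows essentially the same route as the paper's proof: integration by parts (with the boundary terms vanishing precisely because $\alpha \in (0,1)$) reduces $C(\alpha)$ to $\frac{1}{\alpha}\int_{-\infty}^\infty \frac{e^{-\alpha\tau}}{1+e^{-\tau}}\,d\tau$, and the substitution $t=\tau/2$ with $a = 1-2\alpha$ then invokes Lemma~\ref{lemma:1/cosh} and the identity $\cos(\pi/2-\pi\alpha)=\sin(\pi\alpha)$. Your preliminary rewriting of the integrand as $\log(1+e^\tau)$ and the explicit tail estimates are cosmetic refinements of the same argument, not a different method.
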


\begin{proof}

    Integration by parts results (because the boundary terms go to 0 if $\alpha \in (0,1)$):

    \begin{equation}
        C = 0 - \int_{-\infty}^\infty \frac{e^{-\alpha \tau}}{-\alpha}
        \left( 1 - \frac{e^{-\tau}}{1+e^{-\tau}}\right ) d \tau
    \end{equation}

    \begin{equation}
        C = \frac{1}{\alpha} \int_{-\infty}^\infty \frac{e^{-\alpha \tau}}{1+e^{-\tau}} d \tau
    \end{equation}

    \begin{equation}
        C = \frac{1}{\alpha} \int_{-\infty}^\infty \frac{e^{(1/2-\alpha) \tau}}{e^{\tau/2}+e^{-\tau/2}} d \tau
    \end{equation}

    Substituting the variables:

    \begin{equation}
        t = \tau/2, \quad \tau = 2 t
    \end{equation}

    \begin{equation}
        a = (1/2 - \alpha) \ 2 = 1- 2 \alpha, \quad a \in (-1,1)
    \end{equation}

    \begin{equation}
        C = \frac{1}{\alpha} \int_{-\infty}^\infty \frac{e^{a t}}{2 \cosh{t}} 2 dt
    \end{equation}

    using lemma \ref{lemma:1/cosh} we get:

    \begin{equation}
        C = \frac{1}{\alpha} \frac{\pi}{\cos(\pi a /2)}
    \end{equation}

    \begin{equation}
         C = \frac{1}{\alpha} \frac{\pi}{\cos(\pi/2 - \pi \alpha)}
    \end{equation}

    \begin{equation}
         C = \frac{1}{\alpha} \frac{\pi}{\sin(\pi \alpha)}
    \end{equation}
    
\end{proof}

with lemma \ref{lemma:C(alpha)} the expressions \eqref{eq:CA_res},\eqref{eq:CB_res} follow from \eqref{eq:CA_o},\eqref{eq:CB_o}.

\begin{remark}
    The function $C(\alpha)$ can be extended to complex $\alpha \in \mathbb{C}$ values, if $0<\Re(\alpha)<1$.
    
\end{remark}

For further use we introduce $C_A(.),C_B(.)$ functions as well:

\begin{equation}
    \label{eq:CA_f}
    C_A(\alpha_A) = - \frac{\pi}{\alpha_A} \frac{1}{\sin(\pi \alpha_A)}, \quad \text{if } \Re(\alpha_A) \in (0,1)
\end{equation}

\begin{equation}
    \label{eq:CB_f}
    C_B(\alpha_B) = - \frac{\pi}{\alpha_B} \frac{1}{\sin(\pi \alpha_B)}, \quad \text{if } \Re(\alpha_B) \in (-1,0)
\end{equation}

\subsection{Asymptotic expansion of Binomial Bayesian prior approximation}
\label{sec:AsymptoticExpansionBinomialBayesianPriorApproximation}

The derivation supports the approximation in remark \ref{remark:AsymptoticExpansionSubleadingTermApproximation}.

\paragraph{Notation:}

Normalized growth factor difference:

\begin{equation}
    \Delta g_\theta^{\clubsuit,\varphi}(\vartheta) = 
    \frac{\Delta G_\theta^{\clubsuit,\varphi}(\vartheta)}{
    \frac{1}{\sqrt{2 \pi x_0^* (1-x_0^*)}} \frac{e^{- N \varepsilon(x_A,x_B)}}{\sqrt{N} \beta}
    }
\end{equation}

\paragraph{First-order asymptotic expansion:}

We need to solve the following equation to obtain a first-order asymptotic expansion:

\begin{equation}
    \Delta g_A^\clubsuit(\vartheta^*_\clubsuit + \sampi/N) - 
    \Delta g_B^\clubsuit(\vartheta^*_\clubsuit + \sampi/N) 
    = 0 + \mathcal{O}(1/N^2)
\end{equation}

\subsubsection{Ingredients}

\paragraph{Stirling series:}
For further coefficients and more context, see \cite{book:OrszagBender, book:NISThandbook,paper:StirlingSeriesMadeEasy}

\begin{equation}
    \log(n!) = n \log(n) - n + \frac{1}{2} \log(n) + \frac{1}{2} \log(2 \pi) + \frac{1}{12} \frac{1}{n} + \mathcal{O}(1/n^2)
\end{equation}

\paragraph{Taylor expansion:}

\begin{equation}
    I_\theta(x) = I_\theta(x_0^*) + I'_\theta(x) (x-x_0^*) +
    \frac{1}{2} I''_\theta(x) (x-x_0^*)^2 
    + \mathcal{O}((x-x_0^*)^3)
\end{equation}

\paragraph{Perturbative approach:}

For appropriate $X(\tau)$ functions:

\begin{equation}
    e^{-\alpha \tau + \frac{1}{N} X(\tau) + \mathcal{O}(1/N^2)} = 
    e^{-\alpha \tau} \left ( 1 + \frac{1}{N} X(\tau) + \mathcal{O}(1/N^2)  \right )
\end{equation}

For $n \in \mathbb{N}$ monomials of $\tau$:

\begin{equation}
    \int_{-\infty}^\infty e^{-\alpha \tau} \tau^n L(\tau) d \tau = 
    (-1)^n \frac{\partial^n}{\partial \alpha^n} 
    \int_{-\infty}^\infty e^{-\alpha \tau} L(\tau) d \tau
\end{equation}

\subsubsection{Derivation}

First-order approximation of density function:

\begin{equation}
    f_\theta(x) = \frac{1}{\sqrt{2 \pi N}}
    e^{-N I_\theta(x) + n(x) + \frac{1}{N} b_1(x)}
\end{equation}

where:

\begin{equation}
    n(x) = - \frac{1}{2} \log(x (1-x)), \quad
    b_1(x) = \frac{1}{12} \left ( 1 - \frac{1}{x} - \frac{1}{1-x} \right )
\end{equation}

Introducing:

\begin{equation}
    J_\theta(x) = -N I_\theta(x) + n(x) + \frac{1}{N} b_1(x)
\end{equation}

and

\begin{equation}
    x = x_0^* + \frac{1}{N} \Delta x + \frac{1}{N^2} \sampi
\end{equation}

we get:

\begin{equation}
    \begin{split}
        J_\theta(x) = & 
        -N I_\theta(x_0^*) + \\ 
        & n(x_0^*) - I'_\theta(x_0^*) \Delta x + \\
        & \frac{1}{N} 
        \left ( b_1(x_0^*) + n'(x_0^*) \Delta x - I'_\theta(x_0^*) \sampi
        - \frac{1}{2} I''_\theta(x_0^*) (\Delta x)^2
        \right ) + \\ 
        & \mathcal{O}(1/N^2)
    \end{split}
\end{equation}

Introducing:

\begin{equation}
    f_0^* = f_A(x_0^*) = f_B(x_0^*)
\end{equation}

we get an expression for the ``normalized'' density function:

\begin{equation}
        \frac{f_\theta(x)}{f_0^*} = 
        e^{- I'_\theta(x_0^*) \Delta x}
        \left ( 
        1 
        + \frac{1}{N}
        \left (
         n'(x_0^*) \Delta x - I'_\theta(x_0^*) \sampi
            - \frac{1}{2} I''_\theta(x_0^*) (\Delta x)^2
        \right )
        + \mathcal{O}(1/N^2)  \right )
\end{equation}

\paragraph{Expressing the growth rates:}

\begin{equation}
    \Delta x = \frac{\tau+\vartheta}{\beta}
\end{equation}

We can introduce a curvature factor euqal to \eqref{eq:BinomialRateFunctionSecondDerivativeI''}:

\begin{equation}
    \kappa(x) = I''_A(x) = I''_B(x) =
    \frac{1}{x} + \frac{1}{1-x} = 
    \frac{1}{x(1-x)}
\end{equation}

After solving the equation perturbatively \cite{book:PerturbationMethods}, up to the first term in $1/N$ expansion:

\begin{equation}
    \int_{-\infty}^\infty f_A(x(\tau)) L_A(\tau) d \tau
    =
    \int_{-\infty}^\infty f_B(x(\tau)) L_B(\tau) d \tau
\end{equation}

and solving for $\sampi$ we get:

\begin{equation}
    \begin{split}
        \sampi = &
        \left (
        \frac{n'(x_0^*)}{\beta}  -
        \frac{\kappa(x_0^*) \vartheta^*_\clubsuit}{\beta^2} 
        \right )
        \left (
        \frac{C_B'(\alpha_B)}{C_B(\alpha_B)} -
        \frac{C_A'(\alpha_A)}{C_A(\alpha_A)}
        \right ) 
        + \\
        & \frac{1}{2} \frac{\kappa(x_0^*)}{\beta^2}
        \left (
        \frac{C_B''(\alpha_B)}{C_B(\alpha_B)} -
        \frac{C_A''(\alpha_A)}{C_A(\alpha_A)}
        \right )
    \end{split}
\end{equation}

After simplifications:

\begin{equation}
    \left (
        \frac{C_B'(\alpha_B)}{C_B(\alpha_B)} -
        \frac{C_A'(\alpha_A)}{C_A(\alpha_A)}
        \right ) =
        \frac{d}{d \alpha} 
        \log \left (
        \frac{C_B(\alpha-1)}{C_A(\alpha)} 
        \right )
        \Biggr|_{\alpha=\alpha_A} =
        \frac{1}{\alpha_A (1-\alpha_A)}
\end{equation}

We can obtain an explicit expression for the first-order asymptotic approximation term:

\begin{equation}
    \label{eq:sampiResult01}
    \sampi = \frac{1}{\alpha_A (1-\alpha_A)}
    \left (
    \frac{n'(x_0^*)}{\beta}  -
    \frac{\kappa(x_0^*) \vartheta^*_\clubsuit}{\beta^2} 
    \right ) +
    \frac{1}{2} \frac{\kappa(x_0^*)}{\beta^2}
    \left (
    \frac{C_B''(\alpha_B)}{C_B(\alpha_B)} -
    \frac{C_A''(\alpha_A)}{C_A(\alpha_A)}
    \right )
\end{equation}

\begin{equation}
    \label{eq:sampiResult02}
    \left (
    \frac{C_B''(\alpha_B)}{C_B(\alpha_B)} -
    \frac{C_A''(\alpha_A)}{C_A(\alpha_A)}
    \right ) = 
    -2 \frac{1-2 \alpha_A}{\alpha_A^2 (1-\alpha_A)^2} -
    \frac{2 \pi}{\alpha_A (1-\alpha_A)} \frac{\cos(\pi \alpha_A)}{\sin(\pi \alpha_A)}
\end{equation}

\subsection{Limiting prior approximation for general Statistical games}

A similar calculation can be performed for general statistical games \ref{def:StatisticalGame} with an isoelastic utility function.
In this case, we can start the derivation by recalling the isoelastic equilibrium splitting ratios \eqref{eq:ppkPgamma}:

\begin{equation}
    p'^*_{\gamma,k} = \frac{(P \ p_k(A))^{1/\gamma}}{(P \ p_k(A))^{1/\gamma} + ((1-P) p_k(B))^{1/\gamma}}
\end{equation}

Introducing variables similar to the ones in Section~\ref{sec:BinomialBayesianLimitingApproximation}:

\begin{equation}
    \tau = - \log \left ( \frac{p'^*_{\gamma,k}}{1-p'^*_{\gamma,k}} \right ) 
\end{equation}

\begin{equation}
    \tau = (- \vartheta + N \beta (x-x_0^*))/\gamma
\end{equation}

\begin{equation}
    x = x_0^* + \frac{\gamma \ \tau  + \vartheta}{N \beta}
\end{equation}

Recalling the isoelastic utility function from equation \eqref{eq:ugamma}:

\begin{equation}
    u_\gamma(c) = \frac{c^{1-\gamma}-1}{1-\gamma}
\end{equation}

We define the expected utilities for each scenario:

\begin{equation}
    U_A(P) = \sum_{k} p_k(A) u_\gamma \left ( p'^*_{\gamma,k} \right )
\end{equation}

\begin{equation}
    U_B(P) = \sum_{k} p_k(B) u_\gamma \left ( 1 - p'^*_{\gamma,k} \right )
\end{equation}

Expressions for the isoelastic loss functions:

\begin{equation}
    L^\gamma_A(\tau) = u_\gamma \left ( p'^*_{\gamma,k} \right ) =
    \frac{\left ( \frac{e^{-\tau}}{1 + e^{-\tau}} \right )^{1-\gamma}-1}{1-\gamma} 
\end{equation}

\begin{equation}
    L^\gamma_B(\tau) = u_\gamma \left ( 1 - p'^*_{\gamma,k} \right ) =
    \frac{\left ( \frac{e^{\tau}}{1 + e^{\tau}} \right )^{1-\gamma}-1}{1-\gamma} 
\end{equation}

The expected utilities expressed by the loss functions:

\begin{equation}
    U_\theta(P) = \sum_{k} p_k(\theta) L^\gamma_\theta(\tau_k), \quad
    \theta \in \{A,B\}
\end{equation}

After making the same approximations as in Section~\ref{sec:SEC_Approx} we get:

\begin{equation}
    U_\theta^\mathrm{S,E,C}(\vartheta) = \int_{-\infty}^\infty 
    f_\theta \left ( x_0^* + \frac{\gamma \ \tau + \vartheta}{N \beta} \right ) L^\gamma_\theta(\tau) \frac{\gamma}{N \beta} d \tau
\end{equation}

which can be expressed as:

\begin{equation}
    U_\theta^\clubsuit(\vartheta) = 
    \frac{e^{-\alpha_\theta \vartheta}}{\sqrt{2 \pi x_0^* (1-x_0^*)}} \frac{e^{- N \varepsilon(x_A,x_B)}}{\sqrt{N} \beta}
    \left ( C^\gamma_\theta + \mathcal{O}(1/N) \right )
\end{equation}

where $C^\gamma_A$, $C^\gamma_B$ are defined by the following integrals:

\begin{equation}
    \label{eq:CA_gamma}
    C^\gamma_A = \int_{-\infty}^\infty e^{-\gamma \alpha_A \tau} 
    \frac{\left ( \frac{e^{-\tau}}{1 + e^{-\tau}} \right )^{1-\gamma}-1}{1-\gamma} 
    \gamma d \tau
\end{equation}

\begin{equation}
    \label{eq:CB_gamma}
    C^\gamma_B = \int_{-\infty}^\infty e^{-\gamma \alpha_B \tau}
    \frac{\left ( \frac{e^{\tau}}{1 + e^{\tau}} \right )^{1-\gamma}-1}{1-\gamma} 
    \gamma d \tau
\end{equation}

The result can be obtained by the symbolic integration \cite{tool:WolframIntegrate} available in \textit{Mathematica 13.0}, and are formally calculated in Section~\ref{sec:gammaGammaIntegrals}.

\begin{equation}
    C^\gamma_A = \gamma \frac{\Gamma(-\alpha_A \gamma) \Gamma(1-\gamma(1-\alpha_A))}{\Gamma(2-\gamma)}
\end{equation}

if $0 < \Re(\gamma \alpha_A) < 1$ and $\Re(\gamma (1-\alpha_A)) < 1 $

\begin{equation}
    C^\gamma_B = \gamma \frac{\Gamma(\alpha_B \gamma) \Gamma(1-\gamma(1+\alpha_B))}{\Gamma(2-\gamma)}
\end{equation}

if $-1 < \Re(\gamma \alpha_B) < 0$ and $\Re(\gamma (1+\alpha_B)) < 1 $

Where $\Gamma(z)$ is the Gamma function \cite{book:SeymourMathematicalHandbook, book:Bronshtein, book:NISThandbook, book:Abramowitz, book:HigherTranscendentalFunctions}.

\paragraph{Gamma function identities:}

Euler's Reflection formula \cite{book:SeymourMathematicalHandbook}:

\begin{equation}
    \Gamma(z) \Gamma(1-z) = \frac{\pi}{\sin(\pi z)}
\end{equation}

Recursion formula \cite{book:SeymourMathematicalHandbook}:

\begin{equation}
    \Gamma(z+1) = z \ \Gamma(z)
\end{equation}

\paragraph{Rewriting the result:} After using the identities for Gamma functions, we can get:

\begin{equation}
    C^\gamma_A = - \frac{\pi}{\alpha_A \sin(\pi \gamma \alpha_A)}
    \frac{\Gamma(1-\gamma(1-\alpha_A))}
    {\Gamma(2-\gamma) \Gamma(\gamma \alpha_A)}
\end{equation}

\begin{equation}
    C^\gamma_B = - \frac{\pi}{\alpha_B \sin(\pi \gamma \alpha_B)}
    \frac{\Gamma(1-\gamma(1+\alpha_B))}
    {\Gamma(2-\gamma) \Gamma(-\gamma \alpha_B)}
\end{equation}

Alternatively, we can express these formulas by the Beta function \cite{book:HigherTranscendentalFunctions, book:SeymourMathematicalHandbook, book:Bronshtein, book:NISThandbook, book:Abramowitz}:

\begin{equation}
    C^\gamma_A = - \frac{1}{\alpha_A}
    B(1-\gamma \alpha_A, 1- \gamma (1-\alpha_A))
\end{equation}

\begin{equation}
    C^\gamma_B = \frac{1}{\alpha_B}
    B(1-\gamma (-\alpha_B), 1- \gamma (1+\alpha_B))
\end{equation}

For further use we introduce $C^\gamma_A(.),C^\gamma_B(.)$ functions as well:

\begin{equation}
    \label{eq:CAgamma_f}
    C^\gamma_A(\alpha_A) = - \frac{1}{\alpha_A}
    B(1-\gamma \alpha_A, 1- \gamma (1-\alpha_A))
\end{equation}

\begin{equation}
    \label{eq:CBgamma_f}
    C^\gamma_B(\alpha_B) = \frac{1}{\alpha_B}
    B(1-\gamma (-\alpha_B), 1- \gamma (1+\alpha_B))
\end{equation}

\paragraph{Limiting prior approximation:}
We need to solve the following equation up to the zeroth-order:

\begin{equation}
    U_A^\clubsuit(\vartheta^*_{\gamma,\clubsuit}) =
    U_B^\clubsuit(\vartheta^*_{\gamma,\clubsuit})
\end{equation}

Resulting in the equation:

\begin{equation}
    e^{-\alpha_A \vartheta^*_{\gamma,\clubsuit}} C^\gamma_A = 
    e^{-\alpha_B \vartheta^*_{\gamma,\clubsuit}} C^\gamma_B
\end{equation}

Which has a simple solution for $\vartheta^*_{\gamma,\clubsuit}$:

\begin{equation}
\label{eq:vartheta_gamma_approx}
    \vartheta^*_{\gamma,\clubsuit} = \frac{\log \left (  \frac{C^\gamma_A}{C^\gamma_B}  \right ) }{\alpha_A - \alpha_B}
\end{equation}

The expression simplifies remarkably, because $\alpha_A - \alpha_B = 1$:

\begin{equation}
    \vartheta^*_{\gamma,\clubsuit} = 
    \log \left (  \frac{-\alpha_B}{\alpha_A}  \right ) =
    \log \left (  \frac{1-\alpha_A}{\alpha_A}  \right ), \quad
    \text{if } \frac{\gamma-1}{\gamma} < \alpha_A < \frac{1}{\gamma}
\end{equation}

\begin{remark}
    $\alpha_A \in (0,1)$, therefore there is a critical value of relative risk aversion parameter $\gamma$, under which a finite equilibrium log-odds approximation can be made.
    \begin{equation}
        \gamma \in (0,\gamma^{\overline{\diamond}}), \quad \gamma^{\overline{\diamond}} = 2
    \end{equation}

    For general $0<x_A<x_B<1$ values, the critical maximal relative risk aversion can be expressed by $\alpha_A(x_A,x_B)$:

    \begin{equation}
        \gamma^\diamond(x_A,x_B) = 
        \min \left (
        \frac{1}{\alpha_A(x_A,x_B)},
        \frac{1}{1-\alpha_A(x_A,x_B)}
        \right )
    \end{equation}

    \begin{equation}
        1 < \gamma^\diamond(x_A,x_B) \le 2
    \end{equation}
    
\end{remark}

\begin{remark}
    The obtained approximation $\vartheta^*_{\gamma,\clubsuit}$ in eq. \eqref{eq:vartheta_gamma_approx} is the same for all $\gamma < \gamma^\diamond(x_A,x_B)$, i.e. independent of the relative risk aversion $\gamma$.
    
\end{remark}

\subsubsection{Visualization of the critical relative risk aversion}

\begin{figure}[H]
    \centering
    \begin{subfigure}[b]{0.45\textwidth}
        \includegraphics[width=\textwidth]{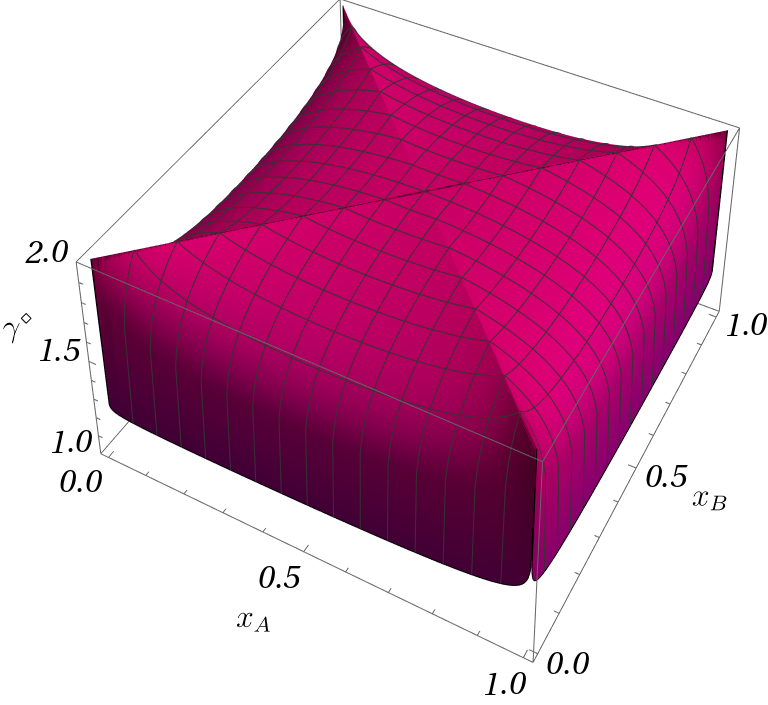}
        \caption{3D plot}
        \label{fig:gamma_diamond_3D}
    \end{subfigure}
    \hspace{0.05\textwidth} 
    \begin{subfigure}[b]{0.45\textwidth}
        \includegraphics[width=\textwidth]{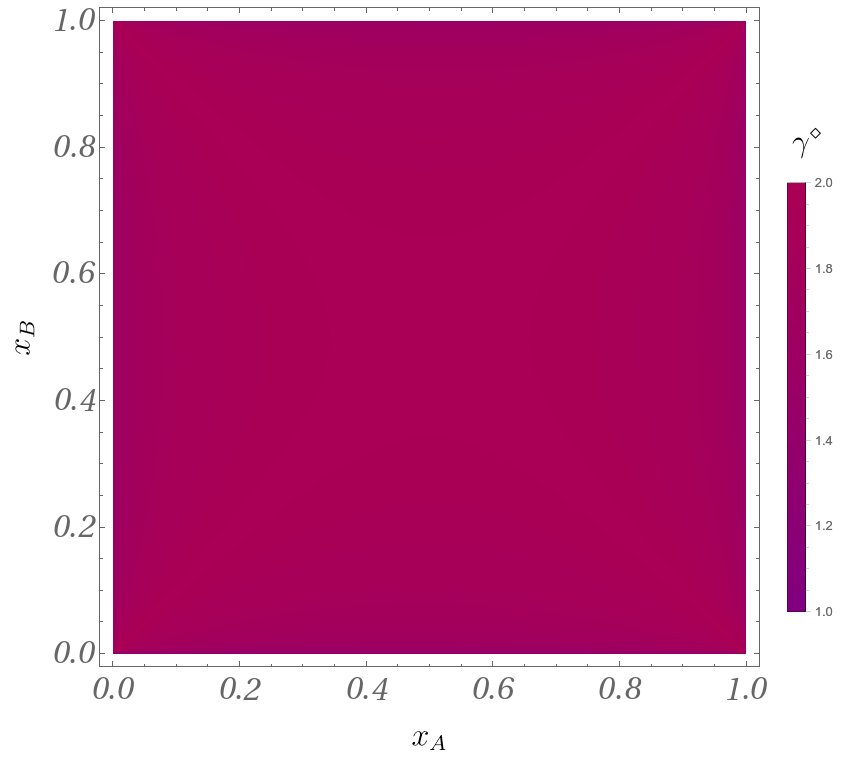}
        \caption{Density plot}
        \label{fig:gamma_diamond_Density}
    \end{subfigure}
    \par
    \begin{subfigure}[b]{0.45\textwidth}
        \includegraphics[width=\textwidth]{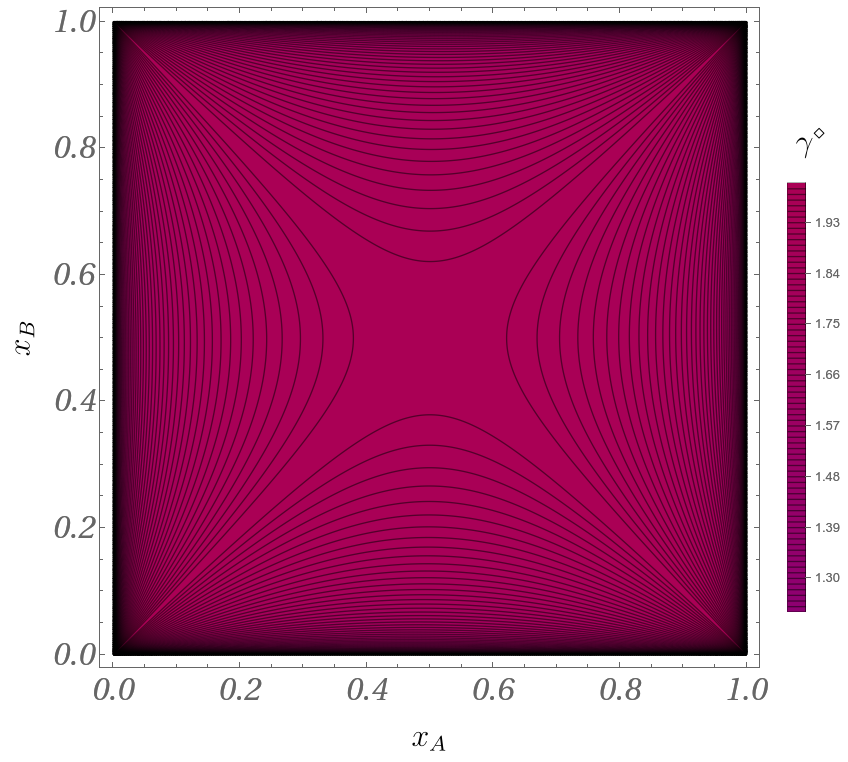}
        \caption{Density plot}
        \label{fig:gamma_diamond_Contour}
    \end{subfigure}
    
    \caption{Critical relative risk aversion $\gamma^\diamond(x_A,x_B)$. 
    The colour coding is the same as in figures \ref{fig:IsoelasticUtility}, \ref{fig:PolicyPlot_0_gamma}.
    (Contour lines show $0.01$ difference, and goes from $1.00$ to $1.99$.)}
    \label{fig:gamma_diamond}
\end{figure}

All the 3D plot\footnote{Resembling a \href{https://www.wippell.co.uk/Online-Shop/Clergy-Outffiting/Hats-\%281\%29/Style-250-Canterbury-Cap.aspx}{Canterbury cap} or Oxford University \href{https://www.museodelestudiante.com/Indumentaria_y_distintivos/SoftCapAA.htm}{women's soft cap}. The plot also features elements that may remind one of a Pagoda roof \cite{book:BuddhistArchitecture,book:JapaneseArchitecture} (such as \href{https://en.wikipedia.org/wiki/File:Chureito_Pagoda_and_Mount_Fuji.jpg}{Chureito Pagoda} in Fujiyoshida, Japan).}, Density plot and Contour plot have been extended to the whole $x_A,x_B \in (0,1)$ domain (in accordance with \eqref{eq:PKAKBextended}).

\subsubsection{Formal calculation of the integrals}
\label{sec:gammaGammaIntegrals}

\paragraph{Using the Beta function:}

\begin{lemma}
\label{lemma:BetaDef}

For all $\Re(z_1),\Re(z_2) > 0$
    \begin{equation}
        B(z_1,z_2) = \int_0^1 p^{z_1-1} (1-p)^{z_2-1} dp
    \end{equation}

    where $B(z_1,z_2)$ is the Beta function \cite{book:HigherTranscendentalFunctions, book:SeymourMathematicalHandbook, book:Bronshtein, book:NISThandbook, book:Abramowitz}, which can be expressed by the Gamma function:

    \begin{equation}
        B(z_1,z_2) = \frac{\Gamma(z_1) \Gamma(z_2)}{\Gamma(z_1+z_2)}
    \end{equation}
    
\end{lemma}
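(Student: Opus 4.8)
The plan is to prove the integral representation and the Gamma-function formula simultaneously, via the classical computation of the product $\Gamma(z_1)\Gamma(z_2)$ as a double integral. This is the most economical route: rather than evaluating $\int_0^1 p^{z_1-1}(1-p)^{z_2-1}\,dp$ directly (which would require recognizing a hypergeometric-type structure), I would let the two-dimensional integral do the work and read off both claims at once.

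First I would recall the Gamma integral $\Gamma(z) = \int_0^\infty t^{z-1} e^{-t}\,dt$, valid for $\Re(z) > 0$, where on the positive real axis we read $t^{z-1} = e^{(z-1)\log t}$ with real $\log t$. Writing the product as an iterated integral over the open first quadrant,
\begin{equation}
    \Gamma(z_1)\,\Gamma(z_2) = \int_0^\infty \int_0^\infty s^{z_1-1}\, t^{z_2-1}\, e^{-(s+t)}\,ds\,dt ,
\end{equation}
I would then perform the change of variables $s = u\,p$, $t = u\,(1-p)$ with $u \in (0,\infty)$ and $p \in (0,1)$, whose Jacobian has absolute value $u$. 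This transforms the integrand into a product that separates cleanly:
\begin{equation}
    \Gamma(z_1)\,\Gamma(z_2) = \int_0^\infty u^{z_1+z_2-1} e^{-u}\,du \int_0^1 p^{z_1-1}(1-p)^{z_2-1}\,dp = \Gamma(z_1+z_2) \int_0^1 p^{z_1-1}(1-p)^{z_2-1}\,dp .
\end{equation}
Defining $B(z_1,z_2)$ as the inner integral and dividing by $\Gamma(z_1+z_2)$ (nonzero since $\Re(z_1+z_2)>0$) yields both the integral representation and the identity $B(z_1,z_2) = \Gamma(z_1)\Gamma(z_2)/\Gamma(z_1+z_2)$.

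The main obstacle is purely the analytic bookkeeping for complex arguments, not the algebra. I would need to justify the interchange of the order of integration (Fubini--Tonelli) and the change of variables by passing to absolute values: since $|s^{z_1-1} t^{z_2-1} e^{-(s+t)}| = s^{\Re(z_1)-1} t^{\Re(z_2)-1} e^{-(s+t)}$, the hypothesis $\Re(z_1),\Re(z_2) > 0$ guarantees absolute integrability over the quadrant, so all manipulations are legitimate. The same real-part condition ensures the Beta integral converges at both endpoints $p\to 0^+$ and $p\to 1^-$. I would remark that the map $(u,p)\mapsto(up,u(1-p))$ is a smooth bijection from $(0,\infty)\times(0,1)$ onto the open first quadrant, so the substitution applies without boundary complications. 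No deeper difficulty arises; the identity $B(z_1,z_2)=\Gamma(z_1)\Gamma(z_2)/\Gamma(z_1+z_2)$ is a standard consequence once convergence and Fubini are in place.
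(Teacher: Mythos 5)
Your proof is correct, but note that the paper itself does not prove this lemma at all: it is stated as a known fact, with citations to standard handbooks, and is then used as a black box in the evaluation of the integrals $C^\gamma(\alpha)$ in that appendix. Your argument is the classical Euler computation: write $\Gamma(z_1)\Gamma(z_2)$ as an absolutely convergent double integral over the open first quadrant, substitute $(s,t)=(up,\,u(1-p))$ with Jacobian of absolute value $u$, and factor the result as $\Gamma(z_1+z_2)\int_0^1 p^{z_1-1}(1-p)^{z_2-1}\,dp$; the hypothesis $\Re(z_1),\Re(z_2)>0$ is exactly what makes the Tonelli step and the endpoint convergence at $p\to 0^+$, $p\to 1^-$ legitimate, and since $u,p>0$ the complex powers split as $(up)^{z_1-1}=u^{z_1-1}p^{z_1-1}$ with no branch ambiguity. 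The only loose point is your parenthetical claim that $\Gamma(z_1+z_2)$ is ``nonzero since $\Re(z_1+z_2)>0$'': positivity of the real part guarantees convergence of the defining integral, but not, by itself, nonvanishing; that $\Gamma$ has no zeros is a separate standard fact (e.g.\ from the reflection formula $\Gamma(z)\Gamma(1-z)=\pi/\sin(\pi z)$, or because $1/\Gamma$ is entire). Alternatively you can sidestep the division entirely by stating the conclusion in the product form $\Gamma(z_1)\Gamma(z_2)=B(z_1,z_2)\,\Gamma(z_1+z_2)$, which is what your computation literally establishes. With that one-line repair, your proof is a complete, self-contained justification of a statement the paper merely imports from the literature.
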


We aim to calculate the following integral:

\begin{equation}
    \label{eq:C(alpha)_gamma}
    C^\gamma(\alpha) = \int_{-\infty}^\infty e^{-\gamma \alpha \tau} 
    \frac{\left ( \frac{e^{-\tau}}{1 + e^{-\tau}} \right )^{1-\gamma}-1}{1-\gamma} 
    \gamma d \tau
\end{equation}

After rearranging the terms, we get:

\begin{equation}
    C^\gamma(\alpha) = \int_{-\infty}^\infty e^{-\gamma \alpha \tau} 
    \frac{\left ( \frac{1}{1 + e^{\tau}} \right )^{1-\gamma}-1}{1-\gamma} 
    \gamma d \tau
\end{equation}

\begin{equation}
    C^\gamma(\alpha) = - \frac{1}{-\gamma \alpha} \int_{-\infty}^\infty e^{-\gamma \alpha \tau}
    \frac{\gamma}{1-\gamma}
    (-(1-\gamma))
    \frac{e^\tau}{
    \left (
    1+e^\tau
    \right )^{2-\gamma}
    }
    d \tau
\end{equation}

\begin{equation}
    C^\gamma(\alpha) = - \frac{1}{\alpha} \int_{-\infty}^\infty e^{-\gamma \alpha \tau}
    \frac{e^\tau}{
    \left (
    1+e^\tau
    \right )^{2-\gamma}
    }
    d \tau
\end{equation}

\begin{equation}
    \label{eq:CgammaExp}
    C^\gamma(\alpha) = - \frac{1}{\alpha} \int_{-\infty}^\infty
    \frac{\left ( e^\tau \right )^{1-\gamma \alpha}}{
    \left (
    1+e^\tau
    \right )^{2-\gamma}
    }
    d \tau
\end{equation}

After this, we can make the following change of variable:

\begin{equation}
    \tau = \log \left ( \frac{p}{1-p} \right ), \quad
    e^\tau = \frac{p}{1-p},
    \quad
    d \tau = \frac{dp}{p (1-p)}
\end{equation}

Rewriting the terms to the $p$ variable instead of $\tau$ results in:

\begin{equation}
    C^\gamma(\alpha) = - \frac{1}{\alpha} \int_{0}^1
    \frac{
     p^{1-\gamma \alpha}
    \left ( 1-p \right )^{\gamma \alpha-1}
    }{
    \left (
    1+\frac{p}{1-p}
    \right )^{2-\gamma}
    }
    \frac{dp}{p (1-p)}
\end{equation}

\begin{equation}
    C^\gamma(\alpha) = - \frac{1}{\alpha} \int_{0}^1
     p^{1-\gamma \alpha}
    \left ( 1-p \right )^{\gamma \alpha-1+2-\gamma}
    \frac{dp}{p (1-p)}
\end{equation}

\begin{equation}
    C^\gamma(\alpha) = - \frac{1}{\alpha} \int_{0}^1
     p^{1-\gamma \alpha}
    \left ( 1-p \right )^{1-\gamma (1-\alpha)}
    \frac{dp}{p (1-p)}
\end{equation}

Which is, by definition (or by recalling lemma \ref{lemma:BetaDef}):

\begin{equation}
\boxed{
    C^\gamma(\alpha) = - \frac{1}{\alpha}
    B(1-\gamma \alpha, 1- \gamma (1-\alpha))
    }
\end{equation}

or expressed by Gamma functions:

\begin{equation}
\boxed{
    C^\gamma(\alpha) = - \frac{1}{\alpha}
    \frac{\Gamma(1-\gamma \alpha)\Gamma(1- \gamma (1-\alpha))}{\Gamma(2-\gamma)}
    }
\end{equation}

\paragraph{Using Ramanujan’s formula:}

\begin{lemma}
    For $a>0$
    \begin{equation}
    \label{eq:RamanujanFormula}
        \int_{-\infty}^\infty \Gamma(a+i t) \Gamma(a-i t) e^{-i \xi t} d t = 
        \sqrt{\pi} \Gamma(a) \Gamma
        \left (
        a + \frac{1}{2}
        \right  )
        \left (
        \cosh \left ( \frac{\xi}{2} \right )
        \right )^{-2 a}
    \end{equation}

    See the proof in \cite{arxiv:Ramanujan}, and Ramanujan’s formula in \cite{paper:RamanujanOriginal,book:HigherTranscendentalFunctions}.
    
\end{lemma}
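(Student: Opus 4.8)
The plan is to recognize the left-hand side as a Mellin--Barnes integral and evaluate it by Mellin inversion, reducing everything to the known Mellin transform of $(1+x)^{-2a}$. First I would substitute $z = a + it$, so that the integration contour becomes the vertical line $\Re z = a$; under this substitution $\Gamma(a+it) = \Gamma(z)$ and $\Gamma(a-it) = \Gamma(2a-z)$, while $-i\xi t = \xi a - \xi z$ gives $e^{-i\xi t} = e^{\xi a}(e^{\xi})^{-z}$. With $dt = dz/i$ the integral takes the form
\begin{equation}
\int_{-\infty}^\infty \Gamma(a+it)\Gamma(a-it)e^{-i\xi t}\,dt = 2\pi\, e^{\xi a}\cdot \frac{1}{2\pi i}\int_{a-i\infty}^{a+i\infty}\Gamma(z)\Gamma(2a-z)\,(e^{\xi})^{-z}\,dz .
\end{equation}

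Next I would invoke the standard Mellin pair: since $\int_0^\infty x^{z-1}(1+x)^{-2a}\,dx = B(z,2a-z) = \Gamma(z)\Gamma(2a-z)/\Gamma(2a)$ for $0<\Re z<2a$ (lemma~\ref{lemma:BetaDef}), Mellin inversion yields $\frac{1}{2\pi i}\int_{a-i\infty}^{a+i\infty}\Gamma(z)\Gamma(2a-z)x^{-z}\,dz = \Gamma(2a)(1+x)^{-2a}$. Applying this with $x = e^{\xi}$ and $c=a$ gives $2\pi\Gamma(2a)\,e^{\xi a}(1+e^{\xi})^{-2a}$. The elementary identity $1+e^{\xi} = 2e^{\xi/2}\cosh(\xi/2)$ then collapses $e^{\xi a}(1+e^{\xi})^{-2a}$ to $4^{-a}(\cosh(\xi/2))^{-2a}$, so the value becomes $2\pi\Gamma(2a)\,4^{-a}(\cosh(\xi/2))^{-2a}$. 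Finally, Legendre's duplication formula $\Gamma(2a) = 2^{2a-1}\pi^{-1/2}\Gamma(a)\Gamma(a+\tfrac12)$ (see \cite{book:SeymourMathematicalHandbook}) converts the prefactor $2\pi\Gamma(2a)\,4^{-a}$ into exactly $\sqrt{\pi}\,\Gamma(a)\Gamma(a+\tfrac12)$, matching equation~\eqref{eq:RamanujanFormula}.

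As a sanity check and alternative route I would also run the \emph{Beta integral} version: writing $\Gamma(a+it)\Gamma(a-it) = \Gamma(2a)\int_0^1 u^{a-1+it}(1-u)^{a-1-it}\,du$, swapping the order of integration, and treating $\int_{-\infty}^\infty e^{it[\log(u/(1-u))-\xi]}\,dt = 2\pi\,\delta(\log(u/(1-u))-\xi)$. The delta localizes at $u_0 = e^{\xi}/(1+e^{\xi})$ with Jacobian factor $u_0(1-u_0) = \tfrac14\operatorname{sech}^2(\xi/2)$, reproducing the same closed form; this independently confirms both the constant and the $\cosh^{-2a}$ profile.

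The hard part will be the analytic justification rather than the algebra. The bare $t$-integral is not absolutely convergent on its face, so the formal delta-function manipulation in the alternative route is only heuristic; the rigorous content lives in the Mellin inversion step, whose validity requires sufficient decay of the integrand along the contour. I would supply this by Stirling's asymptotics, which give $\Gamma(a+it)\Gamma(a-it) = |\Gamma(a+it)|^2 \sim 2\pi\,|t|^{2a-1}e^{-\pi|t|}$ as $|t|\to\infty$; the factor $e^{-\pi|t|}$ makes the left-hand integral absolutely (and uniformly in $\xi$) convergent and legitimizes both the contour identification and the appeal to Mellin inversion. The only remaining care is the strip condition $0<\Re z<2a$, guaranteed by $a>0$ with the contour placed at $\Re z = a$, together with the observation that the shift crosses no poles of $\Gamma(z)\Gamma(2a-z)$, which again follows from $a>0$.
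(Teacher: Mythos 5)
Your proof is correct, but it is a genuinely different route from the paper's, because the paper does not actually prove this lemma: it defers entirely to the cited references \cite{arxiv:Ramanujan,paper:RamanujanOriginal,book:HigherTranscendentalFunctions}. Your Mellin--Barnes argument is self-contained and, pleasingly, uses only ingredients the paper already has on hand: the substitution $z=a+it$ turns the Fourier integral into an inverse Mellin integral along $\Re z=a$; the Beta integral
$\int_0^\infty x^{z-1}(1+x)^{-2a}\,dx=\Gamma(z)\Gamma(2a-z)/\Gamma(2a)$, valid for $0<\Re z<2a$, is exactly Lemma~\ref{lemma:BetaDef} after the change of variable $x=u/(1-u)$; and Legendre's duplication formula, which the paper also quotes, converts $2\pi\Gamma(2a)4^{-a}$ into $\sqrt{\pi}\,\Gamma(a)\Gamma(a+\tfrac12)$. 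The analytic care is also in the right place: the Stirling bound $|\Gamma(a+it)|^2\sim 2\pi|t|^{2a-1}e^{-\pi|t|}$ gives absolute convergence of the $t$-integral and the exponential decay needed to justify Mellin inversion, and $a>0$ keeps the contour strictly inside the pole-free strip $0<\Re z<2a$. In effect you run, in reverse, the direction in which the paper uses the formula: the paper takes \eqref{eq:RamanujanFormula} as known and Fourier-inverts it to obtain the sech-power transform \eqref{eq:CoshInvFourierRamanujan}, whereas you derive \eqref{eq:RamanujanFormula} from the Beta-integral side. Your delta-function computation is, as you correctly flag, only heuristic and is redundant once the Mellin step is rigorous; if you wanted a variant closer to the paper's own style (compare the residue proof of Lemma~\ref{lemma:1/cosh}), you could instead close the Mellin contour to the left and sum the residues of $\Gamma(z)$, recovering the binomial series of $(1+e^{\xi})^{-2a}$.
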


The right-hand side of equation \eqref{eq:RamanujanFormula} can be rewritten according to the Legendre's Duplication formula for the Gamma function \cite{book:ClassicalTopicsInComplexFunctionTheory,arxiv:Ramanujan}:

\begin{equation}
    \sqrt{\pi} \Gamma(2 a) = 2^{2 a -1} \Gamma(a)  
    \Gamma \left ( a + \frac{1}{2} \right )
\end{equation}

Resulting in:

\begin{equation}
        \int_{-\infty}^\infty \Gamma(a+i t) \Gamma(a-i t) e^{-i \xi t} d t = 
        \frac{2 \pi \Gamma(2 a)}
        {4^a 
        \left (
        \cosh \left ( \frac{\xi}{2} \right )
        \right )^{2 a}}
\end{equation}

Performing an inverse Fourier transformation, we get the following formula:

\begin{equation}
    \frac{1}{2 \pi}
    \int_{-\infty}^\infty
    \frac{2 \pi \Gamma(2 a)}
        {4^a 
        \left (
        \cosh \left ( \frac{\xi}{2} \right )
        \right )^{2 a}}
        e^{i \xi t} d \xi =
        \Gamma(a+i t) \Gamma(a-i t)
\end{equation}

After rearranging the terms and simplification:

\begin{equation}
\label{eq:CoshInvFourierRamanujan}
    \int_{-\infty}^\infty
    \frac{1}
        { 
        \left (
        \cosh \left ( \frac{\xi}{2} \right )
        \right )^{2 a}}
        e^{i \xi t} d \xi =
        4^a \frac{\Gamma(a+i t) \Gamma(a-i t)}
        {{\Gamma(2 a)}}
\end{equation}

The formula can be analytically continued to complex values of $t \in \mathbb{C}$, if $-a < \Im(t) < a$.

Recalling equation \eqref{eq:CgammaExp} for a rearranged version of the original $C^\gamma(\alpha)$ expression in \eqref{eq:C(alpha)_gamma} gives:

\begin{equation}
    C^\gamma(\alpha) = - \frac{1}{\alpha} \int_{-\infty}^\infty
    \frac{\left ( e^\tau \right )^{1-\gamma \alpha}}{
    \left (
    1+e^\tau
    \right )^{2-\gamma}
    }
    d \tau = 
    - \frac{1}{\alpha} \int_{-\infty}^\infty
    \frac{e^{\tau(1-\gamma \alpha)} e^{-\tau (1-\gamma/2)}}{
    \left (
    e^{\tau/2}+e^{-\tau/2}
    \right )^{2-\gamma}
    }
    d \tau
\end{equation}

\begin{equation}
\label{eq:cosh^2a}
    C^\gamma(\alpha)
    = 
    - \frac{1}{\alpha} 
    \frac{1}{2^{2-\gamma}}
    \int_{-\infty}^\infty
    \frac{e^{\tau(\gamma (1/2-\alpha))}}
    {
    \left (
    \cosh(\tau/2)
    \right )^{2-\gamma}
    }
    d \tau
\end{equation}

By choosing the following parameters:

\begin{equation}
    a=1-\frac{\gamma}{2}, \quad
    t=\frac{1}{i} \gamma 
    \left (
    \frac{1}{2} - \alpha
    \right )
\end{equation}

and recalling equation \eqref{eq:cosh^2a} and \eqref{eq:CoshInvFourierRamanujan} we get:

\begin{equation}
    C^\gamma(\alpha)
    = 
    - \frac{1}{\alpha} 
    \frac{1}{2^{2-\gamma}}
    4^{1-\gamma/2}
    \frac{\Gamma(1-\gamma \alpha) \Gamma(1-\gamma (1-\alpha))}
    {\Gamma(2-\gamma)}
\end{equation}

\begin{equation}
\boxed{
    C^\gamma(\alpha)
    = 
    - \frac{1}{\alpha}
    \frac{\Gamma(1-\gamma \alpha) \Gamma(1-\gamma (1-\alpha))}
    {\Gamma(2-\gamma)}
    }
\end{equation}

\subsubsection{Asymptotic expansion of general statistical game prior approximation}

After performing a similar calculation as in Section~\ref{sec:AsymptoticExpansionBinomialBayesianPriorApproximation}, we get the following 
explicit expression for the first-order asymptotic approximation term 
for general isoelastic statistical games $\sampi_\gamma$:

\begin{equation}
    \sampi_\gamma = \frac{1}{\alpha_A (1-\alpha_A)}
    \left (
    \frac{n'(x_0^*)}{\beta}  -
    \frac{\kappa(x_0^*) \vartheta^*_{\gamma,\clubsuit}}{\beta^2} 
    \right ) +
    \frac{1}{2} \frac{\kappa(x_0^*)}{\beta^2}
    \left (
    \frac{{C_B^\gamma}''(\alpha_B)}{C_B^\gamma(\alpha_B)} -
    \frac{{C_A^\gamma}''(\alpha_A)}{C_A^\gamma(\alpha_A)}
    \right )
\end{equation}

\begin{equation}
    \left (
    \frac{{C_B^\gamma}''(\alpha_B)}{C_B^\gamma(\alpha_B)} -
    \frac{{C_A^\gamma}''(\alpha_A)}{C_A^\gamma(\alpha_A)}
    \right ) 
    =
    -2 \frac{1-2 \alpha_A}{\alpha_A^2 (1-\alpha_A)^2}
    - 2 \gamma \frac{\psi(1-\gamma \alpha_A) - 
    \psi(1-\gamma (1-\alpha_A))}{\alpha_A (1-\alpha_A)}
\end{equation}

where $\psi(z)$ is the Digamma function \cite{ book:SeymourMathematicalHandbook, book:NISThandbook, book:Abramowitz,book:HigherTranscendentalFunctions}, the logarithmic derivative of the Gamma function:

\begin{equation}
    \psi(z) = \frac{\Gamma'(z)}{\Gamma(z)}
\end{equation}

\subsection{Binomial Bayesian limiting prior asymptotics}
\label{deriv:BinomialBayesianAsymptotics}

The derivation supports conjecture \ref{conj:BayesianLimitPrior}.

Previously, in Section~\ref{sec:SEC_Approx}, we used the Euler-Maclaurin formula to approximate the summation with integration.
However, we can relax this assumption and introduce a ``Dirac comb'' measure for integration, which mimics the discrete summation.

To approximate the integrals with respect to the ``Dirac comb'' measure, we will use the Poisson summation formula \cite{book:DistributionsFourierTransforms,book:TrigonometricSeries}.

\subsubsection{Approximations}
\label{sec:SPC_Approx}

\paragraph{Stirling's formula:}
After applying Stirling's formula in the same way as in Section~\ref{sec:SEC_Approx}, we get:

\begin{equation}
    \label{eq:SumToDiracComb}
    \Delta G_\theta^\mathrm{S}(\vartheta) = \sum_{k} \frac{1}{N} f_\theta(k/N) L_\theta(\tau_k) =
    \int_0^1 f_\theta(x) L_\theta(\tau(x)) \sh(N x) dx
\end{equation}

Where we substituted the summation over the discrete variable $k$ with an integration with a Dirac comb (tempered-) distribution $\sh(x)$, which is a periodic Dirac $\delta$-function \cite{book:DistributionsFourierTransforms,book:DistributionsOperators} grid:

\begin{equation}
    \sh(y) = \sum_{k=-\infty}^\infty \delta(y-k)
\end{equation}

From the scaling of Dirac $\delta$-function: $\delta(a x) = 1/|a| \delta(x)$ \cite{book:DistributionsOperators} we have:

\begin{equation}
    \sh(N x) = \sum_k \delta(N x - k) = 
    \frac{1}{N} \sum_k \delta(x - k/N)
\end{equation}

which justifies equation \eqref{eq:SumToDiracComb}.

\paragraph{Poisson summation formula:}

Formally, we can substitute the Dirac comb with its Fourier series representation:

\begin{equation}
    \sh(y) = \sum_{k=-\infty}^\infty \delta(y-k) = 
    \sum_{m=-\infty}^\infty e^{2 \pi i m y}
\end{equation}

resulting:

\begin{equation}
    \Delta G_\theta^\mathrm{S,P}(\vartheta) = 
    \int_0^1 \sum_m f_\theta(x) L_\theta(\tau(x)) 
    e^{2 \pi i m N x} dx
\end{equation}

\paragraph{Change of variables:}
We do the same variable changes as in Section~\ref{sec:SEC_Approx}:

\begin{equation}
    \Delta G_\theta^\mathrm{S,P,C}(\vartheta) =  \int_{-\infty}^\infty \sum_m
    f_\theta \left ( x_0^* + \frac{\tau + \vartheta}{N \beta} \right ) L_\theta(\tau)
    e^{2 \pi i m \left ( N x_0^* + (\tau + \vartheta)/\beta \right )}
    \frac{d\tau}{N \beta}
\end{equation}

\paragraph{Interchanging summation and integration:}

\begin{equation}
    \Delta G_\theta^\mathrm{S,P,C,I}(\vartheta) = \sum_m \int_{-\infty}^\infty 
    f_\theta \left ( x_0^* + \frac{\tau + \vartheta}{N \beta} \right ) L_\theta(\tau)
    e^{2 \pi i m \left ( N x_0^* + (\tau + \vartheta)/\beta \right )}
    \frac{d\tau}{N \beta}
\end{equation}

\subsubsection{Phase dependent expression}

By introducing a phase parameter:

\begin{equation}
    \varphi = 2 \pi N x_0^* \mod 2 \pi
\end{equation}

we get the following expression:

\begin{equation}
    \begin{split}
        \Delta G_\theta^{\spadesuit,\varphi}(\vartheta) &= \Delta G_\theta^\mathrm{S,P,C,I}(\vartheta) \\
        &= 
        \sum_m \int_{-\infty}^\infty 
        f_\theta \left ( x_0^* + \frac{\tau + \vartheta}{N \beta} \right ) L_\theta(\tau)
        e^{i m \varphi + 2 \pi i m (\tau + \vartheta)/\beta}
        \frac{d\tau}{N \beta}
    \end{split}
\end{equation}

Introducing normalized growth factors:

\begin{equation}
    \Delta g_\theta^{\spadesuit,\varphi}(\vartheta) = 
    \frac{\Delta G_\theta^{\spadesuit,\varphi}(\vartheta)}{
    \frac{1}{\sqrt{2 \pi x_0^* (1-x_0^*)}} \frac{e^{- N \varepsilon(x_A,x_B)}}{\sqrt{N} \beta}
    }
\end{equation}

we can get the following approximation:

\begin{equation}
    \Delta g_\theta^{\spadesuit,\varphi}(\vartheta) = 
    e^{-\alpha_\theta \vartheta}
    \sum_m C_\theta(\alpha_\theta - 2 \pi i m/\beta) e^{i m \varphi + 2 \pi i m \vartheta / \beta }
    +\mathcal{O}(1/N)
\end{equation}

where we analytically continued the $C_A(.)$, $C_B(.)$ functions in \eqref{eq:CA_f}, \eqref{eq:CB_f}.
We can solve the following equation up to the zeroth-order to get a phase-dependent asymptotic expression for the equilibrium log-odds $\vartheta^{*,\varphi}_\spadesuit$:

\begin{equation}
    \Delta g_A^{\spadesuit,\varphi}(\vartheta^{*,\varphi}_\spadesuit) = \Delta g_B^{\spadesuit,\varphi}(\vartheta^{*,\varphi}_\spadesuit)
\end{equation}

yielding the following implicit equation:

\begin{equation}
\label{eq:ImplicitAsymtoticCACB}
    \vartheta^{*,\varphi}_\spadesuit = 
    \frac{1}{\alpha_A - \alpha_B} \log \left (
    \frac{
    \sum_m C_A(\alpha_A - 2 \pi i m/\beta) e^{i m \varphi + 2 \pi i m \vartheta^{*,\varphi}_\spadesuit / \beta }
    }
    {
    \sum_m C_B(\alpha_B - 2 \pi i m/\beta) e^{i m \varphi + 2 \pi i m \vartheta^{*,\varphi}_\spadesuit / \beta }
    }
    \right )
\end{equation}

Introducing:

\begin{equation}
    \widetilde{C}_\theta(\alpha,\omega) = 
    \sum_m C_\theta(\alpha - 2 \pi i m/\beta) e^{i m \omega}
\end{equation}

we can define a quantity:

\begin{equation}
    \Xi^\varphi(\vartheta) = 
    \frac{1}{\alpha_A - \alpha_B} \log \left (
    \frac{
    \widetilde{C}_A(\alpha_A,\varphi + 2 \pi \vartheta / \beta )
    }
    {
    \widetilde{C}_B(\alpha_B,\varphi + 2 \pi \vartheta / \beta )
    }
    \right )
\end{equation}

The solution of equation \eqref{eq:ImplicitAsymtoticCACB} can be defined as a fixed point equation:

\begin{equation}
\label{eq:AsymptoticBayesianLogOdds01}
\boxed{
    \vartheta^{*,\varphi}_\spadesuit = 
    \Xi^\varphi(\vartheta^{*,\varphi}_\spadesuit)
    }
\end{equation}

where:

\begin{equation}
\label{eq:AsymptoticBayesianLogOdds02}
    \Xi^\varphi(\vartheta) = 
    \log \left (
    \frac{
    \sum_m \frac{\pi}{\alpha - 2 \pi i m /\beta} \frac{
    e^{i m (\varphi + 2 \pi \vartheta / \beta) }}
    {\sin(\pi \alpha - 2 \pi^2 i m /\beta)} 
    }
    {
    \sum_m \frac{\pi}{1-\alpha + 2 \pi i m /\beta} \frac{
    e^{i m (\varphi + 2 \pi \vartheta / \beta) }}
    {\sin(\pi \alpha + 2 \pi^2 i m /\beta)} 
    }
    \right )
\end{equation}

or alternatively:

\begin{equation}
\label{eq:AsymptoticBayesianLogOdds03}
    \Xi^\varphi(\vartheta) = 
    \log \left (
    \frac{
    \frac{\pi}{\alpha} \frac{1}
    {\sin(\pi \alpha)} + 2 \ \Re \left ( 
    \sum_{m=1}^\infty \frac{\pi}{\alpha - 2 \pi i m /\beta} \frac{
    e^{i m (\varphi + 2 \pi \vartheta / \beta) }}
    {\sin(\pi \alpha - 2 \pi^2 i m /\beta)}
    \right )
    }
    {
    \frac{\pi}{1-\alpha} \frac{1}
    {\sin(\pi \alpha)} + 2 \ \Re \left ( 
    \sum_{m=1}^\infty \frac{\pi}{1-\alpha + 2 \pi i m /\beta} \frac{
    e^{i m (\varphi + 2 \pi \vartheta / \beta) }}
    {\sin(\pi \alpha + 2 \pi^2 i m /\beta)}
    \right )
    }
    \right )
\end{equation}

\paragraph{Statement of the conjecture:}
Conjecture \ref{conj:BayesianLimitPrior} claims that:

\begin{equation}
    \vartheta^{*,\varphi}_\loopedsquare = 
    \vartheta^{*,\varphi}_\spadesuit
\end{equation}

More precisely:

\begin{equation}
    \vartheta^*_N(x_A,x_B) = 
    \vartheta^{*,\varphi_N(x_A,x_B)}_\spadesuit +
    \mathcal{O}(1/N)
\end{equation}

\begin{remark}
    The limiting prior approximation $P^\approx_\loopedsquare$ in \eqref{eq:Papprox_club}, \eqref{eq:P*_club} and the related approximative log-odds $\vartheta^\approx_\loopedsquare$ can be interpreted as the fixed point of \eqref{eq:AsymptoticBayesianLogOdds01} if we take the zeroth harmonic-order approximation of the expression \eqref{eq:AsymptoticBayesianLogOdds03}
    
\end{remark}

\subsection{Limiting prior asymptotics for general Statistical games}

A similar calculation can be performed for general statistical games \ref{def:StatisticalGame}, with an isoelastic utility function.

\subsubsection{Approximations}

\paragraph{Stirling's formula:}

\begin{equation}
    \label{eq:USumToDiracComb}
    U_\theta^\mathrm{S}(\vartheta) = \sum_{k} \frac{1}{N} f_\theta(k/N) L^\gamma_\theta(\tau_k) =
    \int_0^1 f_\theta(x) L^\gamma_\theta(\tau(x)) \sh(N x) dx
\end{equation}

\paragraph{Poisson summation formula:}

\begin{equation}
    U_\theta^\mathrm{S,P}(\vartheta) = 
    \int_0^1 \sum_m f_\theta(x) L^\gamma_\theta(\tau(x)) 
    e^{2 \pi i m N x} dx
\end{equation}

\paragraph{Change of variables:}

\begin{equation}
    U_\theta^\mathrm{S,P,C}(\vartheta) = \int_{-\infty}^\infty \sum_m 
    f_\theta \left ( x_0^* + \frac{\gamma \ \tau + \vartheta}{N \beta} \right ) L^\gamma_\theta(\tau)
    e^{2 \pi i m \left ( N x_0^* + (\gamma \tau + \vartheta)/\beta \right )}
    \frac{\gamma}{N \beta} d\tau
\end{equation}

\paragraph{Interchanging summation and integration:}

\begin{equation}
    U_\theta^\mathrm{S,P,C,I}(\vartheta) = \sum_m \int_{-\infty}^\infty 
    f_\theta \left ( x_0^* + \frac{\gamma \ \tau + \vartheta}{N \beta} \right ) L^\gamma_\theta(\tau)
    e^{2 \pi i m \left ( N x_0^* + (\gamma \tau + \vartheta)/\beta \right )}
    \frac{\gamma}{N \beta} d\tau
\end{equation}

\subsubsection{Phase dependent expression}

\begin{equation}
    \begin{split}
        U_\theta^{\spadesuit,\varphi}(\vartheta) &= 
        U_\theta^\mathrm{S,P,C,I}(\vartheta) \\
        &= 
        \sum_m \int_{-\infty}^\infty 
        f_\theta \left ( x_0^* + \frac{\gamma \ \tau + \vartheta}{N \beta} \right ) L^\gamma_\theta(\tau)
        e^{i m \varphi + 2 \pi i m (\gamma \tau + \vartheta)/\beta}
        \frac{\gamma}{N \beta} d\tau
    \end{split}
\end{equation}

Introducing normalized expected utility:

\begin{equation}
    u_\theta^{\spadesuit,\varphi}(\vartheta) = 
    \frac{U_\theta^{\spadesuit,\varphi}(\vartheta)}{
    \frac{1}{\sqrt{2 \pi x_0^* (1-x_0^*)}} \frac{e^{- N \varepsilon(x_A,x_B)}}{\sqrt{N} \beta}
    }
\end{equation}

we can get the following approximation:

\begin{equation}
    u_\theta^{\spadesuit,\varphi}(\vartheta) = 
    e^{-\alpha_\theta \vartheta}
    \sum_m C^\gamma_\theta(\alpha_\theta - 2 \pi i m/\beta) e^{i m \varphi + 2 \pi i m \vartheta / \beta }
    +\mathcal{O}(1/N)
\end{equation}

where we analytically continued the $C^\gamma_A(.)$, $C^\gamma_B(.)$ functions in \eqref{eq:CAgamma_f}, \eqref{eq:CBgamma_f}.
We can solve the following equation up to the zeroth-order to get a phase-dependent asymptotic expression for the equilibrium log-odds $\vartheta^{*,\varphi}_{\gamma,\spadesuit}$:

\begin{equation}
    u_A^{\spadesuit,\varphi}(\vartheta^{*,\varphi}_{\gamma,\spadesuit}) =
    u_B^{\spadesuit,\varphi}(\vartheta^{*,\varphi}_{\gamma,\spadesuit})
\end{equation}

yielding the following implicit equation:

\begin{equation}
\label{eq:ImplicitvarphigammaCACB}
    \vartheta^{*,\varphi}_{\gamma,\spadesuit} = 
    \frac{1}{\alpha_A - \alpha_B} \log \left (
    \frac{
    \sum_m C^\gamma_A(\alpha_A - 2 \pi i m/\beta) e^{i m \varphi + 2 \pi i m \vartheta^{*,\varphi}_{\gamma,\spadesuit} / \beta }
    }
    {
    \sum_m C^\gamma_B(\alpha_B - 2 \pi i m/\beta) e^{i m \varphi + 2 \pi i m \vartheta^{*,\varphi}_{\gamma,\spadesuit} / \beta }
    }
    \right )
\end{equation}

Introducing:

\begin{equation}
\label{eq:CtildeAB}
    \widetilde{C}^\gamma_\theta(\alpha,\omega) = 
    \sum_m C^\gamma_\theta(\alpha - 2 \pi i m/\beta) e^{i m \omega}
\end{equation}

we can define a quantity:

\begin{equation}
    \Xi_\gamma^\varphi(\vartheta) =
    \frac{1}{\alpha_A - \alpha_B} \log \left (
    \frac{
    \widetilde{C}^\gamma_A(\alpha_A,\varphi + 2 \pi \vartheta / \beta )
    }
    {
    \widetilde{C}^\gamma_B(\alpha_B,\varphi + 2 \pi \vartheta / \beta )
    }
    \right )
\end{equation}

The solution of equation \eqref{eq:ImplicitvarphigammaCACB} can be defined as a fixed point equation:

\begin{equation}
\boxed{
    \vartheta^{*,\varphi}_{\gamma,\spadesuit} = 
    \Xi_\gamma^\varphi(\vartheta^{*,\varphi}_{\gamma,\spadesuit})
    }
\end{equation}

\begin{remark}
    Typically, the implicit expression might have multiple solutions for smaller $\gamma$ and large $\beta$.
    However, it may have only one stable solution.
    Further analysis is needed to explore the proposed implicit equation's validity range.

\end{remark}

\begin{remark}
    This phase-dependent asymptotic expression does depend on $\gamma$, signalling that it has the potential to interpolate between Binomial Fisher and Binomial Bayesian limiting priors.
    
\end{remark}

By taking the $\gamma \to 0$ limit, we attempt to recover the Binomial Fisher limiting prior bounds in Section~\ref{deriv:liminfPN}, \ref{deriv:limsupPN}.

\subsubsection{Taking the $\gamma \to 0$ limit}

In this limit, the $C^\gamma_A(.)$, $C^\gamma_B(.)$ functions in \eqref{eq:CAgamma_f}, \eqref{eq:CBgamma_f} simplifies radically:

\begin{equation}
    \lim_{\gamma \to 0} C_A^\gamma(\alpha_A) = -\frac{1}{\alpha_A}, \quad
    \lim_{\gamma \to 0} C_B^\gamma(\alpha_B) = \frac{1}{\alpha_B}
\end{equation}

\begin{lemma}
\label{lemma:FourierSeriesExp}

For any $\omega \in (0,2 \pi)$

    \begin{equation}
        \lim_{M \to \infty} \sum_{m=-M}^M
        \frac{1}{a - i m} e^{i m} =
        \frac{2 \pi}{e^{2 \pi a}-1}  e^{a \omega}
    \end{equation}
    
\end{lemma}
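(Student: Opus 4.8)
The plan is to recognize the symmetric sum $\sum_{m=-M}^{M} \frac{1}{a-im}\, e^{im\omega}$ as the principal-value partial sum of the Fourier series of an explicit function, and then to identify that function by computing its Fourier coefficients. Concretely, I would take as candidate the $2\pi$-periodic function $g$ defined on one period by $g(\omega) = C\, e^{a\omega}$ for $\omega \in (0,2\pi)$, with the constant $C$ to be fixed, and show that its complex Fourier coefficients are exactly $c_m = 1/(a-im)$.

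First I would compute, for integer $m$,
\[
\frac{1}{2\pi}\int_0^{2\pi} C\, e^{a\omega}\, e^{-im\omega}\, d\omega = \frac{C}{2\pi}\cdot \frac{e^{(a-im)2\pi}-1}{a-im} = \frac{C}{2\pi}\cdot\frac{e^{2\pi a}-1}{a-im},
\]
using $e^{-2\pi i m}=1$ for integer $m$. Choosing $C = 2\pi/(e^{2\pi a}-1)$ makes this equal to $1/(a-im)$, so the candidate function is precisely $g(\omega) = \frac{2\pi}{e^{2\pi a}-1}\, e^{a\omega}$ and its formal Fourier series is $\sum_m \frac{1}{a-im}\, e^{im\omega}$. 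This reproduces the right-hand side of the claimed identity, with the constant $C$ well defined as long as $e^{2\pi a}\neq 1$.

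It then remains to justify that the symmetric partial sums actually converge to $g(\omega)$ pointwise on the \emph{open} interval $(0,2\pi)$. Since the coefficients decay only like $1/|m|$, the series is not absolutely convergent, which is exactly why the symmetric (principal-value) limit $\sum_{m=-M}^M$ is the right object: pairing the $\pm m$ terms gives the real expression $\frac{2a\cos(m\omega)-2m\sin(m\omega)}{a^2+m^2}$, a conditionally convergent series controlled by Dirichlet's test. I would then invoke the Dirichlet--Jordan convergence theorem: on $(0,2\pi)$ the function $e^{a\omega}$ is smooth, hence of bounded variation, so its symmetric Fourier partial sums converge to $g(\omega)$ at every interior point.

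I expect the main obstacle (and the only genuinely delicate point) to be the convergence bookkeeping near the endpoints. The periodic extension of $g$ has a jump at $\omega \in 2\pi\mathbb{Z}$, where the Fourier series converges to the midpoint average of the one-sided limits rather than to $g$ itself; this is precisely why the hypothesis restricts $\omega$ to the open interval $(0,2\pi)$, and everywhere in the interior the argument is routine. If $a$ is allowed to be complex, I would simply note that the coefficient computation and the bounded-variation argument are unchanged (splitting into real and imaginary parts if a fully real Dirichlet-test estimate is preferred), again under the standing assumption $e^{2\pi a}\neq 1$ that keeps $C$ finite.
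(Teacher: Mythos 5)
Your proposal is correct and follows essentially the same route as the paper's proof: compute the Fourier coefficients of the $2\pi$-periodic extension of $\frac{2\pi}{e^{2\pi a}-1}e^{a\omega}$, identify them as $\frac{1}{a-im}$, and then invoke a classical pointwise convergence theorem for the symmetric partial sums on the open interval. The only cosmetic difference is that the paper cites Dini's test where you cite Dirichlet--Jordan (bounded variation); your extra remarks about the endpoint jump and conditional convergence are sound but not a different argument.
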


\begin{proof}

For any $m \in \mathbb{Z}$:

    \begin{equation}
        \frac{1}{2 \pi} \int_0^{2 \pi} e^{a \omega} e^{-i \omega m} = 
        \frac{1}{2 \pi} \frac{e^{2 \pi (a - i m)}-1}{a - i m} =
        \frac{1}{2 \pi} \frac{e^{2 \pi a}-1}{a - i m}
    \end{equation}

    Therefore

    \begin{equation}
        c_m = \frac{1}{2 \pi} \int_0^{2 \pi} \frac{2 \pi}{e^{2 \pi a}-1}  e^{a \omega} e^{-i \omega m} = \frac{1}{a - i m}
    \end{equation}

    According to the theory of Fourier series \cite{book:HarmonicAnalysis,book:PartialDifferentialEquationsHilbertSpaceMethods,book:FourierAnalysis,book:TrigonometricSeries}\footnote{in this case, for example Dini's test can guarantee the pointwise convergence in all points, where the function is not discontinuous}, this implies that for $\omega \mod 2 \pi \ne 0$:

    \begin{equation}
        \lim_{M \to \infty} \sum_{m=-M}^M
        c_m e^{i \omega m} =
        \frac{2 \pi}{e^{2 \pi a}-1}  e^{a (\omega \mod 2 \pi))}
    \end{equation}
    
\end{proof}

We can continue by evaluating the expression:

\begin{equation}
    \lim_{\gamma \to 0}
    \frac{
    \sum_m C^\gamma_A(\alpha_A - 2 \pi i m/\beta) e^{i m \omega}
    }
    {
    \sum_m C^\gamma_B(\alpha_B - 2 \pi i m/\beta) e^{i m \omega}
    }=
    \frac{
    \sum_m \frac{-1}{\alpha_A - 2 \pi i m/\beta} e^{i m \omega}
    }
    {
    \sum_m \frac{1}{\alpha_B - 2 \pi i m/\beta} e^{i m \omega}
    }
\end{equation}

\begin{equation}
    \label{eq:LimitXigamma}
    \frac{
    \sum_m \frac{-1}{\alpha_A - 2 \pi i m/\beta} e^{i m \omega}
    }
    {
    \sum_m \frac{1}{\alpha_B - 2 \pi i m/\beta} e^{i m \omega}
    } = 
    \frac{
    \sum_m \frac{-1}{\frac{\alpha_A \beta}{2 \pi} - i m} e^{i m \omega}
    }
    {
    \sum_m \frac{1}{\frac{\alpha_B \beta}{2 \pi} - i m} e^{i m \omega}
    }
\end{equation}

Using lemma \ref{lemma:FourierSeriesExp}, and that $\alpha_A - \alpha_B = 1$, we get:

\begin{equation}
    \frac{
    \sum_m \frac{-1}{\frac{\alpha_A \beta}{2 \pi} - i m} e^{i m \omega}
    }
    {
    \sum_m \frac{1}{\frac{\alpha_B \beta}{2 \pi} - i m} e^{i m \omega}
    } = 
    \frac{1-e^{\alpha_B \beta}}{e^{\alpha_A \beta} - 1} e^{\frac{\beta}{2 \pi} \omega}
\end{equation}

Now we can take the limit of $\Xi^\varphi_\gamma(\vartheta)$:

\begin{equation}
    \lim_{\gamma \to 0} \Xi_\gamma^\varphi(\vartheta) =
    \log \left ( \frac{1-e^{\alpha_B \beta}}{e^{\alpha_A \beta} - 1}
    \right )
    + \frac{\beta}{2 \pi} ((\varphi + 2 \pi \vartheta) \mod 2 \pi)
\end{equation}

Resulting bounds for the equilibrium $\vartheta^{*,\varphi}_{\gamma,\spadesuit}$ for any $\varphi \in [0,2 \pi)$ as $\gamma \to 0$:

\begin{equation}
\label{eq:varthetagammaphiLimitBounds}
\boxed{
    \log \left ( 
    \frac{1-e^{\alpha_B \beta}}{e^{\alpha_A \beta} - 1}
    \right ) \le
    \lim_{\gamma \to 0}
    \vartheta^{*,\varphi}_{\gamma,\spadesuit}
    \le 
    \log \left ( 
    \frac{1-e^{\alpha_B \beta}}{e^{\alpha_A \beta} - 1}
    \right ) + \beta
    }
\end{equation}

\begin{remark}
    The obtained bounds in \eqref{eq:varthetagammaphiLimitBounds} are equivalent to the Binomial Fisher limiting prior bounds in Section~\ref{deriv:liminfPN}, \ref{deriv:limsupPN}.
    
\end{remark}

\begin{remark}
    The expression \eqref{eq:LimitXigamma} can be expressed by the Lerch transcendent \cite{arxiv:JesusLerch,book:NISThandbook,book:HigherTranscendentalFunctions} \footnote{also know as Hurwitz-\href{https://mathshistory.st-andrews.ac.uk/Biographies/Lerch/}{Lerch} transcendent}:

    \begin{equation}
    \Phi_L(z,s,a) = \sum_{m=0}^\infty \frac{z^m}{(m+a)^s}
    \end{equation}

    \begin{equation}
    \frac{
    \sum_m \frac{-1}{\frac{\alpha_A \beta}{2 \pi} - i m} e^{i m \omega}
    }
    {
    \sum_m \frac{1}{\frac{\alpha_B \beta}{2 \pi} - i m} e^{i m \omega}
    } =
    \frac{
    \sum_m \frac{-1}{m + i \frac{\alpha_A \beta}{2 \pi}} e^{i m \omega}
    }
    {
    \sum_m \frac{1}{m + i \frac{ \alpha_B \beta}{2 \pi}} e^{i m \omega}
    }
    \end{equation}

    \begin{equation}
    \frac{
    \sum_m \frac{-1}{m + i \frac{\alpha_A \beta}{2 \pi}} e^{i m \omega}
    }
    {
    \sum_m \frac{1}{m + i \frac{ \alpha_B \beta}{2 \pi}} e^{i m \omega}
    } = 
    - \frac{
    \Phi_L(e^{i \omega},1,i \frac{\alpha_A \beta}{2 \pi}) - 
    \Phi_L(e^{-i \omega},1,-i \frac{\alpha_A \beta}{2 \pi}) +
    i \frac{2 \pi}{\alpha_A \beta}
    }{
    \Phi_L(e^{i \omega},1,i \frac{\alpha_B \beta}{2 \pi}) - 
    \Phi_L(e^{-i \omega},1,-i \frac{\alpha_B \beta}{2 \pi}) +
    i \frac{2 \pi}{\alpha_B \beta}
    }
    \end{equation}

\end{remark}

\subsection{First-order asymptotic expansion for general statistical games}

The indicated derivation supports conjecture \ref{conj:AsymptoticExpansionSubleadingTerm}.

Repeating essentially the same steps as in Section~\ref{sec:AsymptoticExpansionBinomialBayesianPriorApproximation}, but adopting the approximations in Section~\ref{sec:SPC_Approx} one can derive the following phase dependent first-order asymptotic term $\sampi_\gamma^\varphi$:

\begin{equation}
\label{eq:sampiPhiGammaResult}
    \begin{split}
        \sampi_\gamma^\varphi = &
        \left (
        \frac{n'(x_0^*)}{\beta}  -
        \frac{\kappa(x_0^*) \vartheta^{*,\varphi}_{\gamma,\spadesuit}}{\beta^2} 
        \right )
        \left (
        \frac{\widetilde{C}^\gamma_B{}'(\alpha_B,\varphi + 2 \pi \vartheta^{*,\varphi}_{\gamma,\spadesuit} / \beta )}
        {\widetilde{C}^\gamma_B(\alpha_B,\varphi + 2 \pi \vartheta^{*,\varphi}_{\gamma,\spadesuit} / \beta )} -
        \frac{\widetilde{C}^\gamma_A{}'(\alpha_A,\varphi + 2 \pi \vartheta^{*,\varphi}_{\gamma,\spadesuit} / \beta )}
        {\widetilde{C}^\gamma_A(\alpha_A,\varphi + 2 \pi \vartheta^{*,\varphi}_{\gamma,\spadesuit} / \beta )}
        \right ) 
        + \\
        & \frac{1}{2} \frac{\kappa(x_0^*)}{\beta^2}
        \left (
        \frac{\widetilde{C}^\gamma_B{}''(\alpha_B,\varphi + 2 \pi \vartheta^{*,\varphi}_{\gamma,\spadesuit} / \beta )}
        {\widetilde{C}^\gamma_B(\alpha_B,\varphi + 2 \pi \vartheta^{*,\varphi}_{\gamma,\spadesuit} / \beta )} -
        \frac{\widetilde{C}^\gamma_A{}''(\alpha_A,\varphi + 2 \pi \vartheta^{*,\varphi}_{\gamma,\spadesuit} / \beta )}
        {\widetilde{C}^\gamma_A(\alpha_A,\varphi + 2 \pi \vartheta^{*,\varphi}_{\gamma,\spadesuit} / \beta )}
        \right ) 
    \end{split}
\end{equation}

where $\widetilde{C}_\theta^\gamma(\alpha,\omega)$ are defined in equation \eqref{eq:CtildeAB}, and its derivatives are taken with respect to the first variable.

\begin{remark}
    The result can be applied to conjecture \ref{conj:AsymptoticExpansionSubleadingTerm} if we set $\gamma$ to $1$:

    \begin{equation}
        \sampi^\varphi = \sampi^\varphi_{\gamma = 1}
    \end{equation}
    
\end{remark}

\begin{remark}
    In higher-order asymptotic expansions -- and for further simplification of the results -- the following quantity appears to be useful:

    \begin{equation}
    \Xi_\gamma^\varphi(\alpha,\vartheta) =
    \log \left (
    \frac{
    \widetilde{C}^\gamma_A(\alpha,\varphi + 2 \pi \vartheta / \beta )
    }
    {
    \widetilde{C}^\gamma_B(\alpha-1,\varphi + 2 \pi \vartheta / \beta )
    }
    \right )
    \end{equation}

\end{remark}

\section{Natural multiplicative utility functions}
\label{appendix:UtilityFunctions}

The Bayesian game described in Section~\ref{subsection:BettingGame} defines the change of an agent's capital. However, it can not specify how the players translate their amount of capital to \emph{utility}.
Utility plays a central role in game theory because in the framework of expected utility theory \cite{plato:ExpectedUtility}, this is, by definition, the quantity for which the expectation a ``rational'' agent ought to maximize.

In the following sections, we will see that different choices of the utility function can quantitatively change the agent's strategy. Therefore, a careful adoption of the utility function is vital for suggesting optimal strategies for a gambling situation.

\subsection{A case for a nontrivial utility function}

Let us assume that there is an agent who tries to maximize her expected amount of capital after $n$ rounds of multiplicative betting games, starting with an initial capital $c_0$.

Assuming that in all rounds the agent is placing her capitals $p'$ portion to A and $1-p'$ portion to B, after $n$ rounds, her expected capital would be:

\begin{equation}
    C_n(p') = \sum_{k=0}^n p_k \  (2 \ p' \ c_0)^k (2 \ (1-p') \ c_0)^{n-k}
\end{equation}

Assuming that outcomes A and B appear independently with probability $P$ and $1-P$ respectively, this expression can be simplified:

\begin{equation}
    C_n(p') = (2 \ c_0)^n \ \left ( P \ p' + (1-P) \ (1-p') \right )^n
\end{equation}

which can be maximized with $p' = 1$ if $P>1/2$ and $p' = 0$ if $P<1/2$.\footnote{When $P=1/2$, all choices of $p'$ give the same expected capital. Because of continuity arguments, we will adopt the choice $p'=1/2$ when $P=1/2$} Formally:

\begin{equation}
    p'^*_0(P) =
    \begin{cases}
        0 & \text{if } P < 1/2 \\
        1/2 & \text{if } P = 1/2 \\
        1 & \text{if } P > 1/2 
    \end{cases}
\end{equation}

\begin{figure}[H]
    \centering
    \includegraphics[width=12 cm]{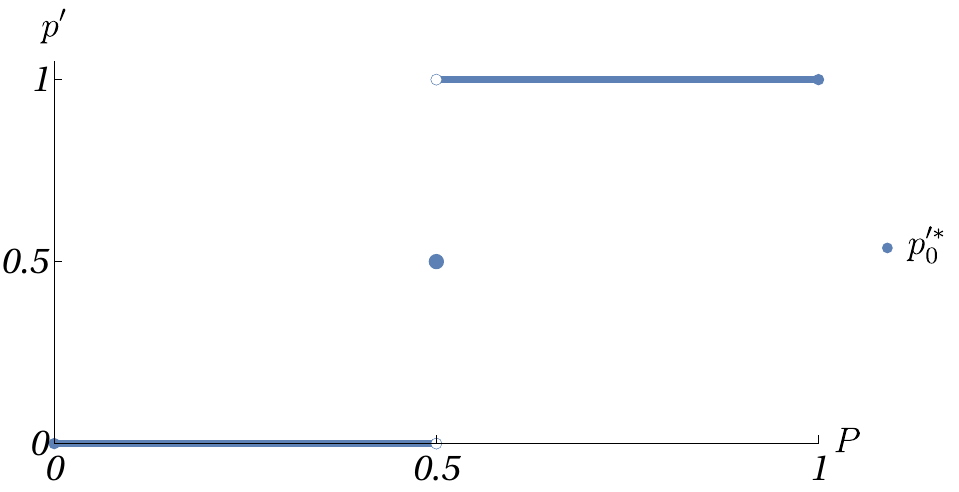}
    \caption{Strategy plot for the capital maximizing agent, $p'^*_0(P)$.}
    \label{fig:PolicyPlot_CapitalMax}
\end{figure}

However, following this strategy, the agent will be bankrupt after $n$ rounds with probability $1-\max(P,1-P)^n$. This means that the agent's capital will go to $0$ almost surely in the long run, and only an almost never-occurring but stellar gain will compensate for her losses.

This extreme strategy might be unacceptable for most human players, signalling that alternative utility functions should be explored and considered if we want to keep the principle of expected utility.

\subsection{Properties of utility functions}

In general a utility function is a mapping from the consequences of a game $\mathcal{C}$ \footnote{or states in the context of Reinforcement Learning \cite{book:RL}} to a utility set $\mathbb{U}$, which will be $\mathbb{R}$, $\mathbb{R}_{\le 0}$ or $\overline{\mathbb{R}}_{\le 0} = \mathbb{R}_{\le 0} \cup \{- \infty\}$ in the followings.

Formally, in general:

\begin{equation}
    u : \mathcal{C} \mapsto \mathbb{U}
\end{equation}

in particular:

\begin{equation}
    u : [0,1] \mapsto \mathbb{R}_{\le 0} \cup \{- \infty\}
\end{equation}

\subsubsection{Equivalent utility functions}

We will call two utility functions equivalent if they result in the same equilibrium strategies.
Positive affine transformations do not influence the strategy profiles in equilibrium, meaning that:

\begin{equation}
    u_1 \sim u_2 \iff \exists C \in \mathbb{R}, D \in \mathbb{R}_{+}, \forall c \in [0,1] \ u_1(c) = C + D \ u_2(c)
\end{equation}

If we denote the set of bounded utility functions by:

\begin{equation}
    B_\downmapsto([0,1]) = \{ u : [0,1] \mapsto \mathbb{R} \cup \{-\infty\} \ | \  \exists M \in \mathbb{R}, \forall c \in [0,1],  u(c) < M \}
\end{equation}

then we can introduce the factorized set of utility functions with respect to this equivalence relation:

\begin{equation}
    \tilde{B}_\downmapsto([0,1]) = \left [ B_\downmapsto([0,1]) \right ]_{\sim}
\end{equation}

\subsubsection{Possible properties of utility functions}

\begin{description}
   \item[Monotonicity] ``more is better'':
    \begin{description}
    \item[Strict Monotonicity] $c_1 < c_2 \implies u(c_1) < u(c_2)$ 
    \item[Week Monotonicity] $c_1 < c_2 \implies u(c_1) \le u(c_2)$
    \begin{description}
        \item[Burnability] there can be a ``burning'' operation, which can reduce a player's capital by any amount. This operation can change any utility function to a weekly monotone utility: $u^{+}(c) = \sup_{c' \le c} u(c')$
    \end{description}
    \end{description}
    
   \item[Strict Concavity] ``risk aversion'' $\mathbb{E}[u(X)] < u(\mathbb{E}[X])$

   \item[Continuity] ``marginal gains results marginal increase of utility'' $u \in C^0([0,1])$

   \item[Differentiability] ``marginal utilities are well defined'' $u \in C^1[0,1]$ (or $u \in C^r[0,1]$)

    \item[Diminishing returns] ``Diminishing Marginal Returns'' $c_1 < c_2 \implies u'(c_1) > u'(c_2)$ 

    \item[Scale free] ``the currency of the capital does not change the behavior'' $\forall 0 < r \le 1 \ \exists C \in \mathbb{R}, D \in \mathbb{R}_{+}, \forall c \in [0,1] \quad u(r \ c) = C + D \ u(c)$ 

    \item[Isoelasticity] \footnote{also known as constant relative risk aversion} for all degree of relative risk aversion \cite{book:Arrow,paper:Pratt} $\gamma \ge 0$, $\gamma \ne 1$
    \begin{equation}
        u_\gamma(c) = \frac{c^{1-\gamma}-1}{1-\gamma}
    \end{equation}

    \item[Logarithmic utility] can also be viewed as a special case of the isoelastic utility, as $\gamma \to 1$
    \begin{equation}
        u_1(c) = \log(c)
    \end{equation}
   
\end{description}

In the following, we will assume as little as possible about the inherently subjective utility functions. However, we will argue that isoelastic and, in particular, logarithmic utilities can emerge naturally as instrumental goals in a repeatable multiplicative gambling situation.

\subsection{Derivation of instrumental goals}

\subsubsection{Repeatable multiplicative gambles}

Let us assume that an agent can play a multiplicative ``double or nothing'' betting game with outcome probabilities $P$, $1-P$, for $n$ rounds.
Initially, she starts with a small $2^{-n}$ capital, meaning that in the final round, the range of her possible capital is the interval $[0,1]$.
We will make as few further assumptions about the agent's final subjective utility values as possible. At this stage, we only assume that it is a function bounded from above:

\begin{equation}
    u_0 \in \tilde{B}_\downmapsto([0,1])
\end{equation}

A natural approach to find the best strategy in this game is to search for a subgame perfect optimal strategy \cite{book:GameTheory}.
In practice, this means that we can recursively solve the
game. 
Assuming that one step before the last the agent has $c_1$ capital, we can search for the optimal splitting ratio $p'_1(c_1)$.

If we can find all optimal strategies as a function of $c_1$, then we can calculate all the optimal expected utilities for the capitals one step before the end.
Formally, we can introduce the following operator:

\begin{equation}
    \mathbfcal{B}[u](c) = \sup_{p' \in [0,1]} \left ( P \ u(2 \ p' \ c) + (1-P) \ u(2 \ (1-p') \ c) \right )
\end{equation}

which is essentially a Bellman operator \cite{book:RL} in the framework of Reinforcement Learning.

By this operator, we can propagate the final subjective utilities back to the previous state, where the possible capital is in the interval $[0,1/2]$.

\begin{equation}
    \mathbfcal{B} : \tilde{B}_\downmapsto([0,2^{-n}]) \mapsto  \tilde{B}_\downmapsto([0,2^{-n-1}])
\end{equation}

If we introduce a scaling (or dilatation) operator:

\begin{equation}
\label{eq:Dilatation}
    \mathbfcal{D}_r[u](c) = u(r \ c)
\end{equation}

then we can reduce our $n$ step gamble to an $n-1$ step gamble with a modified ``instrumentalized'' \cite{book:ValueTheory} utility function $u_1$:

\begin{equation}
    \mathbfcal{I} = \mathbfcal{D}_2 \circ \mathbfcal{B}
\end{equation}

\begin{equation}
    \mathbfcal{I}[u_n](c) = u_{n+1}(c)
\end{equation}

This newly defined instrumentalisation operator $\mathbfcal{I} : \tilde{B}_\downmapsto([0,1]) \mapsto \tilde{B}_\downmapsto([0,1])$ has the following explicit definition:

\begin{equation}
    \mathbfcal{I}[u](c) = \sup_{p' \in [0,1]} \left ( P \ u(p' \ c) + (1-P) \ u((1-p') \ c) \right )
\end{equation}

In the following, we will argue that the instrumentalisation operator will converge to an isoelastic utility function for a wide range of initial subjective utility functions.

\subsubsection{Properties of isoelastic utility functions}

Isoelastic utility functions have several remarkable properties:

    \begin{equation}
        u_\gamma(c) = 
        \begin{cases}
            \frac{c^{1-\gamma}-1}{1-\gamma} & \text{if } \gamma \ne 1 \\
            \log(c) & \text{if } \gamma = 1 
    \end{cases}
    \end{equation}

\begin{figure}[H]
    \centering
    \includegraphics[width=12 cm]{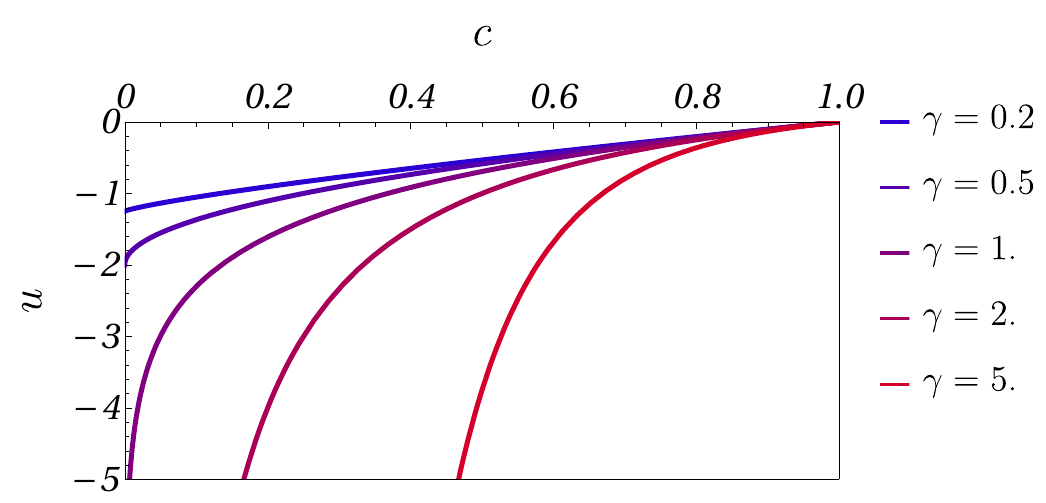}
    \caption{Isoelastic utility functions, for several relative risk aversion ($\gamma$) parameters: $u_\gamma(c)$.}
    \label{fig:IsoelasticUtility}
\end{figure}

It is easy to observe that for all $\gamma > 0$, $u_\gamma$ is a strictly monotone, strictly concave, and infinitely differentiable.

\begin{theorem}
    Isoelastic utility functions are scale-free:

    \begin{equation}
        \mathbfcal{D}_r[u_\gamma] \sim u_\gamma
    \end{equation}

    where $\mathbfcal{D}_r$ is the scaling (or dilatation) operator defined in eq. \eqref{eq:Dilatation}.
    
\end{theorem}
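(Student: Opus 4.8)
The plan is to show that applying the dilatation operator $\mathbfcal{D}_r$ to $u_\gamma$ produces a function that differs from $u_\gamma$ only by a positive affine transformation, which is exactly the equivalence relation $\sim$ defined earlier in the excerpt. First I would treat the generic case $\gamma \ne 1$, where $u_\gamma(c) = (c^{1-\gamma}-1)/(1-\gamma)$. Substituting $rc$ for $c$ gives
\begin{equation}
    \mathbfcal{D}_r[u_\gamma](c) = u_\gamma(r\,c) = \frac{(r\,c)^{1-\gamma}-1}{1-\gamma} = \frac{r^{1-\gamma}\,c^{1-\gamma}-1}{1-\gamma}.
\end{equation}
The key algebraic step is to rewrite the numerator so that the factor $c^{1-\gamma}-1$ appears explicitly, isolating a multiplicative constant and an additive constant. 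I would write $r^{1-\gamma} c^{1-\gamma} - 1 = r^{1-\gamma}(c^{1-\gamma}-1) + (r^{1-\gamma}-1)$, which yields
\begin{equation}
    \mathbfcal{D}_r[u_\gamma](c) = r^{1-\gamma}\,\frac{c^{1-\gamma}-1}{1-\gamma} + \frac{r^{1-\gamma}-1}{1-\gamma} = D\,u_\gamma(c) + C,
\end{equation}
with $D = r^{1-\gamma}$ and $C = (r^{1-\gamma}-1)/(1-\gamma)$.

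Next I would verify that these constants satisfy the requirements of the equivalence relation, namely $D \in \mathbb{R}_{+}$ and $C \in \mathbb{R}$. Since $r \in (0,1]$ and $\gamma > 0$, the quantity $r^{1-\gamma}$ is a positive real number for every admissible exponent $1-\gamma$, so $D > 0$; and $C$ is manifestly a finite real number. This matches precisely the definition
\begin{equation}
    u_1 \sim u_2 \iff \exists C \in \mathbb{R}, D \in \mathbb{R}_{+}, \forall c \in [0,1] \ u_1(c) = C + D \ u_2(c),
\end{equation}
so $\mathbfcal{D}_r[u_\gamma] \sim u_\gamma$ in the case $\gamma \ne 1$.

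Finally I would handle the boundary case $\gamma = 1$, where $u_1(c) = \log(c)$, separately, since the formula used above degenerates. Here the computation is even simpler and more transparent: $\mathbfcal{D}_r[u_1](c) = \log(r\,c) = \log(c) + \log(r) = u_1(c) + \log(r)$, which is an additive shift by the constant $\log(r)$ (with multiplicative factor $D=1>0$), again exhibiting the required equivalence. I anticipate the main obstacle to be purely presentational rather than conceptual: one must take care that the additive/multiplicative split is performed correctly and that the sign and positivity of $r^{1-\gamma}$ are argued cleanly for all $\gamma>0$ including $\gamma$ on either side of $1$, so that the single statement covers the whole isoelastic family uniformly. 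No genuine analytic difficulty arises, since the result is an identity verified by direct substitution; the only subtlety is confirming that the constants land in the correct sets demanded by the equivalence relation.
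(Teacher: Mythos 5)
Your proof is correct and takes essentially the same approach as the paper: a direct substitution showing $\mathbfcal{D}_r[u_\gamma](c) = D\,u_\gamma(c) + C$ with $D = r^{1-\gamma}$ and $C = (r^{1-\gamma}-1)/(1-\gamma)$, with the logarithmic case handled as a pure additive shift. One point in your favour: the paper's proof states $D = r^{\gamma-1}$, which is a typo --- your value $D = r^{1-\gamma}$ is the one that actually makes the identity hold (both are positive, so the conclusion $\mathbfcal{D}_r[u_\gamma] \sim u_\gamma$ is unaffected).
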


\begin{proof}
    Direct calculation shows that
    
    \begin{equation}
        \mathbfcal{D}_r[u_\gamma] = C + D \ u_\gamma(c)
    \end{equation}

    with parameters:

    \begin{equation}
        C = \frac{r^{1-\gamma}-1}{1-\gamma}, \quad D = r^{\gamma -1}
    \end{equation}
\end{proof}

\begin{theorem}
    All scale-free and twice differentiable utility functions are isoelastic.
    
\end{theorem}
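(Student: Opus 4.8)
The plan is to exploit the scale-free functional equation directly. By definition, $u$ being scale-free means that for every $r \in (0,1]$ there are constants $C(r) \in \mathbb{R}$ and $D(r) > 0$ with
\begin{equation}
    u(rc) = C(r) + D(r)\, u(c), \qquad \forall c \in [0,1].
\end{equation}
I will assume throughout that $u$ is non-constant (equivalently, in view of the Monotonicity desideratum, strictly increasing); a constant $u$ satisfies the equation with $D\equiv 1$ but is not isoelastic, so some nondegeneracy must be imposed. The idea is to turn the multiplicative action of the scaling $r$ into an additive (translation) action by passing to logarithmic variables, and then to use the twice-differentiability hypothesis to convert the resulting translation equation into a constant-coefficient linear ODE, whose solutions are exactly the exponentials together with the linear borderline case. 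These correspond precisely to the power-law and logarithmic profiles defining $u_\gamma$.

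First I would differentiate the functional equation with respect to $c$, which is legitimate since $u$ is differentiable, to obtain $r\,u'(rc) = D(r)\,u'(c)$. Setting $c=1$ gives $D(r) = r\,u'(r)/u'(1)$, so $D$ inherits the regularity of $u'$, and $u'(1)\neq 0$ because $u$ is non-constant. Next I substitute $c = e^{-s}$, $r = e^{-t}$ and set $\phi(s) = u(e^{-s})$, $\psi(s) = \phi'(s) = -e^{-s}u'(e^{-s})$; the differentiated equation becomes
\begin{equation}
    \psi(s+t) = \tilde{D}(t)\,\psi(s), \qquad \tilde{D}(t) = D(e^{-t}),
\end{equation}
for $s,t \ge 0$. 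Putting $s=0$ shows $\tilde{D}(t) = \psi(t)/\psi(0)$. Because $u\in C^2$ we have $\psi\in C^1$, so I may differentiate once more in $s$ and set $s=0$ to get $\psi'(t) = \tilde{D}(t)\,\psi'(0)$; dividing the two relations yields $\psi'(t)/\psi(t) = \mu$ with $\mu = \psi'(0)/\psi(0)$ constant. Hence $\psi$ solves $\psi' = \mu\psi$, so $\psi(s) = \psi(0)e^{\mu s}$, and integrating recovers $\phi$.

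Finally I would undo the substitution. When $\mu \neq 0$, $\phi(s) = \phi(0) + \tfrac{\phi'(0)}{\mu}(e^{\mu s}-1)$, and with $c=e^{-s}$ and the identification $\gamma = 1+\mu$ this reads
\begin{equation}
    u(c) = \phi(0) + u'(1)\,\frac{c^{1-\gamma}-1}{1-\gamma} = \phi(0) + u'(1)\,u_\gamma(c),
\end{equation}
using $\phi'(0) = -u'(1)$; the borderline $\mu=0$ case gives $u(c) = \phi(0) + u'(1)\log c$, i.e. $u \sim u_1$. In either case $u = C + D\,u_\gamma$ with $D = u'(1) > 0$, so $u \sim u_\gamma$ for $\gamma = 1+\mu$, which is the claim. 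As a consistency check, the constancy of $\mu$ is equivalent to constancy of the Arrow--Pratt relative risk aversion $-c\,u''(c)/u'(c) \equiv \gamma$, so \emph{scale-free} is being identified with \emph{constant relative risk aversion}.

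The main obstacle I anticipate is not the ODE computation but the regularity and degeneracy bookkeeping: one must (i) justify that $u'(1)\neq 0$ and that $\psi$ never vanishes, so that the divisions producing $\mu$ are valid on all of $[0,1]$ — this is where non-constancy/strict monotonicity is essential, and where I would handle the endpoints $c=0,1$ by working on the interior and extending by continuity — and (ii) confirm that the sign constraint $D>0$ in the equivalence relation $\sim$ forces the increasing branch, so that a decreasing twice-differentiable scale-free function is equivalent to $-u_\gamma$ rather than $u_\gamma$ and must be excluded by the monotonicity convention. The twice-differentiability hypothesis is exactly what lets me avoid the measurability subtleties of the multiplicative Cauchy equation $\psi(s+t)\,\psi(0)=\psi(s)\,\psi(t)$, replacing them by the elementary linear ODE.
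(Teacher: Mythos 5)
Your proof is correct, and it takes a genuinely different route from the paper's, in a way that is worth noting. The paper also passes to logarithmic variables, but then treats scale invariance as an \emph{infinitesimal} (Lie-type) symmetry: writing $w(g)=\log u'(e^g)$, it expands the invariance condition at $r=1+\epsilon$ to first order in $\epsilon$ and reads off the ODE $w'(g)=-\gamma$, with $\gamma$ entering through $D'(1)$. That is short and conceptually transparent, but it silently assumes that $r\mapsto D(r)$ is differentiable at $r=1$ (and that $u'>0$, so that $w$ is even defined), neither of which is part of the scale-free definition. You instead differentiate the functional equation in $c$ and evaluate at $c=1$ to get $D(r)=r\,u'(r)/u'(1)$, so $D$ \emph{inherits} its regularity from $u'$ rather than having it postulated; you then reduce the translation equation $\psi(s+t)=\tilde D(t)\,\psi(s)$ to the ODE $\psi'=\mu\psi$ by differentiating in $s$ at $s=0$. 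This closes the regularity gap in the paper's argument, and your observation that $\psi$ never vanishes (since $\psi(t)=\tilde D(t)\psi(0)$ with $\tilde D>0$ and $\psi(0)=-u'(1)\neq 0$ for non-constant $u$) legitimizes every division without importing monotonicity as an extra axiom. The trade-off: the paper's version displays the geometric content (scale invariance $\Rightarrow$ constant relative risk aversion in one infinitesimal computation), while yours is the more self-contained and rigorous derivation; both arguments need the same nondegeneracy convention to exclude constant $u$, which the bare theorem statement omits, and your closing identification of $\mu+1$ with the Arrow--Pratt coefficient makes the equivalence explicit.
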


\begin{proof}
    To simplify the proof, it is better to change from capital variable ($c$) to growth-related variable ($g$):
    
    \begin{equation}
        g = \log(c), \quad c = e^g
    \end{equation}

    And to introduce a new marginal utility-related quantity:
    
    \begin{equation}
    \label{eq:w(g)}
        w(g) = \text{\bf \dh}[u](g)  =  \log(u'(e^g))
    \end{equation}

    \begin{equation}
        u(c) = \text{\bf \c{S}}[w](c) =  \int_{1}^{c} e^{w(\log(c'))} d c'
    \end{equation}

    These modified differentiating ($\text{\bf \dh}$) and integrating (\text{\bf \c{S}}) operators satisfy the following identities:

    \begin{equation}
        \left ( \text{\bf \c{S}} \circ \text{\bf \dh} \right ) [u] \sim u, \quad \left ( \text{\bf \dh} \circ \text{\bf \c{S}}  \right ) [w] = w
    \end{equation}

    A positive affine transformation on utility is only an additive transformation for $w$:

    \begin{equation}
        u(c) \to C + D \ u(c), \implies w(g) \to \log(D) + w(g)
    \end{equation}

    in a similar fashion, scaling of the utility can be ``pushforward'' \cite{book:SmoothManifolds} to $w$:

    \begin{equation}
        u(c) \to \mathbfcal{D}_r[u](c) = u(r c) \implies w(g) \to \mathbfcal{D}^{\text{\tiny \FiveStarLines}}_r[w](g) = \log(r) + w(g + \log(r))
    \end{equation}

    The requirement for the scale invariance of the utility can be rewritten to the $w$ variable:

    \begin{equation}
        \mathbfcal{D}^{\text{\tiny \FiveStarLines}}_r[w](g) = \log(D(r)) + w(g)
    \end{equation}

    applying an infinitesimal scale transformation $r=1+\epsilon$ and keeping only the first-order terms results in the following equation:

    \begin{equation}
        \epsilon + w(g) + w'(g) \ \epsilon = \log(D(1)) + \alpha \ \epsilon + w(g) + \mathcal{O}(\epsilon^2)
    \end{equation}

    where $\alpha = D'(1)/D(1)$.
    This can be satisfied if $D(1) = 1$ (which is a natural requirement, meaning that if we do not scale at all, then we do not need to transform the utility function either) and if $w$ fulfils the following simple differential equation:

    \begin{equation}
        w'(g) = \alpha - 1 = - \gamma 
    \end{equation}

    This determines the utility up to two integration constants:

    \begin{equation}
        w(g) = - \gamma \ g + A
    \end{equation}

    \begin{equation}
        u(c) = B + \int_{1}^{c} e^{w(\log(c'))} dc'
    \end{equation}

    \begin{equation}
        u(c) = B + e^A \ \frac{c^{1- \gamma} - 1}{1- \gamma}
    \end{equation}

    Which is, by definition, equivalent to an isoelastic function for any choice of $A$ and $B$.
    
\end{proof}

\begin{theorem}
    Isoelastic functions are invariant under the instrumentalisation (and Bellmann) operator:
    \begin{equation}
        \mathbfcal{I}[u_\gamma] \sim u_\gamma
    \end{equation}

    \begin{equation}
        \mathbfcal{B}[u_\gamma] \sim u_\gamma
    \end{equation}
\end{theorem}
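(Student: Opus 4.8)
The plan is to exploit the factorisation $\mathbfcal{I} = \mathbfcal{D}_2 \circ \mathbfcal{B}$ together with the scale-free property already established for isoelastic utilities (the theorem $\mathbfcal{D}_r[u_\gamma] \sim u_\gamma$). Since each $\mathbfcal{D}_r$ maps the positive-affine equivalence class of $u_\gamma$ to itself, it suffices to prove invariance for one of the two operators and transport it to the other. Concretely, I would compute $\mathbfcal{I}[u_\gamma]$ directly, and then recover $\mathbfcal{B}[u_\gamma]$ from the relation $\mathbfcal{B}[u_\gamma] = \mathbfcal{D}_{1/2}[\mathbfcal{I}[u_\gamma]]$, which lies in the same class because $\mathbfcal{D}_{1/2}$ commutes with positive affine maps and fixes the class of $u_\gamma$ by scale-freeness.

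The key step is the inner optimisation. Writing $g(p') = P\, u_\gamma(p'c) + (1-P)\, u_\gamma((1-p')c)$, I would first note that for every $\gamma>0$ this is strictly concave in $p'$, being a positive combination of strictly concave functions precomposed with affine maps. Solving the first-order condition $P(p')^{-\gamma} = (1-P)(1-p')^{-\gamma}$ gives the unique interior maximiser
\begin{equation}
    p^* = \frac{P^{1/\gamma}}{P^{1/\gamma}+(1-P)^{1/\gamma}},
\end{equation}
which is exactly the splitting ratio $p'^*_\gamma(P)$ found earlier and, crucially, is \emph{independent of} $c$.

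The substitution then closes the argument. Because $p^*$ does not depend on $c$ and $u_\gamma(\lambda c) = \frac{\lambda^{1-\gamma}c^{1-\gamma}-1}{1-\gamma}$ factors multiplicatively in the $c^{1-\gamma}$ term, plugging $p^*$ back yields
\begin{equation}
    \mathbfcal{I}[u_\gamma](c) = \frac{K\,c^{1-\gamma}-1}{1-\gamma} = K\,u_\gamma(c) + \frac{K-1}{1-\gamma}, \qquad K = P\,(p^*)^{1-\gamma} + (1-P)\,(1-p^*)^{1-\gamma}.
\end{equation}
Since $p^*, 1-p^* \in (0,1)$ and $P \in (0,1)$ force $K>0$, this is a genuine positive affine image, so $\mathbfcal{I}[u_\gamma] \sim u_\gamma$; and then $\mathbfcal{B}[u_\gamma] = \mathbfcal{D}_{1/2}[\mathbfcal{I}[u_\gamma]] \sim \mathbfcal{D}_{1/2}[u_\gamma] \sim u_\gamma$. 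The logarithmic case $\gamma=1$ I would dispatch separately: the optimiser is $p^*=P$, giving $\mathbfcal{I}[u_1](c) = \log c + P\log P + (1-P)\log(1-P)$, again a positive affine image of $u_1 = \log$.

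The main obstacle I anticipate is not the algebra but the boundary analysis underpinning the claim that the supremum is attained at the interior critical point. One must separate the regime $\gamma>1$, where $u_\gamma(p'c)\to-\infty$ as $p'\to 0$ and interior maximisation is automatic, from $0<\gamma<1$, where $u_\gamma(0)$ is finite. In the latter regime the resolution is that $g'(p') = c^{1-\gamma}\bigl(P(p')^{-\gamma} - (1-P)(1-p')^{-\gamma}\bigr)$ tends to $+\infty$ as $p'\to 0^+$ and to $-\infty$ as $p'\to 1^-$, so strict concavity still forces the unique maximiser inside $(0,1)$. Checking that $\mathbfcal{I}$ indeed maps into $\tilde{B}_\downmapsto([0,1])$ — i.e. that the transformed utility remains bounded above — and that $K>0$ holds uniformly, completes the verification that the equivalence is genuine rather than degenerate.
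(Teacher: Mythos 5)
Your proposal is correct, and its core is the same as the paper's proof: the identical first-order condition $P(p')^{-\gamma}=(1-P)(1-p')^{-\gamma}$, the same $c$-independent maximiser $p^*$, and the same back-substitution exhibiting $\mathbfcal{I}[u_\gamma]$ as a positive affine image of $u_\gamma$; indeed your constant $K=P\,(p^*)^{1-\gamma}+(1-P)\,(1-p^*)^{1-\gamma}$ simplifies to exactly the paper's $D_\gamma=\left(P^{1/\gamma}+(1-P)^{1/\gamma}\right)^{\gamma}$. Where you go beyond the printed proof is in filling steps it leaves implicit: the paper certifies the maximum only by evaluating the second derivative at the critical point, whereas you argue via strict concavity together with the boundary behaviour of the derivative, which is what is genuinely needed for $0<\gamma<1$, where $u_\gamma(0)$ is finite and a boundary maximum must be excluded; you treat $\gamma=1$ separately, which the paper's closed-form expression (carrying $1-\gamma$ in the denominator) does not literally cover; and you derive the second claim $\mathbfcal{B}[u_\gamma]\sim u_\gamma$ explicitly from the first by composing with a dilation and invoking the scale-freeness theorem, a step the paper's proof omits entirely (it only computes $\mathbfcal{I}[u_\gamma]$ and stops). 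Whether one writes the dilate as $\mathbfcal{D}_{1/2}$ or $\mathbfcal{D}_{2}$ is immaterial, since $\mathbfcal{D}_r[u_\gamma]\sim u_\gamma$ holds for every $r>0$. These are refinements of the same argument rather than a different route, and they make your version slightly more complete than the paper's.
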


\begin{proof}
    For isoelastic utility functions, there is an optimal $p'_\gamma$ for which the expected utility is maximal:
    \begin{equation}
        \frac{\partial}{\partial p'} \left ( P \ u_\gamma(p' \ c) + (1-P) \ u_\gamma((1-p') \ c) \right ) \Bigr|_{p'=p'_\gamma} = 0
    \end{equation}
    By introducing:
        \begin{equation}
        v_\gamma(c) = \frac{\partial}{\partial c} u_\gamma(c) = c^{-\gamma}
    \end{equation}
    We get:
    \begin{equation}
        c \ \left ( P \ v_\gamma(p'_\gamma c) - (1-P) \ v_\gamma((1-p'_\gamma) c) \right ) = 0
    \end{equation}
    Assuming $c>0$
    \begin{equation}
        \frac{P}{{p'}_\gamma^{\gamma}} - \frac{1-P}{(1-p'_\gamma)^{\gamma}} = 0
    \end{equation}
    The solution for $p'_\gamma$ is:
    \begin{equation}
        p'_\gamma = \frac{P^{1/\gamma}}{P^{1/\gamma}+(1-P)^{1/\gamma}}
    \end{equation}
    To prove that it is indeed a maximum, one can check its second derivative:
    \begin{equation}
        \frac{\partial^2}{\partial {p'}^2} \left ( P \ u_\gamma(p' \ c) + (1-P) \ u_\gamma((1-p') \ c) \right ) \Bigr|_{p'=p'_\gamma} = d_\gamma
    \end{equation}
    Straightforward calculation gives that:
    \begin{equation}
        d_\gamma = - \frac{\gamma}{c^{\gamma + 1}} \frac{\left ( P^{1/\gamma} + (1-P)^{1/\gamma} \right )^{\gamma + 2}}{\left ( P (1-P) \right )^{1/\gamma}}
    \end{equation}
    which is negative for all $\gamma > 0$, signaling a maximum.

    This means that:
    \begin{equation}
    \mathbfcal{I}[u_\gamma](c) = P \ u_\gamma(p'_\gamma \ c) + (1-P) \ u_\gamma((1-p'_\gamma) \ c)
    \end{equation}

    \begin{equation}
    \mathbfcal{I}[u_\gamma](c) = \frac{c^{1-\gamma} \left ( P^{1/\gamma} + (1-P)^{1/\gamma} \right )^\gamma -1}{1-\gamma}
    \end{equation}

    For which one can find a positive affine transformation, making it equivalent to the original isoelastic utility $u_\gamma$:

    \begin{equation}
        \mathbfcal{I}[u_\gamma](c) = C_\gamma + D_\gamma \ u_\gamma(c)
    \end{equation}
    
    \begin{equation}
        C_\gamma = - \frac{1-\left ( P^{1/\gamma} + (1-P)^{1/\gamma} \right )^\gamma}{1-\gamma}
        \quad
        D_\gamma = \left ( P^{1/\gamma} + (1-P)^{1/\gamma} \right )^\gamma
    \end{equation}
\end{proof}

The relative risk aversion parameter, $\gamma$, influences $p'$, the portion of capital placed to outcome A; therefore, different isoelastic utility functions can observably change the agent's behaviour. 

\begin{figure}[H]
    \centering
    \includegraphics[width=10 cm]{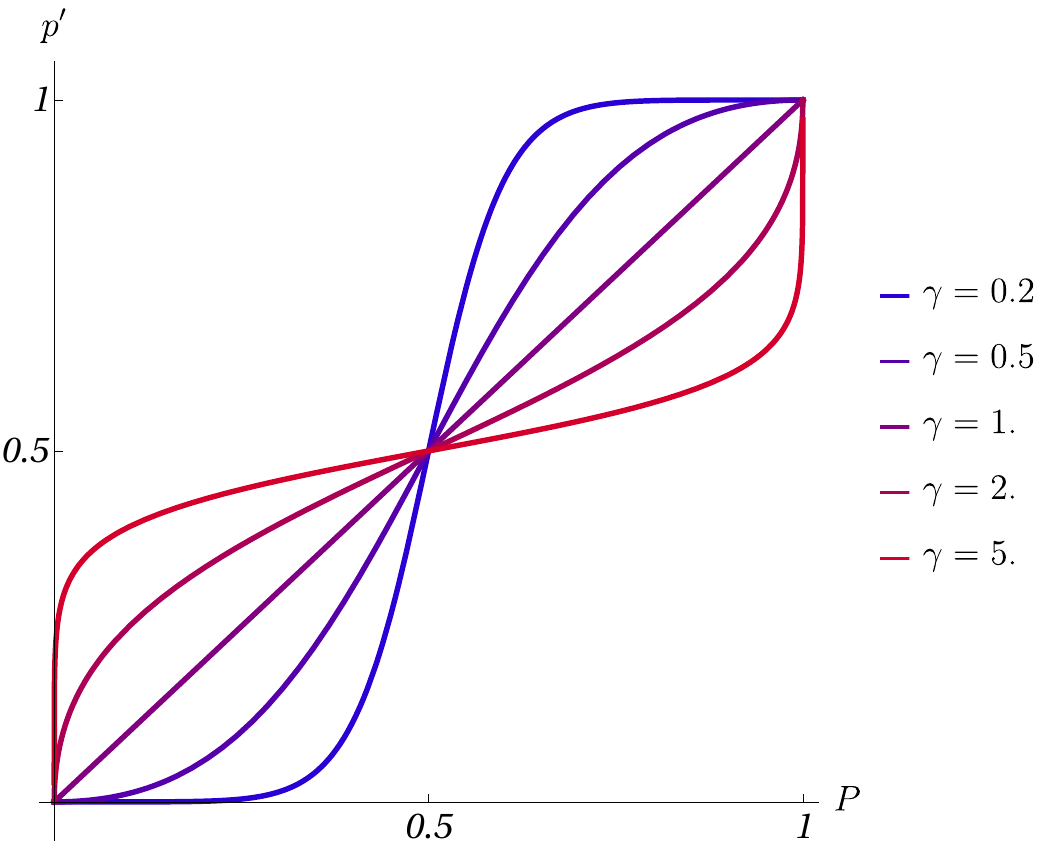}
    \caption{Strategy plot for expected utility maximizing agent, which adopt isoelastic utilities: $p'_\gamma(P)$.}
    \label{fig:PolicyPlot_0_gamma}
\end{figure}

\begin{theorem}
    For any subjective utility function, which is twice continuously differentiable, strictly monotone increasing, strictly concave, its local relative risk aversion parameter is bounded, strictly positive and converges to a finite $\gamma_\downarrow \in \mathbb{R}_{>0}$ as $c \to 0$, will converge to an isoelastic utility function under repeated application of the instrumentalisation operator:

    \begin{equation}
        \lim_{n \to \infty} \mathbfcal{I}^n[u_0] \sim u_{\gamma_\downarrow}
    \end{equation}
\end{theorem}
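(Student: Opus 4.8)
The plan is to reduce the statement to a convergence result for the Arrow–Pratt coefficient of relative risk aversion and then integrate back. Writing $u_n = \mathbfcal{I}^n[u_0]$ and $\gamma_n(c) = -c\,u_n''(c)/u_n'(c)$, the earlier theorems already tell us that the isoelastic profile $\gamma \equiv \gamma_\downarrow$ is a fixed point of the flow (this is exactly the invariance $\mathbfcal{I}[u_\gamma]\sim u_\gamma$), so it suffices to prove that $\gamma_n(c)\to\gamma_\downarrow$ uniformly on compact subsets of $(0,\infty)$. Once that is in hand, integrating the identity $\gamma_n(c) = -\tfrac{d}{d\log c}\log u_n'(c)$ twice and normalizing each $u_n$ by the affine freedom (say $u_n(1)=0$, $u_n'(1)=1$) converts uniform convergence of $\gamma_n$ to the constant $\gamma_\downarrow$ into convergence of the chosen representatives to $u_{\gamma_\downarrow}$, i.e. $\lim_n \mathbfcal{I}^n[u_0]\sim u_{\gamma_\downarrow}$.

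The heart of the argument is an exact transformation rule for $\gamma$ under $\mathbfcal{I}$. For a strictly concave $C^2$ utility the optimal split $p'^*(c)$ is interior and is characterized by the first-order condition $P\,u'(p'^* c) = (1-P)\,u'((1-p'^*)c)$; by the implicit function theorem $p'^*$ is differentiable. Writing $a = p'^*(c)\,c$ and $b = (1-p'^*(c))\,c$, the envelope theorem gives $(\mathbfcal{I}[u])'(c) = P\,u'(a) = (1-P)\,u'(b)$. Differentiating this identity together with the first-order condition and the constraint $a+b=c$, a short elimination yields the clean formula
\begin{equation}
\gamma_{n+1}(c) = \left(\frac{p'^*(c)}{\gamma_n(a)} + \frac{1-p'^*(c)}{\gamma_n(b)}\right)^{-1}, \qquad a,b<c .
\end{equation}
Thus the transformed relative risk aversion at scale $c$ is the weighted harmonic mean of the two values of $\gamma_n$ at strictly smaller capitals $a,b$.

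From here convergence is a monotone squeeze driven by scale contraction. A harmonic mean lies between its smallest and largest argument, so with $m(c)=\inf_{0<c'\le c}\gamma(c')$ and $M(c)=\sup_{0<c'\le c}\gamma(c')$, an induction using the formula gives $m(\rho^n c)\le \gamma_n(c)\le M(\rho^n c)$ for a factor $\rho\in(0,1)$. The existence of a uniform $\rho$ comes from the hypothesis that $\gamma$ is bounded, $0<\gamma_{\min}\le\gamma(c)\le\gamma_{\max}$: the same bound is preserved along the flow (harmonic means stay in $[\gamma_{\min},\gamma_{\max}]$), and the power-law estimates $(b/a)^{\gamma_{\min}}\le u_n'(a)/u_n'(b)\le (b/a)^{\gamma_{\max}}$ applied to the first-order condition force $p'^*$ into a fixed subinterval of $(0,1)$, hence $\max(p'^*,1-p'^*)\le\rho<1$ and $a,b\le\rho c$. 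Since $\rho^n c\to0$ and $\gamma(c)\to\gamma_\downarrow$ as $c\to0$, both $m(\rho^n c)$ and $M(\rho^n c)$ converge to $\gamma_\downarrow$, and the squeeze yields $\gamma_n(c)\to\gamma_\downarrow$, uniformly on compacts by the monotonicity of $m$ and $M$.

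The step I expect to be the main obstacle is not the squeeze itself but the regularity bookkeeping that makes it legitimate: one must verify that $\mathbfcal{I}$ preserves the class of strictly concave $C^2$ utilities (so that $p'^*$ is differentiable and the harmonic-mean identity holds at every stage) and that the optimizer stays uniformly interior, so that $\rho$ is genuinely independent of $n$ and $c$. The degenerate boundary behaviour of $u_n'$ near $c=0$ (for $\gamma_\downarrow>1$ the marginal utility blows up) must be handled carefully when justifying the envelope differentiation and the existence of the maximizer; controlling this through the global bound on $\gamma_n$, rather than through any closed form for $p'^*$, is the delicate part of the proof.
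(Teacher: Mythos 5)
Your proposal is correct and takes essentially the same approach as the paper's proof: the paper likewise derives the weighted harmonic-mean transformation rule for the relative risk aversion function (in log-capital coordinates, via its pushforward operator $\mathbfcal{J}$), notes that the bounds on $\gamma$ are preserved under this averaging while the arguments are shifted toward zero by a uniformly bounded contraction factor, and concludes by squeezing against the limit $\gamma_\downarrow$ as $c \to 0$. The differences are cosmetic only (working in $c$ rather than $g=\log c$, and your explicit normalization step for recovering $u_{\gamma_\downarrow}$, which the paper handles through its $\mathbfcal{U}$ and $\mathbfcal{R}$ operator pair).
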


\begin{proof}
    Introducing a few new functions and operators is useful to prove this theorem.

    We will introduce the following growth-dependent relative risk aversion function $\gamma(g): (-\infty, 0] \mapsto \mathbb{R}$: 

    \begin{equation}
        \gamma(g) = - w'(g) = - \frac{\partial}{\partial g} \log(u'(e^g)) =  - e^g \ \frac{ u''(e^g)}{u'(e^g)}
    \end{equation}

    this can be formulated using differential operators:

    \begin{equation}
        \gamma = - \bm{\partial} [w] = - \bm{\partial} \circ \text{\bf \dh} [u] = \mathbfcal{R} [u]
    \end{equation}

    A nice property of the relative risk aversion function is that from the knowledge of $\gamma(g)$, one can recover a utility function, which is unique up to a linear affine transformation.

    \begin{equation}
        u(c) = \int_{1}^{c} e^{-\int_{1}^{\log(c')} \gamma(g') d g'} d c' 
        \sim  A + \int_{1}^{c} e^{B - \int_{1}^{\log(c')} \gamma(g') d g'} d c' 
    \end{equation}

    This can be formulated using integral operators:

    \begin{equation}
        u = \text{\bf \c{S}} [w] = \text{\bf \c{S}} \circ \left ( - \text{\bf S} \right ) [\gamma] = \mathbfcal{U} [\gamma] 
    \end{equation}

    The defined differential and integral operators satisfy the following properties:

    \begin{equation}
        \left ( \mathbfcal{U} \circ \mathbfcal{R} \right ) [u] \sim u, \quad \left ( \mathbfcal{R} \circ \mathbfcal{U}  \right ) [\gamma] = \gamma
    \end{equation}

    This construction allows us to prove the convergence of utility functions by showing the convergence of $\gamma_n(g)$ functions.
    To do this, we can define a ``second order pushforward'' instrumentalisation operator $\mathbfcal{J}$ by the following commuting diagram:
    
    {
    \begin{center}
    
        \tikzcdset{every label/.append style = {font = \normalsize}}
        \begin{tikzcd}[row sep=32 pt, column sep=62 pt]
            u_n \arrow[r, "\mathbfcal{I}"] \arrow[d, "\mathbfcal{R}", shift left] 
            & u_{n+1} \arrow[d, "\mathbfcal{R}", shift left]  \\
            \gamma_n \arrow[r, "\mathbfcal{J}"] \arrow[u, "\mathbfcal{U}", shift left]
            &  \gamma_{n+1} \arrow[u, "\mathbfcal{U}", shift left]
        \end{tikzcd}

    \end{center}
    }

    \paragraph{Construction of the $\mathbfcal{J}$ operator:}

    As a first step, we can determine $p'(c)$ and $p'(g)$ functions:

    \begin{equation}
        \frac{\partial}{\partial p'} \left ( P \ u(p' \ c) + (1-P) \ u((1-p') \ c) \right ) \Bigr|_{p'=p'(c)} = 0
    \end{equation}

    or in $v(c) = u'(c)$ variables:

    \begin{equation}
        \label{eq:ppv}
         P \ v(p'(c) \ c) = (1-P) \ v((1-p'(c)) \ c)
    \end{equation}

    Taking the logarithm and substituting the $w(g)$ variables defined in eq. \eqref{eq:w(g)} we get:

    \begin{equation}
         \log(P) + w(\log(p'(g)) + g) = \log(1-P) + w(\log(1-p'(g))+g)
    \end{equation}

    which can be rearranged as an expression of $\gamma(g)$:

    \begin{equation}
        \log \left ( \frac{P}{1-P} \right ) = \int_{g+\log(1-p'(g))}^{g+\log(p'(g))} \gamma(g) dg
    \end{equation}

    We assume that $0 < \underline{\gamma} \le \gamma(g) \le \overline{\gamma}$, in which case this implicit equation always has a solution:

    \begin{equation}
        F_g(p') = \int_{g+\log(1-p'(g))}^{g+\log(p'(g))} \gamma(g) dg
    \end{equation}

    It is easy to see that $F_g(1/2) =0$, and that it goes to $+\infty$ as $p' \to 1$ and $-\infty$ as $p' \to 0$. Because of the continuity of $F_g(.)$ there will be always a $p'(g)$ where it intersects the value $\log(P/(1-P))$.

    For every $g$, we can introduce an effective relative risk aversion:

    \begin{equation}
        \gamma_\mathrm{eff}(g) = \frac{1}{\log(p'(g)) - \log(1-p'(g))} \int_{g+\log(1-p'(g))}^{g+\log(p'(g))} \gamma(g') dg'
    \end{equation}

    which is an integral mean of $\gamma(g)$ on the $[g+\log(p'(g)), g+ \log(1-p'(g)]$ interval.

    \begin{equation}
        p'(g) = \frac{P^{1/\gamma_\mathrm{eff}(g)}}{P^{1/\gamma_\mathrm{eff}(g)} + (1-P)^{1/\gamma_\mathrm{eff}(g)}}
    \end{equation}

    because $\gamma_\mathrm{eff}(g)$ is an integral mean, it is surely bounded as $\gamma(g)$:
    $0 < \underline{\gamma} < \gamma_\mathrm{eff}(g) < \overline{\gamma}$, which gives bounds for $p'(g)$ as well. If we assume that $P>1/2$:

    \begin{equation}
        \frac{P^{1/\overline{\gamma}}}{P^{1/\overline{\gamma}} + (1-P)^{1/\overline{\gamma}}}
        \le 
        p'(g) 
        \le 
        \frac{P^{1/\underline{\gamma}}}{P^{1/\underline{\gamma}} + (1-P)^{1/\underline{\gamma}}}
    \end{equation}

    \begin{equation}
        u_1(c) = P \ u(p'(c) \ c) + (1-P) \ u((1-p'(c)) \ c)
    \end{equation}

    Taking the derivative respect to $c$, we get:

    \begin{equation}
        \begin{split}
            v_1(c) =& P \ v(p'(c) \ c) \ \left ( p'(c) + \frac{d p'(c)}{d c} \right ) + \\
                    &(1-P) \ v((1-p'(c)) \ c) \ \left ( (1-p'(c)) - \frac{d p'(c)}{d c} \right )
        \end{split}
    \end{equation} 

    because of equation \eqref{eq:ppv}, this considerably simplifies to:

    \begin{equation}
        \begin{split}
            v_1(c) = & P \ v(p'(c) \ c) \\ 
                    = & (1-P) \ v((1-p'(c)) \ c)
        \end{split}
    \end{equation}

    Taking the logarithm and changing the variables from $c$ to $g$:

    \begin{equation}
        \begin{split}
            w_1(c) = & \log(P) + w(g + \log(p'(g)) ) \\ 
                    = & \log(1-P) + w(g + \log(1-p'(g)) )
        \end{split}
    \end{equation}

    taking the derivative respect to $g$ for both equations results:

    \begin{equation}
        \gamma_1(g) = - \frac{d}{d g} w_1(g) = - w'(g + \log(p'(g)) ) \ \left ( 1 + \frac{1}{p'(g)}\frac{d p'(g)}{d g} \right )
    \end{equation}

    \begin{equation}
        \gamma_1(g) = - \frac{d}{d g} w_1(g) = - w'(g + \log(1-p'(g)) ) \ \left ( 1 - \frac{1}{1-p'(g)}\frac{d p'(g)}{d g} \right )
    \end{equation}

    Combining the two equations, we get:

    \begin{equation}
        \frac{\gamma_1(g)}{\gamma(g + \log(p'(g)))} \ p'(g) +
        \frac{\gamma_1(g)}{\gamma(g + \log(1-p'(g)))} \ (1-p'(g)) = 1  
    \end{equation}
    
    resulting:

    \begin{equation}
        \mathbfcal{J}[\gamma](g) = \gamma_1(g) = \frac{1}{\frac{p'(g)}{\gamma(g+ \log(p'(g)))} + \frac{1-p'(g)}{\gamma(g+ \log(1-p'(g)))}}
    \end{equation}

    which is the weighted harmonic mean of $\gamma(g+ \log(p'(g)))$ and $\gamma(g+ \log(1-p'(g)))$.

    \begin{equation}
        \mathbfcal{J}[\gamma](g) = H^{(p'(g),1-p'(g))} ((\gamma(g+ \log(p'(g))), \gamma(g+ \log(1-p'(g)))))
    \end{equation}

    where we used the notation for weighted harmonic mean:
    \begin{equation}
        H^{\underline{\alpha}}(\underline{x}) = \frac{\sum_i \alpha_i}{\sum_i \frac{\alpha_i}{x_i}} 
    \end{equation}

    Because the instrumentalisation of $\gamma(g)$ can be calculated as a mean, it can not increase the upper and lower bounds of $\gamma(g)$:

    \begin{equation}
        [\underline{\gamma}_{n+1},\overline{\gamma}_{n+1}] \subseteq [\underline{\gamma}_{n},\overline{\gamma}_{n}] \subseteq \dots \subseteq [\underline{\gamma}, \overline{\gamma}]
    \end{equation}

    This means that there will be an upper bound for $\underline{\Delta g} < 0$, by which the instrumentalisation operator shifts the argument of $\gamma(.)$ toward $-\infty$:

    \begin{equation}
        \underline{\Delta g} = \sup_{\gamma \in [\underline{\gamma}, \overline{\gamma}]} \max
        \left ( \log \left ( \frac{P^{1/\gamma}}{P^{1/\gamma}+(1-P)^{1/\gamma}} \right )  , 
        \log \left ( \frac{(1-P)^{1/\gamma}}{P^{1/\gamma}+(1-P)^{1/\gamma}} \right ) \right )
    \end{equation}
    
    as all application of $\mathbfcal{J}$ shifts the argument of $\gamma$ at least by $\underline{\Delta g}$, the following two sided inequality holds:

    \begin{equation}
        \inf_{g \le n \cdot \underline{\Delta g}}{\gamma(g)} \le 
        \mathbfcal{J}^n[\gamma](g)
        \le \sup_{g \le n \cdot \underline{\Delta g}}{\gamma(g)}
    \end{equation}

    if $\lim_{g \to -\infty} \gamma(g) = \gamma_\downarrow$, then

    \begin{equation}
        \lim_{g_M \to -\infty} \inf_{g \le g_M}{\gamma(g)} = \gamma_\downarrow, \quad
        \lim_{g_M \to -\infty} \sup_{g \le g_M}{\gamma(g)} = \gamma_\downarrow
    \end{equation}

    resulting that:

    \begin{equation}
        \forall g \in (-\infty,0], \  \lim_{n \to \infty}  \mathbfcal{J}^n[\gamma](g) = \gamma_\downarrow
    \end{equation}
    
\end{proof}

Besides the mathematical proof, the intuitive understanding of the result is the following:
If the gambler values capital and is risk averse, then she will not commit all her resources to the more probable outcome but reserve some of her capital in case the less probable scenario comes out.
Suppose she associates the value of an intermediate stage with the maximally achievable expected utility from that state. In that case, she can deduce the value of all intermediate stages step by step from her subjective utility judgement at the final stage.

If we imagine gamblers playing more and more rounds, starting from less and less starting capital, their possible maximal gain will decrease (because of their successive careful choices).

This means that if an agent wants to evaluate the value of a state, she needs to consider only very small final capital values.

The degree of relative risk aversion is conserved in a multiplicative gamble; therefore, if the agent's relative risk aversion converges to a finite value for very small final capitals, then the initial values of a state will be dominated by this limit of relative risk aversion.

\begin{remark}
    Because of the averaging property of the instrumentalisation operator, the result can most probably be generalised further. In the proof, we principally used the property that instrumentalisation does not change the bounds of a relative risk aversion function and that it includes a finite ``shift'' if we construct a sequence of gambles with decreasing starting capital and increasing number of rounds.

    However, because of the averaging, the convergence could probably be proved for a more general sequence of gambles (for instance, where the gambler starts with a unit capital, and her gain can grow exponentially) and more general subjective final utility functions.

    Numerical evidence shows that very general utility functions (non-monotone, non-convex) can converge to an isoelastic utility, signalling a wider domain of validity of the fixed point result.
    
\end{remark}

The paper of Hakansson from 1974 \cite{paper:IsoelasticConvergence} might be relevant for investigating the convergence of instrumental utilities. (In his model, the initial capital is finite, so the terminal utilities are dominated by its behaviour around infinity.)

\subsection{Contest view}
\label{sec:ContestView}

A natural way to change the capital collecting game into a contest is to introduce another player with the same opportunities as our original agent. A natural criterion for winning this resource-collecting game is to have more capital after the final round.

\subsubsection{Raffle contest}

First, let us consider a game with two stages.
\begin{description}
    \item[First stage:] two gamblers can decide what amount of their capital are placing to the two possible outcomes, A and B, in $n$ rounds:

    \begin{equation}
        c_1 = (2 \ p'_1)^\kappa (2 \ (1-p'_1))^{n-\kappa}, \quad
        c_2 = (2 \ p'_2)^\kappa (2 \ (1-p'_2))^{n-\kappa}
    \end{equation}

    \begin{equation}
        \kappa \sim \mathrm{Binom}(n,P)
    \end{equation}

    \item[Second stage:] The two players can spend their capital $c_1$ and $c_2$ to buy tickets in a Raffle (or a Lottery).
    Player 1 can win with probability $c_1/(c_1+c_2)$, and Player 2 can win with probability $c_2/(c_1+c_2)$, resulting the following utility function:

    \begin{equation}
        u_1(c_1,c_2) = \frac{c_1}{c_1+c_2} u_1^\mathrm{W} + \frac{c_2}{c_1+c_2} u_1^\mathrm{L}
    \end{equation}

    \begin{equation}
        u_2(c_1,c_2) = \frac{c_2}{c_1+c_2} u_2^\mathrm{W} + \frac{c_1}{c_1+c_2} u_2^\mathrm{L}
    \end{equation}
\end{description}

where $u_i^\mathrm{W}$ represents the utility associated with the $i$-th Player winning, while $u_i^\mathrm{L}$ represents the utility associated with losing.
If all Players prefer winning more than losing ($u_i^\mathrm{W}>u_i^\mathrm{L}$), then by a suitable positive affine transformation, all $u_i^\mathrm{W}$ can be changed to $1$ and all $u_i^\mathrm{L}$ to $0$.

This results in the following explicit final utility function for Player~1 and Player~2:

\begin{equation}
    U_1(p'_1,p'_2) = \mathbb{E} \left [  \frac{(2 \ p'_1)^\kappa (2 \ (1-p'_1))^{n-\kappa}}{(2 \ p'_1)^\kappa (2 \ (1-p'_1))^{n-\kappa} + (2 \ p'_2)^\kappa (2 \ (1-p'_2))^{n-\kappa}}  \right ]
\end{equation}

\begin{equation}
    U_1(p'_1,p'_2) + U_2(p'_1,p'_2) = 1
\end{equation}

These expected utilities can be interpreted as winning rates after these transformations.

\begin{figure}[H]
    \centering
    \includegraphics[width=12 cm]{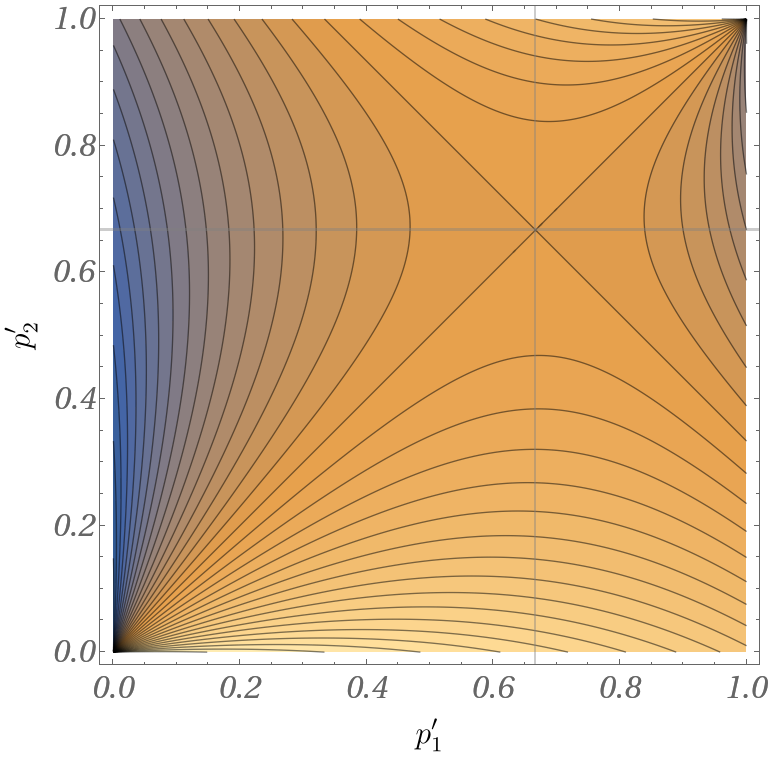}
    \caption{$U_1(p'_1,p'_2)$ function for $P=2/3$, $n=1$.}
    \label{fig:Contest_Up1p2}
\end{figure}

Because of symmetry, we can search the equilibrium, i.e. the saddle point on the $p'_1 = p'_2$ line.
At the saddle point, the function has to fulfil the condition:

\begin{equation}
    \frac{\partial}{\partial p'_1} U_1(p'_1,p'_2) |_{p'_1 = p'^*, p'_2 = p'^*} = \mathbb{E} \left [ \frac{\kappa - n p'^*}{4 p'^* (1-p'^*)} \right ] = n \frac{P-p'^*}{4 p'^* (1-p'^*)} =
    0
\end{equation}

Which has a very simple solution:

\begin{equation}
\boxed{
    p'^* = P
    }
\end{equation}

Direct calculation of the Hessian gives:

\begin{equation}
    H_{U_1}(P,P) = \frac{n}{4 P (1-P)}
    \begin{bmatrix}
-1 & 0 \\
0 & 1
\end{bmatrix}
\end{equation}

Which confirms that we found a saddle point.

\paragraph{Illustration of the equilibrium:}

To illustrate how gradual changes in the splitting ratios might lead to the equilibrium point, we show a stream plot of so-called ``adaptive dynamics'' \cite{paper:AdaptiveDynamics} in figure \ref{fig:AdaptiveDynamics0}:

\begin{equation}
    \overrightarrow{V}(p'_1,p'_2) = 
    \begin{bmatrix}
    \frac{\partial}{\partial p'_1} U_1(p'_1,p'_2) &
    \frac{\partial}{\partial p'_2} U_2(p'_1,p'_2)
    \end{bmatrix}
\end{equation}

\begin{figure}[H]
    \centering
    \includegraphics[width=12 cm]{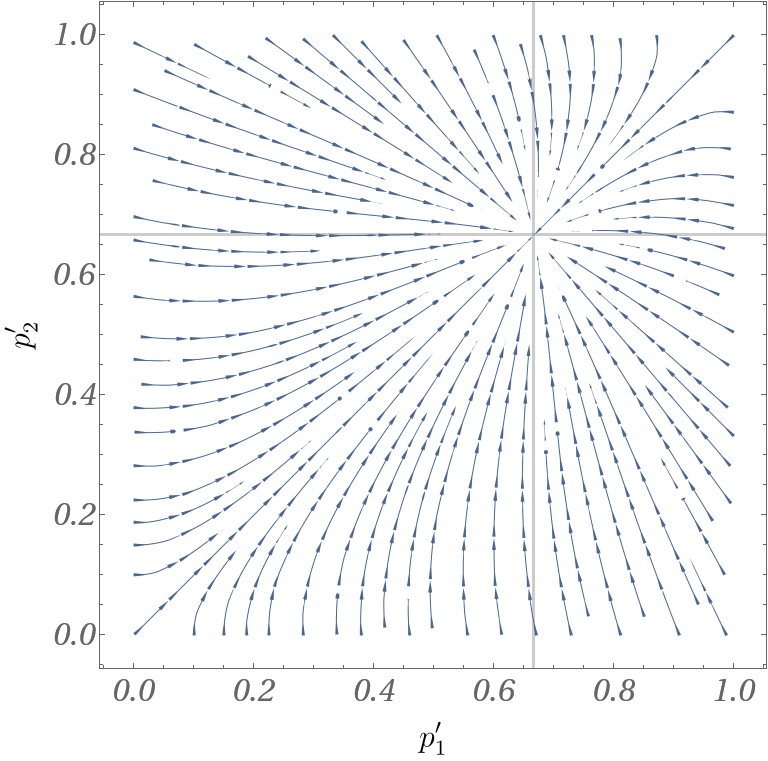}
    \caption{Adaptive dynamics stream plot generated from the vector field $\overrightarrow{V}(p'_1,p'_2)$, for $P=2/3$.}
    \label{fig:AdaptiveDynamics0}
\end{figure}

\subsubsection{Generalizations}

This result can be generalized in multiple ways, signalling the robustness of the concept:

\paragraph{Arbitrary number of players:}

The contest view can be generalized to multiple players. If the number of players is $m$, we get the following equation for equilibrium:

\begin{equation}
    \frac{\partial}{\partial p'_1} U(\underline{p'}) \Bigr|_{p'_i = p'^*}  = n \frac{m-1}{m^2} \frac{P-p'^*}{p'^* (1-p'^*)}
\end{equation}

Which has the same remarkably simple solution:

\begin{equation}
    p'^*_i = P
\end{equation}

\paragraph{Nonlinear ticket prices:}

Remarkably, the equilibrium point remains the same if the raffle ticket prices scale nonlinearly.
The calculation gives the same result if for a collected capital $c$, $c^\alpha$ amount of ticket can be purchased.

This would generalise the utility function of the second stage in the following way:

    \begin{equation}
        u_1^\alpha(c_1,c_2) = \frac{c_1^\alpha}{c_1^\alpha+c_2^\alpha}
    \end{equation}

    \begin{equation}
    \forall \alpha > 0, n \ge 1, \quad p'^*_{\alpha,n}(P) = P
    \end{equation}

\paragraph{Allowing Behavioural strategies:}

In the original construction of the raffle context, we assumed that the players were choosing a splitting ratio for all rounds in the beginning.
This requirement can be relaxed and allow players to choose their strategy according to their current capital and the game history they can recall. These history-dependent strategies are called Behavioural strategies \cite{book:GameTheory}.

In this case, we can try to find the first so-called subgame perfect equilibrium \cite{book:GameTheory}, which means that at every stage, we assume that in the future, everybody will play optimally.
In this framework, it is natural to introduce an instrumentalisation of the utility function, which associates the equilibrium expected utilities to nonterminal stages of the game when the players have the pair of collected capitals $(c_1,c_2)$:

\begin{equation}
    \mathbfcal{I}[u](c_1,c_2) = \mathbb{E} \left [ \frac{c_1 (p'_1)^\kappa (1-p'_1)^{n-\kappa}}{c_1 (p'_1)^\kappa (1-p'_1)^{n-\kappa} + c_2 (p'_2)^\kappa  (1-p'_2)^{n-\kappa}} \Bigr|_{p'_1 = p'^*_1, p'_2 = p'^*_2} \right ]
\end{equation}

This instrumentalisation results in a very simple form of the instrumental utilities in an intermediate stage:

\begin{equation}
    \mathbfcal{I}[u](c_1,c_2) = \frac{c_1}{c_1+c_2}
\end{equation}

This means that all potentially history-dependent subgame perfect equilibria have the same history-independent optimal splitting ratios:

\begin{equation}
    p'^{[1]}_i = P, \quad p'^{[2]}_i(c^{[1]}_1,c^{[1]}_2) = P, \quad p'^{[3]}_i(c^{[2]}_1,c^{[2]}_2,c^{[1]}_1,c^{[1]}_2) = P,
    \dots
\end{equation}

\subsubsection{Capital contest:}

An even simpler contest could be defined if we practically skip the second stage and declare that:

Whoever has more capital wins; if the capitals are equal, all players have the same chance of winning.

\begin{figure}[H]
    \centering
    \includegraphics[width=12 cm]{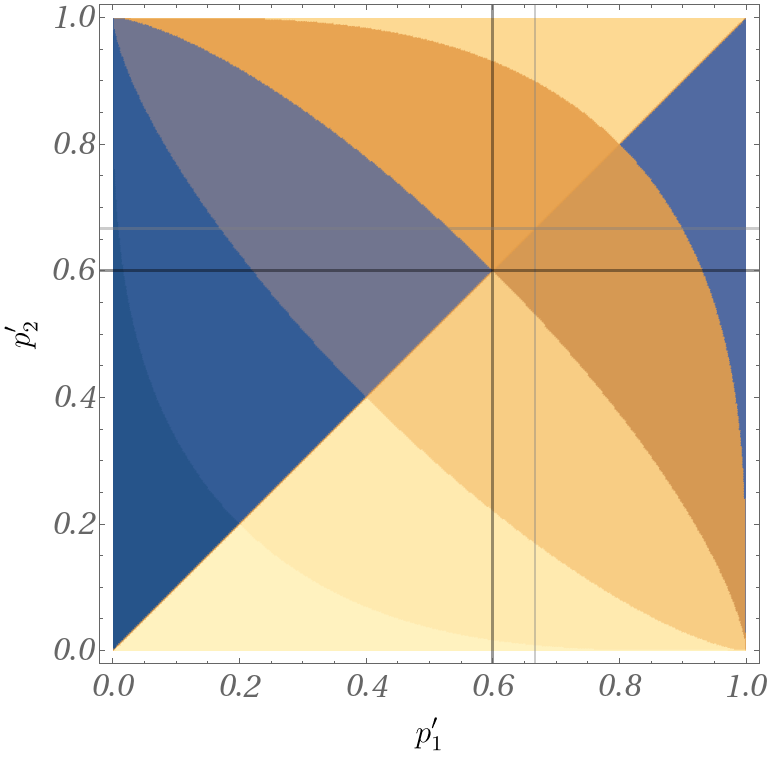}
    \caption{$U_1(p'_1,p'_2)$ function for a capital contest with $P=2/3$, $n=5$.}
    \label{fig:Contest_Up1p2_Disco}
\end{figure}

This simple contest results in a piecewise constant but discontinuous utility function, which depends on the splitting ratios $p'_1$ and $p'_2$ and the number of rounds $n$. 
Figure \ref{fig:Contest_Up1p2_Disco} shows a concrete example of the utility function, which clearly shows that for this contest, the equilibrium solution can differ from the continuous case, where we found $p'^*_i=P$.

The utility function is not differentiable; therefore, we can not use partial derivatives to find an equilibrium, but there are other techniques by which we can find the solution, such as the iterative elimination of weakly dominated strategies \cite{book:GameTheory}:

\begin{equation}
    \vcenter{\hbox{\includegraphics[width=21mm,scale=0.5]{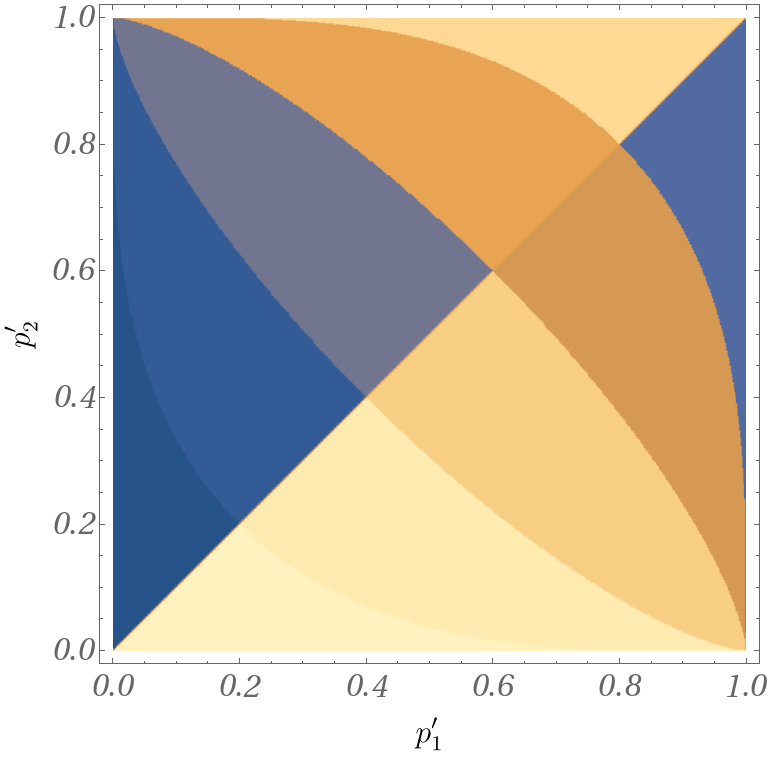}}}
    \vcenter{\hbox{\scalebox{2}{ $\rightarrow$}}}
    \vcenter{\hbox{\includegraphics[width=21mm,scale=0.5]{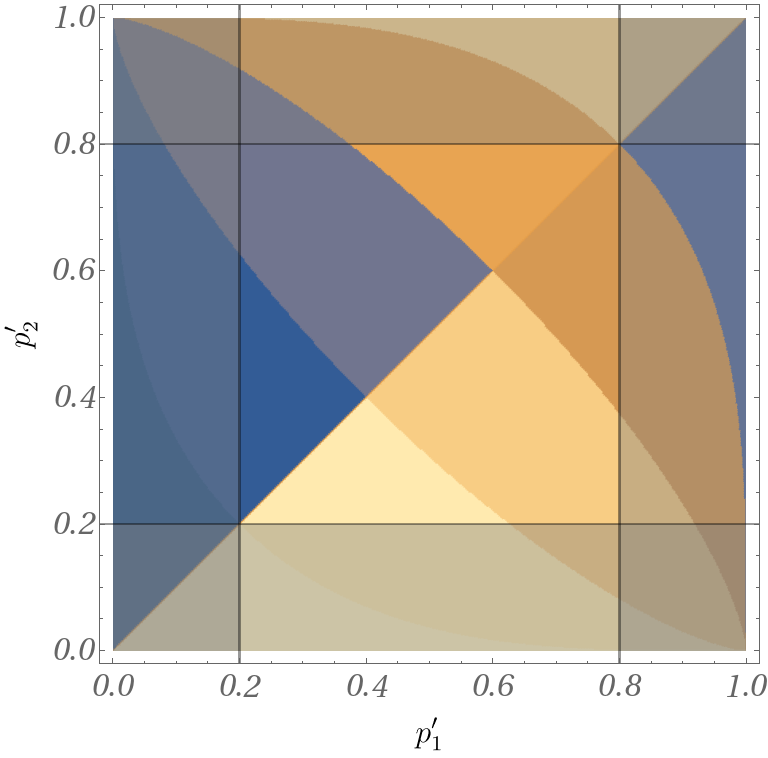}}}
    \vcenter{\hbox{\scalebox{2}{ $\rightarrow$}}}
    \vcenter{\hbox{\includegraphics[width=21mm,scale=0.5]{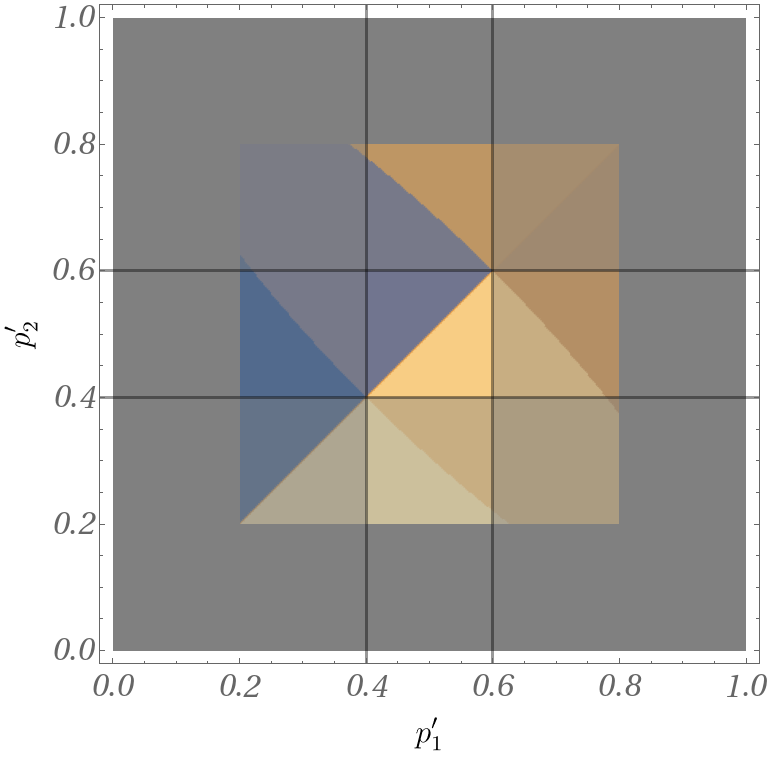}}}
    \vcenter{\hbox{\scalebox{2}{ $\rightarrow$}}}
    \vcenter{\hbox{\includegraphics[width=21mm,scale=0.5]{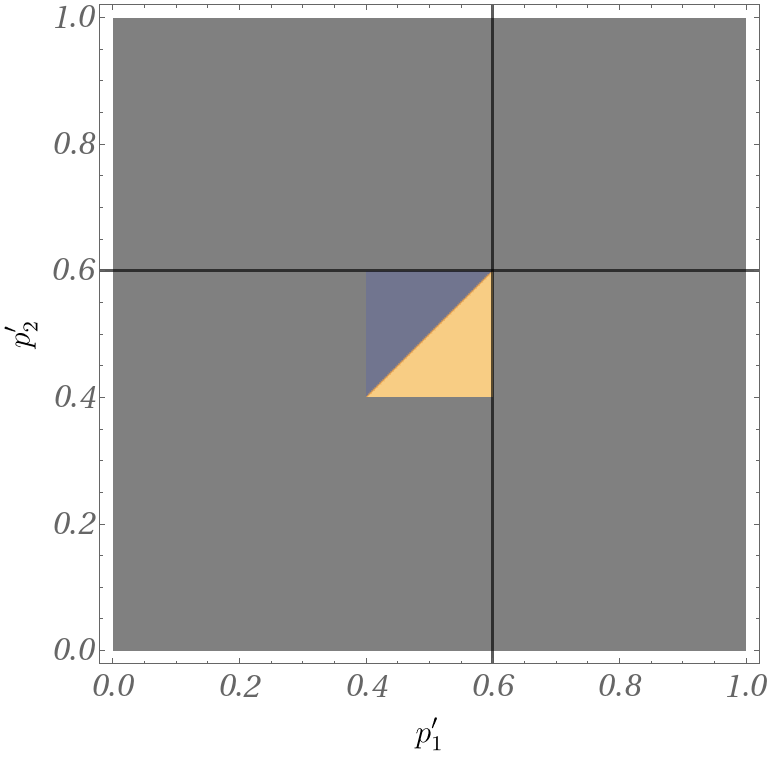}}}
\end{equation}

Arguments based on the iterative elimination of weakly dominated strategies lead to a general formula for the equilibrium splitting ratios:

\begin{equation}
    p'^*_{\text{\textifsymbol{32}} n}(P) = \frac{[n \ P]}{n}
\end{equation}

In some edge cases, there is only an interval in which the splitting ratio has to fall. These cases are also captured with the following general inequalities:

\begin{equation}
    \frac{[n \ P]_{-}}{n} \le p'^*_{\text{\textifsymbol{32}} n}(P) \le \frac{[n \ P]_{+}}{n}
\end{equation}

Where $[x]_{-}$ stands for ``round half down'', in which case half-way values of $x$ are always rounded down, and $[x]_{+}$ stands for ``round half up'' rounding. \footnote{for more background about rounding conventions see \href{https://www.eetimes.com/an-introduction-to-different-rounding-algorithms/}{Rounding Algorithms Compared}, \href{https://www.clivemaxfield.com/diycalculator/sp-round.shtml}{Rounding Algorithms 101} by Clive Maxfield, or \href{https://en.wikipedia.org/wiki/Rounding}{Rounding on Wikipedia}.}

\begin{figure}[H]
    \centering
    \includegraphics[width=12 cm]{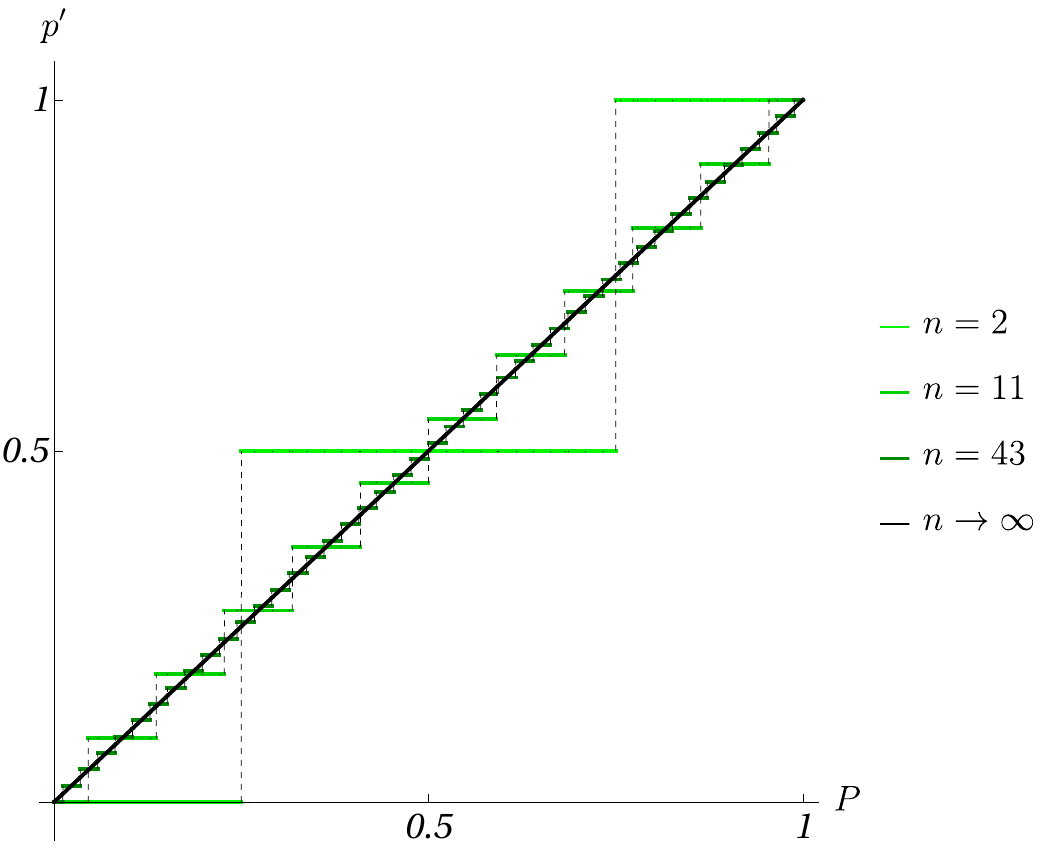}
    \caption{$p'^*_{\text{\textifsymbol{32}} n}(P)$ for several values of $n$, and the limit case as $n \to \infty$.}
    \label{fig:pp_n_P}
\end{figure}

\begin{remark}
    The defined discontinuous utility function can be viewed as the limit case of nonlinear raffle games. However, finding the equilibrium and taking the limit are not interchangeable in general:

    \begin{equation}
    \lim_{\alpha \to \infty} p'^*_{\alpha,n}(.) \nequiv p'^*_{\infty,n}(.) = p'^*_{\text{\textifsymbol{32}} n}(.)
    \end{equation}
    
\end{remark}

\begin{remark}
    However, for all $P \in [0,1]$ the interchangeability is restored as $n \to \infty$:

    \begin{equation}
    \lim_{n \to \infty } \lim_{\alpha \to \infty} p'^*_{\alpha,n}(P) = P =  \lim_{n \to \infty }p'^*_{\text{\textifsymbol{32}} n}(P) 
    \end{equation}
    
\end{remark}

\subsubsection{Emergent isoelastic utility in contests}

\begin{remark}
    Less than $1$ relative risk aversion can also emerge in contests if two or more players can play in independent gambles and have a capital-sharing agreement.
    This can be a natural way for seemingly non-logarithmic utility functions to emerge and be aligned with an evolutionarily stable strategy.
    
\end{remark}

\paragraph{Illustrative calculation:}

A simple back-of-the-envelope calculation can demonstrate the emergence of aligned isoelastic utility functions:

Suppose there are two independent environments or casinos $\aleph$ and $\beth$, in which gamblers can bet on scenarios A and B.
In the first casino, A comes out with probability $P_\aleph$, while in the second casino, with $P_\beth$.
Let us assume that there are two cooperating players gambling in different casinos, both with a logarithmic utility function, who are sharing all their capital but can choose freely and independently their splitting ratios:

\begin{equation}
    p'_\aleph, \quad p'_\beth
\end{equation}

There is an explicit expression for the expected utility of player $\aleph$, depending on both splitting ratios:

\begin{equation}
\begin{split}
    U_\aleph(p'_\aleph, p'_\beth) = 
    &P_\aleph P_\beth \log ((p'_\aleph + p'_\beth)/2) \\
    &(1-P_\aleph) P_\beth \log ((1-p'_\aleph + p'_\beth)/2) \\
    &P_\aleph (1-P_\beth) \log ((p'_\aleph + 1 - p'_\beth)/2) \\
    &(1-P_\aleph) (1-P_\beth) \log ((1-p'_\aleph + 1- p'_\beth)/2)
\end{split}
\end{equation}

The two players share all their capital, and they have the same utility function; therefore, their utility function is always the same:

\begin{equation}
    U_\aleph(p'_\aleph, p'_\beth) =
    U_\beth(p'_\aleph, p'_\beth) 
\end{equation}

To simplify the calculation, let us assume that chances in both casinos are identical, i.e.:

\begin{equation}
    P = P_\aleph = P_\beth
\end{equation}

In this case, the symmetry of the problem suggests that both players will have identical equilibrium splitting ratios:

\begin{equation}
    p'^* = p'^*_\aleph = p'^*_\beth
\end{equation}

Both players try to maximize their expected utility by choosing their own splitting ratio. Equilibrium is reached when:

\begin{equation}
    \frac{\partial}{\partial p'_\aleph} 
    U_\aleph(p'_\aleph, p'_\beth) 
    \Bigr|_{p'_\aleph = p'^*,p'_\beth = p'^*} = 0, \quad
    \frac{\partial}{\partial p'_\beth} 
    U_\beth(p'_\aleph, p'_\beth) 
    \Bigr|_{p'_\aleph = p'^*,p'_\beth = p'^*} = 0
\end{equation}

The expression can be explicitly calculated for player $\aleph$:

\begin{equation}
    \frac{\partial}{\partial p'_\aleph} 
    U_\aleph(p'_\aleph, p'_\beth) 
    \Bigr|_{p'_\aleph = p'^*,p'_\beth = p'^*}  = 
    P^2 \frac{1}{2 p'^*} + (1-P)P (-1) + P(1-P) +
    (1-P)^2 \frac{-1}{2 (1-p'^*)}
\end{equation}

The equilibrium requirement simplifies to:

\begin{equation} 
    P^2 \frac{1}{2 p'^*}  -
    (1-P)^2 \frac{1}{2 (1-p'^*)} =0
\end{equation}

Which results in the equilibrium splitting ratio:

\begin{equation}
    p'^* = \frac{P^2}{P^2+(1-P)^2}
\end{equation}

This shows that the behaviour of the capital sharing players looks \emph{as if} they would maximize an isoelastic utility function with relative risk aversion parameter $\gamma = 1/2$.

If there are more capital-sharing players $\aleph, \beth, \gimel, \dots$ betting in independent casinos, then the emergent relative risk aversion  -- aligned with the optimal equilibrium strategy -- is $\gamma=1/m$, where $m$ is the number of cooperating players.

\begin{remark}
    The construction can have a biological/evolutionary interpretation as well. In a framework where propagation of genes is the primary goal of biological organisms \cite{book:SelfishGene,book:TheOriginsOfLife}, the following observation can be made:

    If a biological actor has a twin or multiple copies having the same set of genes, and they are acting in similar but uncorrelated environments, then (from the gene's point of view) the evolutionary optimal strategy is aligned with an isoelastic utility function with lowered relative risk aversion.
    
\end{remark}

\section{Unification through relative risk aversion}
\label{appendix:Unifiction}

\subsection{Equilibrium of general Statistical games}

The proof belongs to Theorem~\ref{thm:StatisticalGameEquilibrium}.

\begin{proof}

In a general Statistical game \SG{N,K_A,K_B,M,\gamma} \PI/ has an isoelastic utility function, with relative risk aversion $\gamma >0$, $\gamma \ne 1$:

\begin{equation}
    u_\gamma(c) = \frac{c^{1-\gamma}-1}{1-\gamma}
\end{equation}

\PI/'s expected utility is:

\begin{equation}
    U_\gamma(P) = P \ \left ( \sum_k p_k(A) u_\gamma(p'_k(P)) \right ) 
    + (1-P) \left ( \sum_k p_k(B) u_\gamma(1-p'_k(P)) \right )
\end{equation}

This expression is maximized by the choice:

\begin{equation}
    p'^*_{\gamma,k}(P) = \frac{(P \ p_k(A))^{1/\gamma}}{(P \ p_k(A))^{1/\gamma} + ((1-P) \  p_k(B))^{1/\gamma}}
\end{equation}

Introducing:

\begin{equation}
    \begin{split}
        U_A(P) &= \sum_k p_k(A) u_\gamma(p'^*_{\gamma,k}(P)) \\
        U_B(P) &= \sum_k p_k(B) u_\gamma(1-p'^*_{\gamma,k}(P)) 
    \end{split}
\end{equation}

In equilibrium, $P^*_\gamma$ has to satisfy the requirement:

\begin{equation}
    U_A(P^*_\gamma) = U_B(P^*_\gamma)
\end{equation}

or alternatively:

\begin{equation}
    \frac{U_A(P^*_\gamma) + \frac{1}{1-\gamma}}{U_B(P^*_\gamma) + \frac{1}{1-\gamma}} = 1
\end{equation}

Taking the logarithm, we get:

\begin{equation}
    \log \left ( \frac{U_A(P^*_\gamma) + \frac{1}{1-\gamma}}{U_B(P^*_\gamma) + \frac{1}{1-\gamma}} \right ) = 0
\end{equation}

Collecting the terms, we get:

\begin{equation}
    \frac{1-\gamma}{\gamma} \log \left ( \frac {P^*_\gamma}{1-P^*_\gamma}\right ) +
    \log \left ( 
    \frac {\sum_k p_k(A) p_k(A)^{\frac{1-\gamma}{\gamma}} 
    \frac{1}{\left ( (P^*_\gamma p_k(A))^{1/\gamma} + ((1-P^*_\gamma) p_k(B))^{1/\gamma} \right )^{1-\gamma}}}
    {\sum_k p_k(B) p_k(B)^{\frac{1-\gamma}{\gamma}} 
    \frac{1}{\left ( (P^*_\gamma p_k(A))^{1/\gamma} + ((1-P^*_\gamma) p_k(B))^{1/\gamma} \right )^{1-\gamma}}}
    \right ) = 0
\end{equation}

Introducing log-odds:

\begin{equation}
    \log \left ( \frac {P^*_\gamma}{1-P^*_\gamma}\right ) = \vartheta^*_\gamma
\end{equation}

This expression can be rewritten as:

\begin{equation}
    \vartheta^*_\gamma = \Phi(\vartheta^*_\gamma)
\end{equation}

where

\begin{equation}
\label{eq:PhiDef1}
    \Phi(\vartheta) = - \frac{\gamma}{1-\gamma} 
    \log \left ( 
    \frac {\sum_k p_k(A) p_k(A)^{\frac{1-\gamma}{\gamma}} 
    \frac{1}{\left ( e^{\frac{1}{2}\frac{\vartheta}{\gamma}} p_k(A)^{1/\gamma} +  e^{-\frac{1}{2}\frac{\vartheta}{\gamma}} p_k(B)^{1/\gamma} \right )^{1-\gamma}}}
    {\sum_k p_k(B) p_k(B)^{\frac{1-\gamma}{\gamma}} 
    \frac{1}{\left ( e^{\frac{1}{2}\frac{\vartheta}{\gamma}} p_k(A)^{1/\gamma} + e^{-\frac{1}{2}\frac{\vartheta}{\gamma}} p_k(B)^{1/\gamma} \right )^{1-\gamma}}}
    \right )
\end{equation}

or

\begin{equation}
\label{eq:PhiDef2}
    \Phi(\vartheta) = - \frac{\gamma}{1-\gamma} 
    \log \left ( 
    \frac {\sum_k p_k(A)^{1/\gamma} 
    \frac{1}{\left ( e^{\frac{1}{2}\frac{\vartheta}{\gamma}} p_k(A)^{1/\gamma} +  e^{-\frac{1}{2}\frac{\vartheta}{\gamma}} p_k(B)^{1/\gamma} \right )^{1-\gamma}}}
    {\sum_k p_k(B)^{1/\gamma} 
    \frac{1}{\left ( e^{\frac{1}{2}\frac{\vartheta}{\gamma}} p_k(A)^{1/\gamma} + e^{-\frac{1}{2}\frac{\vartheta}{\gamma}} p_k(B)^{1/\gamma} \right )^{1-\gamma}}}
    \right )
\end{equation}

It is straightforward but tedious to take the derivative of $\Phi(\vartheta)$ with respect to $\vartheta$. The following notation will considerably simplify the results:

\begin{equation}
    z = e^{\frac{1}{2} \frac{\vartheta}{\gamma}}
\end{equation}

\begin{equation}
    \begin{split}
        a_k &= z \ p_k(A)^{1/\gamma} + z^{-1} \ p_k(B)^{1/\gamma} \\
        b_k &= z \ p_k(A)^{1/\gamma} - z^{-1} \ p_k(B)^{1/\gamma}
    \end{split}
\end{equation}

\begin{equation}
    \Phi'(\vartheta) = 
    \frac{1}{2} 
    \left (
    \frac {\sum_k p_k(A)^{1/\gamma} 
    \frac{b_k}{a_k^{2-\gamma}}}
    {\sum_k p_k(A)^{1/\gamma} 
    \frac{1}{a_k^{1-\gamma}}} - 
    \frac {\sum_\ell p_\ell(B)^{1/\gamma} 
    \frac{b_\ell}{a_\ell^{2-\gamma}}}
    {\sum_\ell p_\ell(B)^{1/\gamma} 
    \frac{1}{a_\ell^{1-\gamma}}}
    \right )
\end{equation}

Introducing further notation:

\begin{equation}
    \begin{split}
        x_k &= p_k(A)^{1/\gamma} 
        \frac{1}{a_k^{1-\gamma}} \ge 0 \\
        y_\ell &= p_\ell(B)^{1/\gamma} 
        \frac{1}{a_\ell^{1-\gamma}} \ge 0
    \end{split}
\end{equation}

we get the following expression:

\begin{equation}
    \Phi'(\vartheta) = 
    \frac
    {\sum_{k,\ell} x_k y_\ell \ \frac{1}{2} \left ( \frac{b_k}{a_k} - \frac{b_\ell}{a_\ell} \right )}
    {\sum_{k,\ell} x_k y_\ell}
\end{equation}

Using Hölder's inequality \cite{book:Holder} in the numerator gives an upper bound for the absolute value of the derivative:

\begin{equation}
    |\Phi'(\vartheta)| \le 
    \frac
    {\left (\sum_{k,\ell} |x_k y_\ell| \right ) \ 
    \max_{k,l} \left ( \frac{1}{2} \left | \frac{b_k}{a_k} - \frac{b_\ell}{a_\ell} \right | \right )}
    {\sum_{k,\ell} |x_k y_\ell|}
\end{equation}

which simplifies to:

\begin{equation}
    |\Phi'(\vartheta)| \le 
    \max_{k,l} \left ( \frac{1}{2} \left | \frac{b_k}{a_k} - \frac{b_\ell}{a_\ell} \right | \right )
\end{equation}

\begin{remark}
    The expression $b_k/a_k$ is a shifted hyperbolic tangent function of $\vartheta$:
    \begin{equation}
        \frac{b_k}{a_k} = \tanh(\vartheta - \Delta \vartheta_k)
    \end{equation}
\end{remark}

Straightforward calculation shows that $\left | \frac{b_k}{a_k} - \frac{b_\ell}{a_\ell} \right |$ has only one local (and global) maximum, in which point its value is:

\begin{equation}
    \max_z  \frac{1}{2} \left | \frac{b_k(z)}{a_k(z)} - \frac{b_\ell(z)}{a_\ell(z)} \right | =
    \left |
    \frac{\sqrt{p_\ell(A) p_k(B)}^{1/\gamma} - \sqrt{p_k(A) p_\ell(B)}^{1/\gamma}}
    {\sqrt{p_\ell(A) p_k(B)}^{1/\gamma} + \sqrt{p_k(A) p_\ell(B)}^{1/\gamma}}
    \right |
\end{equation}

By introducing the following exponent:

\begin{equation}
\label{eq:OverlineLambdaDefinition}
    \overline{\Lambda} = \max_{k,\ell}
    \left |
    \log \left ( \sqrt{\frac{p_k(A)}{p_\ell(A)} \frac{p_\ell(B)}{p_k(B)}} \right )
    \right |
\end{equation}

a global bound can be given:

\begin{equation}
\label{eq:GlobalBoundPhi}
    |\Phi'(\vartheta)| \le 
    \frac{1-e^{-\overline{\Lambda}/\gamma}}{1+e^{-\overline{\Lambda}/\gamma}}
    ,
\end{equation}

which is strictly smaller than $1$ if all $p_k(A),p_\ell(B)>0$.
This means that $\Phi$ is a contraction for all $\gamma > 0$, $\gamma \ne 1$.

The Banach fixed point theorem \cite{book:BanachFixedPoint} guarantees the existence of a unique fixed point for this contraction:

\begin{equation}
    \forall \gamma > 0, \gamma \ne 1 \quad \exists! \ \vartheta^*_\gamma \in \mathbb{R}, \quad \Phi(\vartheta^*_\gamma) = \vartheta^*_\gamma
\end{equation}

\end{proof}

\begin{remark}
    For the case when $\mathbb{K}_A \ne \mathbb{K}_B$ the exponent in equation \eqref{eq:OverlineLambdaDefinition} can be undefined. 
    Therefore the existence and uniqueness of $P^*_\gamma$ is given by a weaker convexity argument similar to the statement made in Section~\ref{sec:EquilibriumSolutionGeneralBayesianGame}, which maps the equilibrium finding problem to a convex optimization \cite{book:ConvexOpt}.
    
\end{remark}

\subsection{Fisher games as a limit case}

\paragraph{Notation:}

\begin{equation}
    \vartheta_0 = \log \left ( \frac{p_{k^*}(B)}{p_{k^*}(A)} \right ), \quad 
    \tau_0 = \log \left ( \frac{\nu^*}{1-\nu^*} \right )
\end{equation}

To obtain the $\gamma \to 0$ limit of equilibrium parameters, it is useful to introduce the following limit function:

\begin{equation}
    \Psi(\tau) = \lim_{\gamma \to 0} \frac{(\vartheta_0 + \gamma \tau) - \Phi(\vartheta_0 + \gamma \tau)}{\gamma}
\end{equation}

Some suitable notation and expressions to calculate this limit are:

\begin{equation}
    \frac{p_k(A)^{\frac{1-\gamma}{\gamma}}}{\left ( z \ p_k(A)^{1/\gamma} + z^{-1} \ p_k(B)^{1/\gamma} \right )^{1-\gamma}} = 
    z^{-(1-\gamma)} \left ( 1 + z^{-2} \left ( \frac{p_k(B)}{p_k(A)} \right )^{1/\gamma} \right )^{-(1-\gamma)}
\end{equation}

\begin{equation}
    \frac{p_k(B)^{\frac{1-\gamma}{\gamma}}}{\left ( z \ p_k(A)^{1/\gamma} + z^{-1} \ p_k(B)^{1/\gamma} \right )^{1-\gamma}} = 
    z^{(1-\gamma)} \left ( 1 + z^{2} \left ( \frac{p_k(A)}{p_k(B)} \right )^{1/\gamma} \right )^{-(1-\gamma)}
\end{equation}

\begin{equation}
    \vartheta - \Phi(\vartheta) = 
    \frac{\gamma}{1-\gamma} \log \left (
    \frac{
    \sum_k p_k(A) \left (1 + z^{-2} \left ( \frac{p_k(B)}{p_k(A)} \right )^{1/\gamma} \right )^{-(1-\gamma)}
    }
    {
    \sum_k p_k(B) \left ( 1 + z^{2} \left ( \frac{p_k(A)}{p_k(B)} \right )^{1/\gamma} \right )^{-(1-\gamma)}
    }
    \right )
\end{equation}

\paragraph{Taking the limit:}

Now, if we substitute $\vartheta = \vartheta_0 + \gamma \tau$, then all parts of the sums converge to a well-defined limit as $\gamma \to 0$.

Introducing the notation:

\begin{equation}
    \begin{split}
        c_k &= \left (1 + z^{-2} \left ( \frac{p_k(B)}{p_k(A)} \right )^{1/\gamma} \right )^{-(1-\gamma)} \\
        d_k &= \left ( 1 + z^{2} \left ( \frac{p_k(A)}{p_k(B)} \right )^{1/\gamma} \right )^{-(1-\gamma)}
    \end{split}
\end{equation}

\begin{equation}
    c_k =  \left (1 + e^{-\tau} \left ( \frac{p_{k^*}(A)}{p_{k^*}(B)} \frac{p_k(B)}{p_k(A)} \right )^{1/\gamma} \right )^{-(1-\gamma)}
\end{equation}

\begin{equation}
    d_k =  \left (1 + e^{\tau} \left ( \frac{p_{k^*}(B)}{p_{k^*}(A)} \frac{p_k(A)}{p_k(B)} \right )^{1/\gamma} \right )^{-(1-\gamma)}
\end{equation}

\begin{equation}
    c_k = 
    \begin{cases}
        1 + \mathcal{O}(e^{-|\lambda_k|/\gamma}) & \text{if } k < k^* \\
        \left (1 + e^{-\tau} \right )^{-(1-\gamma)} & \text{if } k=k^* \\
        0 + \mathcal{O}(e^{-|\lambda_k|/\gamma}) & \text{if } k > k^* \\
\end{cases}
\end{equation}

\begin{equation}
    d_k = 
    \begin{cases}
        0 + \mathcal{O}(e^{-|\lambda_k|/\gamma}) & \text{if } k < k^* \\
        \left (1 + e^{\tau} \right )^{-(1-\gamma)} & \text{if } k=k^* \\
        1 + \mathcal{O}(e^{-|\lambda_k|/\gamma}) & \text{if } k > k^* \\
\end{cases}
\end{equation}

where

\begin{equation}
    \lambda_k =   \log \left( \frac{p_{k^*}(A)}{p_{k^*}(B)} \frac{p_k(B)}{p_k(A)} \right )  
\end{equation}

Neglecting the $\mathcal{O}(e^{-|\lambda|/\gamma})$ contributions for $\Psi(\tau)$, considerably simplifies the expression. Borrowing the notation from Appendix \ref{Appendix:FisherGameEquilibrium}, equations \eqref{eq:FisherNotation_p*AB}, \eqref{eq:FisherNotation_SigmaAB} results:

\begin{equation}
    \Psi(\tau) = \lim_{\gamma \to 0} \frac{1}{1-\gamma}
    \log \left (
    \frac
    {\Sigma_A + p^*_A \left (1 + e^{-\tau}\right )^{-(1-\gamma)}}
    {\Sigma_B + p^*_B \left (1 + e^{\tau} \right )^{-(1-\gamma)}}
    \right )
\end{equation}

\begin{equation}
    \Sigma_A + p^*_A \nu^* = \Sigma_B + p^*_B (1-\nu^*) = v^*
\end{equation}

this further simplifies the expression:

\begin{equation}
    \Psi(\tau) = 
    \log \left (
    \frac
    {v^* + p^*_A \left ( \left (1 + e^{-\tau}  \right )^{-1} - \nu^* \right)}
    {v^* + p^*_B \left ( \left (1 + e^{\tau}  \right )^{-1} - (1-\nu^*) \right )}
    \right )
\end{equation}

or

\begin{equation}
    \Psi(\tau) = 
    \log \left (
    \frac
    {1 + r_A \left ( \left (1 + e^{-\tau}  \right )^{-1} - \nu^* \right)}
    {1 + r_B \left ( \left (1 + e^{\tau}  \right )^{-1} - (1-\nu^*) \right )}
    \right )
\end{equation}

where

\begin{equation}
    r_A = p^*_A / v^*, \quad r_B = p^*_B / v^*
\end{equation}

\begin{theorem}
    For any $r_A >0$, $r_B >0$, $\nu^* \in (0,1)$
    \begin{equation}
        \Psi(\tau_0 + \Delta \tau) > 0 \text{ if } \Delta \tau >0, \quad \Psi(\tau_0 + \Delta \tau) < 0 \text{ if } \Delta \tau <0
    \end{equation}
\end{theorem}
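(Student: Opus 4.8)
The plan is to reduce the claim to the strict monotonicity of $\Psi$ together with the single anchor value $\Psi(\tau_0)=0$. First I would pass to the sigmoid variable $s=\sigma(\tau)=(1+e^{-\tau})^{-1}$, which is a strictly increasing bijection $\mathbb{R}\to(0,1)$ and satisfies $(1+e^{-\tau})^{-1}=s$ and $(1+e^{\tau})^{-1}=1-s$. In this variable $\Psi$ becomes
\begin{equation}
\Psi = \log\left(\frac{N(s)}{D(s)}\right), \quad N(s)=1+r_A(s-\nu^*), \quad D(s)=1+r_B(\nu^*-s).
\end{equation}
Since $\sigma(\tau_0)=\nu^*$ (directly from $\tau_0=\log(\nu^*/(1-\nu^*))$), both $N$ and $D$ equal $1$ at $\tau=\tau_0$, so $\Psi(\tau_0)=0$ follows at once.

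Next I would establish that $\Psi$ is real and finite on all of $\mathbb{R}$, i.e. that $N(s),D(s)>0$ for every $s\in(0,1)$. This is the one step that genuinely uses the Fisher-equilibrium structure rather than mere positivity of $r_A,r_B$: combining $r_A=p^*_A/v^*$, $r_B=p^*_B/v^*$ with the equilibrium relations $v^*=\Sigma_A+p^*_A\nu^*=\Sigma_B+p^*_B(1-\nu^*)$ and $\Sigma_A,\Sigma_B\ge 0$ lets me rewrite
\begin{equation}
N(s)=\frac{\Sigma_A}{v^*}+r_A\,s, \qquad D(s)=\frac{\Sigma_B}{v^*}+r_B\,(1-s),
\end{equation}
both strictly positive on $(0,1)$ since the first summands are nonnegative and the second strictly positive. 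I would flag this as the main (and essentially only) subtlety: for unconstrained $r_A,r_B$ the log-argument can vanish and change sign, so the \emph{global} validity of the statement rests on $r_A\nu^*\le 1$ and $r_B(1-\nu^*)\le 1$ inherited from the equilibrium.

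With positivity in hand, strict monotonicity is routine: $N$ is strictly increasing and $D$ strictly decreasing in $s$, so $N/D$ is strictly increasing, and composing with the strictly increasing maps $s=\sigma(\tau)$ and $\log$ makes $\Psi$ strictly increasing in $\tau$. Equivalently one may verify
\begin{equation}
\frac{d\Psi}{ds}=\frac{r_A}{N(s)}+\frac{r_B}{D(s)}>0.
\end{equation}

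Finally, combining strict monotonicity with $\Psi(\tau_0)=0$ yields $\Psi(\tau_0+\Delta\tau)>\Psi(\tau_0)=0$ for $\Delta\tau>0$ and $\Psi(\tau_0+\Delta\tau)<0$ for $\Delta\tau<0$, which is exactly the assertion. This transfers the sign of $\Psi$ onto the $\gamma\to 0$ log-odds increment $\tau$, confirming that the Statistical-game equilibrium is driven toward the Fisher value $\tau_0$ from either side. I do not expect any serious obstacle beyond the positivity bookkeeping above; the algebra of the rewrite and of $\sigma(\tau_0)=\nu^*$ is mechanical.
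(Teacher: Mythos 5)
Your proof is correct, but it takes a genuinely different route from the paper's. The paper argues pointwise: it verifies four strict inequalities by direct algebra, namely that at $\tau=\tau_0+\Delta\tau$ the numerator deviation $\bigl(1+e^{-\Delta\tau}\tfrac{1-\nu^*}{\nu^*}\bigr)^{-1}-\nu^*$ is positive and the denominator deviation $\bigl(1+e^{\Delta\tau}\tfrac{\nu^*}{1-\nu^*}\bigr)^{-1}-(1-\nu^*)$ is negative when $\Delta\tau>0$ (with reversed signs for $\Delta\tau<0$), so the argument of the logarithm exceeds $1$ on one side and falls below $1$ on the other; no monotonicity is invoked. You instead pass to the sigmoid variable, observe that numerator $N(s)$ and denominator $D(s)$ are affine with opposite slopes, anchor at $\Psi(\tau_0)=0$ via $\sigma(\tau_0)=\nu^*$, and conclude by strict monotonicity of $\log(N/D)$. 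Your route buys two things the paper's does not: strict monotonicity of $\Psi$ (hence uniqueness of its zero at $\tau_0$), and an explicit treatment of the definedness issue that the paper passes over in silence --- for arbitrary $r_A,r_B>0$ the denominator $D$ (resp.\ numerator $N$) can become nonpositive for large $|\Delta\tau|$, making $\Psi$ undefined, and your identities $N(s)=\Sigma_A/v^*+r_A s$, $D(s)=\Sigma_B/v^*+r_B(1-s)$ show precisely that the equilibrium relations $r_A\nu^*\le 1$, $r_B(1-\nu^*)\le 1$ rescue global positivity. The price is that your positivity step uses equilibrium structure not present in the theorem's literal hypotheses ($r_A,r_B>0$ arbitrary); under those bare hypotheses the paper's pointwise argument still delivers the stated signs wherever $\Psi$ is defined, since $N>1$ forces the log to be defined only when $D>0$, and then $N/D>1$ automatically (symmetrically for $\Delta\tau<0$). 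If you wanted your argument to match the stated generality, you would restrict the monotonicity-plus-anchor reasoning to the maximal interval around $\tau_0$ on which both $N$ and $D$ stay positive, which is exactly the set where $\Psi$ makes sense.
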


\begin{proof}
    A slightly stronger set of inequalities hold:

    \begin{equation}
        \left ( 1 + e^{-\Delta \tau} \frac{1-\nu^*}{\nu^*}\right )^{-1} - \nu^* > 0, \quad \text{if } \Delta \tau>0 
    \end{equation}
    
    \begin{equation}
        \left ( 1 + e^{-\Delta \tau} \frac{1-\nu^*}{\nu^*}\right )^{-1} - \nu^* < 0, \quad \text{if } \Delta \tau<0
    \end{equation}

    \begin{equation}
        \left ( 1 + e^{\Delta \tau} \frac{\nu^*}{1-\nu^*}\right )^{-1} - (1-\nu^*) < 0, \quad \text{if } \Delta \tau>0
    \end{equation}

        \begin{equation}
        \left ( 1 + e^{\Delta \tau} \frac{\nu^*}{1-\nu^*}\right )^{-1} - (1-\nu^*) > 0, \quad \text{if } \Delta \tau<0
    \end{equation}

    All these inequalities can be directly checked. As an example, we can take the first one:

    \begin{equation}
        \left ( 1 + e^{-\Delta \tau} \frac{1-\nu^*}{\nu^*}\right )^{-1} - \nu^* > 0
    \end{equation}
    
    \begin{equation}
        1 + e^{-\Delta \tau} \frac{1-\nu^*}{\nu^*} < \frac{1}{\nu^*} 
    \end{equation}

    \begin{equation}
        e^{-\Delta \tau} \frac{1-\nu^*}{\nu^*} < \frac{1-\nu^*}{\nu^*} 
    \end{equation}

    \begin{equation}
        e^{-\Delta \tau}  < 1, \quad \text{if } \Delta \tau > 0
    \end{equation}

    From these four inequalities, the theorem follows.

\end{proof}

\begin{theorem}
    $\forall \varepsilon_{+}, \varepsilon_{-} > 0, \ \exists \underline{\gamma}>0$ that:
    \begin{equation}
        \forall \gamma < \underline{\gamma}, \quad  \vartheta^*(\gamma) -
        \left ( \log \left ( \frac{p^*_B}{p^*_A} \right ) + \gamma \log \left ( \frac{\nu^*}{1-\nu^*} \right ) \right ) \in [-\gamma \epsilon_{-}, \gamma \epsilon_{+}]
    \end{equation}
\end{theorem}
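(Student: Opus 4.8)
The plan is to reduce the claim to a transversal sign-change argument for the reparametrized fixed-point equation. Recall from the proof of Theorem~\ref{thm:StatisticalGameEquilibrium} that $\vartheta^*(\gamma) = \vartheta^*_\gamma$ is the \emph{unique} fixed point of the contraction $\Phi$, i.e. $\Phi(\vartheta^*_\gamma) = \vartheta^*_\gamma$. Writing $\vartheta = \vartheta_0 + \gamma \tau$ and setting
\begin{equation}
    \tilde{G}(\gamma,\tau) = \frac{(\vartheta_0 + \gamma \tau) - \Phi(\vartheta_0 + \gamma \tau)}{\gamma},
\end{equation}
the fixed-point condition for $\vartheta^*_\gamma$ becomes $\tilde{G}(\gamma,\tau^*(\gamma)) = 0$, where $\tau^*(\gamma) = (\vartheta^*_\gamma - \vartheta_0)/\gamma$. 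Since $\Phi$ is smooth in $\vartheta$, the map $\tilde{G}(\gamma,\cdot)$ is continuous for every fixed $\gamma>0$, and by construction $\lim_{\gamma\to 0}\tilde{G}(\gamma,\tau) = \Psi(\tau)$ pointwise. The target estimate $\vartheta^*(\gamma) - (\vartheta_0 + \gamma\tau_0) \in [-\gamma\varepsilon_-,\gamma\varepsilon_+]$ is exactly the statement $\tau^*(\gamma) - \tau_0 \in [-\varepsilon_-,\varepsilon_+]$, so it suffices to trap the root $\tau^*(\gamma)$ inside this interval for all sufficiently small $\gamma$.

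Next I would feed the preceding sign theorem into an intermediate-value argument. Fix $\varepsilon_+,\varepsilon_->0$ and evaluate $\Psi$ at the endpoints $\tau_+ = \tau_0 + \varepsilon_+$ and $\tau_- = \tau_0 - \varepsilon_-$; the sign theorem yields $\Psi(\tau_+) > 0$ and $\Psi(\tau_-) < 0$ (taking $\Delta\tau = \varepsilon_+ > 0$ and $\Delta\tau = -\varepsilon_- < 0$ respectively). By the pointwise convergence $\tilde{G}(\gamma,\tau_\pm)\to\Psi(\tau_\pm)$, there is a threshold $\underline{\gamma}>0$ such that for all $\gamma<\underline{\gamma}$ one has $\tilde{G}(\gamma,\tau_+)>0$ and $\tilde{G}(\gamma,\tau_-)<0$. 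Continuity of $\tilde{G}(\gamma,\cdot)$ together with the intermediate value theorem then forces a zero of $\tilde{G}(\gamma,\cdot)$ in the open interval $(\tau_-,\tau_+)$. Because the fixed point of $\Phi$ is unique, this zero \emph{is} $\tau^*(\gamma)$; hence $\tau^*(\gamma)\in(\tau_0-\varepsilon_-,\tau_0+\varepsilon_+)$ and, multiplying by $\gamma$,
\begin{equation}
    \vartheta^*(\gamma) - \left( \log\left( \frac{p^*_B}{p^*_A} \right) + \gamma \log\left( \frac{\nu^*}{1-\nu^*}\right) \right) = \gamma\left(\tau^*(\gamma)-\tau_0\right) \in [-\gamma\varepsilon_-,\gamma\varepsilon_+],
\end{equation}
as required.

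The main obstacle is making the pointwise limit $\tilde{G}(\gamma,\tau_\pm)\to\Psi(\tau_\pm)$ fully rigorous, since the closed form of $\Psi$ was obtained only after discarding the $\mathcal{O}(e^{-|\lambda_k|/\gamma})$ contributions, with $\lambda_k = \log\bigl(\tfrac{p_{k^*}(A)}{p_{k^*}(B)}\tfrac{p_k(B)}{p_k(A)}\bigr)$. I would justify this by invoking the strict monotonicity of the likelihood ratio $p_k(B)/p_k(A)$ established in \eqref{le:pa/pb}: for every $k\neq k^*$ this forces $\lambda_k\neq 0$, so each discarded term decays like $e^{-|\lambda_k|/\gamma}\to 0$, and since there are only finitely many indices $k\in\{0,\dots,N\}$, the total error is a finite sum of terms tending to zero. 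It is worth stressing that the intermediate-value scheme requires this convergence only at the two fixed points $\tau_\pm$, not uniformly, which keeps the estimate elementary. Two side conditions also deserve recording: the argument presumes $\nu^*\in(0,1)$ (the hypothesis $\nu^*\neq 0$ of the unification theorem combined with the convention $\nu^*\in[0,1)$), which is exactly what makes the sign theorem applicable and the endpoints strictly signed; and although the contraction constant of $\Phi$ degenerates to $1$ as $\gamma\to 0$, this is harmless here because we never iterate $\Phi$ and use only the uniqueness of its fixed point at each fixed $\gamma>0$.
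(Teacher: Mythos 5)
Your proposal is correct and takes essentially the same route as the paper's own proof, which is stated in two lines: the result ``follows from the uniqueness and existence of $\vartheta^*(\gamma)$ for all $\gamma>0$ and from the definition of $\Psi(\tau)$ as a limit $\gamma \to 0$,'' relying implicitly on the sign theorem proved immediately before. Your write-up simply makes explicit the details the paper leaves to the reader --- the intermediate-value trap at $\tau_0+\varepsilon_+$ and $\tau_0-\varepsilon_-$, the sufficiency of pointwise convergence of $\tilde{G}(\gamma,\cdot)$ at those two points, the identification of the zero with $\tau^*(\gamma)$ via uniqueness of the fixed point of $\Phi$, and the justification (via $\lambda_k\neq 0$ for $k\neq k^*$ and finiteness of the sum) for discarding the $\mathcal{O}(e^{-|\lambda_k|/\gamma})$ terms in the limit defining $\Psi$.
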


\begin{proof}
    This follows from the uniqueness and existence of $\vartheta^*(\gamma)$ for all $\gamma>0$ and from the definition of $\Psi(\tau)$ as a limit $\gamma \to 0$.

\end{proof}

\begin{remark}
    To generate higher order approximations in $\gamma$ for $\vartheta^*(\gamma)$ one can iterate the expression:
    \begin{equation}
        \vartheta^*_{n+1}(\gamma) = \Phi(\vartheta^*_n(\gamma)), \quad
        \vartheta^*_0 = \vartheta_0 + \gamma \tau_0
    \end{equation}

    For example, the first iteration gives:
    \begin{equation}
        \begin{split}
            \vartheta^*_1(\gamma) =& \log \left ( \frac{p^*_B}{p^*_A} \right ) + \gamma \log \left ( \frac{\nu^*}{1-\nu^*} \right ) + \\ 
            & \gamma^2 \left ( r_A \nu^* \log(\nu^*) - r_B (1-\nu^*) \log(1-\nu^*)\right )+
            \mathcal{O}(\gamma^3)
        \end{split}
    \end{equation}

    However, as we will see, this does not give the exact second-order approximation of $\vartheta^*(\gamma)$.
    
\end{remark}

\begin{remark}
    Remarkably, a similar derivation for the first derivative of $\Phi(\vartheta)$ shows that it remains strictly smaller than 1 at $\vartheta_0$ as $\gamma \to 0$.

    \begin{equation}
        \lim_{\gamma \to 0} \Phi'(\vartheta^*_0) = 1 - (r_A + r_B) \nu^* (1-\nu^*)
    \end{equation}

    which signals a much faster convergence than what is guaranteed by the global upper bound in \eqref{eq:GlobalBoundPhi}.
    
\end{remark}

\begin{remark}
    In fact, all derivatives of $\vartheta^*(\gamma)$ can be determined as $\gamma \to 0$ by using the ``perturbative parts'' in the definition of $\Psi$:

    \begin{equation}
        \Psi_P(\tau,\gamma) = 
        \log \left (
        \frac
        {1 + r_A \left ( \left (1 + e^{-\tau} \right )^{-(1-\gamma)} - \nu^* \right)}
        {1 + r_B \left ( \left (1 + e^{\tau} \right )^{-(1-\gamma)} - (1-\nu^*) \right )}
        \right )
    \end{equation}

    An implicit definition for $\tau^\times(\gamma)$:
    \begin{equation}
        \Psi_P(\tau^\times(\gamma),\gamma) = 0
    \end{equation}

    which is equivalent to finding the solution for \footnote{because for Fisher games $P^*_0 = p^*_B/(p^*_A + p^*_B)=r_B/(r_A+r_B)$}:

    \begin{equation}
        f(\tau,\gamma) = 
        (1-P^*_0) \left ( \left (1 + e^{-\tau} \right )^{-(1-\gamma)} - \nu^* \right) - 
        P^*_0 \left ( \left (1 + e^{\tau} \right )^{-(1-\gamma)} - (1-\nu^*) \right )
    \end{equation}

    and

    \begin{equation}
        f(\tau^\times(\gamma),\gamma) = 0
    \end{equation}

    by which, one can define a perturbative approximation of $\vartheta^*(\gamma)$:

    \begin{equation}
        \vartheta^\times(\gamma) = \vartheta_0 + \gamma \ \tau^\times(\gamma)
    \end{equation}

    $\vartheta^*(\gamma)$ and $\vartheta^\times(\gamma)$ are not the same, but their derivatives match upto every finite order \cite{note:AsymptoticSeries} \footnote{the $\mathcal{O}(e^{-|\lambda_k|/\gamma})$ terms can remind someone to ``Instanton contributions'' in quantum field theories \cite{arxiv:Instantons}}:

    \begin{equation}
        \forall n \in \mathbb{N}, \quad 
        \lim_{\gamma \to 0} \vartheta^*(\gamma)^{(n)} = \vartheta^\times(0)^{(n)}
    \end{equation}

    For the difference, the $\mathcal{O}(e^{-|\lambda|/\gamma})$ terms are responsible, which are not visible for the derivatives at 0.
    
\end{remark}

\begin{remark}
    Using the previous formula, an exact second-order approximation can be obtained:
    \begin{equation}
        \begin{split}
            \vartheta^*(\gamma) =& \log \left ( \frac{p^*_B}{p^*_A} \right ) + \gamma \log \left ( \frac{\nu^*}{1-\nu^*} \right ) + \\ 
            & \gamma^2 \ \frac{P^*_0 \nu^* \log(\nu^*) - (1-P^*_0) (1-\nu^*) \log(1-\nu^*)}{ \nu^* (1-\nu^*)}+
            \mathcal{O}(\gamma^3)
        \end{split}
    \end{equation}
    
\end{remark}

\begin{remark}
    The quantity $\tau^*(\gamma)$ is intimately related to $p'^*_{\gamma,k^*}$, in the following way:

    \begin{equation}
        \frac{p'^*_{\gamma,k^*}}{1-p'^*_{\gamma,k^*}} = 
        \left ( \frac{P^*_\gamma}{1-P^*_\gamma} \right )^{1/\gamma}
        \left ( \frac{p_{k^*}(A)}{p_{k^*}(B)} \right )^{1/\gamma}
    \end{equation}

    \begin{equation}
        \log \left ( \frac{p'^*_{\gamma,k^*}}{1-p'^*_{\gamma,k^*}} \right ) = 
        \frac{1}{\gamma} \left ( \vartheta^*(\gamma) - \vartheta_0 \right ) = 
        \tau^*(\gamma)
    \end{equation}

    \begin{equation}
        p'^*_{\gamma,k^*} = \frac{e^{\tau^*(\gamma)}}{1+e^{\tau^*(\gamma)}}
    \end{equation}
    
\end{remark}

Using the connection between $p'^*_{\gamma,k^*}$ and $\tau^*(\gamma)$ we can define a perturbative splitting ratio near $\gamma \approx 0$:

\begin{equation}
    p'^\times_{\gamma,k^*} = \frac{e^{\tau^\times(\gamma)}}{1+e^{\tau^\times(\gamma)}}
\end{equation}

\subsubsection{Example:}

\begin{figure}[H]
    \centering
    \includegraphics[scale=0.8]{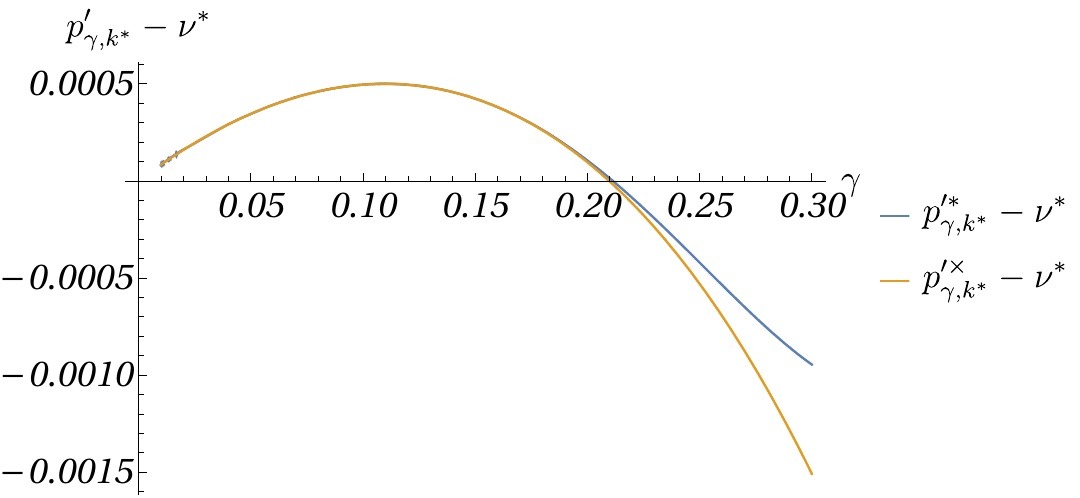}
    \caption{Illustration of $p'^*_{\gamma,k^*}$ and $p'^\times_{\gamma,k^*}$ for \SG{N=2,K_A=2,K_B=7,M=10}.}
    \label{fig:Asymptotic_pp}
\end{figure}

\section{Restricted iteration}
\label{appendix:Restricted}

This proof attempt belongs to Conjecture \ref{conj:Restricted}

\vspace{\baselineskip}

\noindent \emph{Proof attempt.} \ We can take the derivative of $\invbreve{F}(\chi)$ with respect to $\chi$:

    \begin{equation}
        \invbreve{F}'(\chi) = 
        \frac{1}{\sqrt{Z_A Z_B}} \sum_{k \in \mathbb{K}_{AB}} \frac{(p_k(A)-p_k(B))^2}{P(\chi) p_k(A)+(1-P(\chi)p_k(B))} 
        \left ( \frac{d \chi}{d P} \right )^{-1}
    \end{equation}

where

    \begin{equation} 
        \frac{1}{\sqrt{Z_A Z_B}} 
        \left ( \frac{d \chi}{d P} \right )^{-1} = \frac{1}{Z_A/P(\chi)+Z_B/(1-P(\chi))}
    \end{equation}

We can introduce a new normalized set of variables:

    \begin{equation}
        \invbreve{p}_k(\theta) = \frac{1}{Z_\theta} p_k(\theta)
    \end{equation}

For any normalized ``On-shell''\cite{book:Weinberg} set of probabilities ($\sum_k \invbreve{p}_{\theta,k} =1$)

    \begin{equation}
        \underline{\underline{\invbreve{p}}} \in [0,1]^{2 \times |\mathbb{K}_{AB}|}
    \end{equation}

We will denote the set of all normalized probability measures on $\mathbb{K}_{AB}$ by $\mathbb{P}$:

\begin{equation}
    \mathbb{P} = \{ p:\mathbb{K}_{AB} \mapsto [0,1] \ | \  ||p||_1 = 1 \} 
\end{equation}

in this notation:

\begin{equation}
    \underline{\underline{\invbreve{p}}} \in
    \mathbb{P} \times \mathbb{P} = \mathbb{P}_2
\end{equation}
    
\begin{equation}
    \invbreve{F}^\circ{}'(\chi,\underline{\underline{\invbreve{p}}})
    =
    \frac{1}{Z_A/P+Z_B/(1-P)} \sum_{k \in \mathbb{K}_{AB}} \frac{(Z_A \ \invbreve{p}_{A,k} - Z_B \ \invbreve{p}_{B,k})^2}{P \ Z_A \invbreve{p}_{A,k} + (1-P) Z_B \invbreve{p}_{B,k}}
\end{equation}

(Where $P$ here means the function $P(\chi)$.)

It is easy to observe that this expression for the derivative is always positive:

\begin{equation}
    \invbreve{F}^\circ{}'(\chi,\underline{\underline{\invbreve{p}}}) > 0
\end{equation}

To show that this derivative is always less than $1$, we show that its value can take $1$ only for an extreme normalized probability distribution ($\underline{\underline{\invbreve{p}}}^E \in \mathbb{P}_2^E$) and that the value for a non-extreme distribution is always less than this maximal value.

\begin{equation}
    \underline{\underline{\invbreve{p}}}^E \in \mathbb{P}_2^E \iff 
    \forall k \in \mathbb{K}_{AB}, \ 
    \invbreve{p}^E_{A,k} \cdot \invbreve{p}^E_{B,k} = 0
\end{equation}

For such extreme probability distribution pairs it can be seen that:

\begin{equation}
    \invbreve{F}^\circ{}'(\chi,\underline{\underline{\invbreve{p}}}^E) =
    \frac{1}{Z_A/P+Z_B/(1-P)} 
    \left (
    \frac{Z_A}{P} + \frac{Z_B}{1-P}
    \right ) = 1
\end{equation}

To show that making normalized distributions more extreme increases the value of the derivative, we introduce the continuous elementary permutations or continuous swaps. For any $\lambda \in [0,1]$, $k,\ell \in \mathbb{K}_{AB}$, $\theta \in \Theta = \{A,B\}$ we can define $\mathscr{P}^\lambda_{\theta,(k,\ell)} : \mathbb{P}_2 \mapsto \mathbb{P}_2$ in the following way:

\begin{equation}
    \underline{\underline{\invbreve{q}}} = \mathscr{P}^\lambda_{\theta,(k,\ell)} (\underline{\underline{\invbreve{p}}})
    \iff
    \invbreve{q}_{\theta,k} = \lambda (\invbreve{p}_{\theta,k}+\invbreve{p}_{\theta,\ell}) , \quad
    \invbreve{q}_{\theta,\ell} = (1-\lambda) (\invbreve{p}_{\theta,k}+\invbreve{p}_{\theta,\ell})
\end{equation}

It can be shown that all $\underline{\underline{p}}, \underline{\underline{q}} \in \mathbb{P}_2$ pairs can be connected by some finite chain of continuous elementary permutations: $\underline{\underline{q}} = \mathscr{P}_n \circ,\dots,\circ \mathscr{P}_1 (\underline{\underline{p}})$. \footnote{This is similar to the ``Transposition Theorem'', which states that ``any permutation of a finite set containing at least two
elements can be written as the product of transpositions'' \cite{book:AbstractAlgebra}.}

To see, how the continuous permutation can change $\invbreve{F}^\circ{}'$, we can take its second derivative with respect to $\lambda$:

\begin{equation}
    \frac{\partial^2}{\partial \lambda^2}
    \invbreve{F}^\circ{}'(\chi,\mathscr{P}^\lambda_{\theta,(k,\ell)}(\underline{\underline{\invbreve{p}}}))
\end{equation}

The second derivative can be calculated explicitly:

\begin{equation}
    \frac{2 \left(p_{A,k}+p_{A,\ell}\right)^2 p_ {B,k}^2}{\left(P \
\lambda  \left(p_{A,k}+p_{A,\ell}\right)+(1-P) p_{B,k}\right)^3}+\frac{2 \left(p_{A,k}+p_{A,\ell}\right)^2 p_{B,\ell}^2}{\left(P
(1-\lambda ) \left(p_{A,k}+p_{A,\ell}\right)+(1-P) p_{B,\ell}\right)^3}
\end{equation}

which shows that it is always greater or equal to $0$:

\begin{equation}
    \frac{\partial^2}{\partial \lambda^2}
    \invbreve{F}^\circ{}'(\chi,\mathscr{P}^\lambda_{\theta,(k,\ell)}(\underline{\underline{\invbreve{p}}})) \ge 0
\end{equation}

This means that as a function of $\lambda$, the expression will take its maximum on the boundary, i.e. $\lambda \in \{0,1\}$.

Explicit calculation gives the difference between values at $\lambda = 1$ and $\lambda=0$:

\begin{equation}
    \Delta \invbreve{F}' = 
    \invbreve{F}^\circ{}'(\chi,\mathscr{P}^1_{\theta,(k,\ell)}(\underline{\underline{\invbreve{p}}})) -
    \invbreve{F}^\circ{}'(\chi,\mathscr{P}^0_{\theta,(k,\ell)}(\underline{\underline{\invbreve{p}}}))
\end{equation}

\begin{equation}
    \Delta \invbreve{F}' = 
    \frac{\left(p_{A,k}+p_{A,\ell}\right)^2 \left(p_{B,\ell}-p_{B,k}\right)}
    {(1-P) \left(P \left(p_{A,k}+p_{A,\ell}\right)+(1-P) p_{B,k}\right) \left(P
\left(p_{A,k}+p_{A,\ell}\right)+(1-P) p_{B,\ell}\right)}
\end{equation}

From which the following conditional equations follow:

\begin{equation}
    \begin{split}
        \invbreve{F}^\circ{}'(\chi,\mathscr{P}^1_{A,(k,\ell)}(\underline{\underline{\invbreve{p}}})) = &
        \sup_{\lambda \in [0,1]} \invbreve{F}^\circ{}'(\chi,\mathscr{P}^\lambda_{A,(k,\ell)}(\underline{\underline{\invbreve{p}}})), \quad \text{if } \invbreve{p}_{B,k} \le \invbreve{p}_{B,\ell} \\
        \invbreve{F}^\circ{}'(\chi,\mathscr{P}^0_{A,(k,\ell)}(\underline{\underline{\invbreve{p}}})) = & 
        \sup_{\lambda \in [0,1]} \invbreve{F}^\circ{}'(\chi,\mathscr{P}^\lambda_{A,(k,\ell)}(\underline{\underline{\invbreve{p}}})), \quad \text{if } \invbreve{p}_{B,k} \ge \invbreve{p}_{B,\ell} \\
        \invbreve{F}^\circ{}'(\chi,\mathscr{P}^1_{B,(k,\ell)}(\underline{\underline{\invbreve{p}}})) = &
        \sup_{\lambda \in [0,1]} \invbreve{F}^\circ{}'(\chi,\mathscr{P}^\lambda_{B,(k,\ell)}(\underline{\underline{\invbreve{p}}})), \quad \text{if } \invbreve{p}_{A,k} \le \invbreve{p}_{A,\ell} \\
        \invbreve{F}^\circ{}'(\chi,\mathscr{P}^0_{B,(k,\ell)}(\underline{\underline{\invbreve{p}}})) = & 
        \sup_{\lambda \in [0,1]} \invbreve{F}^\circ{}'(\chi,\mathscr{P}^\lambda_{B,(k,\ell)}(\underline{\underline{\invbreve{p}}})), \quad \text{if } \invbreve{p}_{A,k} \ge \invbreve{p}_{A,\ell}
    \end{split}
\end{equation}

We can now introduce the following extremization operators:

\begin{equation}
    \mathscr{E}_{\theta, k \to \ell} = \mathscr{P}_{\theta,(k,\ell)}^0 , \quad
    \mathscr{E}_{\theta, \ell \to k} = \mathscr{P}_{\theta,(k,\ell)}^1
\end{equation}

By successively applying appropriate extremization operators, we can get an extreme state from an initial state:

\begin{equation}
    \underline{\underline{\invbreve{p}}}^E = \mathscr{E}_n \circ \dots \circ \mathscr{E}_1(\underline{\underline{\invbreve{p}}}), \quad
    \underline{\underline{\invbreve{p}}}^E \in \mathbb{P}_2^E, \ 
    \underline{\underline{\invbreve{p}}} \in \mathbb{P}_2
\end{equation}

where every step increases the value of $\invbreve{F}^\circ{}'(\chi,.)$.
This implies that if $\underline{\underline{\invbreve{p}}}$ is not extreme:

\begin{equation}
    \invbreve{F}^\circ{}'(\chi,\underline{\underline{\invbreve{p}}}) <
    \invbreve{F}^\circ{}'(\chi,\underline{\underline{\invbreve{p}}}^E) = 1, \quad \text{if} \ \underline{\underline{\invbreve{p}}} \notin \mathbb{P}_2^E
\end{equation}

From this follows that:

\begin{equation}
    \forall \chi \in \mathbb{R}, \ \invbreve{F}'(\chi) < 1
\end{equation}

To show that the inequality uniformly holds for any range of $\chi$, it is beneficial to observe that:

\begin{equation}
    \lim_{\chi \to -\infty} \invbreve{F}'(\chi) =0 \quad \text{and} \quad  \lim_{\chi \to \infty} \invbreve{F}'(\chi) = 0
\end{equation}

This means that:

\begin{equation}
    \forall \varepsilon >0, \exists \chi_{-}, \chi{+}, \quad 
    \forall \chi < \chi_{-} \vee \chi > \chi_{+}, \quad
    |\invbreve{F}'(\chi)| < \varepsilon
\end{equation}

We are interested in the smallest global upper bound of $|\invbreve{F}'(\chi)|$:

\begin{equation}
    \invbreve{q} = \sup_{\chi \in \mathbb{R}} |\invbreve{F}'(\chi)|
\end{equation}

The domain of $\chi$ can be split to $3$ subsets:

\begin{equation}
    \invbreve{q} = \max \left \{
    \sup_{\chi < \chi_{-}} |\invbreve{F}'(\chi)|,
    \sup_{\chi \in [\chi_{-},\chi_{+}]} |\invbreve{F}'(\chi)|,
    \sup_{\chi > \chi_{+}} |\invbreve{F}'(\chi)|
    \right \}
\end{equation}

\begin{equation}
    \invbreve{q} \le \max \left \{
    \varepsilon,
    \sup_{\chi \in [\chi_{-},\chi_{+}]} |\invbreve{F}'(\chi)|,
    \varepsilon
    \right \}
\end{equation}

A continuous function takes its supremum and infinum on a compact interval, therefore:

\begin{equation}
    \invbreve{F}' \in C^0(\mathbb{R},\mathbb{R}), \ \forall \chi, |\invbreve{F}'(\chi)|<1 \implies 
    \sup_{\chi \in [\chi_{-},\chi_{+}]} |\invbreve{F}'(\chi)| < 1
\end{equation}

which for $\varepsilon<1$ choice guarantees that:

\begin{equation}
    \invbreve{q} = \sup_{\chi \in \mathbb{R}} |\invbreve{F}'(\chi)| < 1
\end{equation}

This attempts to complete the proof.

\begin{flushright}
\rotatebox[origin=c]{180}{$\heartsuit$}
\end{flushright}

\section{Questions leading to the present work}
\label{Appendix:PersonalMotivation}

\paragraph{Personal but genuine motivation:}

The way I followed and the puzzles which interested me might not be everybody's path, but sketching my own story might resonate with some because it has grown from genuine enthusiasm. 

\paragraph{Random mass events:}

For a relatively long time -- more or less until the end of my undergraduate studies -- my default framework for probabilistic reasoning would be a mixture of Classical probability theory (providing the principle of indifference \footnote{or principle of insufficient reason}) and the Frequentist interpretation (by which I was able to ground derived probabilistic statements to the results of many times repeated experiments).
I think this is no wonder, given that all formal educational material backing my theoretical physicist degree were in accordance (or at least mostly compatible) with this blend of frameworks.
Error analysis in experimental physics \cite{book:ErrorAnalysis,book:ExperimentationBaird}, the convenient thermodynamic limit in statistical physics, the counters of ionizing radiation from radioactive decay gave a convincing grounding for the objective reality of the long-term relative frequencies. Axiomatic probability theory \cite{book:Kolmogorov} provided a rich and robust mathematical language, in which initial assumptions could be transformed to probabilistic statements about complicated events.

However, personally, I always felt a little uneasy when I wanted to sincerely interpret the result of error analysis for a concrete laboratory experiment or -- to give a different example -- the law of large numbers, which was sometimes introduced as the link between probability theory and statistics.
With tools from ``frequentist statistics,'' I was equipped with techniques that were asymptotically correct and consistent. However, we only ever had finite samples (and for some more delicate experiments, only a few).
Furthermore, theoretically, the connection between probability theory and real-world events was not entirely convincing because the arguments always felt suspiciously circular \cite{book:UniversalArtificialIntelligence}. \footnote{Probably the ``\textsection 2. The Relation to Experimental Data'' in Kolmogorov's axiomatization \cite{book:Kolmogorov} made a symbolic attempt to make the connection possible by postulating: ``4)
Under certain conditions, which we shall not discuss here, we may assume that to an event $A$ which may or may not occur under conditions $\mathfrak{S}$, is assigned a real number $P(A)$ which has
the following characteristics:
(a) One can be practically certain that if the complex of conditions $\mathfrak{S}$ is repeated a large number of times, $n$, then if to be the number of occurrences of event $A$, the ratio $m /n$ will differ very
slightly from $P(A)$.'', Kolmogorov points out in a straightforward footnote that his axiomatization aims to establish a purely mathematical framework, and writes: ``In establishing the premises
necessary for the applicability of the theory of probability to the world of
actual events, the author has used, in large measure, the work of R. v. Mises \cite{book:VonMises}''}

Against all my philosophical doubts, I adopted a pragmatic attitude, neatly summarized by Mark Kac in the following quote:
``If a probability of a certain event was calculated in accordance
with certain assumptions and rules, then the probability (again
calculated according to the same assumptions and rules) that the
frequency with which the event will occur in a large assembly of
trials will differ significantly from the calculated probability is
small.
Modest as the statement is, it is essentially all one can expect
from a purely mathematical theory.
The applicability of such a theory to natural sciences must
ultimately be tested by an experiment. But this is true of all
mathematical theories when applied outside the realm of mathematics, and the vague feeling of discomfort one encounters (mostly
among philosophers!) when first subjected to statistical reasoning
must be attributed to the relative novelty of the ideas.
To me there is no methodological distinction between the
applicability of differential equations to astronomy and of probability theory to thermodynamics or quantum mechanics.
It works! And brutally pragmatic as this point of view is, no
better substitute has been found.'' \cite{book:Kac}

\paragraph{A Bayesian turn:}

My hesitation to accept the previously described views as the only valid framework started to grow when I began to successfully use statistical techniques -- routinely applied in experimental physics -- to deterministic (but truncated) numerical results related to purely theoretical models \cite{paper:BajnokKonczeretall}.
In these cases, seemingly none of the usual requirements were present, which might justify a probabilistic or statistical treatment, but these concepts and interpretations somehow worked remarkably well.

This was the point when I started to entertain the thought that maybe it is more beneficial if we liberate the phenomena from the requirement of being objectively stochastic and we reserve the probabilistic language to describe only our partial and subjective knowledge about physical (or even theoretical) ``objects'' or state of affairs.

My entry point to Bayesian statistics was the famous textbook of E. T. Jaynes (published posthumously in 2003) \cite{book:Jaynes}. The possibility of a new, broader and more coherent framework captivated my imagination, and I soon started to explore the applicability of its methods to various problems.

However, one issue remained: the unbearable subjectivity of the prior probabilities.

\paragraph{Quest for Objective prior(s):}

I think in most situations, the appropriate place where probabilities could be identified is not the outside world but our internal partial description of the world.
However, in my view, this does not automatically mean that one has to give up all principles or criteria to filter out priors, which can potentially ruin any inference based on a finite amount of data. \footnote{A simple example for such principle might be Cromwell's rule \cite{book:Lindley,book:BayesianSocialScience}, which simply suggests not to use a prior, which associates $0$ or $1$ to specific events (the statement can be made without further complication if the set of possibilities is discrete).}
There is wisdom in the bold view of Bruno de Finetti \cite{book:deFinetti}, emphasizing that an agent's model about the world is already a subjective mental construction; therefore, constructing a prior might not be a seriously demanding step.
However, putting the subjective Bayesian view aside, I started searching for ``Objective'' (also known as uninformative) priors, which might serve at least as robust default choices. 

Once one starts to search for objective priors, many different suggestions appear, such as the classical principle of indifference, maximum entropy priors, transformation group invariant priors \cite{book:Jaynes}, Jeffreys prior (which is invariant under diffeomorphisms on the parameter space) \cite{book:Jaynes,paper:InformationGeometry,paper:JeffreysPriorOriginal}, Reference prior \cite{paper:ReferencePrior,book:Bernardo}, etc.

A seemingly plentiful menu of ``objective'' prior choices gave the motivation to search for arguments and principles that might prefer one to another or construct more ``natural'' ones.

\paragraph{Little less thinking, little more action:}

During the search for convincing principles for priors in Bayesian inference, it appeared that to rate different prior choices (with finite sample sizes), one has to link the result of inference to actions and then compare utilities (or loss functions or rewards) associated with possible consequences.
However, the Bayesian framework is traditionally introduced mainly as an epistemological discipline concerned about knowledge, belief and ``rationality'', but makes no meaningful statements about different prior choices. I felt it has to be embedded into a more general decision-making framework, where the main concepts are action, consequences, utilities and strategy.

For instance, consider a scenario where an agent receives a reward in a parametric model. This reward is based on the relative entropy -- also known as Kullback-Leibler divergence -- between her inferred distribution, which is based on her data, and the real distribution. Furthermore, assume that the parameters of this model have a known $\pi_0$ prior distribution.
In such a case, the agent can maximize her expected reward (or minimize her expected loss). It can be proven that to achieve this, she must strictly follow the Bayesian updating rule to determine her inferred distribution.

This observation gave motivation to search for decision-making problems, where Bayesian update is not ``only'' a result of a consistency argument but an optimal solution to an optimization or equilibrium finding problem.

\paragraph{Game theory as foundation:}

A natural framework in which randomization and mixed strategies appear is Game Theory \cite{book:EssentialGameTheory,book:GameTheory,book:EvolutionaryGames,review:NeumannMorgensternGameThoery,book:GameTheoryOriginal}.
Knowing that they will play a rock-paper-scissors, players can deduce in advance that their best strategy is to choose from all possible moves as unpredictably as possible.
In this case, the concepts of randomization and mixed strategy are results of a careful consideration, solving a paradoxical dilemma where no deterministic decision function could work well.

Traditionally, randomization in mixed strategies was defined and understood with the help of probability theory, but in a way, this might be reversed.
One can ask: if an agent can plan and execute randomized mixed strategies in simple games, then could we use such games to ground concepts in probability theory?
Is it possible, or even desirable, to use other analogies to statistical problems, which do not model real-world events based on degrees of belief or fair coins and dice? Instead, could we map such problems to games and interpret the statistical decision functions as equilibrium strategies?
What could or should we assume about our ``opponent''? Are there good default assumptions?

In particular, what is a game in which Bayesian update is the optimal strategy?
Can this framework provide an ``objective'' prior as an ingredient of our optimal strategy in some kind of inference game?

These questions motivated me to explore the topic, resulting in the present work.
This quest brought me to an exciting journey, leading to various scientific disciplines and providing a range of mathematical puzzles and the possibility of a rich abstract structure.
To illustrate the general concept, I tried to find the simplest statistical inference problem, which can serve as a toy model, and where exact answers can be given.
I hope that I will be able to partly share with the reader the fruits of these mathematical explorations and open (or reopen) the door to structures that emerge from a game theoretic interpretation of probability.

\bibliographystyle{plaindin}

\bibliography{ref}

\end{document}